\newtheorem{example}{Example}[section]
\newtheorem{theorem}{Theorem}[section]
\newtheorem{proposition}{Proposition}[section]
\newtheorem{remark}{Remark}[section]
\newtheorem{assumption}{Assumption}[section]
\numberwithin{equation}{section}
\newcommand{\setd}{{ d \kern -.15em l}}
\newcommand{\hatsetd}{ d \hat{\kern -.15em l }}
\newcommand{\dd}{\mathsf {d\kern -0.07em l}} 
\newcommand{\vt}{{\vartheta}}
\newcommand{\bgeqn}{\begin{eqnarray}}
\newcommand{\edeqn}{\end{eqnarray}}
\newcommand{\bgeq}{\begin{eqnarray*}}
\newcommand{\edeq}{\end{eqnarray*}}
\newcommand{\bec}{\begin{center}}
\newcommand{\enc}{\end{center}}
\newcommand{\R}{{\rm I\!R}}
\newcommand{\inmat}[1]{\mbox{\rm {#1}}}
\title{
Stability Analysis of an Integrated Multistage Stochastic Programming and Markov Decision Process Problem\footnote{Dedicated to the memory of Professor Werner R\"omisch, a pioneer in stability analysis of stochastic optimization.}
}
\author{Zhiyao Yang\thanks{School of Mathematics and Statistics, Xi’an Jiaotong University, Xi’an, Shaanxi, P. R. China\\ Research Center for Optimization Technology and Intelligent Game, Xi’an International Academy for Mathematics and Mathematical Technology, Xi’an, P. R. China, \texttt{yzy9907@stu.xjtu.edu.cn}}, Zhiping Chen\thanks{School of Mathematics and Statistics, Xi’an Jiaotong University, Xi’an, Shaanxi, P. R. China\\ Research Center for Optimization Technology and Intelligent Game, Xi’an International Academy for Mathematics and Mathematical Technology, Xi’an, P. R. China, \texttt{zchen@mail.xjtu.edu.cn}} and Huifu Xu\thanks{Department of Systems Engineering and Engineering Management, The Chinese University of Hong Kong, Hong Kong, \texttt{h.xu@cuhk.edu.hk}}}
\begin{document}

\maketitle



\begin{abstract}
In this paper, we consider  an 
integrated MSP-MDP framework  which
captures features of Markov decision process (MDP) and multistage stochastic programming (MSP). 
The integrated framework allows one to  
study a dynamic decision-making process that involves 
both transition of system states and dynamic change of the stochastic environment affected respectively by
potential endogenous uncertainties
and exogenous uncertainties.
The integrated model 
differs from classical MDP models by 
taking into account the effect of 
history-dependent exogenous uncertainty and
distinguishes itself 
from standard MSP models
by explicitly 
considering transition of states between stages.  
We begin by 
deriving a dynamic nested reformulation
of the problem and  
the Lipschitz continuity and convexity of 
the stagewise optimal value functions.
We then move on to 
investigate stability of 
the
problem in terms of the optimal value and the set of optimal solutions 
under
the perturbations of 
the probability distributions of 
the endogenous uncertainty and the exogenous uncertainty.
Specifically, we
quantify the effects of the perturbation of the two uncertainties 
on the optimal values and optimal solutions  
by deriving the error bounds in terms of 
Kantorovich metric and a weighted transport discrepancy of the probability distributions of the respective uncertainties.
These results differ from the existing stability results established 
in terms of the filtration distance \cite{heitsch2009scenario} or the nested distance \cite{pflug2012distance}. We use 
some examples to explain the differences via
tightness of the error bounds and applicability of the stability results.
The results complement the existing stability results and provide 
new theoretical grounding for emerging integrated MSP-MDP
models.
\end{abstract}

\textbf{Key words.}
Multistage stochastic optimization, 
Markov decision process,
time-consistency, endogenous uncertainty,
exogenous uncertainty,
stability analysis.
\section{Introduction}
Multistage stochastic programming (MSP) and Markov decision process (MDP) are two 
important dynamic stochastic optimization models for making sequential decisions in uncertain environments. 
Over the past few decades, various
specific forms of MSP/MDP models have been proposed and 
relevant computational methods and underlying theory have been developed accordingly. For a complete treatment, see monographs \cite{shapiro2021lectures,pflug2014multistage,puterman2014markov,sutton2018reinforcement} and references therein.
In MSP models, the uncertainties are progressively revealed over time and decisions are made at each stage based on observation of 
the realizations of the uncertainties 
up to the current stage (\cite{birge2011introduction, kall1994stochastic}).
In MDP models, the uncertainty arises from the state transition, which is typically assumed to be independent across stages. Decisions are always made at each stage based only on the observation of the current state, 
without depending on historical information.
Driven by advances in computer science and artificial intelligence, reinforcement learning (RL) based on MDP(~\cite{gosavi2009reinforcement}) recently 
demonstrates its remarkable potential in practical applications. 
Meanwhile, as decision-making environments become increasingly complex, the demand for robust and stable decision-making under uncertainty increases substantially, leading to renewed interest in MSP from both academia and industry.
Over the past decade, 
MSP and MDP have been extensively used to solve a wide range of dynamic decision-making problems under uncertainty in operations research and management science
such as inventory control (\cite{puterman2014markov,tan2023production}), logistics (\cite{jamali2022multi}), healthcare (\cite{zhang2021two}), autonomous driving (\cite{kretzschmar2022detection,liu2022markov}), energy management (\cite{tsaousoglou2022multistage}), and financial management (\cite{bertsekas2012dynamic}).

An important issue here is how to 
distinguish different types of uncertainties. 
Haghighat et al.~\cite{haghighat2023robust} introduce a two-stage robust optimization model that simultaneously considers different uncertainties for microgrid capacity planning. They classify uncertainties into decision-dependent and decision-independent uncertainties. Sinclair et al.~\cite{sinclair2023hindsight} propose an approach for an MDP model with additional random inputs, which efficiently learns policies for resource allocation problems by leveraging historical samples of the external disturbances; but the authors attribute all uncertainties to exogenous inputs, assuming that the state transition and reward functions are deterministic. 
Wang et al.~\cite{wang2023aleatoric} characterize uncertainty as epistemic uncertainty and aleatoric uncertainty to distinguish the uncertainty in model parameter estimation caused by limited data and the inherent randomness of the environment. 
This classification 
offers a suitable way to distinguish different sources of uncertainty. 
Ma et al.~\cite{ma2024bayesian} further propose a Bayesian MDP model dividing the uncertainty as epistemic uncertainty and aleatoric uncertainty, and advance the solution of dynamic decision-making problems based on the consideration of multiple sources of uncertainty. However, aleatoric uncertainty itself could be further decomposed according to its 
sources.

Recently, there have been a few studies integrating MDPs with MSPs to solve complex decision-making problems.
Wang et al.~\cite{wang2010security} apply a stochastic optimization technique to MDP/RL policy iteration to dynamically optimize economic dispatch under security constraints. 
Nevertheless, they simply apply the MSP algorithm to the policy iteration of MDP,  without integrating the two frameworks.
Zhang et al.~\cite{zhang2021two} introduce a two-step surgical scheduling framework: an MDP for weekly patient selection to minimize long-term costs, followed by an MSP for detailed daily scheduling. Here MDP and MSP modeling approaches are applied successively, instead of simultaneously. 
Jaimungal et al.~\cite{jaimungal2022reinforcement} combine RL with stochastic optimization to tackle sequential decision-making problems by learning optimal policies in uncertain and dynamic environments. 
This approach enables 
learning optimal solutions directly from data without paying particular attention to the  
random variation of the system. 
Kiszka et al.~\cite{kiszka2022stability} incorporate Markov processes into the MSP framework by integrating state variables, and establish a linear stochastic dynamic programming problem based on Markov processes.
The model is limited to random data process with Markov property,
without accounting for the potential randomness in state transitions.
Bhattarai and Song~\cite{bhattarai2025multistage} study a
Markovian multistage stochastic programming 
 model for 
hurricane evacuation and relief logistics planning,
where the hurricane evolution is modeled 
by an endogenous 
stochastic process.
From both modeling and theoretical perspectives, the above studies 
are yet to 
formally integrate MDP and MSP 
while accounting for the distinct types of uncertainties in real-world problems. 
In this paper, we follow the strand of research 
to propose an integrated MSP-MDP model 
which allows one to 
distinguish endogenous uncertainties
and exogenous uncertainties 
and lay down some mathematical foundation 
for the integrated model in terms of 
dynamic reformulation and stability analysis.

In this paper, 
we concentrate on 
stability analysis of the integrated MSP-MDP model with respect to (w.r.t.)
the perturbation of the underlying uncertainties.
In the literature of MSP, 
Heitsch et al.~\cite{HRS2006stability} 
introduce a
filtration distance to measure
the perturbation of the stochastic process in 
a multistage linear stochastic program 
and use it to quantify 
the impact of the perturbation 
on the optimal value. 
K\"uchler ~\cite{kuchler2008stability} proposes a quantitative stability analysis of the value function for a class of  
linear MSP problems 
without taking into account the perturbation to stagewise random variables.
Jiang et al.~\cite{jiang2018quantitative} extend the analysis 
by introducing new 
forms of calm modification.
Pflug et al.~\cite{pflug2012distance} introduce a
nested distance 
which captures the perturbation of a stochastic process and its distribution simultaneously.
They use it 
to investigate the impact on the optimal value of a convex multistage stochastic program when the underlying stochastic process is perturbed.
Kern et al.~\cite{kern2020first}
consider the perturbation of 
the transition kernel of an MDP 
at a particular 
episode and 
use the so-called S-derivative to study
its effect on the value functions of
the MDP in the remaining stages.
In this paper, we follow the strand of research by 
studying effects of 
the perturbation in endogenous uncertainties
and exogenous uncertainties
on 
the optimal value and optimal solutions of the integrated MSP-MDP model. The main challenge is to tackle
interactions between state variables and the set 
of feasible solutions
related to the two uncertainties. 
Moreover, how to quantify the impact of  inter-stage correlation of exogenous random variables under suitable conditions is also challenging. 
The main contributions of this paper can be summarized as follows.

\begin{itemize} 
\item \textbf{Modeling framework.} 
We develop an integrated MSP-MDP model
which 
covers a wide range of 
problems
including classical MSP, MDP, contextual MDP, and MSP with side information. This is 
primarily motivated by 
distinguishing the underlying uncertainties in dynamic decision-making problems
according to their
sources/nature such as 
endogenous or exogenous uncertainty. 
The former refers to uncertainties that evolve within the system itself, often characterized by history-independent random variables whereas the latter refers to uncertainties originating from complex external environments, 
usually described 
by history-dependent random processes. 
The new framework allows one to address the need from modeling perspective where existing MDP or MSP models are inadequate.

\item \textbf{Structural property.}
We derive a nested reformulation of the integrated MSP-MDP model. 
The reformulation facilitates a tractable dynamic programming representation. 
Under 
some moderate conditions such as continuity and convexity of stagewise cost functions and boundedness of feasible sets at 
each stage, instead of the relatively complete recourse condition (see \cite{HRS2006stability})
or convexity of objective function and feasible set (see \cite{pflug2012distance}), 
we prove the existence of optimal solutions and establish the continuity of the value function. 
We show that the value function is convex with respect to pair of state-decision variables under appropriate convexity and monotonicity assumptions. This is challenging since the nonlinear state dynamics brings the inter-stage coupling and historical dependence of random variables.
Furthermore, we prove that the value function is Lipschitz continuous in the state-decision variables,
provided that both the objective function and the state transition mapping satisfy Lipschitz continuity in these variables. These findings extend classical results in MSP and MDP problems.

\item \textbf{Stability analysis.}
We first investigate
the impact of 
the perturbation in the distribution of 
endogenous uncertainty on
the integrated MSP-MDP model. 
Under some moderate conditions,
we derive error bounds 
for both the optimal value and the set of optimal solutions 
in terms of the stagewise Kantorovich metrics between the distributions before and after the perturbation (Theorem \ref{Theo-stab-endo}).
Next, we move on to study the
effect of perturbation in the distribution of the exogenous uncertainty on the model. 
Unlike the stability analysis in 
Heitsch et al.~\cite{HRS2006stability} and Pflug and Pichler~\cite{pflug2012distance}, 
here we have to 
tackle the challenges 
arising from intrinsic 
interactions between state and decision variables, 
as well as intertemporal dependence of exogenous random variables. 
Under the Lipschitz continuity conditions on the conditional distributions of exogenous random variables with respect to historical information, we derive error bounds for the optimal value and the set of optimal solutions (Theorems \ref{Theo-stab-exo-optvalue}, \ref{Theo-stab-exo}, \ref{Theo-stab-exo-stage} and \ref{Theo-stab-exo-optsolu-stage}) in terms of discrepancy and Kantorovich metric. 
Directly imposing conditions on the involved functionals and stagewise feasible sets, 
our quantitative stability analysis avoids the complex filtration distance (\cite{HRS2006stability}) or the nested distance (\cite{pflug2012distance}) and the obtained results 
subsume the main conclusions of \cite{kuchler2008stability}
\cite{HRS2006stability}, and \cite{pflug2012distance}.  

\end{itemize}

The 
rest of this paper is organized as follows. Section 2 
introduces the new integrated MSP-MDP model. Section 3 analyzes the fundamental properties of the proposed integrated MSP-MDP model, including its well-definedness, convexity, Lipschitz continuity, and other structural characteristics. Sections 4 and 5 establish quantitative stability for the optimal value and optimal solution set of the integrated MSP-MDP model with respect to distributional perturbations of the endogenous random variables and exogenous random variables respectively.
Section 6 concludes the paper and outlines directions for future research.

Throughout the paper, we use the following notation. 
By convention, we use 
$\mathbb{R}^n$ to denote $n$-dimensional 
Euclidean space and 
$d(a,b)$ to denote the distance
between two points $a, b \in \mathbb{R}^n$.
We define
\( d(a, B) := \underset{b \in B}{\min} \text{ } d(a, b) \) as the distance from a point \( a \) to a compact set \( B \) and 
\( \mathbb{D}(A, B) := \underset{a \in A} {\max} \text{ } d(a, B) \) the excess of set $A$ over set $B$.
The Hausdorff distance between sets \( A \) and \( B \) 
in $\mathbb{R}^n$
 is then given by \( \mathbb{H}(A, B) := \max\{ \mathbb{D}(A, B), \mathbb{D}(B, A) \} \).
Unless specified otherwise,
we use  
$\|a\|$ 
to represent 
the infinity norm of a vector $a$.
We use terminologies probability 
measure and probability distribution interchangeably depending on the context. 
Finally, we use $\bm{x}$ to denote a random policy and use $x$ to denote a given solution.

\section{An integrated MSP-MDP model}
\subsection{Setup}
In many multistage 
decision-making problems, 
the underlying uncertainties have distinct characteristics in terms of their sources and effects.
Some of them arise from random changes in external (exogenous) environment over a time horizon which have a major impact on decision-making at each stage whereas others occur in the
internal (endogenous) decision-making process. The former 
is represented by
a random process in  standard multistage stochastic programming models while the latter is described 
in MDP models. Here we consider both.
For $t = 1, 2, \cdots, T$,
let random vector $\xi_t: \Omega_1 \to \mathbb{R}^{m_{1,t}}$ denote
the exogenous uncertainty 
with support set $\Xi_t$ and 
$\zeta_t: \Omega_2 \to \mathbb{R}^{m_{2,t}}$ 
denote the endogenous uncertainty
with support set
$Z_t$.
Let $s_t \in S_t \subseteq \mathbb{R}^{\hat{n}_t}$ denote the system state vector at stage $t$, and $x_t \in \mathbb{R}^{n_t}$ 
denote the decision vector at stage $t$.
Figure \ref{liuchengtu} illustrates the decision-making process when a decision maker(DM) faces 
both types of uncertainties.

\begin{figure}[htbp]
\begin{tikzpicture}[node distance=1.5cm]
    \tikzset{
        mydiamond/.style={
            draw, diamond, 
            minimum size=0.8cm, 
            inner sep=1pt,        
            font=\scriptsize      
        }
    }
    
    \node[draw, circle, minimum size=0.5cm] (s0) at (0, 0) {$s_0$};
    \node[draw, rectangle, minimum size=0.5cm] (x0) at (1.2, 0) {$x_0$};
    \node[mydiamond] (omega0) at (2.4, 0) {$\zeta_0$};      
    \node[draw, circle, minimum size=0.5cm] (s1) at (3.6, 0) {$s_1$};
    \node[mydiamond] (xi1) at (4.8, 0) {$\xi_1$};            
    \node[draw, rectangle, minimum size=0.5cm] (x1) at (6, 0) {$x_1$};
    \node[mydiamond] (omega1) at (7.2, 0) {$\zeta_1$};       
    \node[] (ellipsis1) at (8.4, 0) {$\cdots$};
    \node[draw, circle, minimum size=0.5cm] (st) at (9.6, 0) {$s_T$};
    \node[mydiamond] (xit) at (10.8, 0) {$\xi_T$};            
    \node[draw, rectangle, minimum size=0.5cm] (xt) at (12, 0) {$x_T$};
    \node[mydiamond] (omegat) at (13.2, 0) {$\zeta_T$};      
    
    \draw[->] (s0) -- (x0);
    \draw[->] (x0) -- (omega0);
    \draw[->] (omega0) -- (s1);
    \draw[->] (s1) -- (xi1);
    \draw[->] (xi1) -- (x1);
    \draw[->] (x1) -- (omega1);
    \draw[->] (omega1) -- (ellipsis1);
    \draw[->] (ellipsis1) -- (st);
    \draw[->] (st) -- (xit);
    \draw[->] (xit) -- (xt);
    \draw[->] (xt) -- (omegat);
\end{tikzpicture}
\caption{Chronology of states, random variables, and decision variables}
\label{liuchengtu}
\end{figure}
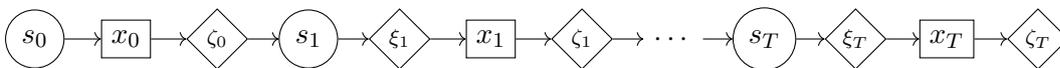
In what follows, we develop a mathematical model which 
precisely describes the process.
We begin by introducing some notation. 
Let $(\xi, \zeta)$ be a 
joint
stochastic process defined on the product probability space $(\Omega_1 \times \Omega_2, \mathcal{F} \times \mathcal{G}, \mathbb{P})$
which describes the evolution of exogenous uncertainty ($\xi_t$) and endogenous uncertainty ($\zeta_t$) over time. 
By convention,
let $\xi_{[t]} := (\xi_1, \xi_2, \cdots, \xi_t)$ and 
$\mathcal{F}_t := \sigma(\xi_{[t]})$
denote the filtration induced by $\xi_{[t]}$.
Then $\mathcal{F}_t$ satisfies
$\mathcal{F}_0 := \{\emptyset, \Omega_1\}  \subseteq \mathcal{F}_1 \subseteq \cdots \subseteq \mathcal{F}_T = \mathcal{F}$.
Let $\Xi_{[t]} \subseteq \mathbb{R}^{m_{1, 1}} \times \mathbb{R}^{m_{1, 2}} \times \cdots \times \mathbb{R}^{m_{1, t}}$
denote the support set of $\xi_{[t]}$ 
and $P^{1,t} = \mathbb{P} \circ (\xi_{[t]})^{-1}$
the probability measure
induced by $\xi_{[t]}$.
Write $\mathscr{P}(\Xi_{[t]})$
for the set of all probability measures 
defined over $\Xi_{[t]}$.
Likewise,  
let $\zeta_{[t]} := (\zeta_1, \zeta_2, \cdots, \zeta_t)$ and $\mathcal{G}_t$
the filtration induced by $\zeta_{[t]}$,
let $\mathcal{Z}_t \subseteq \mathbb{R}^{m_{2, 1}} \times \mathbb{R}^{m_{2, 2}} \times \cdots \times \mathbb{R}^{m_{2, t}}$ denote the support set of $\zeta_{[t]}$, $P_t^2 = \mathbb{P} \circ (\zeta_t)^{-1}$.
With the notation in place, we are
ready to introduce the following 
integrated MSP-MDP model 
for the decision-making problem under both uncertainties:
\begin{subequations}
\label{eq:mixed-MSP-MDP}
\begin{eqnarray}
\label{eq:mixed-MSP-MDPa}
\inmat{(MSP-MDP)}\quad \underset{\bm x \in \mathcal{X}}{\min}&& \mathbb{E}_{\xi_1, \xi_2, \cdots, \xi_T, \zeta_0, \zeta_1, \zeta_2, \cdots, \zeta_T} \left[ C_0(s_0, x_0, \zeta_0) + \sum_{t=1}^T C_t(s_t, x_t, \xi_{[t]}, \zeta_t) \right] \\
\label{eq:mixed-MSP-MDPb}
\textrm{s.t.} &  & g_{t,i}(s_t, x_t, x_{t-1}, \xi_{[t]}) \leq 0, \quad i \in I_t, \quad t = 1, 2, 3, \cdots, T ; \\
\label{eq:mixed-MSP-MDPc}
 &  & s_{t + 1} = S^M_t(s_{t}, x_{t}, \xi_{t}, \zeta_{t}), \quad t = 1, \cdots, T - 1, \\
 & & s_1 = S^M_0(s_0, x_0, \zeta_0).
\end{eqnarray}
\end{subequations}
In this setup, the DM aims to find an optimal policy
${\bm x}(\cdot):=(x_0, x_1(\cdot), \cdots, x_T(\cdot)) : \Xi_{[T]} \times \mathcal{Z}_{[T]} \to \mathbb{R}^{n_{[T]}}$, where $n_{[T]} = \sum\limits_{t = 0}^T n_t$, at stage $t=0$ 
over a finite time horizon $T$ which minimizes the overall 
expected cost. Here $C_t(s_t, x_t, \xi_{[t]}, \zeta_t):
\mathbb{R}^{\hat{n}_t} \times \mathbb{R}^{n_t} \times \mathbb{R}^{m_{1, [t]}} \times \mathbb{R}^{m_{2, t}} \to \mathbb{R}$, $m_{1, [t]} := \sum\limits_{k = 1}^t m_{1, k}$, represents
the cost at stage $t$ incurred from action $x_t$. The cost
depends on the current state $s_t$, the historical path $\xi_{[t]}$ (with $\xi_0$ being deterministic)
representing exogenous uncertainty and 
the endogenous uncertainty $\zeta_t$. 
The solution $x_t$ at stage $t$ is subject to constraints \eqref{eq:mixed-MSP-MDPb} 
where the constraint function $g_{t,i}(s_t, x_t, x_{t-1}, \xi_{[t]}) : \mathbb{R}^{\hat{n}_t} \times \mathbb{R}^{n_t} \times \mathbb{R}^{n_{t - 1}} \times \mathbb{R}^{m_{1, [t]}} \to \mathbb{R}$ 
depends on the decision $x_{t-1}$ at stage $t - 1$, 
 the historical path of exogenous uncertainty 
$\xi_{[t]}$ and  
the current state $s_t$; the transition of states is specified 
by \eqref{eq:mixed-MSP-MDPc} 
where the transition mapping $S^M_t:\mathbb{R}^{\hat{n}_t} \times \mathbb{R}^{n_t} \times \mathbb{R}^{m_{1,t}} \times \mathbb{R}^{m_{2,t}} \to \mathbb{R}^{\hat{n}_{t + 1}}$ depends on the state $s_t$ at the stage $t$, the decision $x_t$ at the previous stage, the exogenous uncertainty $\xi_t$ and the endogenous uncertainty $\zeta_t$.

Unlike standard MSP models, the stagewise cost function depends
on state $s_t$ and $\zeta_t$ to emphasize the effect of
endogenous uncertainty of a system. Likewise, 
the state $s_t$ may also affect 
the feasibility of $x_t$. From MDP perspective, 
model \eqref{eq:mixed-MSP-MDP} differs from standard MDP models in that
the cost function $C_t$ depends on the historical path $\xi_{[t]}$
and transition of states depends on exogenous uncertainty $\xi_t$.
To facilitate the discussions, let
\begin{eqnarray}
\mathcal{X}_t :=\mathcal{X}_t(s_t, x_{t - 1}, \xi_{[t]}) := \left\{ x_t \in \mathbb{R}^{n_t} \mid g_{t,i}(s_t, x_t, x_{t-1}, \xi_{[t]}) \leq 0, i \in I_t \right\}
\label{eq:feaset-X_t}
\end{eqnarray}
for $t=1,\cdots,T$ and $\mathcal{X} = \mathcal{X}_0 \times \mathcal{X}_1 \times \mathcal{X}_2 \times \cdots \times \mathcal{X}_T$, where \(\mathcal{X}_0\) denotes the deterministic feasible set for the initial stage decision \(x_0\); for $t > 0$, \(\mathcal{X}_t(s_t, x_{t - 1}, \xi_{[t]}) : \mathbb{R}^{\hat{n}_t} \times \mathbb{R}^{n_{t - 1}} \times \mathbb{R}^{m_{1, [t]}} \rightrightarrows \mathbb{R}^{n_t}\) denotes the feasible set for the decision $x_t$ at stage $t$, where $I_t$ is a finite index set. 
In this model, the decision $x_t \in \mathbb{R}^{n_t}$ at stage $t$ is a function  of $(s_t, x_{t - 1}, \xi_{[t]})$.
Furthermore, the state transition equations at stage \( t \) do not directly affect the current stage's decision \( x_t \), but indirectly influence the decision-making process through
state \( s_{t + 1} \). 
In this case, given $\xi_t$, the probability transition kernel can be written as
$$
P_t(s_{t + 1} | s_{t}, x_{t}) := P_t^2(\zeta_t \in Z_t | S^M_t(s_{t}, x_{t}, \xi_{t}, \zeta_{t}) = s_{t + 1})
= \mathbb{P}(\omega_2 \in \Omega_2 | S^M_t(s_{t}, x_{t}, \xi_{t}, \zeta_t(\omega_2)) = s_{t + 1}),
$$
which explicitly describes the state transition and its occurring probability from stage $ t $ to the next stage through 
the distribution of $\zeta_t$. 
As shown in \cite{hernandez2012discrete} and other related references, we assume without loss of generality
that $\zeta_0, \zeta_1, \zeta_2, \cdots, \zeta_T $ are mutually independent, thereby facilitating subsequent analysis.

The integrated MSP-MDP model complements 
the existing MDP models and MSP models 
with some benefits.
One is that 
it takes into account 
endogenous and exogenous uncertainties
simultaneously but 
 distinguishes them explicitly.
The new framework 
provides a modeling approach where neither traditional MDP nor MSP models can adequately capture the problem structure.
For example, in power system management, it 
is desirable to distinguish the uncertainties arising in 
transmission loss and (dis)charging efficiency
in the system
from uncertainty in weather conditions such as 
wind speed and intensity of sunlight
in that these uncertainties may have different effects on the operation of the system and management. 
Distinguishing them explicitly 
may facilitate decision makers to investigate the impact of these uncertainties separately and take relevant management decisions accordingly, we will come back to this in Sections 4 and 5 in terms of stability analysis.
It might also be helpful to consider this distinction from a learning perspective. For instance, in inventory control problems, information about exogenous uncertainties such as price is usually revealed over time with the accumulation of observations and thus learnable. In contrast, endogenous uncertainties such as losses during transportation are inherent to the system and thus not necessary to learn dynamically. 
For a mature logistics system, the loss rate during transportation is mainly dominated by aleatory factors
which are usually regarded as independent random variables (\cite{daganzo2005logistics}), meaning that the loss rate information from previous stages does not provide useful insight for the current stage.
Furthermore,
the separation of the uncertainties may facilitate the DM to 
take respective robust actions 
to address risks arising from 
Knightian  
uncertainties (ambiguity of the distributions of the uncertainties).

\subsection{Examples}
We give a few examples where  
some 
specifically structured 
MSP and MDP models 
can 
be viewed as special instances of model \eqref{eq:mixed-MSP-MDP}.

\begin{example}[Observable-context MDP]
Hallak et al.~\cite{hallak2015contextual} consider the following 
contextual Markov decision process (CMDP)
problem:
\begin{subequations}
\label{eq:ex-CMDP}
\begin{eqnarray}
\min_{\bm x \in \mathcal{X}} && \mathbb{E}_{\theta,\zeta_0, \zeta_1, \cdots, \zeta_T}\left[\sum_{t=0}^T C_t\left(s_{t}, x_{t}, \zeta_{t}, \theta\right)\right] \label{eq:ex-CMDP-obj} \\
\rm{s.t.} && x_t \in \mathcal{X}_t\left(s_t, \theta \right), \label{eq:ex-CMDP-cons} \\
&& s_t = S^{M}_{t - 1}\left(s_{t-1}, x_{t-1}, \zeta_{t-1}, \theta\right), t = 1, 2, \cdots, T,  \label{eq:ex-CMDP-tran}
\end{eqnarray}
\end{subequations}
where $\theta$ represents
contextual uncertainty which 
differs significantly from the 
state transition uncertainty 
$\zeta_t$.
To clarify the distinction between $\theta$ and $\zeta_t$, consider a dynamic decision-making problem formulated as an MDP in a given country where $\theta$ represents the observable prevailing macroeconomic conditions (e.g., interest rate and unemployment rate) and/or specific geopolitical environment, while $\zeta_t$ captures the uncertainty that directly affects the decision-making process. 
Note that $\theta$ cannot be absorbed 
into state variable $s_t$ because 
they represent completely different 
nature of uncertainties and more importantly the latter is decision dependent.
This setting is also
closely related to MDPs with continuous side information \cite{modi2018markov}.
When $\theta$ is fixed, it means that the underlying contextual environment remains unchanged  throughout the duration of the MDP.
In the case when $\theta$ is time-dependent (see \cite{hamadanian2023online}), we can 
formulate a CMDP
with non-stationary contexts as  
\begin{subequations}
\label{eq:ex-CMDP-td}
\begin{eqnarray}
\min_{\bm x \in \mathcal{X}} && \mathbb{E}_{\theta_1, \cdots, \theta_T,\zeta_0, \zeta_1, \cdots, \zeta_T}\left[C_0(s_0, x_0, \zeta_0) + \sum_{t=1}^T C_t\left(s_{t}, x_{t}, \zeta_{t}, \theta_t \right)\right] 
\\
\rm{s.t.} && x_t \in \mathcal{X}_t\left(s_t, \theta_{[t]}\right), \label{CMDP-theta_t} \\
&& s_t = S^{M}_{t - 1}\left(s_{t-1}, x_{t-1}, \zeta_{t-1}, \theta_{t - 1}\right), t = 2, \cdots, T,  
\\
&& s_1 = S_0^M(s_0, x_0, \zeta_0), 
\end{eqnarray}
\end{subequations}
where the cost function at each stage and the state transition mapping after the first stage depend on the context variable $\theta_t$ at stage $t$. 
By 
setting $\xi_t := \theta_t$ (exogenous uncertainty) for $t = 1, 2, \cdots, T$ 
and $g_{t, i}(s_t, x_t, x_{t - 1}, \xi_{[t]}) := g_t(s_t, x_t, \theta_{[t]})$, 
we can represent \eqref{eq:ex-CMDP-td}
as \eqref{eq:mixed-MSP-MDP}. 
We emphasize again that
$\theta_t$ cannot be integrated into 
$s_t$ as they have distinct nature.


\end{example}

\begin{example}[Stochastic optimization with side information]
Bertsimas et al.~\cite{bertsimas2023dynamic} 
recently 
propose a dynamic stochastic optimization model with so-called side information, like:
\begin{eqnarray}
    \min_{{x}_t: \Xi_{[t]} \rightarrow \mathcal{X}_t} \mathbb{E}_{\xi} \left[ \sum\limits_{t = 1}^T c_t \left(x_t, {\xi}_t \right) \,\middle|\, {\zeta} = \tilde{{\zeta}} \right],  
    \label{eq:ex-side-info}
\end{eqnarray}
where ${\zeta}$ represents the side information such as product attributes (e.g., brand, style, color of new clothing items in retail). The side information allows the DM to better 
predict the future uncertainties and make more informed decisions.
We may extend the model by allowing $\zeta$ to be time-dependent and 
subsequently obtain the following model:
\begin{eqnarray}
\label{eq:ex-side-info-t}
\quad \underset{x_t : \Xi_{[t]} \to \mathcal{X}_t}{\min}&& \mathbb{E}_{\xi_1, \xi_2, \cdots, \xi_T, \zeta_0, \zeta_1, \cdots, \zeta_T} \left[ c_0(x_0, \zeta_0) + \sum_{t=1}^T c_t(x_t, \xi_{[t]}, \zeta_t) \right],
\end{eqnarray}
which is a special case of \eqref{eq:mixed-MSP-MDP}.
In this setup, $\zeta_t$ represents side information 
which is not necessarily endogenous uncertainty, but we distinguish it from $\xi_t$. Note also that in this model, there is no state variable.
\end{example}

\begin{example}[Inventory control]\label{inventory}
    Consider an 
    inventory control problem 
    over a finite time horizon 
    where 
    uncertainties arise from  
    demand, sale price, purchase
    price, delivery of an order 
    and 
    customer's 
    dissatisfaction.
    The optimal policy is to 
    set appropriate 
    order quantities at each stage such that 
    the expected overall cost is minimized. 
    We use an integrated MSP-MDP model to describe the problem:
\begin{subequations}
\label{eq:ex-inv-cotl}
\begin{eqnarray}
\min _{x_t} && \mathbb{E}_{p,d,\eta,\delta} \left[ h_0 s_0 + p_0 x_0 + \sum_{t=1}^T h_t [s_t]^+ + x_t p_t + l_t [- s_t]^+ \right]  \\
\rm{ s.t.} && s_{t+1} = s_t + (1 - \eta_t) x_t - (1 - \delta_t)d_t, \quad t = 0, 1, \cdots, T - 1;  \label{eq:ex-inv-tran}\\
&& p_t x_t \leq b_t, \quad x_t \leq M - s_t,  \quad t = 0, 1, 2, \cdots, T. \label{eq:ex-inv-cons1} 
\end{eqnarray}
\end{subequations}
 The objective function at stage $t = 0, 1,\cdots, T$ comprises three terms: holding cost $h_t [s_t]^+$, purchase cost $x_t p_t$ and backorder cost $l_t [-s_t]^+$, where $l_t$, $h_t$, and $p_t$ represent the unit backorder cost, holding cost, and purchase cost
respectively, assuming that there 
is no backorder at initial stage; $[s_t]^+ := \max\{ 0, s_t\}$. 
Constraints \eqref{eq:ex-inv-tran} characterize 
the changes of inventory levels between stages. 
Especially, the coefficients 
$(1-\eta_t)$ and $(1-\delta_t)$
signify the order delivery rate 
 and demand delivery rate. 
 Parameters $\eta_t$ and $\delta_t$ are
 nonnegative random variables 
 which represent rates of deliveries.
 The randomness of these parameters 
 capture uncertainties such as 
transportation and 
customer's dissatisfaction.
Constraints \eqref{eq:ex-inv-cons1}  
are related to budget and capacity constraints at
each stage,
i.e., the stagewise purchase cost cannot exceed the budget \( b_t \) and the total inventory after procurement \( x_t \) cannot exceed the maximum warehouse capacity \( M \).
In this model, the optimal decision-making 
at each stage is
dependent on the historical path
(due to the nature of 
exogenous uncertainty $ p_t $) 
and 
there is an explicit specification of 
transition of states depending on the decision and endogenous uncertainties ($ d_t, \eta_t, \delta_t$). 
In doing so, we effectively fit
\eqref{eq:ex-inv-cotl} into  
the integrated MSP-MDP framework \eqref{eq:mixed-MSP-MDP},
which is a departure 
from the existing MSP model 
(\cite{gokbayrak2023single}) and MDP models (\cite{powell2015tutorial} and \cite{toktay2000inventory}) for the problem.
\end{example}


\section{
Specifications, reformulation and properties of problem \texorpdfstring{\eqref{eq:mixed-MSP-MDP}}{(mixed-MSP-MDP)}}

In this section, we give detailed specifications
on 
problem \eqref{eq:mixed-MSP-MDP},  
derive its recursive formulation 
and investigate main properties including 
convexity and Lipschitz continuity
of the value functions. 
To this end, we 
impose the following assumptions.

\begin{assumption}[Integrable cost functions]
For $t = 1, 2, \cdots, T$, (a) there exists 
a nonnegative
integrable
function $h_t(\xi_{[t]}, \zeta_{t})$
such that $C_t(s_{t}, x_{t}, \xi_{[t]}, \zeta_{t}) \geq -h_t(\xi_{[t]}, \zeta_{t})$ for all $(s_t, x_{t})$;
(b) there exists a feasible solution
$\hat{\bm x}_t(s_t, x_{t - 1}, \cdot)$ such that $\mathbb{E}_{\xi_{[t]}, \zeta_{[t]}} \left[ C_t(s_t, \hat{x}_t(s_t, x_{t - 1}, \xi_{[t]}), \xi_{[t]}, \zeta_t) \right] < \infty$ for any given $(s_t, x_{t-1})$.

\label{Assu:welldefinedness}
\end{assumption}
This type of assumptions is commonly used 
in the MSP literature, where integrability conditions are 
needed to ensure 
the finiteness of expected costs. 
  Assumption~\ref {Assu:welldefinedness}
  holds if 
  $C_t(s_{t}, x_{t}, \xi_{[t]}, \zeta_{t})$ is Lipschitz continuous w.r.t. $(s_t, x_t, \xi_{[t]}, \zeta_{t} )$, 
    and 
$\mathbb{E}_{\xi_{[t]}} \left[ \Vert \xi_{[t]} \Vert \right] < \infty$, 
$\mathbb{E}_{\zeta_{t}} \left[ \Vert \zeta_{t} \Vert \right] < \infty$.

\begin{assumption}[Continuity of the underlying functions]
    For \( t = 1, 2, \cdots, T \), (a) \( C_t\left(s_t, x_t, \xi_{[t]}, \zeta_t\right) \) is 
    continuous w.r.t. $(s_t, x_t, \xi_{[t]}, \zeta_t)$; (b) $g_{t, i}(s_t, x_t, x_{t - 1}, \xi_{[t]}), i \in I_t$ is 
    continuous w.r.t. $(s_t, x_t,x_{t - 1}, \xi_{[t]})$; (c) $S^M_{t - 1}(s_{t - 1}, x_{t - 1}, \xi_{t - 1}, \zeta_{t - 1})$ is continuous w.r.t. $(s_{t - 1}, x_{t - 1}, \xi_{t - 1}, \zeta_{t - 1})$.
    \label{Assu:continuity}
\end{assumption}

It is possible to weaken the continuity assumption 
to lower semicontinuity, see e.g. \cite{nemirovski2009robust,shapiro2021lectures,shapiro2021distributionally,lecarpentier2019non,le2021metrics}
for MSP and MDP models.
We make the assumption
so that we may concentrate
on key stability analysis in this paper.

\begin{assumption}[Boundedness of feasible sets]
Let $\mathcal{X}_t(s_t, x_{t - 1}, \xi_{[t]})$ be
 defined as in \eqref{eq:feaset-X_t}.
    For \( t = 1, 2, \cdots, T \), there exists a bounded set \( \mathcal{X}_t^0 \) such that \( \mathcal{X}_t(s_t, x_{t - 1}, \xi_{[t]}) \subseteq \mathcal{X}_t^0 \) for any given \( (s_t, x_{t - 1}, \xi_{[t]}) \).  
    \label{Assu:compactness}
\end{assumption}
Assumptions~\ref{Assu:continuity} and ~\ref{Assu:compactness} guarantee that 
the feasible sets \( \mathcal{X}_t(s_t, x_{t - 1}, \xi_{[t]}) \), $t = 1, 2, \cdots, T$
are compact.
It is possible to weaken the condition by replacing it 
with inf-compactness condition but 
here we make it easier to simplify 
the analysis in the forthcoming discussions.
With the assumptions, we are ready to state 
the dynamic formulation of problem \eqref{eq:mixed-MSP-MDP} in the next theorem.

\begin{theorem}[Nested reformulation of \eqref{eq:mixed-MSP-MDP}]
Consider problem:
\begin{subequations}
\label{eq:model-nested}
\begin{eqnarray}
\min _{x_0 \in \mathcal{X}_0} && \mathbb{E}_{\zeta_0} \bigg[ C_0(s_0, x_0, \zeta_0) + \mathbb{E}_{\xi_1} \bigg[ \min _{x_1 \in \mathcal{X}_1(s_1, x_0, \xi_1)} \mathbb{E}_{\xi_2 | \xi_1, \zeta_1} \bigg[  C_1\left(s_1, x_1, \xi_1, \zeta_1 \right) \nonumber \\
&& + \min _{x_2 \in \mathcal{X}_2\left(s_2, x_1, \xi_{[2]}\right)} \mathbb{E}_{\xi_3 \mid \xi_{[2]}, \zeta_2} \bigg[C_2\left(s_2, x_2, \xi_{[2]}, \zeta_2\right) + \cdots \nonumber \\
&& + \min _{x_T \in \mathcal{X}_T \left(s_T, x_{T-1}, \xi_{[T]}\right)} 
\mathbb{E}_{\zeta_T}\left[C_T\left(s_T, x_T, \xi_{[T]}, 
\zeta_T\right)\right]\bigg] \cdots \bigg]  \\
\rm{s.t.} && \quad  s_{t + 1} = S^M_t (s_{t}, x_{t}, \xi_{t}, \zeta_{t}), \quad  t = 0, 1, \cdots, T - 1,
\end{eqnarray}
\end{subequations}
where $S_0^M(\cdot)$ only depends on $(s_0, x_0, \zeta_0)$ as shown in \eqref{eq:mixed-MSP-MDP}, \( \mathbb{E}_{\xi_{t + 1} | \xi_{[t]}, \zeta_{t}}[\cdot] \) denotes the 
expectation with respect to joint probability distribution of 
\( \xi_{t + 1} \) 
conditional on \( \xi_{[t]} \),
and 
\( \zeta_{t} \).
Let
\begin{eqnarray}
&&v_T\left(s_{T-1}, x_{T-1}, \xi_{[T]}, \zeta_{T - 1}\right) := \min _{x_T \in \mathcal{X}_T(s_T, x_{T - 1}, \xi_{[T]})} \mathbb{E}_{\zeta_T} \left[ C_T\left(s_T, x_T, \xi_{[T]}, \zeta_T\right) \right] \label{eq:value-T} 
 \end{eqnarray}
and 
for
\( t = 1, 2, \cdots, T - 1 \),
let
\begin{eqnarray}
 &&v_t\left(s_{t-1}, x_{t-1}, \xi_{[t]}, \zeta_{t - 1}\right) \nonumber  \\
&:= & \min_{x_t \in \mathcal{X}_t(s_t, x_{t - 1}, \xi_{[t]})} \mathbb{E}_{\xi_{t+1} \mid \xi_{[t]}, \zeta_{t}} \left[ C_t\left(s_t, x_t, \xi_{[t]}, \zeta_t\right) + v_{t+1}\left(s_t, x_t, \xi_{[t+1]}, \zeta_{t}\right)\right],  \label{eq:value-t} \\
&& v_0(s_0) := \underset{x_0 \in \mathcal{X}_0}{\min} \mathbb{E}_{\zeta_0} \left[ C_0(s_0, x_0, \zeta_0) + \mathbb{E}_{\xi_1} \left[ v_1(s_0, x_0, \xi_1, \zeta_0) \right] \right], \label{eq:value-0}
\end{eqnarray}
where $\mathcal{X}_t(s_t, x_{t - 1}, \xi_{[t]})$ is defined as in \eqref{eq:feaset-X_t}.
    Under Assumptions \ref{Assu:welldefinedness}- 
    \ref{Assu:compactness},
problem \eqref{eq:mixed-MSP-MDP} 
    can be reformulated as problem  \eqref{eq:model-nested} or equivalently as problems \eqref{eq:value-T}-\eqref{eq:value-0}.
    \label{Theo-timecons}
\end{theorem}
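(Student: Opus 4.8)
The plan is to establish both equivalences by a backward induction on the stage index, at each step combining the law of iterated expectations with the interchangeability principle (the interchange of minimization over admissible policies with integration). The equivalence of the nested problem \eqref{eq:model-nested} with the recursion \eqref{eq:value-T}--\eqref{eq:value-0} is immediate once the nested form is in place, since $v_T,v_{T-1},\ldots,v_0$ are by construction exactly the inner-to-outer partial objectives of \eqref{eq:model-nested}; so the substance lies in reducing \eqref{eq:mixed-MSP-MDP} to \eqref{eq:model-nested}, which is a dynamic programming (time-consistency) argument.

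First I would pin down the admissible policy class. Iterating the state dynamics \eqref{eq:mixed-MSP-MDPc}, the state $s_t$ is a Borel function of $(s_0,x_0,x_1,\ldots,x_{t-1},\xi_{[t-1]},\zeta_{[t-1]})$ and $x_0$ is deterministic; hence requiring $x_t$ to be a function of $(s_t,x_{t-1},\xi_{[t]})$ amounts to requiring $x_t$ to be a measurable function of $(\xi_{[t]},\zeta_{[t-1]})$ only---in particular $x_t$ is committed \emph{before} $\zeta_t$ is observed, in accordance with Figure~\ref{liuchengtu}. Under Assumption~\ref{Assu:welldefinedness} the expected cost of every admissible policy is a well-defined element of $(-\infty,+\infty]$ (bounded below via part (a), finite at the policy assembled from the $\hat{\bm x}_t$ via part (b)), which legitimizes rewriting the expectation in \eqref{eq:mixed-MSP-MDPa} in the iterated form $\mathbb{E}_{\zeta_0}\mathbb{E}_{\xi_1}\mathbb{E}_{\xi_2\mid\xi_{[1]},\zeta_1}\cdots\mathbb{E}_{\zeta_T}[\cdot]$; here mutual independence of $\zeta_0,\ldots,\zeta_T$ (and their independence of $\xi$) is what lets each $\mathbb{E}_{\zeta_t}$ be placed at the stage where $\zeta_t$ first enters, through $C_t$ and through $s_{t+1}$.

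Next I would run the induction. At the terminal stage, among all decisions only $x_T$ enters $C_T$ and the feasible set $\mathcal{X}_T(s_T,x_{T-1},\xi_{[T]})$; conditioning on $(\xi_{[T]},\zeta_{[T-1]})$ and applying the interchangeability principle to the extended-real integrand equal to $C_T(s_T,x_T,\xi_{[T]},\zeta_T)$ on $\mathcal{X}_T$ and $+\infty$ off it, over the decomposable space of measurable $\xi_{[T]}$-selections, replaces $\min_{x_T(\cdot)}\mathbb{E}[C_T]$ by $\mathbb{E}_{\xi_{[T]},\zeta_{[T-1]}}[v_T(s_{T-1},x_{T-1},\xi_{[T]},\zeta_{T-1})]$, with $v_T$ as in \eqref{eq:value-T}; Assumptions~\ref{Assu:continuity}--\ref{Assu:compactness}, through a Berge-type maximum-theorem argument, guarantee that the inner minimum is attained and that $v_T$ is again jointly measurable and lower semicontinuous in its decision argument, which is precisely what the next interchange needs. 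Substituting $v_T$ back leaves a horizon-$(T-1)$ problem with terminal cost $C_{T-1}+\mathbb{E}_{\xi_T\mid\xi_{[T-1]},\zeta_{T-1}}[v_T]$; repeating this stage by stage yields \eqref{eq:value-t} for $t=T-1,\ldots,1$ and finally \eqref{eq:value-0}, which is exactly the nesting in \eqref{eq:model-nested}, and the optimal value of \eqref{eq:mixed-MSP-MDP} equals $v_0(s_0)$.

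The step I expect to be the main obstacle is justifying the interchange at each stage, which hinges on showing that the value function produced one stage earlier is again a normal integrand---jointly measurable in (state, previous decision, exogenous history, previous endogenous variable) and lower semicontinuous in the decision variable---so that both existence of measurable minimizing selections and the min--integral equality are available. This requires propagating the continuity hypotheses of Assumption~\ref{Assu:continuity} through the composition with the transition map $S^M_t$ (so that $v_{t+1}(s_t,x_t,\xi_{[t+1]},\zeta_t)$ inherits joint measurability and the needed semicontinuity in $(x_t,\cdot)$) and through the constraint multifunction $\mathcal{X}_t(s_t,x_{t-1},\xi_{[t]})$, whose compact-valuedness (Assumption~\ref{Assu:compactness}) and closedness/continuity (Assumption~\ref{Assu:continuity}(b)) are exactly what make the Berge-type argument for attainment and semicontinuity of the parametric minimum go through. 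A secondary subtlety is the bookkeeping of the two information flows: the exogenous variables must remain under conditional expectations $\mathbb{E}_{\xi_{t+1}\mid\xi_{[t]}}$ because of their inter-stage dependence, whereas each endogenous $\zeta_t$ is integrated unconditionally at its own stage by mutual independence, and one must keep the nonanticipativity of each $x_t$ (measurable in $(\xi_{[t]},\zeta_{[t-1]})$ only) intact throughout so that the reconstructed selectors assemble into an admissible policy for \eqref{eq:mixed-MSP-MDP}.
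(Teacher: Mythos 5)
Your proposal follows essentially the same route as the paper: backward induction on the stage index, the interchangeability principle to swap minimization over measurable selections with (conditional) expectation at each stage, and a Berge-maximum-theorem argument to propagate continuity, attainment and well-definedness of $v_t$ through the transition map and the constraint multifunction (the paper isolates this last part as Proposition~\ref{Prop:continuity}). The obstacles you flag -- normal-integrand regularity of the stage value functions needed for each interchange, and the bookkeeping separating conditional expectations over $\xi_{t+1}\mid\xi_{[t]}$ from unconditional integration of the independent $\zeta_t$ -- are exactly the points the paper's proof addresses.
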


A key step towards the proof of the theorem is to establish continuity of  
$v_t$ in $(s_{t - 1}, x_{t - 1})$.
The next proposition addresses this.

\begin{proposition}[Continuity 
of $v_t$ w.r.t. $(s_{t - 1}, x_{t - 1})$ and well-definedness of problems \eqref{eq:value-T}-\eqref{eq:value-0}]
    Under Assumptions \ref{Assu:welldefinedness}- \ref{Assu:compactness},
    $v_t$ is well-defined and continuous w.r.t. $(s_{t - 1}, x_{t - 1})$  for 
    $t = 1, \cdots, T$.
    \label{Prop:continuity}
\end{proposition}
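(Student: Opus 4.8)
The plan is to prove the statement by backward induction on $t$, running from $t=T$ down to $t=1$, with Berge's maximum theorem supplying the passage from continuity of an integrand to continuity of the associated parametric minimum, and a dominated-convergence argument supplying the passage from continuity of $C_t$ (and of $v_{t+1}$) to continuity of the relevant expected-value functional. The induction hypothesis I would carry is slightly stronger than the bare statement: for each fixed $(\xi_{[t+1]},\zeta_t)$ the map $(s_t,x_t)\mapsto v_{t+1}(s_t,x_t,\xi_{[t+1]},\zeta_t)$ is real-valued and continuous on $S_t\times\mathcal{X}_{t+1}^0$, it is jointly measurable in all its arguments, and on any bounded set of $(s_t,x_t)$ it admits an integrable (with respect to the conditional law of $\xi_{t+1}$, and of $\zeta_t$) majorant; this extra bookkeeping is what makes the induction self-reproducing.

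\textbf{Base case $t=T$.} First I would show that $\varphi_T(s_T,x_T,\xi_{[T]}):=\mathbb{E}_{\zeta_T}\!\left[C_T(s_T,x_T,\xi_{[T]},\zeta_T)\right]$ is well-defined: Assumption~\ref{Assu:welldefinedness}(a) gives the integrable lower bound $-h_T$, so the integral exists in $(-\infty,+\infty]$, and Assumption~\ref{Assu:welldefinedness}(b) gives a feasible point at which it is finite. For continuity in $(s_T,x_T)$, Assumption~\ref{Assu:continuity}(a) together with Fatou's lemma applied to $C_T+h_T\ge 0$ gives lower semicontinuity; to upgrade to full continuity I would restrict $x_T$ to the compact superset $\mathcal{X}_T^0$ of Assumption~\ref{Assu:compactness}, fix a compact neighbourhood of the point of interest, and invoke the integrable majorant furnished by the Lipschitz/finite-first-moment strengthening noted right after Assumption~\ref{Assu:welldefinedness}, so that dominated convergence yields upper semicontinuity as well. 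Then, writing $s_T=S^M_{T-1}(s_{T-1},x_{T-1},\xi_{T-1},\zeta_{T-1})$, which is continuous by Assumption~\ref{Assu:continuity}(c), I would view $v_T$ in \eqref{eq:value-T} as the parametric minimum of the continuous function $\varphi_T$ over the correspondence $x_T\rightrightarrows\mathcal{X}_T(s_T,x_{T-1},\xi_{[T]})$. Compact-valuedness and upper semicontinuity of this correspondence follow from continuity of the $g_{T,i}$ (closed graph, Assumption~\ref{Assu:continuity}(b)) together with the uniform boundedness of Assumption~\ref{Assu:compactness}; with (the delicate) lower semicontinuity in hand, Berge's maximum theorem delivers continuity of $v_T$ in $(s_{T-1},x_{T-1})$, and finiteness again from Assumption~\ref{Assu:welldefinedness}.

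\textbf{Inductive step.} Assuming the hypothesis for $v_{t+1}$, I would set $\psi_t(s_t,x_t,\xi_{[t]},\zeta_t):=C_t(s_t,x_t,\xi_{[t]},\zeta_t)+\mathbb{E}_{\xi_{t+1}\mid\xi_{[t]},\zeta_t}\!\left[v_{t+1}(s_t,x_t,\xi_{[t+1]},\zeta_t)\right]$ and check that $\psi_t$ is continuous in $(s_t,x_t)$: continuity of the first term is Assumption~\ref{Assu:continuity}(a), and continuity of the second is the induction hypothesis plus dominated convergence with respect to the regular conditional distribution of $\xi_{t+1}$ (and of $\zeta_t$). Exactly as in the base case, $v_t$ from \eqref{eq:value-t} is then the parametric minimum of $\psi_t\bigl(S^M_{t-1}(s_{t-1},x_{t-1},\xi_{t-1},\zeta_{t-1}),x_t,\xi_{[t]},\zeta_t\bigr)$ over $x_t\in\mathcal{X}_t(s_t,x_{t-1},\xi_{[t]})$, so continuity of $S^M_{t-1}$ (Assumption~\ref{Assu:continuity}(c)) and Berge's theorem give continuity of $v_t$ in $(s_{t-1},x_{t-1})$; real-valuedness comes from compactness of $\mathcal{X}_t^0$ with Assumption~\ref{Assu:welldefinedness}(a)--(b), and the measurability and integrable-majorant clauses propagate by standard normal-integrand arguments. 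Running this down to $t=1$ proves the proposition, and the well-definedness of $v_0$ in \eqref{eq:value-0} then follows since it is a minimum over the compact set $\mathcal{X}_0$ of a function continuous in $x_0$.

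\textbf{Main obstacle.} I expect the real work to lie in obtaining \emph{continuity} rather than merely lower semicontinuity. Two points need care. First, the expectation step yields only lower semicontinuity from Assumptions~\ref{Assu:welldefinedness}--\ref{Assu:compactness} alone — full continuity needs an integrable upper envelope for $C_t$ (and for $v_{t+1}$) over compact sets of $(s_t,x_t)$, which is precisely why the argument must lean on the Lipschitz-plus-finite-moment variant flagged after Assumption~\ref{Assu:welldefinedness}. Second, applying Berge's maximum theorem requires \emph{lower} semicontinuity of the feasible-set correspondence $\mathcal{X}_t(s_t,x_{t-1},\xi_{[t]})$ in its parameters, and this does not follow from continuity of the $g_{t,i}$ by itself; it normally needs a constraint qualification of Slater or Mangasarian--Fromovitz type, and I would expect the paper either to assume such a condition (or the convexity of the $g_{t,i}$ that makes a Slater point available) or to argue around it. Secondary technicalities — joint measurability of each $v_t$ as a normal integrand, and the justification of continuity under the conditional expectation $\mathbb{E}_{\xi_{t+1}\mid\xi_{[t]},\zeta_t}[\cdot]$ against a regular conditional distribution — are routine but must be recorded so that the induction hypothesis is genuinely self-reproducing.
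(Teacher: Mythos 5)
Your overall strategy --- backward induction from $t=T$, Berge's maximum theorem for the parametric minimum, compactness of $\mathcal{X}_t(\cdot)\subseteq\mathcal{X}_t^0$ from Assumptions~\ref{Assu:continuity}--\ref{Assu:compactness}, and well-definedness from the two halves of Assumption~\ref{Assu:welldefinedness} --- matches the paper's. The one substantive divergence is how the two semicontinuity directions of $v_t$ are obtained. The paper does not invoke the full Berge theorem in one shot: it uses a one-sided Berge argument for one direction and then establishes lower semicontinuity of $v_T$ (and inductively of $v_t$) by a bare-hands sequential argument --- take $(s_{T-1,n},x_{T-1,n})\to(s_{T-1,0},x_{T-1,0})$, pick optimal $x_{T,n}^*$, extract a convergent subsequence, use upper semicontinuity of $\mathcal{X}_T$ to see that the limit point is feasible, and compare with the optimum. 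That direction needs only upper semicontinuity and compact-valuedness of the correspondence, which Assumptions~\ref{Assu:continuity}--\ref{Assu:compactness} do provide, so the delicacy you flagged is irrelevant for the lsc half. Your bookkeeping on the integrability side (integrable majorants carried through the induction, dominated convergence under $\mathbb{E}_{\xi_{t+1}\mid\xi_{[t]},\zeta_t}[\cdot]$) is more explicit than the paper's, which defers all finiteness questions to a separate induction using Assumption~\ref{Assu:welldefinedness}(a) for the lower bound and the recursive estimate \eqref{eq:nested-<infty} at the candidate policy $\hat{x}_t$ from Assumption~\ref{Assu:welldefinedness}(b) for the upper bound; both treatments are in the same spirit.

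The obstacle you identified is, however, genuine, and your proposal does not close it --- nor, strictly, does the paper. Upper semicontinuity of $v_T(\cdot)=\min_{x_T\in\mathcal{X}_T(\cdot)}\mathbb{E}_{\zeta_T}[C_T]$ requires \emph{lower} semicontinuity of the feasible-set correspondence: to show the value does not jump down in the limit one must approximate $x_{T,0}^*$ by feasible points of the perturbed problems. Upper semicontinuity plus compact-valuedness of $\mathcal{X}_T$ yields only the \emph{lower} semicontinuity of the value function of a minimization problem (a correspondence that suddenly enlarges at a point is usc but makes the minimum drop there). The paper asserts usc of $v_T$ from Berge under exactly these insufficient hypotheses, so the step you singled out as needing a Slater- or MFCQ-type condition is precisely the step the paper leaves unjustified; the needed lower semicontinuity of $\mathcal{X}_t(\cdot)$ would follow from Assumption~\ref{assumption7} together with convexity of $g_{t,i}$ in $x_t$, which the paper only introduces later for the Lipschitz and stability results. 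In short: same route as the paper, a correct diagnosis of the weak point, but the missing lower semicontinuity of the correspondence is not supplied by your proposal (nor by the paper's own proof).
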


\begin{proof}
    We prove the 
    continuity of $v_t$
    by 
    induction from $t=T$. 
 We do so by showing 
 $v_T$ is upper semicontinuous and lower semicontinuous 
 in $(s_{T - 1}, x_{T - 1})$. 
Under Assumption~\ref{Assu:continuity},
$\mathcal{X}_T(s_T, x_{T - 1}, \xi_{[T]})$ is upper semicontinuous
and under Assumption~\ref{Assu:compactness}, it is compact set-valued.
Together with the continuity of the objective function, we can use Berge's maximum theorem, \cite[Theorem 17.31]{aliprantis2006infinite},
to assert that $v_T$ is upper semicontinuous.

Next, we show the lower semicontinuity of $v_T$. For any $(s_{T - 1, 0}, x_{T - 1, 0})$ and a sequence $\{(s_{T - 1, n}, x_{T - 1, n})\}_{n = 1}^{\infty}$ converging to $(s_{T - 1, 0}, x_{T - 1, 0})$, let 
$x_{T, 0}^*$ and $\{x_{T, n}^*\}$ be the corresponding optimal solutions 
to problem \eqref{eq:value-T} and 
  \begin{eqnarray}
        v_{T}(s_{T - 1, 0}, x_{T - 1, 0}, \xi_{[T]}, \zeta_{T - 1}) &= & 
        \mathbb{E}_{\zeta_T} \left[ C_T(s_{T, 0}, x_{T, 0}^*, \xi_{[T]}, \zeta_T) \right], \nonumber \\
        v_{T}(s_{T - 1, n}, x_{T - 1, n}, \xi_{[T]}, \zeta_{T - 1}) &= & 
        \mathbb{E}_{\zeta_T} \left[ C_T(s_{T, n}, x_{T, n}^*, \xi_{[T]}, \zeta_T) \right].  \nonumber 
    \end{eqnarray}
By taking a subsequence if necessary, we assume 
for simplicity of notation that
$ x_{T, n}^*\to  \hat{x}_{T, 0}$ as $n\to \infty$.
By the upper semicontinuity of $\mathcal{X}_T(s_T, x_{T - 1}, \xi_{[T]})$, $\hat{x}_{T, 0} \in \mathcal{X}_T(s_{T, 0}, x_{T - 1,0}, \xi_{[T]})$. Thus 
\bgeq
    \lim_{n\to\infty} v_{T}(s_{T - 1, n}, x_{T - 1, n}, \xi_{[T]}, \zeta_{T - 1}) 
 &=&
\lim_{n\to\infty}    \mathbb{E}_{\zeta_T} \left[ C_T(s_{T,n}, x_{T, n}^*, \xi_{[T]}, \zeta_{T}) \right]\\    
    &=&
    \mathbb{E}_{\zeta_T} \left[ C_T(s_{T, 0}, \hat{x}_{T, 0}, \xi_{[T]}, \zeta_{T}) \right]\\ 
    &\geq& 
    \mathbb{E}_{\zeta_T} \left[ C_T(s_{T, 0}, x_{T, 0}^*, \xi_{[T]}, \zeta_{T}) \right] \\
    &=& v_{T}(s_{T - 1, 0}, x_{T-1, 0}, \xi_{[T]}, \zeta_{T-1}),
    \edeq
which establishes the lower semicontinuity as desired.

    Assume now 
    that $ v_{t+1}(\cdot, \cdot, \xi_{[t+1]}, \zeta_t) $ is continuous. 
    We prove that $ v_t $ is continuous in $ (s_{t-1}, x_{t-1}) $. Looking at the minimization problem \eqref{eq:value-t}, we note that the feasible set-valued mapping is upper semicontinuous in $(s_t, x_t)$
    and the objective function 
   $\mathbb{E}_{\xi_{t+1} \mid \xi_{[t]}, \zeta_{t}} [C_t\left(s_t, x_t, \xi_{[t]}, \zeta_t\right) + v_{t+1}\left(s_t, x_t, \xi_{[t+1]}, \zeta_{t}\right)]$ is continuous in $(s_t,x_{t-1})$. Following a similar argument to the first part of the proof, we can establish the continuity of $ v_t $ in $ (s_{t-1}, x_{t-1})$. 
    
    Next, we 
    prove the well-definedness of problems \eqref{eq:value-T}-\eqref{eq:value-0} by induction. At stage $T$, by Assumption~\ref{Assu:welldefinedness}(a),  
    \begin{eqnarray}
        &&\mathbb{E}_{\xi_T | \xi_{[T - 1]}} \left[ v_T(s_{T - 1}, x_{T - 1}, \xi_{[T]}, \zeta_{T - 1}) \right] \nonumber \\
        &= & \mathbb{E}_{\xi_T | \xi_{[T - 1]}} \left[ \underset{x_T \in \mathcal{X}_T(s_T, x_{T - 1}, \xi_{[T]})}{\min} \mathbb{E}_{\zeta_T} \left[ C_T(s_T, x_T, \xi_{[T]},\zeta_T) \right] \right] \nonumber \\
        & \geq& \mathbb{E}_{\xi_T | \xi_{[T - 1]}, \zeta_T} \left[ -h_T(\xi_{[T]}, \zeta_T) \right] >- \infty
    \end{eqnarray}
    uniformly for all $(s_{T - 1}, x_{T - 1})$ almost surely. Moreover, by Assumption~\ref{Assu:welldefinedness}(b),
    \begin{eqnarray}
        &&\mathbb{E}_{\xi_T | \xi_{[T - 1]}} \left[ v_T(s_{T - 1}, x_{T - 1}, \xi_{[T]}, \zeta_{T - 1}) \right] \nonumber \\
        &= & \mathbb{E}_{\xi_T | \xi_{[T - 1]}} \left[ \underset{x_T \in \mathcal{X}_T(s_T, x_{T - 1}, \xi_{[T]})}{\min} \mathbb{E}_{\zeta_T} \left[ C_T(s_T, x_T, \xi_{[T]},\zeta_T) \right] \right] \nonumber \\
        &= & \mathbb{E}_{\xi_T | \xi_{[T - 1]}, \zeta_T} \left[ C_T(s_T, x^*_T, \xi_{[T]},\zeta_T) \right] \nonumber \\
        &\leq & \mathbb{E}_{\xi_T | \xi_{[T - 1]}, \zeta_T} \left[ C_T(s_T, \hat{x}_T(s_T, x_{T - 1}, \xi_{[T]}), \xi_{[T]},\zeta_T) \right] < + \infty
    \end{eqnarray} 
    almost surely. Thus,
      $\mathbb{E}_{\xi_T | \xi_{[T - 1]}} \left[ v_T \right]$ is finite-valued.
      Together with the continuity of $v_{T}$ and the compactness of $\mathcal{X}_T(s_T, x_{T - 1}, \xi_{[T]})$, 
      we conclude that problem \eqref{eq:value-T} is well defined. 
    Assume now that $\mathbb{E}_{\xi_{t + 1} | \xi_{[t]}} \left[ v_{t + 1} \right]$ is finite-valued for $1 \leq t \leq T - 1$. Analogous to the above proof for stage $T$, we can derive, under Assumption~\ref{Assu:welldefinedness}(a), that
    \begin{eqnarray}
        &&\mathbb{E}_{\xi_t | \xi_{[t - 1]}} \left[ v_t(s_{t - 1}, x_{t - 1}, \xi_{[t]}, \zeta_{t - 1}) \right] \nonumber \\
        &= & \mathbb{E}_{\xi_t | \xi_{[t - 1]}} \left[\underset{x_t \in \mathcal{X}_t(s_t, x_{t - 1}, \xi_{[t]})}{\min} \mathbb{E}_{\zeta_t} \left[ C_t(s_{t}, x_{t}, \xi_{[t]}, \zeta_{t}) + \mathbb{E}_{\xi_{t + 1} | \xi_{[t]}} \left[ v_{t + 1}(s_t, x_t, \xi_{[t + 1]}, \zeta_t) \right] \right] \right] \nonumber \\
        &= & \mathbb{E}_{\xi_t | \xi_{[t - 1]}, \zeta_t} \left[ C_t(s_{t}, x^*_{t}, \xi_{[t]}, \zeta_{t}) \right] + \mathbb{E}_{\xi_{t + 1}, \xi_t | \xi_{[t - 1]}, \zeta_t} \left[ v_{t + 1}(s_t, x^*_t, \xi_{[t + 1]}, \zeta_t) \right] \nonumber \\
        &\geq & \mathbb{E}_{\xi_t | \xi_{[t - 1]}, \zeta_t} \left[ -h_t(\xi_{[t]}, \zeta_t) \right] + \mathbb{E}_{\xi_{t + 1}, \xi_t | \xi_{[t - 1]}, \zeta_t} \left[ v_{t + 1}(s_t, x^*_t, \xi_{[t + 1]}, \zeta_t) \right] > - \infty,
    \end{eqnarray}
    where $x_t^*$ is an optimal solution to problem \eqref{eq:value-t}. The last inequality is obtained by Assumption~\ref{Assu:welldefinedness}(a) and the fact that $\mathbb{E}_{\xi_{t + 1}, \xi_t | \xi_{[t - 1]}, \zeta_t} \left[ v_{t + 1} \right]$ is finite-valued due to the induction assumption. On the other hand, by Assumption~\ref{Assu:welldefinedness}(b), 
\begin{eqnarray}
    &&\mathbb{E}_{\xi_t | \xi_{[t - 1]}} \left[ v_t(s_{t - 1}, x_{t - 1}, \xi_{[t]}, \zeta_{t - 1}) \right] \nonumber \\
    &= & \mathbb{E}_{\xi_t | \xi_{[t - 1]}, \zeta_t} \left[ C_t(s_{t}, x^*_{t}, \xi_{[t]}, \zeta_{t}) \right] + \mathbb{E}_{\xi_{t + 1}, \xi_t | \xi_{[t - 1]}, \zeta_t} \left[ v_{t + 1}(s_t, x^*_t, \xi_{[t + 1]}, \zeta_t) \right] \nonumber \\
    &\leq & \mathbb{E}_{\xi_{t} | \xi_{[t - 1]}, \zeta_{t}} \left[ C_{t}(s_{t}, \hat{x}_{t}(s_t, x_{t - 1}, \xi_{[t]}), \xi_{[t]}, \zeta_{t}) + \mathbb{E}_{\xi_{t + 1} | \xi_{[t]}} \left[ v_{t + 1}(s_{t}, \hat{x}_{t}(s_t, x_{t - 1}, \xi_{[t]}), \xi_{[t + 1]}, \zeta_{t}) \right] \right] \nonumber \\
    &= & \mathbb{E}_{\xi_{t} | \xi_{[t - 1]}, \zeta_{t}} \left[ C_{t}(s_{t}, \hat{x}_{t}(s_t, x_{t - 1}, \xi_{[t]}), \xi_{[t]}, \zeta_{t}) \right] \nonumber \\
    &&+ \mathbb{E}_{\xi_{t}, \xi_{t + 1} | \xi_{[t - 1]}, \zeta_{t}} \left[ v_{t + 1}(s_{t}, \hat{x}_{t}(s_t, x_{t - 1}, \xi_{[t]}), \xi_{[t + 1]}, \zeta_{t}) \right]. 
    \label{eq:well-defined-vt2}
\end{eqnarray}
We estimate the second term
\begin{eqnarray}
    && \mathbb{E}_{\xi_{t},\xi_{t + 1} | \xi_{[t - 1]}, \zeta_{t}} \left[ v_{t + 1}(s_{t}, \hat{x}_{t}(s_t, x_{t - 1}, \xi_{[t]}), \xi_{[t + 1]}, \zeta_{t}) \right] \nonumber \\
    &= & \mathbb{E}_{\xi_{t}, \xi_{t + 1} | \xi_{[t - 1]}, \zeta_{t}, \zeta_{t + 1}} \Big[ C_{t + 1}(s_{t + 1}, {x}_{t + 1}^*, \xi_{[t + 1]}, \zeta_{t + 1}) \nonumber \\
    && + \mathbb{E}_{\xi_{t + 2} | \xi_{[t + 1]}} \left[ v_{t + 2}(s_{t + 1}, {x}_{t + 1}^*, \xi_{[t + 2]}, \zeta_{t + 1}) \right] \Big] \nonumber \\
    &\leq & \mathbb{E}_{\xi_{t}, \xi_{t + 1} | \xi_{[t - 1]}, \zeta_{t}, \zeta_{t + 1}} \Big[ C_{t + 1}(s_{t + 1}, \hat{x}_{t + 1}(s_{t + 1}, \hat{x}_{t},\xi_{[t + 1]}), \xi_{[t + 1]}, \zeta_{t + 1}) \nonumber \\
    &&  + \mathbb{E}_{\xi_{t + 2} | \xi_{[t + 1]}} \left[ v_{t + 2}(s_{t + 1}, \hat{x}_{t + 1}(s_{t + 1}, \hat{x}_{t},\xi_{[t + 1]}), \xi_{[t + 2]}, \zeta_{t + 1}) \right] \Big], \nonumber \\
    &\leq & \cdots \leq \sum\limits_{k = t + 1}^T \mathbb{E}_{\xi_{[t, k]} | \xi_{[t - 1]}, \zeta_{t}, \zeta_{t + 1}, \cdots, \zeta_{k}} \left[ C_{k}(s_{k}, \hat{x}_{k}, \xi_{[k]}, \zeta_{k})\right], 
    \label{eq:nested-<infty}
\end{eqnarray}
where for $t \leq k \leq T - 1$, $s_{k + 1} = S^M_{k}(s_{k}, \hat{x}_{k}, \xi_{k}, \zeta_{k})$, $\hat{x}_{k+1} := \hat{x}_{k + 1}(s_{k + 1}, \hat{x}_{k}, \xi_{[k + 1]})$ is from Assumption~\ref{Assu:welldefinedness}(b), $x_{k + 1}^*$ is the optimal solution to problem \eqref{eq:value-t}.
It is known from Assumption~\ref{Assu:welldefinedness}(b) that 
$$
\mathbb{E}_{\xi_{[t, k]} | \xi_{[t - 1]}, \zeta_{t}, \zeta_{t + 1}, \cdots, \zeta_{k}} \left[ C_{k}(s_{k}, \hat{x}_{k}, \xi_{[k]}, \zeta_{k})\right] < + \infty
$$
almost surely.
Thus, the rhs of \eqref{eq:well-defined-vt2} $< + \infty$. Together with the continuity of $v_t$ and the compactness of $\mathcal{X}_t(s_t, x_{t - 1}, \xi_{[t]})$, we have that $\mathbb{E}_{\xi_t | \xi_{[t - 1]}} \left[ v_{t} \right]$ is finite-valued and problems \eqref{eq:value-t}-\eqref{eq:value-0} are well defined.
The proof is completed.   
\end{proof}

We are now ready to prove Theorem \ref{Theo-timecons}.

\begin{proof}[Proof of Theorem \ref{Theo-timecons}]
    Let \( \bm x^* = (x_0^*, \bm x_{1}^*(s_1^*, x_0^*, \xi_1), \bm x_2^*(s_2^*, \bm x_1^*, \xi_{[2]}), \cdots, \bm x_T^*(s_T^*, \bm x_{T - 1}^*, \xi_{[T]})) \) denote the optimal policy of problem \eqref{eq:mixed-MSP-MDP}, i.e.,  
    \begin{eqnarray}
    && \mathbb{E}_{\xi_1, \xi_2 | \xi_{1}, \cdots, \xi_T | \xi_{[T - 1]}, \zeta_0, \zeta_1, \zeta_2, \cdots, \zeta_T}  \left[ C_0(s_0, x_0^*, \zeta_0) + \sum_{t=1}^T C_t (s_t,{x}^*_t,\xi_{[t]},\zeta_t )\right] \nonumber \\
    &= & \underset{\bm x(\cdot) \in \mathcal{X}(\cdot)}{\text{min }}
     \mathbb{E}_{\xi_1, \xi_2 | \xi_{1}, \cdots, \xi_T | \xi_{[T - 1]}, \zeta_0, \zeta_1, \zeta_2, \cdots, \zeta_T} \left[ C_0(s_0, x_0, \zeta_0) + \sum_{t=1}^T C_t (s_t, {x}_t,\xi_{[t]},\zeta_t )\right]. \nonumber
    \end{eqnarray}
    Note that a policy 
    \( \bm x(\cdot) : \Xi_{[T]} \times \mathcal{Z}_{[T - 1]} \to \mathbb{R}^{n_{[T]}} \) maps each realization of the stochastic data process \( \{ \xi, \zeta \} \) to a feasible solution \( x \) of problem \eqref{eq:mixed-MSP-MDP}, where $n_{[T]} = {\sum\limits_{t = 0}^T n_t}$.
    To ease the exposition, we write \( \bm x_t \) for
    the decision at stage \( t \).
    For 
    \( t = 1, 2, \cdots, T \), we 
    prove that \( x_t^* = \bm x_t^*(s_t, x_{t - 1}^*, \xi_{[t]}) \) is an optimal solution to problems  \eqref{eq:value-t} and 
    \eqref{eq:value-T}, i.e.,  
    \begin{eqnarray}
    x_t^* \in  \underset{x_t \in \mathcal{X}_t(s_t, x_{t-1}^*, \xi_{[t]})}{\text{argmin }} \left\{\mathbb{E}_{\xi_{t+1} | \xi_{[t]}, \zeta_{t}} \left[C_t(s_t, x_t, \xi_{[t]}, \zeta_t) +  v_{t+1}(s_t, x_t, \xi_{[t+1]}, \zeta_{t}) \right]\right\}, t = 1, 2, \cdots, T, \nonumber
    \end{eqnarray}
    where 
    we define \( \xi_{T + 1} \) as a constant and
    set \( v_{T+1}(s_T, x_T, \xi_{[T+1]}, \zeta_{T}) \equiv 0 \).

    We prove the theorem by induction. 
    Observe that
    for a given feasible solution \( x_{t - 1} \) 
    at stage \( t - 1 \), 
    the feasible solution at stage \( t \), \( \bm x_t(\cdot) = \bm x_t(s_t , x_{t - 1}, \cdot) \in \mathbf{\mathcal{X}}_t(s_t, x_{t - 1}, \cdot): \Xi_{[t]} \to \mathbb{R}^{n_t} \) 
    maps each realization of \( (\xi_{[t]}) \) to a feasible decision \( x_t \) at stage $t$, where 
    $\mathbf{\mathcal{X}}_t(s_t, x_{t - 1}, \cdot)$
    represents 
    the feasible set 
    of all measurable mappings defined over $\Xi_{[t]}$ and 
    $\mathbf{\mathcal{X}}_t(s_{t}, x_{t - 1}, \xi_{[t]}) \subset \mathcal{X}_{t}^0$ is a subset of  
    $\mathbb{R}^{n_t}
    $.
    At stage $T$, 
    \begin{subequations}
    \begin{eqnarray}
       && \underset{\bm x_T \in \mathcal{X}_T(s_T, x^*_{T - 1}, (\xi_{[T - 1]}, \cdot))}{\min} \mathbb{E}_{\xi_{T} | \xi_{[T-1]}, \zeta_T} \left[ C_T(s_T, \bm x_T, \xi_{[T]}, \zeta_T) \right] \nonumber \\
       &= & \mathbb{E}_{\xi_{T} | \xi_{[T-1]}} \left[ \underset{x_T \in \mathcal{X}_T(s_T, x^*_{T - 1}, \xi_{[T]})}{\min} \mathbb{E}_{\zeta_T} \left[ C_T(s_T, x_T, \xi_{[T]}, \zeta_T) \right] \right]\\ 
       &= &\mathbb{E}_{\xi_{T} | \xi_{[T-1]}} \left[ v_T(s_{T - 1}, x^*_{T - 1}, \xi_{[T]}, \zeta_{T - 1}) \right],
    \end{eqnarray}  
    \label{eq:timecons-T}
    \end{subequations}
    where $x_{T - 1}^* = \bm x_{T - 1}^*(s_{T - 1}, x_{T - 2}^*, \xi_{[T - 1]})$ is the optimal solution at stage $T - 1$ given $\xi_{[T - 1]}$ and $\zeta_{[T - 1]}$,
    the first equality 
    is due to the interchangeability principle~\cite[Lemma 3]{liu2025preference}. To see this, we verify the conditions of the lemma. 
    Assumption \ref{Assu:continuity} ensures that $C_T$ is continuous w.r.t.~$(s_T, x_T)$ and the set-valued mapping $\mathcal{X}_T(s_T, x_{T - 1}^*, \cdot)$ is a closed-valued measurable mapping due to the continuity of $g_{T, i}(s_T, x_T, x_{T - 1}, \xi_{[T]}), i \in I_T$  with respect to $(s_T, x_T,x_{T - 1}, \xi_{[T]})$. Assumption   \ref{Assu:compactness} means that the feasible decision $\bm x_T(s_T, x_{T - 1}^*, \cdot)$ is an integrable mapping. Assumption \ref{Assu:welldefinedness} ensures that there is a feasible solution $\hat{\bm x}_t(s_T, x_{T - 1}, \cdot)$ such that $\mathbb{E}_{\xi_T | \xi_{[T - 1]}, \zeta_T} \left[ C_T(s_T, \hat{x}_T(s_T, x_{T - 1}, \xi_{[T]}), \xi_{[T]}, \zeta_T) \right] < \infty$.
    The second equality follows from the definition of $v_T$.

    Next, 
    let $t_0$ be such that $1 \leq t_0 \leq T - 1$. At stage $t = t_0 + 1$, let
    \begin{eqnarray}
    && \bm x_{[t_0+1, T]}(\cdot) := (\bm x_{t_0 +1}(s_{t_0 + 1}, x_{t_0}^*, \cdot),  \cdots, \bm x_T(s_T, x_{T - 1}, \cdot)), \nonumber \\
    && \mathbf{\mathcal{X}}_{[t_0 + 1, T]}(\cdot) := \mathbf{\mathcal{X}}_{t_0 + 1}(s_{t_0 + 1}, x_{t_0}^*, \cdot) \times \cdots \times \mathbf{\mathcal{X}}_{T}(s_T, x_{T - 1}, \cdot). \nonumber
    \end{eqnarray} 
    For given \( s_{t_0 + 1}, x_{t_0}^* \) and \( \xi_{[t_0 + 1]} \), by induction 
    \begin{eqnarray}
    && \underset{\bm x_{[t_0+1, T]}(\cdot) \in \mathbf{\mathcal{X}}_{[t_0+1,T]}(\cdot)}{\min} \mathbb{E}_{\xi_{t_0+1}, \cdots, \xi_T | \xi_{[t_0]}, \zeta_{t_0+1}, \cdots, \zeta_T} \left[\sum_{t=t_0 + 1}^T C_t (s_t,\bm x_t,\xi_{[t]},\zeta_t )\right] \nonumber \\
    &= & \mathbb{E}_{\xi_{t_0+1} | \xi_{[t_0]}} \big[ v_{t_0 + 1}(s_{t_0 }, x_{t_0}^*, \xi_{[t_0 + 1]}, \zeta_{t_0}) \big]. 
    \label{eq:nested-hypo}
    \end{eqnarray}  
    {\color{black} Under the Assumptions~\ref{Assu:welldefinedness}-~\ref{Assu:compactness}, 
    Proposition \ref{Prop:continuity} guarantees that
    $v_t(s_{t-1}, x_{t-1}, \xi_{[t]}, \zeta_{t - 1})$ is finite-valued and continuous for $t = 1, 2,\cdots, T$,  
    and there exists a $\hat{\bm x}_t(s_{t_0}, x_{t_0 - 1}, \cdot)$ such that 
    $$
    \mathbb{E}_{\xi_{t_0} | \xi_{[t_0 - 1]}, \zeta_{t_0}} \left[ C_{t_0}(s_{t_0}, \hat{x}_{t_0}, \xi_{[t_0]}, \zeta_{t_0}) + \mathbb{E}_{\xi_{t_0 + 1} | \xi_{[t_0]}} \left[ v_{t_0 + 1}(s_{t_0}, \hat{x}_{t_0}, \xi_{[t_0 + 1]}, \zeta_{t_0}) \right] \right] < \infty
    $$ 
    almost surely(by \eqref{eq:nested-<infty}).}
    Therefore, for the optimal solution \( x_{t_0 - 1}^* \) at stage \( t_0 - 1 \), we have
    \begin{eqnarray}
    && \underset{\boldsymbol{x}_{[t_0, T]}(\cdot) \in \mathbf{\mathcal{X}}_{[t_0, T]}(\cdot)}{\min} \mathbb{E}_{\xi_{t_0}, \xi_{t_0+1}, \cdots, \xi_T | \xi_{[t_0 - 1]}, \zeta_{t_0}, \zeta_{t_0+1}, \cdots, \zeta_T} \left[\sum_{t = t_0}^T C_t (s_t,\boldsymbol{x}_t,\xi_{[t]},\zeta_t )\right] \nonumber \\
    &= & \underset{\boldsymbol{x}_{t_0}(s_{t_0}, x^*_{t_0 - 1}, \cdot) \in \mathbf{\mathcal{X}}_{t_0}(s_{t_0}, x_{t_0 - 1}^*, \cdot)}{\min} \mathbb{E}_{\xi_{t_0} | \xi_{[t_0 - 1]},\zeta_{t_0}} \Bigg[ C_{t_0} (s_{t_0},\boldsymbol{x}_{t_0},\xi_{[t_0]},\zeta_{t_0} )   \nonumber\\
    &&  +  \underset{\boldsymbol{x}_{[t_0 + 1, T]}(\cdot) \in \mathbf{\mathcal{X}}_{[t_0 + 1,T]}(\cdot)}{\min} \mathbb{E}_{\xi_{t_0+1}, \cdots, \xi_T | \xi_{[t_0]}, \zeta_{t_0+1}, \cdots, \zeta_T} \left[ \sum_{t=t_0 + 1}^T C_t (s_t,\boldsymbol{x}_t,\xi_{[t]},\zeta_t )
    \right]\Bigg] \nonumber \\
    &= & \underset{\boldsymbol{x}_{t_0}(s_{t_0}, x^*_{t_0 - 1}, \cdot) \in \mathbf{\mathcal{X}}_{t_0}(s_{t_0}, x_{t_0 - 1}^*, \cdot)}{\min} \mathbb{E}_{\xi_{t_0} | \xi_{[t_0 - 1]},\zeta_{t_0}} \Bigg[ C_{t_0} (s_{t_0},\boldsymbol{x}_{t_0},\xi_{[t_0]},\zeta_{t_0} )  \nonumber \\
    && +  \mathbb{E}_{\xi_{t_0+1} | \xi_{[t_0]}} \Bigg[ v_{t_0 + 1}(s_{t_0 }, \boldsymbol{x}_{t_0}^*, \xi_{[t_0 + 1]}, \zeta_{t_0}) \Bigg] \Bigg] 
    \quad \text{(by \eqref{eq:nested-hypo})}
    \nonumber \\
    &= & \mathbb{E}_{\xi_{t_0} | \xi_{[t_0 - 1]}} \bigg[ \underset{{x}_{t_0}(s_{t_0}, x^*_{t_0 - 1}, \xi_{[t_0]}) \in \mathbf{\mathcal{X}}_{t_0}(s_{t_0}, x_{t_0 - 1}^*, \xi_{[t_0]})}{\min} \mathbb{E}_{\zeta_{t_0}} \big[ C_{t_0} (s_{t_0},{x}_{t_0},\xi_{[t_0]},\zeta_{t_0} )  \nonumber \\
    && +  \mathbb{E}_{\xi_{t_0+1} | \xi_{[t_0]}} \big[ v_{t_0 + 1}(s_{t_0 }, {x}_{t_0}^*, \xi_{[t_0 + 1]}, \zeta_{t_0}) \big] \big] \bigg] \nonumber \\
    &= & \mathbb{E}_{\xi_{t_0} | \xi_{[t_0 - 1]}} \big[ v_{t_0}(s_{t_0 - 1}, x_{t_0 - 1}^*, \xi_{[t_0]}, \zeta_{t_0 - 1}) \big], \nonumber
    \end{eqnarray}
    where the first equality follows from the fact that $ C_{t_0}(s_{t_0}, x_{t_0}, \xi_{[t_0]}, \zeta_{t_0}) $ is independent of $ x_{[t_0 + 1, T]} $.  
    The second equality is obtained by 
    \eqref{eq:nested-hypo}.
    The third equality 
    is based on 
    the interchangeability principle 
    ~\cite[Lemma 3]{liu2025preference} in that $ C_{t_0} + \mathbb{E} \left[ v_{t_0 + 1} \right] $ is continuous w.r.t.~$ (s_{t_0}, x_{t_0}) $(by Proposition \ref{Prop:continuity}),
    the set-valued mapping $\mathcal{X}_{t_0}(s_{t_0}, x_{t_0 - 1}^*, \cdot)$ is measurable (by Assumption \ref{Assu:continuity}), and $\bm x_t(s_{t_0}, x_{t_0 - 1}^*, \cdot)$ is integrable (by Assumption \ref{Assu:compactness}).
    Summarizing the discussions above, we can conclude that  
    \begin{eqnarray}
    && \underset{\bm x(\cdot) \in \mathbf{\mathcal{X}}(\cdot)}{\text{min }}
    \mathbb{E}_{\xi_1, \xi_2, \cdots, \xi_T, \zeta_0, \zeta_1, \zeta_2, \cdots, \zeta_T} 
    \left[ C_0(s_0, x_0,\zeta_0) + \sum_{t=1}^T C_t (s_t,\boldsymbol{x}_t,\xi_{[t]},\zeta_t )\right] \nonumber \\
    &= & \underset{{x_0} \in \mathcal{X}_0}{\text{min }} \mathbb{E}_{\zeta_0} \left[ C_0(s_0, x_0,\zeta_0) + \mathbb{E}_{\xi_{1}} \left[ v_1(s_0, x_0, \xi_{1}, \zeta_0) \right] \right]. \nonumber
    \end{eqnarray}  
    The proof is completed.
\end{proof}

Theorem \ref{Theo-timecons} 
provides a theoretical guarantee to solve problem \eqref{eq:mixed-MSP-MDP} 
by 
solving problems \eqref{eq:value-T}-\eqref{eq:value-0}
recursively.
With this, we proceed to investigate  
basic properties of 
the value function $ v_t $. To this end, we introduce the following technical assumptions.

\begin{assumption}[Convexity]
    For \( t = 1, 2, \cdots, T \) and given \( \xi_{[t]} \) and \( \zeta_t \),  
    (a) \( C_t\left(s_t, x_t, \xi_{[t]}, \zeta_t\right) \) is 
    convex w.r.t.~\( \left(s_t, x_t\right) \);  
    (b) \( S^M_t\left(s_t, x_t, \xi_t, \zeta_{t}\right) \) is 
    convex w.r.t.~\( \left(s_t, x_t\right) \);
    (c) \( g_{t, i}\left(s_t, x_t, x_{t-1}, \xi_{[t]}\right) \) is
    convex w.r.t. \( \left(s_t, x_t, x_{t-1}\right) \).  
    \label{Assu:convexity}
\end{assumption}
It should be noted that at each stage $ t = 1, 2, 3, \ldots, T $, the state variable $ s_t $ directly influences feasible decisions at that stage through the constraints $ g_{t,i}(s_t, x_t, x_{t-1}, \xi_{[t]}) \leq 0$, $i \in I_t$. On the other hand, $ s_t $ is directly affected by the decision $ x_{t-1} $ at the previous stage through $ s_t = S^M_{t-1}(s_{t-1}, x_{t-1}, \xi_{t-1}, \zeta_{t-1}) $. Therefore, we need to consider $ s_t $ and $ x_t $  simultaneously. 
It is a standard assumption in the MSP literature that the objective functions and feasible sets are convex. For example, Chapter 6 of \cite{pflug2012distance} by Pflug and Pichler considers an MSP  problem where the objective function is convex in the decision variables, and the feasible set is also convex.
Furthermore, for the characterization of many decision problems including inventory problems, state transitions can often be described as linear mappings (see e.g. \cite{shapiro2021distributionally}). Therefore, the convexity assumption on the state transition mapping is also reasonable here.

\begin{assumption}[Monotonicity w.r.t. state variables]
    For \( t = 1, \cdots, T \) and given \( x_t, x_{t - 1}, \xi_{[t]}\) and \( \zeta_t \),  
    (a) \( C_t\left(s_t, x_t, \xi_{[t]}, \zeta_t\right) \) is non-decreasing in \( s_t \);  
    (b) \( S^M_t\left(s_t, x_t, \xi_t, \zeta_{t}\right) \) is non-decreasing in \( s_t \);  
    (c) \( g_{t, i}\left(s_t, x_t, x_{t-1}, \xi_{[t]}\right) \) is non-decreasing in \( s_t \).  
    \label{assumption4}
\end{assumption}

Monotonicity is a typical premise for convexity verification in many studies on nonlinear models (\cite{bauerle2022distributionally,kircher2019convexity}). Considering the practical meaning of state variables, Assumption \ref{assumption4} is very often automatically satisfied. Take the inventory problem as an example: given other factors, the state variable \( s_t \) at stage \( t \) is clearly monotonically increasing with respect to \( s_{t - 1} \) at stage \( t - 1 \). As for its constraints, the most important one is the warehouse capacity constraint in the form of \( s_t + x_t \leq M \), as that in \eqref{eq:ex-inv-cotl}, which indicates that the warehouse capacity cannot exceed \( M \). It obviously satisfies monotonicity. 
Since the objective function typically represents the sum of holding costs, shortage costs, and purchasing costs, its monotonicity with respect to \( s_{t} \) is also natural.

We know that convexity is a basic setting for optimization theory. 
For this reason, we 
establish the convexity of the value function \( v_t(s_{t-1}, x_{t-1}, \xi_{[t]}, \zeta_{t - 1}), 1 \leq t \leq T \).
\begin{proposition}[Convexity]
    Suppose that Assumptions 
    \ref{Assu:welldefinedness} - \ref{assumption4} hold. 
    Then for \( t = 1, 2, \cdots, T \) and each \( (\xi_{[t]}, \zeta_{t - 1}) \),  
    \( v_t\left(s_{t-1}, x_{t-1}, \xi_{[t]}, \zeta_{t - 1}\right) \)  
    is jointly convex w.r.t. \( \left(s_{t-1}, x_{t-1}\right) \).  
    \label{Theo-convexity}
\end{proposition}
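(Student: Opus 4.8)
The plan is to argue by backward induction on $t$, from $t=T$ down to $t=1$, strengthening the induction hypothesis to: for each fixed $(\xi_{[t]},\zeta_{t-1})$, the value function $v_t(\cdot,\cdot,\xi_{[t]},\zeta_{t-1})$ is jointly convex in $(s_{t-1},x_{t-1})$ \emph{and} componentwise non-decreasing in $s_{t-1}$. The monotonicity must be carried along because it is exactly what lets convexity pass through the nonlinear state dynamics: if $f$ is convex and componentwise non-decreasing in the state argument and $S^M_{t-1}$ is componentwise convex in $(s_{t-1},x_{t-1})$ (Assumption~\ref{Assu:convexity}(b)), then $f\circ S^M_{t-1}$ is convex; this is precisely why Assumption~\ref{assumption4} is imposed together with Assumption~\ref{Assu:convexity}.

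\textbf{Base case $t=T$.} Put $\phi_T(s_T,x_T):=\mathbb{E}_{\zeta_T}[C_T(s_T,x_T,\xi_{[T]},\zeta_T)]$. By Assumption~\ref{Assu:convexity}(a) and linearity of the expectation, $\phi_T$ is jointly convex in $(s_T,x_T)$; by Assumption~\ref{assumption4}(a) it is componentwise non-decreasing in $s_T$. Substituting $s_T=S^M_{T-1}(s_{T-1},x_{T-1},\xi_{T-1},\zeta_{T-1})$ and using the convex/non-decreasing composition rule with Assumption~\ref{Assu:convexity}(b), the map $(s_{T-1},x_{T-1},x_T)\mapsto\phi_T(S^M_{T-1}(s_{T-1},x_{T-1},\xi_{T-1},\zeta_{T-1}),x_T)$ is jointly convex. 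The same rule applied to Assumptions~\ref{Assu:convexity}(c) and~\ref{assumption4}(c) shows that each composed constraint $(s_{T-1},x_{T-1},x_T)\mapsto g_{T,i}(S^M_{T-1}(s_{T-1},x_{T-1},\xi_{T-1},\zeta_{T-1}),x_T,x_{T-1},\xi_{[T]})$ is jointly convex, so the set these constraints cut out (call it $K_T$) is convex. Since $v_T(s_{T-1},x_{T-1},\xi_{[T]},\zeta_{T-1})$ is the infimum over $x_T$, subject to $(s_{T-1},x_{T-1},x_T)\in K_T$, of the jointly convex function above, it is convex in $(s_{T-1},x_{T-1})$ by the standard fact that partial minimization of a jointly convex function over a convex set is convex; the infimum is attained and finite by Assumption~\ref{Assu:compactness} and Proposition~\ref{Prop:continuity}. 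For monotonicity, if $s'_{T-1}\ge s''_{T-1}$ componentwise then $s'_T\ge s''_T$ by Assumption~\ref{assumption4}(b); this raises $\phi_T$ pointwise (Assumption~\ref{assumption4}(a)) and shrinks the feasible set $\mathcal{X}_T$ (Assumption~\ref{assumption4}(c) tightens the constraints), and both effects can only increase the minimum, so $v_T$ is non-decreasing in $s_{T-1}$.

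\textbf{Inductive step.} Assume the claim holds at stage $t+1$. Set $\phi_t(s_t,x_t):=\mathbb{E}_{\xi_{t+1}\mid\xi_{[t]},\zeta_t}[C_t(s_t,x_t,\xi_{[t]},\zeta_t)+v_{t+1}(s_t,x_t,\xi_{[t+1]},\zeta_t)]$. By Assumption~\ref{Assu:convexity}(a), the convexity half of the induction hypothesis, and linearity of the expectation, $\phi_t$ is jointly convex in $(s_t,x_t)$; by Assumption~\ref{assumption4}(a) and the monotonicity half of the induction hypothesis, $\phi_t$ is componentwise non-decreasing in $s_t$. Now repeat the base-case argument verbatim with $\phi_t$ in place of $\phi_T$: compose with $S^M_{t-1}$ (Assumption~\ref{Assu:convexity}(b)) to get joint convexity in $(s_{t-1},x_{t-1},x_t)$, compose the constraints $g_{t,i}$ (Assumptions~\ref{Assu:convexity}(c) and~\ref{assumption4}(c)) to get a convex feasible set, perform the partial minimization over $x_t$ (attained and finite by Proposition~\ref{Prop:continuity} and Assumption~\ref{Assu:compactness}) to obtain joint convexity of $v_t$ in $(s_{t-1},x_{t-1})$, and run the shrinking-feasible-set/raising-objective argument to obtain monotonicity of $v_t$ in $s_{t-1}$. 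Taking $t=1$ yields the proposition.

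\textbf{Main obstacle.} The difficulty is the inter-stage coupling flagged in the paper: the stagewise cost $C_t$, the constraints $g_{t,i}$, and hence the feasible set $\mathcal{X}_t(s_t,x_{t-1},\xi_{[t]})$ all depend on the state $s_t$, which is itself only a \emph{convex} (not affine) function of $(s_{t-1},x_{t-1})$, so a naive substitution destroys convexity. The resolution is to recognize that the monotonicity assumption (Assumption~\ref{assumption4}) on the cost, the transition map, and the constraints is exactly the extra structure that (i) lets convexity survive the composition $(\cdot)\circ S^M_{t-1}$ and (ii) makes the feasible region move monotonically with the state; this forces the backward recursion to propagate convexity and monotonicity simultaneously rather than convexity alone.
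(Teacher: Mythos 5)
Your proof is correct and follows essentially the same route as the paper's: a backward induction that carries joint convexity together with componentwise monotonicity of $v_t$ in $s_{t-1}$, using the latter to push convexity through the nonlinear transition map $S^M_{t-1}$ and to keep the composed feasible set convex. The only difference is presentational — you package the argument via the standard convex-nondecreasing composition rule and the partial-minimization lemma, whereas the paper carries out the same inequalities pointwise with explicit optimal solutions $x_t^*$, $y_t^*$ and a separate nested induction for the monotonicity of $v_t$.
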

\begin{proof}
    We prove the proposition by backward induction from $t = T$.
    We show that for any \( (s_{T - 1, 1}, x_{T - 1}) \) ,\( (s_{T - 1, 2}, y_{T - 1}) \) and \( \alpha \in [0, 1] \), the following inequality holds:  
    \begin{equation}
    \begin{aligned}
    & \alpha v_T\left(s_{T-1,1}, x_{T-1}, \xi_{[T]}, \zeta_{T - 1}\right)+(1-\alpha) v_T\left(s_{T-1,2}, y_{T-1}, \xi_{[T]}, \zeta_{T - 1}\right) \\
    & \geq v_T\left(\alpha s_{T-1,1}+(1-\alpha) s_{T-1,2}, \alpha x_{T-1}+(1-\alpha) y_{T-1}, \xi_{[T]}, \zeta_{T - 1}\right).
    \end{aligned}
    \label{eq:conv-T1}
    \end{equation}  
    For $t = T$, 
    under given state-decision pairs at the previous stage, the corresponding states at stage $t$ are
    \begin{subequations}
    \begin{eqnarray} 
    s_{T, 1}&= &S^M_{T - 1}\left(s_{T-1,1}, x_{T-1}, \xi_{T-1}, \zeta_{T - 1}\right),  s_{T, 2}=S^M_{T - 1}\left(s_{T-1,2}, y_{T-1}, \xi_{T-1}, \zeta_{T - 1}\right), \\
    s_{T, \alpha}&= &S^M_{T - 1}\left(\alpha s_{T-1,1} + (1-\alpha) s_{T-1,2}, \alpha x_{T-1} + (1-\alpha) y_{T-1}, \xi_{T-1}, \zeta_{T - 1}\right).
    \end{eqnarray}
    \label{eq:conv-s_t}
    \end{subequations}
    Under Assumption \ref{Assu:continuity},
    $\mathcal{X}_T(s_T, x_{T - 1}, \xi_{[T]})$ is a closed subset of $\mathcal{X}_T^0$. This and Assumption \ref{Assu:compactness} ensure $\mathcal{X}_T(s_T, x_{T - 1}, \xi_{[T]})$ is compact. 
    Thus, problem \eqref{eq:value-T} 
    has an optimal solution.
    Existence of the optimal solution to problem \eqref{eq:value-t} at stage $t = 1, 2, \cdots, T - 1$ can be 
    established analogously.  
    Let 
    \begin{eqnarray}
    x_T^* &\in & \underset{x_T \in \mathcal{X}_T(s_{T,1}, x_{T - 1}, \xi_{[T]})}{\arg\min} \mathbb{E}_{\zeta_T} \left[ C_T(s_{T,1}, x_T, \xi_{[T]}, \zeta_T) \right], \nonumber \\
    y_T^* &\in & \underset{y_T \in \mathcal{X}_T(s_{T,2}, y_{T - 1}, \xi_{[T]})}{\arg\min} \mathbb{E}_{\zeta_T} \left[ C_T(s_{T,2}, y_T, \xi_{[T]}, \zeta_T) \right]. \nonumber
    \end{eqnarray}
    By Assumption \ref{Assu:convexity}, we have  
    \begin{subequations}
    \begin{eqnarray}
    && \alpha v_T\left(s_{T-1,1}, x_{T-1}, \xi_{[T]}, \zeta_{T - 1}\right)+(1-\alpha) v_T\left(s_{T-1,2}, y_{T-1}, \xi_{[T]}, \zeta_{T - 1}\right) \nonumber \\
    & =&\alpha \mathbb{E}_{\zeta_T} \left[ C_T\left(s_{T, 1}, x_T^*, \xi_{[T]}, \zeta_T\right) \right] + (1-\alpha) \mathbb{E}_{\zeta_T} \left[ C_T\left(s_{T, 2}, y_T^*, \xi_{[T]}, \zeta_T\right) \right] \nonumber \\
    & \geq& \mathbb{E}_{\zeta_T} \left[ C_T\left(\alpha s_{T, 1}+(1-\alpha) s_{T, 2}, \alpha x_T^*+(1-\alpha) y_T^*, \xi_{[T]}, \zeta_T \right) \right] \quad \textrm{(convexity)}  \\
    &\geq & \mathbb{E}_{\zeta_T} \big[ C_T\left(s_{T, \alpha}, \alpha x_T^*+(1-\alpha) y_T^*, \xi_{[T]}, \zeta_T\right) \big] \quad \textrm{(monotonicity \& Assumption \ref{Assu:convexity}(b))} \nonumber \\
    &\geq & \min _{x_T \in \mathcal{X}_T(s_{T, \alpha}, \alpha x_{T - 1} + (1 - \alpha) y_{T - 1}, \xi_{[T]})} \mathbb{E}_{\zeta_T} \big[ C_T\left(s_{T, \alpha}, x_T, \xi_{[T]}, \zeta_T\right) \big] \nonumber \\
    &= & v_T\left(\alpha s_{T-1,1}+(1-\alpha) s_{T-1,2}, \alpha x_{T-1}+(1-\alpha) y_{T-1}, \xi_{[T]}, \zeta_{T - 1}\right). \label{eq:conv-T2}
    \end{eqnarray}
    \end{subequations}
    The second inequality holds because $\alpha s_{T, 1} + (1 - \alpha) s_{T, 2} \geq s_{T, \alpha}$, the monotonicity of $C_T$ w.r.t. $s_T$ and the convexity of $g_{T, i}$ w.r.t. $(s_T, x_T, x_{T - 1})$, i.e.,
    \begin{eqnarray}
    && g_{T, i} (s_{T, \alpha}, \alpha x_T + (1 - \alpha) y_T, \alpha x_{T-1} + (1 - \alpha) y_{T-1}, \xi_{[T]} ) \nonumber \\
    &\leq & g_{T, i} (\alpha s_{T,1} + (1 - \alpha) s_{T,2}, \alpha x_T + (1 - \alpha) y_T, \alpha x_{T-1} + (1 - \alpha) y_{T-1}, \xi_{[T]} ) \nonumber \\
    &\leq & \alpha g_{T, i} (s_{T,1}, x_{T}, x_{T-1}, \xi_{[T]}) + (1 - \alpha) g_{T, i} (s_{T,2}, y_{T}, y_{T-1}, \xi_{[T]})  \leq  0, i \in I_T.
    \nonumber
    \end{eqnarray}  
    The inequalities above and \eqref{eq:conv-T2} establish the convexity of \( v_T(s_{T-1}, x_{T-1}, \xi_{[T]}, \zeta_{T - 1}) \) with respect to \( (s_{T-1}, x_{T-1}) \).
    
   Next, assume that the convexity
   holds for stage \( t + 1 \). 
   We prove that \( v_t(s_{t-1}, x_{t-1}, \xi_{[t]}, \zeta_{t - 1}) \) is convex. 
   Let $s_{t, 1}, s_{t, 2}, s_{t, \alpha}$ be defined as \eqref{eq:conv-s_t} by using $t$ instead of $T$. First, by the definition of \( v_t \), we have
    \begin{eqnarray}
    && v_x := v_t\left(s_{t-1,1}, x_{t-1}, \xi_{[t]}, \zeta_{t - 1}\right)\nonumber \\
    &= &\min _{x_t \in \mathcal{X}_t(s_{t,1}, x_{t-1}, \xi_{[t]})}\mathbb{E}_{\xi_{t+1} \mid \xi_{[t]}, \zeta_{t}} \left[C_t\left(s_{t, 1}, x_t, \xi_{[t]}, \zeta_t\right) + v_{t+1}\left(s_{t, 1}, x_t, \xi_{[t+1]}, \zeta_{t}\right)\right], \nonumber \\[10pt]
    && v_y := v_t\left(s_{t-1,2}, y_{t-1}, \xi_{[t]}, \zeta_{t - 1}\right) \nonumber \\
    &= &\min _{y_t \in \mathcal{X}_t(s_{t,2}, y_{t-1}, \xi_{[t]})}\mathbb{E}_{\xi_{t+1} \mid \xi_{[t]}, \zeta_{t}} \left[C_t\left(s_{t, 2}, y_t, \xi_{[t]}, \zeta_t\right) + v_{t+1}\left(s_{t, 2}, y_t, \xi_{[t+1]}, \zeta_{t}\right)\right], \nonumber \\[10pt]
    && v_z := v_t\left(\alpha s_{t-1,1}+(1-\alpha) s_{t-1,2}, \alpha x_{t-1}+(1-\alpha) y_{t-1}, \xi_{[t]}, \zeta_{t - 1}\right)  \nonumber \\
    &= &\min _{z_t \in \mathcal{X}_t(s_{t, \alpha}, \alpha x_{t-1} + (1-\alpha)y_{t-1}, \xi_{[t]})} \mathbb{E}_{\xi_{t+1} \mid \xi_{[t]}, \zeta_{t}} \left[C_t\left(s_{t, \alpha}, z_t, \xi_{[t]}, \zeta_t\right) + v_{t+1}\left(s_{t, \alpha}, z_t, \xi_{[t+1]}, \zeta_{t}\right)\right]. \nonumber
    \end{eqnarray}
    It suffices to prove 
    $   \alpha v_x+(1-\alpha) v_y \geq v_z$. 
    To this end, we need to prove that \( v_t \) is also monotonically non-decreasing with respect to \( s_{t-1} \). We show this by using backward induction from stage $T$. At stage \( T \), let \( s_{T-1,2} \geq s_{T-1,1} \). 
    By \eqref{eq:conv-s_t} and the monotonicity of $S^M_{T - 1}$, we have \( s_{T,2} \geq s_{T,1} \). Therefore, by the monotonicity of \( C_T \) w.r.t. $s_T$, we obtain
    $$
    \mathbb{E}_{\zeta_T} \left[ C_T(s_{T,2}, x_{T,2}^*, \xi_{[T]}, \zeta_T) \right] \geq \mathbb{E}_{\zeta_T} \left[ C_T(s_{T,1}, x_{T,2}^*, \xi_{[T]}, \zeta_T) \right] \geq \mathbb{E}_{\zeta_T} \left[ C_T(s_{T,1}, x_{T,1}^*, \xi_{[T]}, \zeta_T) \right].
    $$
    This means the monotonicity of \( v_T \) with respect to \( s_{T - 1} \). Assume that the monotonicity holds for stage \( t + 1 \), i.e., for \( s_{t,2} \geq s_{t,1} \),  we have 
    \begin{eqnarray}
        v_{t+1}(s_{t,2}, x_{t}, \xi_{[t+1]}, \zeta_{t}) \geq v_{t+1}(s_{t,1}, x_{t}, \xi_{[t+1]}, \zeta_{t}). \nonumber
    \end{eqnarray}  
    Then, for \( s_{t-1,2} \geq s_{t-1,1} \),
    \begin{eqnarray}
             &&v_t(s_{t-1,2}, x_{t-1}, \xi_{[t]}, \zeta_{t - 1}) \nonumber \\
             &= &  \mathbb{E}_{\xi_{t+1} | \xi_{[t]}, \zeta_{t}} \left[C_t(s_{t,2}, x_{t,2}^*, \xi_{[t]},\zeta_t)   + v_{t+1}(s_{t,2}, x_{t,2}^*, \xi_{[t+1]},\zeta_{t})\right] \nonumber \\
             &\geq & \mathbb{E}_{\xi_{t+1} | \xi_{[t]}, \zeta_{t}} \left[C_t(s_{t,1}, x_{t,2}^*, \xi_{[t]},\zeta_t) + v_{t+1}(s_{t,1}, x_{t,2}^*, \xi_{[t+1]},\zeta_{t})\right] \nonumber \\
             &\geq & \mathbb{E}_{\xi_{t+1} | \xi_{[t]}, \zeta_{t}} \left[C_t(s_{t,1}, x_{t,1}^*, \xi_{[t]},\zeta_t) + v_{t+1}(s_{t,1}, x_{t,1}^*, \xi_{[t+1]},\zeta_{t})\right] \nonumber \\
             &= & v_{t} (s_{t-1,1}, x_{t-1}, \xi_{[t]}, \zeta_{t - 1}). \nonumber
    \end{eqnarray}  
    Thus, the proof of the monotonicity of \( v_t \) with respect to \( s_{t - 1} \) is completed.

    Now, we return to the proof of convexity of $v_t$  
    \begin{eqnarray}
    && \alpha v_x+(1-\alpha) v_y \nonumber \\
    &= &\mathbb{E}_{\xi_{t+1} \mid \xi_{[t]}, \zeta_{t}}\big[ \alpha C_t\left(s_{t, 1}, x_t^*, \xi_{[t]}, \zeta_t\right)+(1-\alpha) C_t\left(s_{t, 2}, y_t^*, \xi_{[t]}, \zeta_t\right) \nonumber \\
    && + \alpha v_{t+1}\left(s_{t, 1}, x_t^*, \xi_{[t+1]}, \zeta_{t}\right)+(1-\alpha) v_{t+1}\left(s_{t, 2}, y_t^*, \xi_{[t+1]}, \zeta_{t}\right)\big] \nonumber \\
    &\geq& \mathbb{E}_{\xi_{t+1} \mid \xi_{[t]}, \zeta_{t}}\big[ C_t\left(\alpha s_{t, 1} + (1 - \alpha) s_{t , 2}, \alpha x_t^*+(1-\alpha) y_t^*, \xi_{[t]}, \zeta_t\right) \nonumber \\
    && +v_{t+1}\left(\alpha s_{t, 1} + (1 - \alpha) s_{t , 2}, \alpha x_t^*+(1-\alpha) y_t^*, \xi_{[t+1]}, \zeta_{t}\right)\big] \quad \textrm{(convexity)} \nonumber \\
    & \geq& \mathbb{E}_{\xi_{t+1} \mid \xi_{[t]}, \zeta_{t}}\big[ C_t\left(s_{t, \alpha}, \alpha x_t^*+(1-\alpha) y_t^*, \xi_{[t]}, \zeta_t\right) \nonumber \\
    && +v_{t+1}\left(s_{t, \alpha}, \alpha x_t^*+(1-\alpha) y_t^*, \xi_{[t+1]}, \zeta_{t}\right)\big] \quad \textrm{(monotonicity \& Assumption \ref{Assu:convexity}(b))}\nonumber \\
    & \geq& \min _{z_t \in \mathcal{X}_t(s_{t, \alpha}, \alpha x_{t - 1} + (1 - \alpha) y_{t - 1}, \xi_{[t]})}\left\{\mathbb{E}_{\xi_{t+1} \mid \xi_{[t]}, \zeta_{t}} \left[C_t\left(s_{t, \alpha}, z_t, \xi_{[t]}, \zeta_t\right) + v_{t+1}\left(s_{t, \alpha}, z_t, \xi_{[t+1]}, \zeta_{t}\right)\right]\right\} \nonumber \\
    & =&v_z, \nonumber
    \end{eqnarray}
    where the first inequality follows from the convexity of \( C_t \) and \( v_{t+1} \) with respect to \( (s_t, x_t) \) and the monotonicity of \( C_t \). The proof is completed.  
\end{proof}
Due to the nonlinear state 
transition mapping, the state $s_{t, \alpha}$ at the current stage under the convex combination   $(\alpha s_{t - 1, 1} + (1 - \alpha) s_{t - 1, 2}, \alpha x_{t - 1, 1} + (1 - \alpha) x_{t - 1, 2})$ of the previous stage's state-decision pairs cannot be expressed as a convex combination of $s_{t, 1}$ and $s_{t, 2}$. Consequently, we need Assumption \ref{assumption4} to  guarantee the convexity of $v_t$ with respect to $(s_{t - 1}, x_{t - 1})$.
In fact, in the proof of Proposition \ref{Theo-convexity}, the condition we really need is that the feasible solution set at each stage is jointly convex with respect to the $(s_t, x_{t - 1})$. To ensure that the feasible set defined by the inequality constraints \( g_{t, i}(\cdot) \leq 0, i \in I_t \) is jointly convex with respect to \( (s_t, x_t) \), we require that \( g_{t, i}\left(s_t, x_t, x_{t-1}, \xi_{[t]}\right) \) is jointly convex with respect to \( (s_t, x_t, x_{t-1}) \) and is monotonically non-decreasing with respect to \( s_t \). Of course, there are other conditions that can also guarantee the convexity of the feasible solution set; as shown in \cite{shapiro2021distributionally}, when \( S^M_t(\cdot) \) is linear with respect to \( (s_t, x_t) \), the monotonicity requirements of $C_t(\cdot)$ and $g_{t, i}(\cdot), i \in I_t, 1 \leq t \leq T,$ with respect to \( s_t \) are not necessary. In contrast, our proof ensures the convexity of the integrated MSP-MDP model \eqref{eq:mixed-MSP-MDP} for general nonlinear state transition mappings.

With the above convexity and 
continuity of the integrated MSP-MDP model, we can examine the existence and global optimality of optimal solutions. These properties are important for the qualitative analysis of problem \eqref{eq:mixed-MSP-MDP}, but they are not enough for quantitative analysis. 
For the latter, we further need the Lipschitz continuity of the value function \( v_t, 1 \leq t \leq T \). The following additional   assumptions are needed to establish the Lipschitz continuity.

\begin{assumption}[Slater condition]
    There exists a positive constant \( \rho \) 
    and \( \bar{x}_t \in \mathcal{X}_t\left(s_t, x_{t-1}, \xi_{[t]}\right) \) such that  
    \begin{eqnarray}
    g_{t, i}\left(s_t, \bar{x}_t, x_{t-1}, \xi_{[t]}\right) \leq-\rho, 
    \forall s_t, x_{t-1}, \xi_{[t]}
    \label{eq:assu-Slater}
    \end{eqnarray}  
    for $i \in I_t$, 
    $t = 1, 2, \cdots, T$.
    \label{assumption7}
\end{assumption}

The assumption ensures that 
problems \eqref{eq:value-T} and \eqref{eq:value-t}
satisfy the Slater condition.
Condition \eqref{eq:assu-Slater}
requires the inequality constraint
at each stage 
has a strictly feasible solution with 
residual bounded by a time independent constant $-\rho$.
Without such a negative constant margin, the feasible set may be highly sensitive
to arbitrarily small perturbations of the problem data. In particular, a feasible
problem may become infeasible after a small perturbation. Conversely, a degenerate
feasible set, such as a singleton, may also be perturbed into a nondegenerate
feasible set with nonempty interior.
We use a simple example 
to illustrate this.
Consider
\[
    v(\xi)=\min_{x\in[0,1]\subset \R} x
    \quad \textrm{s.t.} \quad
    1-\xi x\leq 0.
\]
When \(P(\xi=1) = 1\), the feasible set is
$
    \mathcal{X}(\xi)=\{1\},
$
in which case 
the Slater condition fails.
Consider an arbitrarily small perturbation $\epsilon$. 
Let 
$    \tilde \xi=1-\varepsilon,  \varepsilon>0,
$ be a perturbation from $\xi=1$. 
Then the constraint becomes
$
    (1-\varepsilon)x\geq 1,
$
which requires \(x\geq 1/(1-\varepsilon)>1\). This contradicts the constraint
\(x\leq 1\), and hence the perturbed problem becomes infeasible. On the other
hand, if
$
    \tilde \xi=1+\varepsilon,
$
then the feasible set becomes
\(
    \mathcal{X}(\tilde{\xi})
    =
    \left[\frac{1}{1+\varepsilon},\,1\right],
\)
which 
has nonempty interior. 
This example
shows that, without a uniform strict feasibility margin, the 
set of feasible solutions
may change abruptly under small perturbation of $\xi$. 

 \begin{assumption}[Lipschitz continuity]
    For $t = 1, 2, \cdots, T$, 
    
    \noindent $\mathrm{(C)}$ \( C_t(s_t, x_t, \xi_{[t]}, \zeta_t) \) is Lipschitz continuous in \( (s_t, x_t) \) with Lipschitz modulus \( L_{C,t} \), i.e., 
    \begin{eqnarray*}
    && | C_t(s_{t,1}, x_{t,1}, \xi_{[t]}, \zeta_t) - C_t(s_{t,2}, x_{t,2}, \xi_{[t]}, \zeta_t) | \nonumber \\
    &\leq & L_{C,t} (\Vert s_{t,1} - s_{t,2} \Vert + \Vert x_{t,1} - x_{t,2} \Vert), \quad \forall (s_{t, 1}, x_{t, 1}),(s_{t, 2}, x_{t, 2});
    \end{eqnarray*}
    $\mathrm{(S)}$ \( S^M_{t - 1}(s_{t - 1}, x_{t - 1}, \xi_{t - 1}, \zeta_{t - 1}) \) is Lipschitz continuous in \( (s_{t - 1}, x_{t - 1}) \) with Lipschitz modulus \( L_{S, t - 1} \), i.e.,
    \begin{eqnarray*}
    && \Vert S^M_{t - 1}(s_{t - 1,1}, x_{t - 1,1}, \xi_{t - 1}, \zeta_{t - 1}) - S^M_{t - 1}(s_{t - 1,2}, x_{t - 1,2}, \xi_{t - 1}, \zeta_{t - 1}) \Vert \nonumber \\
    &\leq & L_{S, t - 1} (\Vert s_{t - 1,1} - s_{t - 1,2} \Vert + \Vert x_{t - 1,1} - x_{t - 1,2} \Vert), \quad \forall (s_{t - 1, 1}, x_{t - 1, 1}), (s_{t - 1, 2}, x_{t - 1, 2});
    \end{eqnarray*}   
    $\mathrm{(G)}$ \( g_{t, i}\left(s_t, x_t, x_{t-1}, \xi_{[t]}\right), i \in I_t\) is Lipschitz continuous in \( (s_t, x_{t-1}) \), i.e., 
    \begin{eqnarray*}
    && | g_{t,i}(s_{t,1}, x_{t}, x_{t-1,1}, \xi_{[t]}) - g_{t,i}(s_{t,2}, x_{t}, x_{t-1,2}, \xi_{[t]}) | \nonumber \\
    &\leq & L_{g, t} (\Vert s_{t,1} - s_{t,2} \Vert + \Vert x_{t-1,1} - x_{t-1,2} \Vert), \quad \forall (s_{t, 1}, x_{t - 1, 1}), (s_{t, 2}, x_{t - 1, 2}).
    \end{eqnarray*}
    \label{assumption-lip}
Let $L_S := \underset{t = 0, 1, \cdots, T - 1}{\max}\{L_{S, t}\}$, $L_C := \underset{t = 1, 2, \cdots, T}{\max}\{L_{C, t}\}$, and $L_{g} := \underset{t = 1, 2, \cdots, T}{\max} L_{g, t}$. 
\end{assumption}

Lipschitz continuity ensures that the value function would not change drastically within its domain. This is key to guaranteeing the quantitative stability of the integrated MSP-MDP model
with respect to small perturbations in endogenous randomness or exogenous randomness and their distributions. As for the Lipschitz continuity of \( v_t \) with respect to \( (s_{t - 1}, x_{t - 1}), 1 \leq t \leq T \), we have:

\begin{theorem}[Lipschitz continuity of the value function]
    Let  
    Assumptions \ref{Assu:welldefinedness} - \ref{Assu:compactness},
    \ref{assumption7} and \ref{assumption-lip} hold and for $1 \leq t \leq T, i \in I_t$ \( g_{t, i}\left(s_t, x_t, x_{t-1}, \xi_{[t]}\right) \) is convex in \( x_t \).
    Then, \( v_{t}\left(s_{t-1}, x_{t-1}, \xi_{[t]}, \zeta_{t - 1}\right) \) is Lipschitz continuous with respect to \( (s_{t-1}, x_{t-1}) \), i.e., there exists a constant \( L_{t} > 0 \) such that  
    \begin{eqnarray}
    &&| v_t(s_{t-1,1}, x_{t-1,1}, \xi_{[t]}, \zeta_{t - 1}) - v_t(s_{t-1,2}, x_{t-1,2}, \xi_{[t]}, \zeta_{t - 1}) | \nonumber \\
    &\leq & L_t (\Vert s_{t-1,1} - s_{t-1,2} \Vert + \Vert x_{t-1,1} - x_{t-1,2} \Vert), \label{eq:Lipschitz-v_t}
    \end{eqnarray}  
    where  
    $
    L_T := L_{C,T} (L_S + L_{X,T} + L_{X,T} L_S),
    $  
    and for \( t = 1, 2, \cdots, T-1 \),  
    \bgeqn
    L_t := (L_{C,t} + L_{t+1}) L_S + L_{X,t} + L_{X,t} L_S, 
    \label{eq:Lip-Lt}
    \edeqn 
\( L_{X,t} := \frac{A}{\rho} L_{g, t} \), 
\( A \) is the maximum of the diameters of the feasible sets \( \mathcal{X}_t(s_t, x_{t-1}, \xi_{[t]}), 1 \leq t \leq T \).  
    \label{Theo-Lipschitz}
\end{theorem}

\begin{proof}
    Let 
    \( g_t(\cdot) = (g_{t, 1}(\cdot), \cdots, g_{t,|I_t|}(\cdot))^{\top} : \mathbb{R}^{\hat{n}_t} \times \mathbb{R}^{n_t} \times \mathbb{R}^{n_{t - 1}} \times \mathbb{R}^{m_{1, [t]}} \to \mathbb{R}^{|I_t|} \).
    For any 
    \( y_t \in \mathbb{R}^{n_t} \), define
    $$
  \gamma := \Vert g_t(s_{t,1}, y_t, x_{t-1,1}, \xi_{[t]})_+ \Vert 
    $$  
    and 
    \( z_t := \frac{\gamma}{\rho+\gamma} \bar{x}_t + \frac{\rho}{\rho+\gamma} y_t \). 
    By the convexity of $g_t(s_t, x_t, x_{t - 1}, \xi_{[t]})$ in $x_t$, 
    \begin{eqnarray}
    g_{t,i}(s_{t,1}, z_t, x_{t-1,1}, \xi_{[t]}) &\leq& \frac{\gamma}{\rho+\gamma} g_{t,i}(s_{t,1}, \bar{x}_t, x_{t-1,1}, \xi_{[t]}) + \frac{\rho}{\rho+\gamma} g_{t,i}(s_{t,1}, y_t, x_{t-1,1}, \xi_{[t]}) \nonumber \\
    &\leq& - \frac{\gamma}{\rho+\gamma} \rho + \frac{\rho}{\rho+\gamma} \gamma = 0, i \in I_t, \nonumber
    \end{eqnarray}
    which implies \( z_t \in \mathcal{X}_t(s_{t,1}, x_{t-1,1}, \xi_{[t]}) \). Consequently, we have  
    \begin{eqnarray}
    d\left(y_t, \mathcal{X}_t(s_{t,1}, x_{t-1,1}, \xi_{[t]})\right) &\leq& \Vert y_t - z_t \Vert = \frac{\gamma}{\rho} \Vert z_t - \bar{x}_t \Vert. \label{eq:Lipschitz-dyx1}
    \end{eqnarray}  
    For \( t=1,2,\cdots,T \), by Assumption \ref{Assu:compactness}, \( \mathcal{X}_t(s_{t,1}, x_{t-1,1}, \xi_{[t]}) \) is bounded. 
    Let  $
    A_t 
    $  
    denote the diameter of the set $\mathcal{X}_t(s_{t, 1}, x_{t - 1, 1}, \xi_{[t]})$ and
    \( A = \underset{1 \leq t \leq T}{\max} A_t \). It 
    follows from
    \eqref{eq:Lipschitz-dyx1} that  
    \begin{equation}
    d(y_t, \mathcal{X}_t(s_{t, 1}, x_{t - 1, 1}, \xi_{[t]})) \leq \frac{\gamma A}{\rho}. \label{eq:Lipschitz-dyx2}
    \end{equation}
    For any \( x_{t,2} \in \mathcal{X}_t(s_{t, 2}, x_{t-1, 2}, \xi_{[t]}) \), 
    by \eqref{eq:Lipschitz-dyx2} and Assumption \ref{Assu:compactness}, we have  
    \begin{eqnarray}
     d(x_{t, 2}, \mathcal{X}_t(s_{t, 1}, x_{t-1, 1}, \xi_{[t]})) 
    &\leq& \frac{A}{\rho} \Vert g_t(s_{t, 1}, x_{t, 2}, x_{t - 1, 1}, \xi_{[t]})_+ \Vert \nonumber \\
    &\leq& \frac{A}{\rho} \Vert g_t(s_{t, 1}, x_{t, 2}, x_{t - 1, 1}, \xi_{[t]})_+ - g_t(s_{t, 2}, x_{t, 2}, x_{t - 1, 2}, \xi_{[t]})_+ \Vert \nonumber \\
    &\leq& \frac{A}{\rho} \Vert g_t(s_{t, 1}, x_{t, 2}, x_{t - 1, 1}, \xi_{[t]}) - g_t(s_{t, 2}, x_{t, 2}, x_{t - 1, 2}, \xi_{[t]}) \Vert \nonumber \\
    &\leq& \frac{A}{\rho} L_{g, t} (\Vert s_{t,1} - s_{t,2} \Vert + \Vert x_{t-1, 1} - x_{t-1 ,2} \Vert). \nonumber
    \end{eqnarray}  
    Since \( x_{t, 2} \) is arbitrarily chosen from \( \mathcal{X}_t(s_{t, 2}, x_{t - 1, 2}, \xi_{[t]}) \), it follows that  
    $$
    \mathbb{D}(\mathcal{X}_t(s_{t, 2}, x_{t-1, 2}, \xi_{[t]}), \mathcal{X}_t(s_{t, 1}, x_{t-1, 1}, \xi_{[t]})) \leq \frac{A}{\rho} L_{g, t} (\Vert s_{t,1} - s_{t,2} \Vert + \Vert x_{t-1, 1} - x_{t-1 ,2} \Vert),
    $$
    where $\mathbb{D}(A, B)$ denotes the deviation from $A$ to $B$. 
    The conclusion above remains valid if we swap \( x_{t - 1, 1} \) and \( x_{t - 1, 2} \) in the feasible sets. Therefore, we obtain  
    \begin{eqnarray} 
    \mathbb{H}(\mathcal{X}_t(s_{t, 1}, x_{t-1, 1}, \xi_{[t]}), \mathcal{X}_t(s_{t, 2}, x_{t-1, 2}, \xi_{[t]})) \leq L_{X,t} (\Vert s_{t,1} - s_{t,2} \Vert + \Vert x_{t-1, 1} - x_{t-1 ,2} \Vert),
    \end{eqnarray}   
    where \( L_{X,t} = \frac{A}{\rho} L_{g, t} \). Thus, we have proven that the feasible solution set at stage \( t (1 \leq t \leq T) \) satisfies the Lipschitz property under the Hausdorff distance.

    Now, we establish the Lipschitz continuity of \( v_T \). Since Assumption \ref{assumption-lip}
    implies the continuity of $C_t$ and 
    $g_{t, i}$,
    it is clear that problem \eqref{eq:value-T} at stage $T$ has at least one optimal solution. Similarly, there also exists at least one optimal solution to problem \eqref{eq:value-t} at stage $t = 1, 2, \cdots, T - 1$. 
    Let \( x_{T,1}^* \) and \( x_{T,2}^* \) satisfy 
    \begin{eqnarray}
    && x_{T, 1}^* \in \underset{x_{T, 1} \in \mathcal{X}_T(s_{T, 1}, x_{T - 1, 1}, \xi_{[T]})}{\text{arg min}} C_T(s_{T,1}, x_{T,1}, \xi_{[T]}, \zeta_T), \nonumber \\
    && x_{T, 2}^* \in \underset{x_{T, 2} \in \mathcal{X}_T(s_{T, 2}, x_{T - 1, 2}, \xi_{[T]})}{\text{arg min}} C_T(s_{T,2}, x_{T,2}, \xi_{[T]}, \zeta_T). \nonumber 
    \end{eqnarray}  
    Then
    \begin{eqnarray}
&& v_{T}\left(s_{T-1,1}, x_{T-1,1}, \xi_{[T ]}, \zeta_{T - 1}\right) - v_{T}\left(s_{T-1,2}, x_{T-1,2}, \xi_{[T]}, \zeta_{T - 1}\right) \nonumber \\
&= & \mathbb{E}_{\zeta_T} \left[ C_T(s_{T,1}, x^*_{T,1}, \xi_{[T]}, \zeta_T) - C_T(s_{T,2}, x^*_{T,2}, \xi_{[T]}, \zeta_T) \right]  \nonumber   \\
&\leq & \mathbb{E}_{\zeta_T} \left[ C_T(s_{T,1}, x_{T,1}, \xi_{[T]}, \zeta_T) - C_T(s_{T,2}, x^*_{T,2}, \xi_{[T]}, \zeta_T) \right]  \nonumber \\
&\leq & L_{C,T} (\Vert s_{T,1}-s_{T,2} \Vert + \mathbb{H}(\mathcal{X}_T(s_{T,1}, x_{T-1,1}, \xi_{[T]}),\mathcal{X}_T(s_{T,2}, x_{T-1,2}, \xi_{[T]}))) \nonumber \\
&\leq & L_{C,T}(L_S (\Vert x_{T-1,1}-x_{T-1,2} \Vert + \Vert s_{T-1,1}-s_{T-1,2} \Vert) \nonumber \\
&& + L_{X,T} (\Vert x_{T-1,1}-x_{T-1,2} \Vert + \Vert s_{T,1}-s_{T,2} \Vert)) \nonumber \\
&\leq & L_{C,T}(L_S + L_{X,T} + L_{X,T}L_S) (\Vert x_{T-1,1}-x_{T-1,2} \Vert + \Vert s_{T-1,1}-s_{T-1,2} \Vert), \nonumber
\end{eqnarray}
where \( x_{T, 1} \) is 
the orthogonal projection of \( x_{T, 2}^* \) onto \( \mathcal{X}_T\left(s_{T, 1}, x_{T-1,1}, \xi_{[T]}\right) \). The first inequality is obtained from the definition of the optimal solution. The second inequality is due to the Lipschitz property of $C_T$ in $(s_{T, 1}, x_{T, 1})$ and
\( \Vert x_{T,1} - x_{T,2}^* \Vert \leq \mathbb{H}(\mathcal{X}_T(s_{T,1}, x_{T-1,1}, \xi_{[T]}), \mathcal{X}_T(s_{T,2}, x_{T-1,2}, \xi_{[T]})) \). Other inequalities follow directly from the assumptions and the Lipschitz continuity of the feasible set.

By swapping \( \left(s_{T-1,1}, x_{T-1,1}\right) \) and \( \left(s_{T-1,2}, x_{T-1,2}\right) \), it is easy to see that the conclusion above still holds. Therefore, we conclude that \( v_T(s_{T-1}, x_{T-1}, \xi_{[T]}, \zeta_{T - 1}) \) is Lipschitz continuous with respect to \( (s_{T - 1}, x_{T - 1}) \) with a Lipschitz modulus of \( L_T := L_{C,T}L_S + L_{X,T} + L_{X,T}L_S \).

For any  \( 1 \leq t \leq T - 1 \), since both \( C_{t}(s_t, x_t, \xi_{[t]}, \zeta_t) \) and \( \mathbb{E}_{\xi_{t+1} \mid \xi_{[t]}} \left[v_{t+1} (s_t, x_t, \xi_{[t+1]}, \zeta_{t})\right] \) are Lipschitz continuous with respect to \( (s_t, x_t) \), the specific argument for stage \( t \) is then the same as the proof for the stage \( T \) above. By the principle of induction, \( v_t(s_{t-1}, x_{t-1}, \xi_{[t]}, \zeta_{t - 1}) \) is then Lipschitz continuous with respect to \( (s_{t - 1}, x_{t - 1}) \) with the Lipschitz modulus being \( L_{C,t} + L_{t+1} \), and we have:  
$$
L_t := (L_{C,t} + L_{t+1}) (L_S + L_{X,t} + L_{X,t}L_S).
$$  
Finally, the proof by induction is completed.
\end{proof}

Theorem~\ref{Theo-Lipschitz} integrates the results in \cite{tao2025generalized} and \cite{hinderer2005lipschitz}. The former derives the Lipschitz properties of the feasible set of an one-stage parametric  stochastic programming problem 
under Slater condition.
The latter establishes  
the Lipschitz continuity of the value function for standard MDPs. 
We extend these results by considering an integrated  model where the complex constraints
depend not only on the state variable $ s_t $, but also on the decision $ x_{t - 1} $ from the previous stage. The Lipschitz conditions given in Theorem \ref{Theo-Lipschitz} are commonly used in studies, as seen in \cite{hinderer2005lipschitz, dentcheva2021subregular}.

It should be pointed out that, unless otherwise specified, \( \Vert \cdot \Vert \) in this paper denotes the infinity norm. By now, we have investigated the structural properties of problems \eqref{eq:value-T}, \eqref{eq:value-t} and \eqref{eq:value-0} under 
mild assumptions. Thanks to the time-consistency, these properties about $v_t$ not only help us to deeply understand the behavior of the proposed integrated MSP-MDP model under different conditions, but also ensure the stability 
of the solutions derived from it.

In practice, due to the complexity of the uncertain environment, the observed values and distributions of endogenous and exogenous random variables may contain errors. Additionally, as the dynamic decision-making environment constantly changes, the values and distributions of these two types of random variables may also vary. The impact of these facts on the solution of the integrated MSP-MDP model can be attributed to the quantitative stability analysis of problem \eqref{eq:mixed-MSP-MDP} with respect to changes in endogenous and exogenous random variables. Therefore, in the next section, we will analyze the stability of the optimal value and set of optimal solutions of problem \eqref{eq:mixed-MSP-MDP} with respect to changes in the two types of random variables and their distributions. 

\section{Quantitative stability with respect to endogenous uncertainty}

In this section and the next section, we will establish the quantitative stability of the integrated MSP-MDP model \eqref{eq:mixed-MSP-MDP}
in terms of the optimal values and the optimal solutions w.r.t.~perturbation of the underlying uncertainties.
Based on the structure of the integrated MSP-MDP model, we  assume without much loss of generality that the randomness from the endogenous system (\( \zeta_t \)) and the randomness from exogenous sources (\( \xi_{[t]} \)) are independent. 
This 
prompts 
us to  study the 
stability w.r.t.~two sources of uncertainties separately (in the next section for the latter).

In both cases, we will
use Monge-Kantorovich functional
to measure perturbation of probability distributions.
Specifically,
we consider 
the Kantorovich metric
\begin{equation}\label{eq:FMp}
    \dd_{\mathrm{K}}(P, Q) := \sup_{f \in \mathcal{F}(\Xi)} \left( \int_{\Xi} f(\xi) P(\mathrm{d}\xi) - \int_{\Xi} f(\xi) Q(\mathrm{d}\xi) \right),
\end{equation}
where $\mathcal{F}(\Xi)$ 
denotes the set of functions satisfying 
    $| f(\xi) - f(\tilde{\xi}) | \leq \|\xi - \tilde{\xi}\|$,
see \cite{rachev2002quantitative,romisch2003stability} for details.
In this paper, we use a slightly different ``metric'' to measure the 
discrepancy between $P$ and $Q$. Let \(\Pi(P,Q)\) denote the set of all probability measures on
\(\Xi\times\Xi\) with marginal distributions \(P\) and \(Q\), respectively. Define
\begin{eqnarray}
\label{eq:weighted_transport}
    \Delta_{p}(P, Q)
    :=
    \inf_{\pi\in\Pi(P, Q)}
    \int_{\Xi\times\Xi}
    \max\left\{
        1,\|\xi\|^{p-1},\|\widetilde\xi\|^{p-1}
    \right\}
    \|\xi-\widetilde\xi\|
    \,\pi(\mathrm d\xi,\mathrm d\widetilde\xi).
\end{eqnarray}
This is a specific form of Monge--Kantorovich functional, see \cite{dupacova2003scenario,heitsch2007note,romisch2003stability} for general forms.
In the case when \(p=1\),  \(\Delta_1(P,Q)\) reduces to the Kantorovich metric \(\dd_K(P,Q)\).
Since
\(
\max\left\{1,\|\xi\|^{p-1},\|\widetilde\xi\|^{p-1}\right\}\geq 1,
\)
then
\(
    \dd_K(P,Q)\leq \Delta_p(P,Q),
\) which means the latter dominates the former in the sense that $\Delta_p(P, Q) \to 0$ implies that $\dd_K(P, Q) \to 0$.
On the other hand, if \(P,Q\in\mathcal P_p(\Xi)\), then H\"older's inequality implies that
\begin{equation}
\label{eq:Delta_controlled_by_Wp}
    \Delta_p(P,Q)
    \leq
    \left[
        1+
        \left(\int_{\Xi}\|\xi\|^p P(\mathrm d\xi)\right)^{\frac{p-1}{p}}
        +
        \left(\int_{\Xi}\|\xi\|^p Q(\mathrm d\xi)\right)^{\frac{p-1}{p}}
    \right]
    W_p(P,Q),
\end{equation}
where \(\mathcal P_p(\Xi)\) denotes the set of all probability measures on
\(\Xi\) with finite \(p\)-th moment, and $W_p$ represents the $p$-th order Wasserstein metric.
Inequality \eqref{eq:Delta_controlled_by_Wp} means that \(W_p\) dominates \(\Delta_p\).
More generally, we may consider a nonnegative continuous function
\(L:\Xi\times\Xi\to\mathbb R_+\) and
define
\begin{eqnarray}
    \Delta_{L}(P, Q) := \inf_{\pi \in \Pi} \int_{\Xi\times\Xi}
    L(\xi, \tilde{\xi})
    \|\xi-\widetilde\xi\|
    \,\pi(\mathrm d\xi,\mathrm d\widetilde\xi).
    \label{eq:def-DeltaL}
\end{eqnarray}
By convention, we call
the joint probability distribution attaining the infimum at the rhs of \eqref{eq:def-DeltaL}, written $\pi^*(P, Q)$, the optimal coupling of $P$ and $Q$.
By Kantorovich--Rubinstein duality, 
$\Delta_L(P, Q)$ can be equivalently written as
\begin{equation}\label{eq:FMp-dual}
\begin{aligned}
    \Delta_{L}(P,Q)
    :=
    \sup_{(f_1,f_2)\in\mathcal G_L}
    \left\{
        \int_{\Xi} f_1(\xi)P(d\xi)
        -
        \int_{\Xi} f_2(\xi)Q(d\xi)
    \right\},
\end{aligned}
\end{equation}
where
\[
    \mathcal G_L
    :=
    \left\{
    (f_1,f_2):
    f_1(\xi)-f_2(\widetilde{\xi})
    \leq
    L(\xi, \tilde{\xi}) \|\xi - \tilde{\xi}\|,
    \quad
    \forall \xi,\widetilde{\xi}\in\Xi
    \right\},
\]
see ~\cite{santambrogio2015optimal, villani2009optimal}.
Note that when $L(\xi, \tilde{\xi})$ is symmetric in $\xi$ and $\tilde{\xi}$, i.e., $L(\xi, \tilde{\xi}) = L(\tilde\xi, {\xi})$, 
we have $\Delta_L(P, Q) = \Delta_L(Q, P)$.
In the forthcoming discussions, we will use both \eqref{eq:def-DeltaL} and \eqref{eq:FMp-dual}.


We are now ready to  investigate the 
effects of perturbation of endogenous uncertainty on the optimal value and the optimal policy of problem \eqref{eq:mixed-MSP-MDP}.
Let $\mathscr{P}(\R^{m_{2, t}})$
denote the set of all probability measures 
on $\R^{m_{2, t}}$.
In the rest of the paper, we write $P_t\in \mathscr{P}(\R^{m_{2, t}})$
for the true probability distribution of $\zeta_t$ and 
$\tilde{P}_t
 \in \mathscr{P}(\R^{m_{2, t}})$
for its perturbation.
Consequently, we 
write $\tilde{\zeta}_t$ 
for the perturbation of $\zeta_t$
which is
a random variable mapping from $(\Omega_2, \mathcal{G}, \mathbb{P}^2)$ to $\mathbb{R}^{m_{2, t}}$ 
with distribution $\tilde{P}_t$.
Let $\tilde{\zeta} = \left\{ \tilde{\zeta}_0, \tilde{\zeta}_1, \cdots, \tilde{\zeta}_T\right\}$ be a perturbation of
$\zeta = \{ \zeta_0, \zeta_1, \cdots, \zeta_T\}$.
We consider
\begin{eqnarray} 
        \vt(P)  = \underset{x_0 \in \mathcal{X}_0}{\min} \mathbb{E}_{\zeta_0} \left[ C_0(s_0, x_0, \zeta_0) + \mathbb{E}_{\xi_1} \left[ v_1(s_0, x_0, \xi_1, \zeta_0) \right] \right]
        \label{eq:v-zeta}
        \end{eqnarray} 
and its perturbation
    \begin{eqnarray}    
        \vt(\tilde{P}) = \underset{x_0 \in \mathcal{X}_0}{\min} \mathbb{E}_{\tilde{\zeta}_0} \left[ C_0(s_0, x_0, \tilde{\zeta}_0) + \mathbb{E}_{\xi_1} \left[ \tilde{v}_1(s_0, x_0, \xi_1, \tilde{\zeta}_0) \right] \right]. 
        \label{eq:v-zeta-ptb}
    \end{eqnarray}
    To facilitate the stability analysis in this subsection, we
    write $\mathcal{X}(\zeta)$
    for the set of feasible policies to problem \eqref{eq:v-zeta} to emphasize that the policy is induced by $\zeta$,
    instead of ${\cal X}$ 
    as in 
    the follow-up of \eqref{eq:feaset-X_t}. Likewise, we use 
    $\mathcal{X}(\tilde{\zeta})$ to  denote the set of feasible policies to problem 
    \eqref{eq:v-zeta-ptb}.

Endogenous uncertainty in model \eqref{eq:mixed-MSP-MDP} 
is mainly concerned with the stochastic state transition within the system 
which 
affects the objective function at the current stage. 
Change of the distributions of \( \zeta_t, 0 \leq t \leq T \) 
will  
affect the subsequent system transition paths 
and the values of the objective functions 
at later stages.
Moreover, a small perturbation 
of 
the endogenous uncertainty at a specific stage may be cumulated over subsequent stages, 
leading to cumulative errors. The
cumulative effect is 
significant in large scale or long-term problems. 
Such perturbation may arise from problem data and/or 
numerical computation via dynamic 
recursive formulations 
\eqref{eq:value-T}-
\eqref{eq:value-0}.
Thus, it 
will be instrumental to quantify the overall effect
of the perturbations on the optimal values and optimal solutions.
As 
discussed 
earlier, this kind of research essentially corresponds to the stability analysis of optimal value functions and policies in MDPs under
perturbations in the transition probability kernel or the  transition probability distribution in the continuous case. 
As far as we know, the only research in this regard is conducted by Z\"ahle et al.~\cite{kern2020first},  
who carried out first-order sensitivity analysis of MDPs w.r.t. the state transition kernel.

It should be noted that, as discussed in Section 2, \( \zeta_t, 0 \leq t \leq T \) at different stages are independent. Therefore, we will establish the quantitative stability of problem \eqref{eq:mixed-MSP-MDP} w.r.t. \( \zeta_t,0 \leq t \leq T \) by considering the distribution perturbations at each stage. To this end, we need the following technical assumption:  

\begin{assumption}[Lipschitz continuity]
For \( t = 1, 2, \cdots, T \),  

\noindent $\mathrm{(C^{\zeta})}$ \( C_t(s_t, x_t, \xi_{[t]}, \zeta_t) \) is Lipschitz continuous w.r.t.~\( (s_t, x_t, \zeta_t) \) with Lipschitz modulus \( L_{C, t} \), let \( L_C = \underset{t \in \{ 1, 2, \cdots, T \}}{\max} L_{C, t} \);  

\noindent $\mathrm{(S^{\zeta})}$ \( S^M_t(s_t, x_t, \xi_{[t]}, \zeta_{t}) \) is Lipschitz continuous w.r.t.~\( (s_t, x_t, \zeta_{t}) \) with Lipschitz modulus \( L_{S,t} \), let \( L_S = \underset{t = 0,1,\cdots,T-1}{\max} L_{S,t} \);  

\noindent $\mathrm{(G^{\zeta})
}$ \( g_{t, i}(s_t, x_t, x_{t - 1}, \xi_{[t]}) \) is Lipschitz continuous w.r.t.~\( (s_t, x_{t - 1}) \) with Lipschitz modulus \( L_{g, t} \), let \( L_g = \underset{t \in \{ 1, 2, \cdots, T \}}{\max} L_{g, t} \).  
\label{assumption6}
\end{assumption}

\begin{theorem}[Stability 
of the optimal value and the 
optimal policy w.r.t.~perturbation of $\zeta$
]
Consider problems \eqref{eq:v-zeta} and   \eqref{eq:v-zeta-ptb}.
Under
Assumptions \ref{Assu:welldefinedness}- \ref{Assu:compactness},
    \ref{assumption7} and \ref{assumption6},
    the following 
    assertions hold.  
    \begin{itemize}
    \item[(i)] Let $\vt(P)$ and $\vt(\tilde{P})$ be defined as in  \eqref{eq:v-zeta} and   \eqref{eq:v-zeta-ptb}. Then
    \begin{eqnarray}
    |\vt(P) - \vt(\tilde{P})| 
    \leq  \sum\limits_{t = 0}^{T - 1} \hat{L}_{t + 1} \dd_{K}(P_t, \tilde{P}_t) + L_C \dd_{K}(P_T, \tilde{P}_T). 
    \label{eq:stab-endo}
    \end{eqnarray}  
    where $\hat{L}_t := L_{C} L_S + L_{C} L_{X,t} + L_{C} + L_{t + 1} L_S + L_{t + 1} L_{X, t}$, 
    and \( L_t, 1 \leq t \leq T \), is given in Theorem \ref{Theo-Lipschitz}, \( L_{T + 1} = 0 \). 
    \item[(ii)] If, in addition, there exist positive constants $\beta, \nu$ such that
    \begin{eqnarray}
    &&\mathbb{E}_{\xi, \zeta} \left[ \sum\limits_{t = 0}^T C_t(s_t^{\bm x}, x_t, \xi_{[t]}, \zeta_t) \right] - \mathbb{E}_{\xi, \zeta} \left[ \sum\limits_{t = 0}^T C_t(s_t, x_t^*, \xi_{[t]}, \zeta_t) \right] \nonumber\\
    &\geq& \beta \left( \mathbb{E}_{\xi, \zeta} \left[ d({\bm x}(\zeta), \mathcal{X}^*(\zeta)) \right] \right)^\nu, \forall {\bm x}(\zeta) \in \mathcal{X}(\zeta),
    \label{eq:growth-omega}
    \end{eqnarray}  
    where $d(\bm x, \mathcal{X}^*(\zeta))$ denotes the distance from $\bm x$ to 
    the set of optimal solutions 
    $\mathcal{X}^*(\zeta)$, 
    $s_t^{\bm x} = S^M_{t - 1}(s_{t - 1}^{\bm x}, x_{t - 1}, \xi_{[t - 1]}, \zeta_{t - 1})$,
    then there exists a sequence of positive constants \( H_t, L_{X,k,t}, 1 \leq t \leq T \), such that  
    \begin{eqnarray}
    \mathbb{E}_{\pi^*(P, \tilde{P})} \left[ \mathbb{H}({\mathcal{X}}^*(\tilde{\zeta}), \mathcal{X}^*(\zeta)) \right] \leq \sum\limits_{t = 0}^T \left( \sum\limits_{k = t + 1}^T L_{X, k, t} \right) \dd_K(P_t, \tilde{P}_t) + \left( \frac{1}{\beta} \sum\limits_{t = 0}^T H_t \dd_K(P_t, \tilde{P}_t) \right)^\frac{1}{\nu},
    \label{eq:stab-zeta-optsolu}
    \end{eqnarray}
    where $\pi^*(P, \tilde{P})$ denotes the optimal coupling of $P$ and $\tilde{P}$ in \eqref{eq:def-DeltaL}, $P := \{P_0, P_1, \cdots, P_T\}$ and $\tilde{P} := \left\{\tilde{P}_0, \tilde{P}_1, \cdots, \tilde{P}_T\right\}$.
    \end{itemize}  
    \label{Theo-stab-endo}
\end{theorem}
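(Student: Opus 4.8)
The plan is to prove (i) by a telescoping ``change one distribution at a time'' argument combined with Kantorovich--Rubinstein duality, and (ii) by feeding (i) together with the growth condition \eqref{eq:growth-omega} into the standard ``growth $\Rightarrow$ solution stability'' scheme, after transporting perturbed-optimal policies to feasibility by the Hausdorff--Lipschitz property of the feasible-set mapping that is already established inside the proof of Theorem \ref{Theo-Lipschitz}.

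For part (i) I would introduce, for $j=0,1,\dots,T+1$, the hybrid value $\vartheta^{(j)}$ obtained from the nested recursion \eqref{eq:value-T}--\eqref{eq:value-0} by using $\tilde P_0,\dots,\tilde P_{j-1}$ at stages $0,\dots,j-1$ and $P_j,\dots,P_T$ at stages $j,\dots,T$, so that $\vartheta^{(0)}=\vartheta(\zeta)$, $\vartheta^{(T+1)}=\vartheta(\tilde\zeta)$ and $|\vartheta(\zeta)-\vartheta(\tilde\zeta)|\le\sum_{j=0}^{T}|\vartheta^{(j)}-\vartheta^{(j+1)}|$. Since $\vartheta^{(j)}$ and $\vartheta^{(j+1)}$ use the \emph{same} value functions $v_{j+1},\dots,v_T$ and differ only in whether the stage-$j$ expectation is taken against $P_j$ or $\tilde P_j$, and since propagating this single-stage discrepancy up through the outer nesting introduces no extra factor (the stage costs $C_0,\dots,C_{j-1}$ cancel and each $\min$ and conditional expectation is non-expansive), one obtains $|\vartheta^{(j)}-\vartheta^{(j+1)}|\le\sup|v_j^{(j)}-v_j^{(j+1)}|\le\max_{x_j}\big|\mathbb{E}_{\zeta_j\sim P_j}[\psi_j]-\mathbb{E}_{\zeta_j\sim\tilde P_j}[\psi_j]\big|$, where the supremum is over the arguments of $v_j$ and $\psi_j(\zeta_j)=C_j(s_j,x_j,\xi_{[j]},\zeta_j)+\mathbb{E}_{\xi_{j+1}}[v_{j+1}(s_j,x_j,\xi_{[j+1]},\zeta_j)]$. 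By the $p=1$ case of \eqref{eq:FMp} this is at most $(\mathrm{Lip}_{\zeta_j}\psi_j)\,\dd_K(P_j,\tilde P_j)$; and $\mathrm{Lip}_{\zeta_j}\psi_j$ is estimated from Assumption \ref{assumption6}: $C_j$ contributes $L_C$ directly, while $\zeta_j$ enters $v_{j+1}$ only through $s_{j+1}=S^M_j(s_j,x_j,\xi_j,\zeta_j)$ (modulus $L_S$), amplified by the Lipschitz modulus of the value-to-go in the end-of-stage state, which the proof of Theorem \ref{Theo-Lipschitz} controls in terms of $L_C$, $L_{X,j+1}$ and $L_{j+2}$. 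Collecting these reproduces $\hat L_{j+1}$ for $j\le T-1$ and $L_C$ for $j=T$ (no downstream term). The points needing justification here — existence of stagewise minimizers and the interchange of $\min$ and expectation — are available from Theorem \ref{Theo-timecons} and Proposition \ref{Prop:continuity}.

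For part (ii) I would fix an optimal policy $\bm x^{\dagger}\in\mathcal{X}^*(\tilde\zeta)$ of the perturbed problem. Because feasibility of $x_t$ runs through $g_{t,i}(s_t,\cdot,x_{t-1},\xi_{[t]})\le 0$ and the state $s_t$ is driven by the realized endogenous path, $\bm x^{\dagger}$ is in general infeasible for the $\zeta$-problem; I would therefore project, stage by stage along a coupling of $\zeta$ and $\tilde\zeta$, the decisions of $\bm x^{\dagger}$ onto $\mathcal{X}_t(s_t^{\bm x^{\dagger},\zeta},x_{t-1},\xi_{[t]})$, whose error is controlled via the Hausdorff--Lipschitz bound $\mathbb{H}(\mathcal{X}_t(s_{t,1},\cdot),\mathcal{X}_t(s_{t,2},\cdot))\le L_{X,t}\|s_{t,1}-s_{t,2}\|$ from the proof of Theorem \ref{Theo-Lipschitz} together with an estimate of $\|s_t^{\bm x^{\dagger},\zeta}-s_t^{\bm x^{\dagger},\tilde\zeta}\|$ obtained by compounding, through the $L_S$-Lipschitz transitions, the stagewise $\dd_K(P_k,\tilde P_k)$ for $k<t$ — this is the origin of the constants $L_{X,k,t}$. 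Applying \eqref{eq:growth-omega} to the projected, now feasible, policy $\bar{\bm x}$ gives $\beta\,\mathbb{E}_\zeta[d(\bar{\bm x}(\zeta),\mathcal{X}^*(\zeta))]\le \mathrm{obj}_\zeta(\bar{\bm x})-\vartheta(\zeta)$, and I bound the right-hand side by $|\mathrm{obj}_\zeta(\bar{\bm x})-\mathrm{obj}_{\tilde\zeta}(\bm x^{\dagger})|+|\vartheta(\tilde\zeta)-\vartheta(\zeta)|$ using $\mathrm{obj}_{\tilde\zeta}(\bm x^{\dagger})=\vartheta(\tilde\zeta)$: the first term is a fixed-policy analogue of (i) (Lipschitz continuity of $C_t$ and $S^M_t$ in $\zeta$, compounded along the trajectory, plus the projection error), the second is exactly (i). This yields $\mathbb{E}_\zeta[d(\bm x^{\dagger}(\zeta),\mathcal{X}^*(\zeta))]\le\frac1\beta\sum_{t=1}^T\big(H_t+\sum_{k=t}^T L_{X,k,t}\big)\dd_K(P_t,\tilde P_t)$ after absorbing $1/\beta$ and the remaining moduli into $H_t$; taking the supremum over $\bm x^{\dagger}$ and invoking the analogous growth inequality for the perturbed problem to bound the reverse excess gives \eqref{eq:stab-zeta-optsolu}.

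The hardest part, in both statements, is the error propagation: a perturbation concentrated at stage $t$ moves the entire downstream state trajectory through $S^M$, hence every later cost, every later feasible set, and — for (ii) — the feasibility of any transported policy, so that obtaining the precise telescoped constants $\hat L_{t+1}$, $H_t$ and $L_{X,k,t}$ demands keeping (a) the direct dependence of $C_t$ on $\zeta$, (b) the transition-induced dependence amplified by the value-function moduli $L_{t+1}$ of Theorem \ref{Theo-Lipschitz}, and (c) the feasible-set moduli $L_{X,t}$ strictly separate. A second, more delicate, difficulty specific to (ii) is that optimal policies of the original and perturbed problems live over different endogenous supports and induce different state trajectories, which is precisely what forces the coupling-plus-projection construction and the (implicitly needed) assumption that a growth condition of the form \eqref{eq:growth-omega} persists for the perturbed problem, so that the full Hausdorff distance, not merely a one-sided excess, can be controlled.
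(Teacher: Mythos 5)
Your proposal is correct and follows essentially the same route as the paper: part (i) via the same stagewise telescoping of hybrid value functions combined with Kantorovich--Rubinstein duality applied to the $\zeta_t$-Lipschitz modulus of the stage-$t$ cost-to-go (whose constant collects $L_C$, the transition modulus $L_S$, the feasible-set moduli $L_{X,t}$ and the value-function moduli $L_{t+1}$ exactly as you describe), and part (ii) via projection of the perturbed optimal policy onto the original feasible sets, controlled by the Slater-based Hausdorff--Lipschitz bounds that generate the $L_{X,k,t}$, followed by the growth condition. Your remark that a growth condition must also hold for the perturbed problem in order to control both one-sided excesses is accurate: the paper states \eqref{eq:growth-omega} only for $\zeta$ yet asserts the full Hausdorff bound, so you have correctly identified a point where the argument implicitly uses slightly more than is stated.
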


\begin{proof}
    By Theorem \ref{Theo-timecons}, 
    we may
    derive the stability
    via 
    recursive formulations \eqref{eq:value-T}, \eqref{eq:value-t} and \eqref{eq:value-0}.  
    
   \underline{Part (i)}. 
For $t=0,1, \cdots, T$, let $\tilde{\zeta}_{[0:t]} := \left\{ \tilde{\zeta}_0, \tilde{\zeta}_1, \cdots, \tilde{\zeta}_t\right\}$, 
    $\zeta_{[t : T]}
:= 
\{ \zeta_{t}, \zeta_{t + 1}, \cdots, \zeta_T\}$ and
\begin{eqnarray} 
\vt\left(\tilde{P}_{[0:t - 1]}, P_{[t : T]}\right)
 &:= & \underset{\bm x \in \mathcal{X}}{\min} \mathbb{E}_{\tilde{\zeta}_0, \tilde{\zeta}_1, \cdots, \tilde{\zeta}_{t - 1}, \zeta_{t}, \cdots, \zeta_T} \Big[ C_0(s_0, x_0, \tilde{\zeta}_0) \nonumber \\
 && + \sum_{k = 1}^{t - 1} C_k(s_k, x_k, \xi_{[k]}, \tilde{\zeta}_k) + \sum_{k = t}^T C_k(s_k, x_k, \xi_{[k]}, \zeta_k)\Big].
\end{eqnarray} 
Consider $\vt\left(\tilde{P}_{[0:t - 1]}, P_{[t:T]}\right) -
\vt\left(\tilde{P}_{[0 : t]}, P_{[t + 1 : T]}\right)$.
We may regard 
$
\vt\left(\tilde{P}_{[0 : t]}, P_{[t + 1 : T]}\right)$
as a perturbation of
$\vt\left(\tilde{P}_{[0 : t - 1]}, P_{[t : T]}\right)$
when $\zeta_{t}$ is perturbed to 
$\tilde{\zeta}_{t}$.
For $k = t, \cdots, T$, let
\begin{eqnarray*}
    && w_k(s_{k - 1}, x_{k - 1}, \xi_{[k]}, \zeta_{k - 1}) \\ 
    &:= & \underset{x_k \in \mathcal{X}_k(s_k, x_{k - 1}, \xi_{[k]})}{\text{min}} \mathbb{E}_{\zeta_k} \left[ C_k(s_k, x_k, \xi_{[k]}, \zeta_k) + \mathbb{E}_{\xi_{[k + 1]} | \xi_{[k]}} \left[ w_{k + 1}(s_k, x_k, \xi_{[k + 1]}, \zeta_k) \right] \right],
\end{eqnarray*}
with $w_{T + 1}(\cdot) = 0$. Let 
\begin{eqnarray*}
    \tilde{w}_{t + 1}(s_{t}, x_{t}, \xi_{[t + 1]}, \tilde{\zeta}_{t}) &:= & w_{t + 1}(s_{t}, x_{t}, \xi_{[t + 1]}, \tilde{\zeta}_{t}), \\
    \hat{w}_{t}(s_{t - 1}, x_{t - 1}, \xi_{[t]}, \tilde{\zeta}_{t - 1}) &:= & w_{t}(s_{t - 1}, x_{t - 1}, \xi_{[t]}, \tilde{\zeta}_{t - 1}).
\end{eqnarray*}
Then for $k = t, t-1, \cdots, 1$, define
    \begin{eqnarray*}
    &&\tilde{w}_k(s_{k - 1}, x_{k - 1}, \xi_{[k]}, \tilde{\zeta}_{k - 1}) \\
    &:= & \underset{x_k \in \mathcal{X}_k(s_k, x_{k - 1}, \xi_{[k]})}{\text{min}} \mathbb{E}_{\tilde{\zeta}_k} \left[ C_k(s_k, x_k, \xi_{[k]}, \tilde{\zeta}_k) + \mathbb{E}_{\xi_{[k + 1]} | \xi_{[k]}} \left[ \tilde{w}_{k + 1}(s_k, x_k, \xi_{[k + 1]}, \tilde{\zeta}_k) \right] \right],  
    \end{eqnarray*}
and  for $k = t-1, t-2, \cdots, 1$, let
    \begin{eqnarray*}
    && \hat{w}_k(s_{k - 1}, x_{k - 1}, \xi_{[k]}, \tilde{\zeta}_{k - 1}) \\ 
    &:= & \underset{x_k \in \mathcal{X}_k(s_k, x_{k - 1}, \xi_{[k]})}{\text{min}} \mathbb{E}_{\tilde{\zeta}_k} \left[ C_k(s_k, x_k, \xi_{[k]}, \tilde{\zeta}_k) + \mathbb{E}_{\xi_{[k + 1]} | \xi_{[k]}} \left[ \hat{w}_{k + 1}(s_k, x_k, \xi_{[k + 1]}, \tilde{\zeta}_k) \right] \right].
    \end{eqnarray*}
Then
\begin{eqnarray}
        \vt(\tilde{P}_{[0 : t - 1]}, P_{[t : T]}) &= & \underset{x_0 \in \mathcal{X}_0}{\min}\mathbb{E}_{\tilde{\zeta}_0} \left[ C_0(s_0, x_0, \tilde{\zeta}_0) + \mathbb{E}_{\xi_1} \left[ \hat{w}_1(s_0, x_0, \xi_1, \tilde{\zeta}_0) \right] \right], 
    \label{eq:vt-optval-zeta-before}\\
    \vt(\tilde{P}_{[0 : t]}, P_{[t + 1 : T]}) &=& \underset{x_0 \in \mathcal{X}_0}{\min}\mathbb{E}_{\tilde{\zeta}_0} \left[ C_0(s_0, x_0, \tilde{\zeta}_0) + \mathbb{E}_{\xi_1} \left[ \tilde{w}_1(s_0, x_0, \xi_1, \tilde{\zeta}_0) \right] \right]. 
    \label{eq:vt-optval-zeta-after}
\end{eqnarray}
Let $\hat{x}_0^*$ and $\tilde{x}_0^*$ be the optimal solutions to problems \eqref{eq:vt-optval-zeta-before} and \eqref{eq:vt-optval-zeta-after} , respectively.
\begin{eqnarray}
    &&\vt(\tilde{P}_{[0 : t - 1]}, P_{[t : T]}) - \vt(\tilde{P}_{[0 : t]}, P_{[t + 1 : T]}) \nonumber \\
    &=& \mathbb{E}_{\tilde{\zeta}_0} \left[ C_0(s_0, \hat{x}^*_0, \tilde{\zeta}_0) + \mathbb{E}_{\xi_1} \left[ \hat{w}_1(s_0, \hat{x}^*_0, \xi_1, \tilde{\zeta}_0) \right] \right] \nonumber \\
    && - \mathbb{E}_{\tilde{\zeta}_0} \left[ C_0(s_0, \tilde{x}^*_0, \tilde{\zeta}_0) + \mathbb{E}_{\xi_1} \left[ \tilde{w}_1(s_0, \tilde{x}^*_0, \xi_1, \tilde{\zeta}_0) \right] \right] \nonumber \\
    &\leq &\mathbb{E}_{\xi_{1}, \tilde{\zeta}_0} \left[ \hat{w}_1(s_0, \tilde{x}^*_0, \xi_{1}, \tilde{\zeta}_0) \right] - \mathbb{E}_{\xi_{1}, \tilde{\zeta}_0} \left[ \tilde{w}_1(s_0, \tilde{x}^*_0, \xi_{1}, \tilde{\zeta}_0) \right] \nonumber \\
    &= & \mathbb{E}_{\xi_{1}, \tilde{\zeta}_0} \mathbb{E}_{\xi_{2} | \xi_1, \tilde{\zeta}_1} \Big[ \Big[ C_1(s_1, \hat{x}_1^*, \xi_{1}, \tilde{\zeta}_1) - C_1(s_1, \tilde{x}_1^*, \xi_1, \tilde{\zeta}_1) \nonumber \\
    && +  \hat{w}_2(s_1, \hat{x}_1^*, \xi_{[2]}, \tilde{\zeta}_1) - \tilde{w}_2(s_1, \tilde{x}_1^*, \xi_{[2]}, \tilde{\zeta}_1) \Big]  \Big] \nonumber \\
    &\leq & \mathbb{E}_{\xi_{1}, \tilde{\zeta}_0} \mathbb{E}_{\xi_{2} | \xi_1, \tilde{\zeta}_1} \left[ \hat{w}_2(s_1, \tilde{x}_1^*, \xi_{[2]}, \tilde{\zeta}_1) - \tilde{w}_2(s_1, \tilde{x}_1^*, \xi_{[2]}, \tilde{\zeta}_1) \right] \nonumber \\
    &\leq & \cdots \leq \mathbb{E}_{\xi_{1}, \tilde{\zeta}_0} \mathbb{E}_{\xi_{2} | \xi_1, \tilde{\zeta}_1} \cdots \mathbb{E}_{\xi_{t} | \xi_{[t - 1]}, \tilde{\zeta}_{t - 1}}  \nonumber \\
    && \Big[ \mathbb{E}_{\xi_{t + 1} | \xi_{[t]}, \zeta_{t}} \left[C_{t}(s_{t}, \tilde{x}_{t}^*, \xi_{[t]}, \zeta_{t}) + w_{t + 1}(s_{t}, \tilde{x}_{t}^*, \xi_{[t + 1]}, \zeta_{t}) \right] \nonumber \\
    && - \mathbb{E}_{\xi_{t + 1} | \xi_{[t]}, \tilde{\zeta}_{t}} \left[ C_{t}(s_{t}, \tilde{x}_{t}^*, \xi_{[t]}, \tilde{\zeta}_{t}) + w_{t + 1}(s_{t}, \tilde{x}_{t}^*, \xi_{[t + 1]}, \tilde{\zeta}_{t}) \right] \Big], \label{eq:stab-optvalue-zeta-step1}
    \end{eqnarray}
    where $\hat{x}_k^*$ and $\tilde{x}_k^*$, $k = 1,\cdots, t$ are the optimal solutions to problem \eqref{eq:value-t} before and after the perturbation at stage $t$. To estimate the difference at the right hand side of \eqref{eq:stab-optvalue-zeta-step1}, we first examine the direct influence of the perturbation of $\zeta_t$ on the feasible set. 
    Let $\tilde{s}_{t + 1} = S^M_t(s_t, x_t, \xi_t, \tilde{\zeta}_t)$. 
    Then by the Slater condition (Assumption \ref{assumption7}), 
    there exist \( \bar{x}_{t + 1} \) and \( \tilde{x}_{t + 1} \) such that  
      \begin{subequations}
    \begin{eqnarray}
    && g_{t + 1, i}(s_{t + 1}, \bar{x}_{t + 1}, x_{t}, \xi_{[t + 1]}) \leq -\rho, \quad i \in I_{t + 1}; \label{eq:xt+1-slater} \\
    && g_{t + 1, i}(\tilde{s}_{t + 1}, \tilde{x}_{t + 1}, x_{t}, \xi_{[t + 1]}) \leq 0, \quad i \in I_{t + 1}. 
    \label{eq:xt+1-feasible}
    \end{eqnarray}
    \end{subequations}
    Existence of $\tilde{x}_{t+1}$ is guaranteed by the fact that under Assumption \ref{assumption6} ($G^\zeta$), 
$
\|\tilde{s}_{t + 1} - s_{t + 1}\| \leq L_S \| \tilde{\zeta}_t - \zeta_t \|$. Together with Assumption \ref{assumption6}, we have
$$
 g_{t + 1, i}(\tilde{s}_{t + 1}, \bar{x}_{t + 1}, x_{t}, \xi_{[t + 1]}) 
 \leq g_{t + 1, i}(s_{t + 1}, \bar{x}_{t + 1}, x_{t}, \xi_{[t + 1]}) + L_g L_S \| \tilde{\zeta}_t - \zeta_t \|
 \leq -\rho + L_g L_S \| \tilde{\zeta}_t - \zeta_t \|.
$$
When $L_g L_S \| \tilde{\zeta}_t - \zeta_t \|\leq \rho$,
we can choose $\tilde{x}_{t + 1} =\bar{x}_{t + 1}$.   
    On the other hand, we can use 
    Assumption \ref{assumption6}($G^\zeta$) to establish
    \begin{eqnarray}
    g_{t + 1,i}(s_{t + 1}, \tilde{x}_{t + 1}, x_{t}, \xi_{[t + 1]}) 
    &\leq& g_{t + 1,i}(s_{t + 1}, \tilde{x}_{t + 1}, x_{t}, \xi_{[t + 1]}) - g_{t + 1,i}(\tilde{s}_{t + 1}, \tilde{x}_{t + 1}, x_{t}, \xi_{[t + 1]})  \nonumber \\
    &\leq & L_{g} \Vert s_{t + 1} - \tilde{s}_{t + 1} \Vert \leq L_g L_S \Vert \zeta_{t} - \tilde{\zeta}_{t} \Vert, \quad \quad i \in I_{t + 1}. 
    \label{eq:Lip-gti}
    \end{eqnarray}
    Let $z_{t + 1} := \frac{\rho \tilde{x}_{t + 1} + L_g L_S \Vert \zeta_{t} - \tilde{\zeta}_{t} \Vert \bar{x}_{t + 1}}{\rho + L_g L_S \Vert \zeta_{t} - \tilde{\zeta}_{t} \Vert}$. 
    By the convexity of  \( g_{t + 1,i} \)
in \( x_{t + 1} \), \eqref{eq:xt+1-slater} and \eqref{eq:Lip-gti},
we obtain 
\begin{eqnarray}
    && g_{t + 1,i}\left(s_{t + 1}, z_{t + 1}, x_{t}, \xi_{[t + 1]}\right) \nonumber \\
    &\leq & \frac{\rho g_{t + 1, i}(s_{t + 1}, \tilde{x}_{t + 1}, x_{t}, \xi_{t + 1})}{\rho + L_g L_S \Vert \zeta_{t} - \tilde{\zeta}_{t} \Vert} + \frac{L_g L_S \Vert \zeta_{t} - \tilde{\zeta}_{t} \Vert g_{t + 1, i}(s_{t + 1}, \bar{x}_{t + 1}, x_{t}, \xi_{t + 1})}{\rho + L_g L_S \Vert \zeta_{t} - \tilde{\zeta}_{t} \Vert}\nonumber \\
    &\leq & \frac{\rho L_g L_S \Vert \zeta_{t} - \tilde{\zeta}_{t} \Vert - L_g L_S \Vert \zeta_{t} - \tilde{\zeta}_{t} \Vert \rho}{\rho + L_g L_S \Vert \zeta_{t} - \tilde{\zeta}_{t} \Vert} = 0, \nonumber
    \end{eqnarray}
 which implies that $z_{t + 1} \in \mathcal{X}_{t + 1}(s_{t + 1}, x_{t}, \xi_{[t + 1]})$.
Consequently
    \begin{eqnarray}
   &&  \mathbb{D}(\mathcal{X}_{t+1}(\tilde{s}_{t+1}, x_{t}, \xi_{[t+1]}), \mathcal{X}_{t+1}({s}_{t+1}, x_{t}, \xi_{[t+1]})) \nonumber \\
    &= & \underset{\tilde{x}_{t+1} \in \mathcal{X}_{t+1}(\tilde{s}_{t+1}, \tilde{x}_{t}, \xi_{[t+1]})}{\max} 
    d(\tilde{x}_{t+1}, \mathcal{X}_{t+1}({s}_{t+1}, x_{t}, \xi_{[t+1]})) \nonumber \\ 
    &\leq & \underset{\tilde{x}_{t+1} \in \mathcal{X}_{t+1}(\tilde{s}_{t+1}, \tilde{x}_{t}, \xi_{[t+1]})}{\max} d(\tilde{x}_{t+1}, z_{t+1}) \nonumber \\
    &= & \frac{ L_g L_S \Vert \zeta_{t} - \tilde{\zeta}_{t} \Vert}{\rho} 
    \underset{\tilde{x}_{t+1} \in \mathcal{X}_{t+1}(\tilde{s}_{t+1}, \tilde{x}_{t}, \xi_{[t+1]})}{\max} d(\bar{x}_{t+1}, z_{t+1}) \nonumber \\
    &\leq & A L_g L_S \Vert \zeta_{t} - \tilde{\zeta}_{t} \Vert / \rho, 
    \label{eq:hoffman}
    \end{eqnarray}
    where $A$ is defined in Theorem \ref{Theo-Lipschitz}. 
    
    Next, we estimate 
    $
    \mathbb{E}_{\xi_{t+1} | \xi_{[t]}, \zeta_{t}} \left[ w_{t+1}(s_t, x_t, \xi_{[t+1]}, \zeta_t) \right] 
    - \mathbb{E}_{\xi_{t+1} | \xi_{[t]}, \tilde{\zeta}_t} \left[ w_{t+1}(s_t, x_t, \xi_{[t+1]}, \tilde{\zeta}_t) \right]
    $
    at the rhs of \eqref{eq:stab-optvalue-zeta-step1}.
    Let \( x_{t+1}^* \) be an optimal solution to the optimization problem \eqref{eq:value-t}, and \( \tilde{x}_{t+1}^* \) be an optimal solution to the perturbed problem \eqref{eq:value-t}. Let \( y_{t+1} \) be the orthogonal projection of \( \tilde{x}_{t+1}^* \) onto \( \mathcal{X}_{t+1}(s_{t+1}, x_t, \xi_{[t+1]}) \). By Theorem \ref{Theo-Lipschitz},
    \begin{eqnarray}
    && w_{t+1}(s_t, x_t, \xi_{[t+1]}, \zeta_t) - w_{t+1}(s_t, x_t, \xi_{[t+1]}, \tilde{\zeta}_t) \nonumber \\
    &= & \mathbb{E}_{\xi_{t+2} | \xi_{[t+1]}, \zeta_{t+1}} \left[ C_{t+1}(s_{t+1}, x_{t+1}^*, \xi_{[t+1]}, \zeta_{t+1}) + 
    w_{t+2}(s_{t+1}, x_{t+1}^*, \xi_{[t+2]}, \zeta_{t+1}) \right] \nonumber \\
    && - \mathbb{E}_{\xi_{t+2} | \xi_{[t+1]}, \zeta_{t+1}} \left[ C_{t+1}(\tilde{s}_{t+1}, \tilde{x}_{t+1}^*, \xi_{[t+1]}, \zeta_{t+1}) + 
    w_{t+2}(\tilde{s}_{t+1}, \tilde{x}_{t+1}^*, \xi_{[t+2]}, \zeta_{t+1}) \right] \nonumber \\
    &\leq & \mathbb{E}_{\xi_{t+2} | \xi_{[t+1]}, \zeta_{t+1}} \left[ C_{t+1}(s_{t+1}, y_{t+1}, \xi_{[t+1]}, \zeta_{t+1}) + 
    w_{t+2}(s_{t+1}, y_{t+1}, \xi_{[t+2]}, \zeta_{t+1}) \right] 
    \nonumber \\
    && - \mathbb{E}_{\xi_{t+2} | \xi_{[t+1]}, \zeta_{t+1}} \left[ C_{t+1}(\tilde{s}_{t+1}, \tilde{x}_{t+1}^*, \xi_{[t+1]}, \zeta_{t+1}) + 
    w_{t+2}(\tilde{s}_{t+1}, \tilde{x}_{t+1}^*, \xi_{[t+2]}, \zeta_{t+1}) \right] \nonumber \\
    &\leq & L_{C,t+1} (\Vert s_{t+1} - \tilde{s}_{t+1} \Vert + \Vert y_{t+1} - \tilde{x}_{t+1}^* \Vert) + 
    L_{t+2} (\Vert s_{t+1} - \tilde{s}_{t+1} \Vert + \Vert y_{t+1} - \tilde{x}_{t+1}^* \Vert) \quad \text{(by \eqref{eq:Lipschitz-v_t})} \nonumber \\
    &\leq & (L_{C,t+1} L_S + L_{C,t+1} L_{X,t+1} + L_{t+2} L_S + L_{t+2} L_{X,t+1}) \Vert \zeta_t - \tilde{\zeta}_t \Vert, \quad \text{(by \eqref{eq:hoffman})} \nonumber \\
    &:= & \widehat{L}_{t+1} \Vert \zeta_t - \tilde{\zeta}_t \Vert. \label{eq:lip-w}
    \end{eqnarray}
    where $y_{t+1}$ is feasible to \eqref{eq:value-t}, $L_{X, t+1} = A L_g L_S / \rho$. \eqref{eq:lip-w} implies that  
    $w_{t+1}(s_t, x_t, \xi_{[t+1]}, \cdot)$ is Lipschitz continuous  over $\mathcal{Z}_t$ with modulus 
    $\widehat{L}_{t+1}$.
    By the dual representation of the 
    Kantorovich metric,
    \begin{eqnarray}
    && \mathbb{E}_{\xi_{t+1} | \xi_{[t]}, \zeta_t} \left[ w_{t+1}(s_t, x_t, \xi_{[t+1]}, \zeta_t) \right] 
    - \mathbb{E}_{\xi_{t+1} | \xi_{[t]}, \tilde{\zeta}_t} \left[ w_{t+1}(s_t, x_t, \xi_{[t+1]}, \tilde{\zeta}_t) \right] \nonumber \\
    &= & \widehat{L}_{t+1} \int_{\mathbb{R}^{m_{2, t}}} \left[ w_{t+1}(s_t, x_t, \xi_{[t+1]}, y) / \widehat{L}_{t+1} \right] P_t(dy) \nonumber \\
    && - \int_{\mathbb{R}^{m_{2, t}}} \left[ w_{t+1}(s_t, x_t, \xi_{[t+1]}, y) / \widehat{L}_{t+1} \right] \tilde{P}_t(dy) \nonumber \\
    &\leq & \widehat{L}_{t+1} \underset{f \in \mathcal{F}(\Xi)}{\sup} \left( 
    \int_{\mathbb{R}^{m_{2, t}}} f(y) P_t(dy) - \int_{\mathbb{R}^{m_{2, t}}} f(y) \tilde{P}_t(dy) \right) \nonumber \\
    &\leq & \widehat{L}_{t+1} \dd_K(P_t, \tilde{P}_t), 
    \label{eq:Lip-zeta}
    \end{eqnarray}
Combining \eqref{eq:Lip-zeta} with the rhs of \eqref{eq:stab-optvalue-zeta-step1}, we obtain that
    \begin{eqnarray}
        \vt(\tilde{P}_{[0 : t - 1]}, P_{[t : T]}) - \vt(\tilde{P}_{[0 : t]}, P_{[t + 1 : T]}) &\leq & \mathbb{E}_{\xi_{1}, \tilde{\zeta}_0} \mathbb{E}_{\xi_{2} | \xi_1, \tilde{\zeta}_1} \cdots \mathbb{E}_{\xi_{t} | \xi_{[t - 1]}, \tilde{\zeta}_{t}} \left[ (\widehat{L}_{t + 1} + L_{C,t}) \dd_K(P_{t}, \tilde{P}_{t}) \right] \nonumber \\
        &\leq &(\widehat{L}_{t + 1} + L_{C}) \dd_K(P_{t}, \tilde{P}_{t}). \nonumber
    \end{eqnarray}
    In view of the inter-stage independence of \( \zeta_t \), $0 \leq t \leq T$, we obtain by summing the above differences of the optimal values perturbed at individual stages, $t=0,1, \cdots, T$, that  
    \begin{eqnarray}
     && \vt(P) - \vt(\tilde{P}) \nonumber \\
     &= & \vt(P) - \vt((\tilde{P}_0, P_{[1:T]})) +
    \vt((\tilde{P}_0, P_{[1:T]})) - \vt((\tilde{P}_{[0:1]}, P_{[2:T]})) + 
    \cdots + \vt((\tilde{P}_{[0:T - 1]}, P_{T})) - \vt(\tilde{P}) \nonumber \\
    &\leq &
    \sum\limits_{t = 0}^T (\widehat{L}_{t + 1} + L_{C}) \dd_K(P_t, \tilde{P}_t).
    \label{eq:opt-value-omega}
    \end{eqnarray}
    By exchanging positions between $P$ and $\tilde{P}$, we can derive the same bound for $\vt(\tilde{P}) - \vt(P)$. This completes the
proof of Part (i).    

\underline{Part (ii)}.
To obtain the quantitative stability of the 
set of optimal solutions, we need to first establish the quantitative stability of the set of feasible solutions of problem \eqref{eq:mixed-MSP-MDP}. It can be seen from \eqref{eq:hoffman} that the feasible set at the first stage satisfies that
    \begin{eqnarray}
    \mathbb{H}(\mathcal{X}_1(s_1, x_0, \xi_1), \mathcal{X}_1(\tilde{s}_1, x_0, \xi_1)) \leq \frac{L_{g,1} L_S A_1}{\rho} \Vert \zeta_0 - \tilde{\zeta}_0 \Vert := L_{X, 1, 0} \Vert \zeta_0 - \tilde{\zeta}_0 \Vert. \nonumber
    \end{eqnarray}
    Assume that for the feasible set at stage $k$ ($k < t$), there exist 
  positive  constants $L_{X,k,j}$ satisfying
    \begin{eqnarray}
    \mathbb{H}(\mathcal{X}_k(s_k^z, z_{k - 1}, \xi_{[k]}), \mathcal{X}_k(\tilde{s}_k, \tilde{x}_{k - 1}, \xi_{[k]})) \leq \sum\limits_{j = 0}^{k - 1} L_{X, k, j} \Vert \tilde{\zeta}_j - \zeta_j \Vert, \label{eq:ind-ass-Lipzeta}
    \end{eqnarray}
    where $s_k^z = S^M_{k - 1}(s_{k - 1}^z, z_{k - 1}, \xi_{k - 1}, \zeta_{k - 1})$, $\tilde{s}_k = S^M_{k - 1}(s_{k - 1}, \tilde{x}_{k - 1}, \xi_{k - 1}, \tilde{\zeta}_{k - 1})$, $z_k$ is the orthogonal projection of $\tilde{x}_k$ onto $\mathcal{X}_k(s_k^z, z_{k - 1}, \xi_{[k]})$, $s_0^z = \tilde{s}_0 = s_0$, and $z_0 = \tilde{x}_0 = x_0$. 
    Following a similar argument to that in Part (i), we assert that there exist feasible solutions $\bar{x}_t$ and $\tilde{x}_t$ such that  
    \begin{eqnarray}
    && g_{t,i}(\tilde{s}_t, \tilde{x}_t, \tilde{x}_{t - 1}, \xi_{[t]}) \leq 0,\ i \in I_t, \nonumber \\
    && g_{t,i}(s_t^z, \bar{x}_t, z_{t - 1}, \xi_{[t]}) \leq -\rho,\ i \in I_t. \nonumber
    \end{eqnarray}
 Moreover, 
 by the Lipschitz continuity of $ g_{t,i} $ and $S_t^M$,
    \begin{subequations}
    \begin{eqnarray}
    && g_{t,i}(s_t^z, \tilde{x}_t, z_{t - 1}, \xi_{[t]}) \leq g_{t,i}(s_t^z, \tilde{x}_t, z_{t - 1}, \xi_{[t]}) - g_{t,i}(\tilde{s}_t, \tilde{x}_t, \tilde{x}_{t - 1}, \xi_{[t]}) \label{eq:omega-g-a} \\
    &\leq & L_{g, t} (\Vert \tilde{s}_t - s_t^z \Vert + \Vert z_{t - 1} - \tilde{x}_{t - 1} \Vert) \label{eq:omega-g-b} \\
    &\leq & L_{g, t} L_S (\Vert \tilde{s}_{t - 1} - s_{t - 1}^z \Vert + \Vert z_{t - 1} - \tilde{x}_{t - 1} \Vert + \Vert \tilde{\zeta}_{t - 1} - \zeta_{t - 1} \Vert) + L_{g, t} \Vert z_{t - 1} - \tilde{x}_{t - 1} \Vert \label{eq:omega-g-c} \\
    &\leq & \cdots \leq L_{g, t}\sum\limits_{k = 1}^t L_S^k (\Vert \tilde{\zeta}_{t - k} - \zeta_{t - k} \Vert + \Vert z_{t - k} - \tilde{x}_{t - k} \Vert) + L_{g, t} \Vert z_{t - 1} - \tilde{x}_{t - 1} \Vert \label{eq:omega-g-d} \\
    &\overset{\eqref{eq:ind-ass-Lipzeta}}{\leq} & L_{g, t}(\sum\limits_{k = 1}^t L_S^k \Vert \tilde{\zeta}_{t - k} - \zeta_{t - k} \Vert + \sum\limits_{k = 1}^t L_S^{t - k} \sum\limits_{j = 0}^{k - 1} L_{X, k, j} \Vert \tilde{\zeta}_j - \zeta_j \Vert)  \nonumber \\
    && + L_{g, t} \sum\limits_{j = 0}^{t - 2} L_{X, t - 1, j} \Vert \tilde{\zeta}_j - \zeta_j \Vert \label{eq:omega-g-e} \\
    &\leq & L_{g, t} \left( \sum\limits_{j = 0}^{t - 2} L_{X, t - 1, j} \Vert \tilde{\zeta}_j - \zeta_j \Vert +  \sum\limits_{j = 0}^{t - 1} \left(L_S^{t - j} + \sum\limits_{k = j + 1}^{t - 1} L_S^{t - 1 - k} L_{X, k, j}\right) \Vert \tilde{\zeta}_j - \zeta_j \Vert \right), \label{eq:omega-g-f} \nonumber \\
    \end{eqnarray}  
    \label{eq:omega-g}
    \end{subequations}
    where \eqref{eq:omega-g-a} is due to $\tilde{x}_t \in \mathcal{X}_t(\tilde{s}_t, \tilde{x}_{t - 1}, \xi_{[t]})$, \eqref{eq:omega-g-b} and \eqref{eq:omega-g-c} are obtained by Assumption \ref{assumption6},  \eqref{eq:omega-g-d} is derived through repeated applying Assumption \ref{assumption6}, \eqref{eq:omega-g-e} is due to the induction assumption \eqref{eq:ind-ass-Lipzeta}.  \eqref{eq:omega-g-f} comes from  interchanging the order of summations.
    Denote the right-hand side of \eqref{eq:omega-g} as \( G_\zeta \). Let \( z_t = \frac{\rho \tilde{x}_t + G_{\zeta} \bar{x}_t}{G_{\zeta} + \rho} \). Then, by the convexity of \( g_{t, i} \), we have
    \begin{eqnarray}
    g_{t, i}(s_t^z, z_t, z_{t - 1}, \xi_{[t]}) \leq \frac{G_{\zeta} \rho - \rho G_{\zeta}}{G_{\zeta} + \rho} = 0,  i \in I_t .  \nonumber
    \end{eqnarray}
    On the other hand, for any \( \tilde{x}_t \in \mathcal{X}_t(\tilde{s}_t, \tilde{x}_{t - 1}, \xi_{[t]}) \), 
    since \( z_t \in \mathcal{X}_t(s_t^z, z_{t - 1}, \xi_{[t]}) \), then
    \begin{eqnarray}
    d(z_t, \tilde{x}_t) = \frac{G_{\zeta}}{\rho} d(z_t, \bar{x}_t) \leq \frac{A}{\rho} G_{\zeta}. \nonumber
    \end{eqnarray}
    Since \( G_\zeta \) is a nonnegative linear combination of \( \Vert \zeta_k - \tilde{\zeta}_k \Vert, k \in \{0, 1, 2, \cdots, t - 1 \} \), we can express \( \frac{A}{\rho} G_{\zeta} \) as \( \sum\limits_{j = 0}^{t - 1} L_{X, t, j} \Vert \zeta_j - \tilde{\zeta}_j \Vert \), where
    \begin{eqnarray*}
        L_{X, t, j} = \frac{A L_{g, t} \left( L_{X, t - 1, j} +  L_S^{t - j} + \sum\limits_{k = j}^{t - 1} L_S^{t - 1 - k} L_{X, k, j}\right)}{\rho} \quad \text{for } j = 0, 1, \cdots, t - 2
    \end{eqnarray*}
    and $L_{X, t, t - 1} = \frac{A L_{g, t} L_S}{\rho}$. Then
    \begin{eqnarray}
    d(z_t, \tilde{x}_t) \leq \sum\limits_{j = 0}^{t - 1} L_{X, t, j} \Vert \zeta_j - \tilde{\zeta}_j \Vert. \nonumber
    \end{eqnarray}
    
    As \( \tilde{x}_t \) is chosen arbitrarily, and in the derivation above, \( {\zeta}_t \) can be replaced by \( \tilde{\zeta}_t \), it follows that the feasible solution set satisfies that
    \begin{eqnarray}
    \mathbb{H}(\mathcal{X}_t(s_t^z, z_{t - 1}, \xi_{[t]}), \mathcal{X}_t(\tilde{s}_t, \tilde{x}_{t - 1}, \xi_{[t]})) \leq \sum\limits_{j = 0}^{t - 1} L_{X, t, j} \Vert \zeta_j - \tilde{\zeta}_j \Vert. \label{eq:feas-Lip-zeta}
    \end{eqnarray}
    By the definition of Kantorovich metric (see e.g. \cite{romisch2003stability}), this implies that
    \begin{eqnarray}
    \mathbb{E}_{\pi^*(P_{[t]}, \tilde{P}_{[t]})} \left[\mathbb{H}(\mathcal{X}_t(s_t^z, z_{t - 1}, \xi_{[t]}), \mathcal{X}_t(\tilde{s}_t, \tilde{x}_{t - 1}, \xi_{[t]}))\right] \leq \sum\limits_{j = 0}^{t - 1} L_{X, t, j} \dd_K(P_j, \tilde{P}_j), \nonumber
    \end{eqnarray}
    where $\pi^*(P_{[t]}, \tilde{P}_{[t]})$ is the optimal coupling of $P_{[t]}$ and $\tilde{P}_{[t]}$ in \eqref{eq:def-DeltaL}.
    Let \( \mathcal{X}(\zeta) := \mathcal{X}_0 \times \cdots \times \mathcal{X}_T \) be the feasible set of problem \eqref{eq:mixed-MSP-MDP}. Then from the inequality above, we obtain
    \begin{eqnarray}
    \mathbb{E}_{\pi^*(P, \tilde{P})} \left[ \mathbb{H}(\mathcal{X}(\zeta), \mathcal{X}(\tilde{\zeta})) \right] \leq \sum\limits_{t = 1}^T \sum\limits_{j = 0}^{t - 1} L_{X, t, j} \dd_K(P_j, \tilde{P}_j).
    \label{eq:omega-feasibleset}
    \end{eqnarray}
    
    Having established the quantitative stability of the set of feasible solutions of problem \eqref{eq:mixed-MSP-MDP}, we now consider the difference between the optimal value under the disturbed distribution and the objective value of problem \eqref{eq:mixed-MSP-MDP} under the feasible policy \( z := (z_1, z_2, \cdots, z_T) \). 
    To this end, we first consider the difference between the states \( s_t^z \) and \( \tilde{s}_t \), \( 0 \leq t \leq T \). It is known from the assumed Lipschitz continuity that
    \begin{eqnarray}
    && \Vert s_t^z - \tilde{s}_t \Vert \leq L_S (\Vert s_{t - 1}^z - \tilde{s}_{t - 1} \Vert + \Vert z_{t - 1} - \tilde{x}_{t - 1} \Vert + \Vert \zeta_{t - 1} - \tilde{\zeta}_{t - 1} \Vert) \nonumber \\
    &\leq & L_S(\Vert z_{t - 1} - \tilde{x}_{t - 1} \Vert + \Vert \zeta_{t - 1} - \tilde{\zeta}_{t - 1} \Vert) \nonumber \\
    && + L_S^2(\Vert s_{t - 2}^z - \tilde{s}_{t - 2} \Vert + \Vert z_{t - 2} - \tilde{x}_{t - 2} \Vert + \Vert 
    \zeta_{t - 2} - \tilde{\zeta}_{t - 2} \Vert) \nonumber \\
    &\leq & \cdots \leq \sum\limits_{k = 1}^t L_S^k \left(\Vert z_{t - k} - \tilde{x}_{t - k} \Vert + \Vert \zeta_{t - k} - \tilde{\zeta}_{t - k} \Vert\right). \label{eq:s-zeta-Lip}
    \end{eqnarray}
    Based on this, we obtain
    \begin{eqnarray}
    \hspace{-2em} && \left| \mathbb{E}_{\zeta} \left[ \sum\limits_{t = 0}^T C_t(s_t^z, z_t, \xi_{[t]}, \zeta_t) \right] - \mathbb{E}_{\tilde{\zeta}} \left[\sum\limits_{t = 0}^T C_t(\tilde{s}_t, \tilde{x}^*_t, \xi_{[t]}, \tilde{\zeta}_t) \right] \right| \nonumber \\
    \hspace{-2em} &\leq & \mathbb{E}_{\pi^*(P, \tilde{P})} \left[ \sum\limits_{t = 0}^T L_{C, t}(\Vert s_t^z - \tilde{s}_t \Vert + \Vert z_t - \tilde{x}^*_t \Vert + \Vert \zeta_t - \tilde{\zeta}_t \Vert) \right] \nonumber \\
    \hspace{-2em} &\leq & \mathbb{E}_{\pi^*(P, \tilde{P})}  \left[ \sum\limits_{t = 0}^T L_{C, t}(\Vert z_t - \tilde{x}^*_t \Vert + \Vert \zeta_t - \tilde{\zeta}_t \Vert)  + \sum\limits_{t = 0}^T L_{C, t} \sum\limits_{k = 1}^t L_S^k(\Vert z_{t - k} - \tilde{x}_{t - k}^* \Vert + \Vert \zeta_{t - k} - \tilde{\zeta}_{t - k} \Vert) \right] \nonumber \\
    \hspace{-2em} &\leq &  L_C\sum\limits_{t = 1}^T \sum\limits_{j = 0}^{t - 1} L_{X, t, j} \dd_K(P_j, \tilde{P}_j) + L_C \sum\limits_{t = 0}^T \dd_K(P_t, \tilde{P}_t) + L_C\sum\limits_{t = 1}^T \sum\limits_{k = 1}^t L_S^k \bigg(\sum\limits_{j = 1}^{t - k} L_{X, t - k, {j - 1}} \dd_K(P_{j - 1}, \tilde{P}_{j - 1}) \nonumber \\
    \hspace{-2em} && + \dd_K(P_{t - k}, \tilde{P}_{t - k})\bigg) \nonumber \\
    \hspace{-2em} &= & L_C \sum\limits_{t = 1}^T \left(1 + \sum\limits_{k = t}^T L_{X, k, t - 1}\right) \dd_K(P_{t - 1}, \tilde{P}_{t - 1}) + L_C \sum\limits_{t = 1}^{T} \left(\sum\limits_{k = 1}^{T - t} \sum\limits_{l = k + t}^T L_S^k L_{X, l - k, t}\right) \dd_K(P_{t - 1}, \tilde{P}_{t - 1}) \nonumber \\
    \hspace{-2em} && + L_C \sum\limits_{t = 1}^T \left(\sum\limits_{k = 1}^{T - t + 1} L_S^k\right) \dd_K(P_{t - 1}, \tilde{P}_{t - 1}) \nonumber \\
    \hspace{-2em} &= & L_C \sum\limits_{t = 0}^{T - 1} \left(1 + \sum\limits_{k = t + 1}^T L_{X, k, t} + \sum\limits_{k = 1}^{T - 1 - t} \sum\limits_{l = k + t + 1}^T L_S^k L_{X, l - k, t} + \sum\limits_{k = 1}^{T - t} L_S^k\right) \dd_K\left(P_t, \tilde{P}_t\right) + L_C \dd_K\left(P_T, \tilde{P}_T\right). \nonumber \\ 
    \label{eq:optvalue-projection}
    \end{eqnarray}
    The first inequality is due to the Lipschitz property of $C_t$. The second inequality follows from \eqref{eq:s-zeta-Lip}. The third inequality is obtained by \eqref{eq:feas-Lip-zeta}, Assumption \ref{assumption7} and the definition of Kantorovich metric. 
    
    With the above preparations, we can specifically explore the quantitative stability of the optimal solution set. According to \eqref{eq:opt-value-omega} and \eqref{eq:optvalue-projection}, we have
    \begin{eqnarray}
     && \left| \mathbb{E}_{\zeta_{[T]}} \left[ \sum\limits_{t = 0}^T C_t(s_t^z, z_t, \xi_{[t]}, \zeta_t) \right] - \mathbb{E}_{\zeta_{[T]}} \left[ \sum\limits_{t = 0}^T C_t(s_t, x_t^*, \xi_{[t]}, \zeta_t) \right] \right| \nonumber \\
     &\leq & \left| \mathbb{E}_{\zeta_{[T]}} \left[ \sum\limits_{t = 0}^T C_t(s_t^z, z_t, \xi_{[t]}, \zeta_t) \right] - \mathbb{E}_{\tilde{\zeta}_{[T]}} \left[\sum\limits_{t = 0}^T C_t(\tilde{s}_t, \tilde{x}^*_t, \xi_{[t]}, \tilde{\zeta}_t) \right] \right| \nonumber \\
     && + \left| \mathbb{E}_{\tilde{\zeta}_{[T]}} \left[\sum\limits_{t = 0}^T C_t(\tilde{s}_t, \tilde{x}^*_t, \xi_{[t]}, \tilde{\zeta}_t) \right] - \mathbb{E}_{\zeta_{[T]}} \left[ \sum\limits_{t = 0}^T C_t(s_t, x_t^*, \xi_{[t]}, \zeta_t) \right] \right| \nonumber \\
     &\leq & \sum\limits_{t = 0}^{T - 1} (L_{C} L_S + L_{C} L_{X,t} + L_{C} + L_{t + 1} L_S + L_{t + 1} L_{X, t} + L_C + L_C \sum\limits_{k = t + 1}^T L_{X, k, t} \nonumber \\
     && + L_C \sum\limits_{k = 1}^{T - 1 - t} \sum\limits_{l = k + t + 1}^T L_S^k L_{X, l - k, t} + L_C \sum\limits_{k = 1}^{T - t} L_S^k) \dd_K(P_t, \tilde{P}_t) + L_C \dd_K(P_T, \tilde{P}_T). \nonumber \\
     &: = & \sum\limits_{t = 0}^T H_t \dd_K(P_t, \tilde{P}_t),
     \label{eq:diff-proj-opt}
    \end{eqnarray}
    where 
    \begin{eqnarray*}
    H_t &:= & L_{C} L_S + L_{C} L_{X,t} + L_{C} + L_{t + 1} L_S + L_{t + 1} L_{X, t} + L_C + L_C \sum\limits_{k = t + 1}^T L_{X, k, t} \nonumber \\
     && + L_C \sum\limits_{k = 1}^{T - 1 - t} \sum\limits_{l = k + t + 1}^T L_S^k L_{X, l - k, t} + L_C \sum\limits_{k = 1}^{T - t} L_S^k
    \end{eqnarray*}
    for $t = 0, 1, \cdots, T - 1$ and $H_T := L_C$. The second inequality is obtained by \eqref{eq:opt-value-omega} and \eqref{eq:optvalue-projection}.
    
   Let \( z := (z_0, z_1, z_2, \cdots, z_T) \), where $z_t$ is the orthogonal projection of $\tilde{x}_t^*$ on $\mathcal{X}_t(s_t^z, z_{t - 1}, \xi_{[t]})$.
   Assume for the sake of a contradiction that
    \begin{eqnarray}
    \mathbb{E}_{\zeta} \left[ d(z, \mathcal{X}^*(\zeta)) \right] > \left( \frac{1}{\beta} \sum\limits_{t = 0}^T H_t \dd_K(P_t, \tilde{P}_t) \right)^\frac{1}{\nu}.
    \end{eqnarray}
    Then by the growth condition \eqref{eq:growth-omega}, we obtain
    \begin{eqnarray}
    \mathbb{E}_{\zeta} \left[ \sum\limits_{t = 0}^T C_t(s_t^z, z_t, \xi_{[t]}, \zeta_t) \right] - \mathbb{E}_{\zeta} \left[ \sum\limits_{t = 0}^T C_t(s_t, x_t^*, \xi_{[t]}, \zeta_t) \right] > \sum\limits_{t = 0}^T H_t \dd_K(P_t, \tilde{P}_t), \nonumber
    \end{eqnarray}
  which leads to a  contradiction to  \eqref{eq:diff-proj-opt}. Thus
    \begin{eqnarray}
    \mathbb{E}_{\zeta} \left[ d(z, \mathcal{X}^*(\zeta)) \right] \leq \left( \frac{1}{\beta} \sum\limits_{t = 0}^T H_t \dd_K(P_t, \tilde{P}_t) \right)^\frac{1}{\nu}. 
    \label{eq:solu-proj-opt}
    \end{eqnarray}
    Combining \eqref{eq:omega-feasibleset} and \eqref{eq:solu-proj-opt}, we 
    conclude that for any \(\tilde{\bm x}^* \in {\mathcal{X}^*}(\tilde{\zeta})\), 
    \begin{eqnarray}
    &&\mathbb{E}_{\pi^*(P, \tilde{P})} \left[ d(\tilde{\bm x}^*, \mathcal{X}^*(\zeta)) \right] \nonumber \\
    &\leq & \mathbb{E}_{\pi^*(P, \tilde{P})} \left[ d(\tilde{\bm x}^*, z) + d(z, \mathcal{X}^*(\zeta)) \right] \leq \mathbb{E}_{\pi^*(P, \tilde{P})} \left[ \mathbb{H}(\mathcal{X}(\tilde{\zeta}), \mathcal{X}(\zeta)) + d(z, \mathcal{X}^*(\zeta)) \right]  \nonumber \\
    &\leq& \sum\limits_{t = 0}^T \left( \sum\limits_{k = t + 1}^T L_{X, k, t} \right) \dd_K(P_t, \tilde{P}_t) + \left( \frac{1}{\beta} \sum\limits_{t = 0}^T H_t \dd_K(P_t, \tilde{P}_t) \right)^\frac{1}{\nu}. \nonumber
    \end{eqnarray}
    Since \(\tilde{\bm x}^*\) is arbitrarily chosen from \(\mathcal{X}^*(\tilde{\zeta})\), we obtain \eqref{eq:stab-zeta-optsolu}. 
\end{proof}
Theorem \ref{Theo-stab-endo} quantifies the changes in the optimal value and the optimal solution set of problem \eqref{eq:mixed-MSP-MDP} when the distributions of endogenous random variables are perturbed. These variations are controlled by a weighted sum of the Kantorovich metrics between the distributions before and after perturbations at individual stages. Since \(\zeta_t, 0 \leq t \leq T,\) are independent across stages, perturbing the distributions at all stages can be decomposed into perturbations at each stage. Therefore, as shown above, the quantitative stability results follow by summing the stagewise bounds.
The growth condition \eqref{eq:growth-omega} holds
under mild conditions. For instance, it is satisfied when the objective function of problem \eqref{eq:mixed-MSP-MDP} is strongly convex w.r.t.~$\bm x$.  We demonstrate this in Proposition \ref{Prop-strongly-convex} in Appendix \ref{appendixproof}.

Kern et al.~\cite{kern2020first} investigated the first-order sensitivity of the value function in 
 MDPs w.r.t.~transition kernel perturbations. Their analyses rely on bounding functions and the Hadamard differentiability of the cost (reward) function w.r.t.~the transition kernel, and the perturbation is along a specific direction. Unlike Kern et al.~\cite{kern2020first} and usual MDP literature, we represent the state transition process in the form of transition functions and its randomness w.r.t.~endogenous random variables. This enables us to establish quantitative stability results for both the optimal value and the optimal solution set under arbitrary perturbations rather than perturbations along a specific direction. Moreover, in Theorem \ref{Theo-stab-endo}, we only need some fundamental assumptions on the objective function and transition functions, which are easily satisfied and verified in comparison to the Hadamard differentiability.
 
The perturbation of the endogenous random variable \(\zeta_t\) naturally induces a perturbation of the transition kernel. Indeed, for a given transition mapping
\(
    s_{t+1}=S_t^M(s_t,x_t,\xi_t,\zeta_t),
\)
if the law of \(\zeta_t\) is \(\mu_t\), then the induced transition kernel under given $\xi_t$ is
\(
    P_t^\mu(B\mid s_t,x_t,\xi_t)
    =
    \mu_t\big(\{z:S_t^M(s_t,x_t,\xi_t,z)\in B\}\big).
\)
Thus, replacing \(\mu_t\) by \(\tilde\mu_t\) leads to a perturbed kernel \(P_t^{\tilde\mu}\). For example, if
\(
    s_{t+1}=s_t+x_t+\zeta_t,
\)
then
\(
    P_t^\mu(B\mid s_t,x_t)=\mu_t(B-s_t-x_t),
\)
so changing the distribution of \(\zeta_t\) directly changes the transition kernel.

However, such perturbations form only a structured class of MDP kernel perturbations, since the transition mapping \(S_t^M\) is specified beforehand. Hence, they cannot represent arbitrary changes from \(P_t(\cdot\mid s,a)\) to \(Q_t(\cdot\mid s,a)\). On the other hand, classical MDP stability bounds often control value perturbations by worst-case kernel distances~\cite{muller1997value, kern2020first,zhou2024robustness} such as
\(
    \sup_{s,a}
    d\big(P_t(\cdot\mid s,a),Q_t(\cdot\mid s,a)\big).
\)
By contrast, our bound is expressed directly in terms of the distance between the primitive distributions \(\mu_t\) and \(\tilde\mu_t\), without taking a supremum over all state-action pairs. In this sense, although our perturbation class is narrower than arbitrary MDP kernel perturbations, the resulting bound avoids the worst-case amplification over \((s,a)\), thus enhancing the tractability, and provides a sharper description for random-input-induced perturbations in the integrated MSP--MDP framework.

\section{Quantitative stability with respect to exogenous uncertainty}

We now turn to analyzing problem  \eqref{eq:mixed-MSP-MDP}
when exogenous uncertainty $\xi$ is perturbed whereas endogenous uncertainty is unperturbed. 

\subsection{Holistic stability analysis}

We begin by considering
\begin{eqnarray} 
        \vt(Q) := \underset{\bm x \in \mathcal{X}(\xi)}{\min} \mathbb{E}_{\xi} \left[ \sum\limits_{t = 0}^T C_t(s_t, x_t, \xi_{[t]}, \zeta_t) \right]
        \label{eq:v-xi}
        \end{eqnarray} 
and its perturbation
    \begin{eqnarray}    
        \vt(\tilde{Q}) := \underset{\bm x \in \mathcal{X}(\tilde{\xi})}{\min} \mathbb{E}_{\tilde{\xi}} \left[ \sum\limits_{t = 0}^T C_t(s_t, x_t, \tilde{\xi}_{[t]}, \zeta_t) \right].
        \label{eq:v-xi-ptb}
    \end{eqnarray}
Similar to Section 4,
we write $\mathcal{X}(\xi)$
for the set of feasible policies to problem \eqref{eq:v-xi} 
and $\mathcal{X}(\tilde{\xi})$ 
for the set of feasible policies to problem \eqref{eq:v-xi-ptb}.
Unlike endogenous random variables, the distributions of exogenous random variables are intertemporal dependent. Therefore, we cannot investigate the stability stage by stage. In light of this, we 
first consider the changes in the optimal value and optimal solution set of problem \eqref{eq:mixed-MSP-MDP} when the whole data process \(\xi\) is perturbed to \(\tilde{\xi}\), and its distribution varies from \(Q\) to \(\tilde{Q}\). 
Let $\Pi(Q, \tilde{Q})$ denote all the joint distributions with marginal distributions being $Q$ and $\tilde{Q}$.
Then we will extend the stability analysis to the general situation under stagewise distribution 
perturbations. 

To establish the quantitative stability, let $\mathscr{Q}_t = \mathscr{P}(\R^{m_{1, t}})$
denote the set of all probability measures in $\R^{m_{1, t}}$ and 
$Q_t, \tilde{Q}_t \in \mathscr{Q}_t$ be the probability measures of $\xi_t$ at stage $t$. Then we need to demonstrate the Lipschitz continuity of the feasible solution set under distribution perturbations. For \(t = 1, 2, \cdots, T\), the decision vector \(x_t\) at stage $t$ depends on \((s_t, x_{t-1}, \xi_{[t]})\). Therefore, if we consider the policy \(\bm x\) 
of problem \eqref{eq:mixed-MSP-MDP}, it would depend on \((s_0, \xi, \zeta)\). Since we are considering the perturbation of the data process \(\xi\) as a whole and here \( s_0 \) and \( \zeta \) are fixed, we use \(\mathcal{X}(\xi)\) to denote the feasible policy set of problem \eqref{eq:mixed-MSP-MDP}. 
We need the following assumption:
    
    \begin{assumption}[Local Lipschitz continuity of \(C_t\), \(g_{t, i}\) and \(S^M_t\)]
    For \(t = 1, 2, \cdots, T\) and any $\xi = (\xi_1, \xi_2, \cdots, \xi_T)$,
    
    \noindent $\mathrm{(C^{\xi})}$ \(C_t(s_t, x_t, \xi_{[t]}, \zeta_t)\) is locally Lipschitz continuous in \((s_t, x_t, \xi_{[t]})\) with
    modulus \(L_{C,t}(\xi_{[t]}, \tilde{\xi}_{[t]})\)
    which is continuous and 
    symmetric in $\xi_{[t]}$ and $ \tilde{\xi}_{[t]}$, and 
 bounded below by a positive constant;
    
    \noindent $\mathrm{(S^{\xi})}$ \(S^M_{t - 1}(s_{t - 1}, x_{t - 1}, \xi_{t - 1}, \zeta_{t - 1})\) is locally Lipschitz continuous 
    in \((s_{t - 1}, x_{t - 1}, \xi_{t - 1})\) with modulus $L_{S,t - 1}(\xi_{[t - 1]}, \tilde{\xi}_{[t - 1]})$ which is continuous,
    symmetric in $\xi_{[t]}$ and $ \tilde{\xi}_{[t]}$, and bounded below by a positive constant;
    
    \noindent $\mathrm{(G^{\xi})}$ \(g_{t,i}(s_t, x_t, x_{t-1}, \xi_{[t]}), i \in I_t\) is locally Lipschitz continuous 
    in \((s_t, x_{t-1}, \xi_{[t]})\) with modulus $L_{g,t}(\xi_{[t]}, \tilde{\xi}_{[t]})$ which is continuous,
    symmetric in $\xi_{[t]}$ and $ \tilde{\xi}_{[t]}$, and bounded below by a positive constant.
    \label{assumption8}
    \end{assumption}

Condition ($S^{\xi}$) is satisfied when 
$S^M_t$ is locally Lipschitz continuous in $(s_t, x_t)$
and globally Lipschitz continuous in $\xi_t$ uniformly 
w.r.t.~$(s_t, x_t)$. Similar comment applies to ($G^{\xi}$) and ($C^{\xi}$). For simplicity, we define $L_{S,t}(\xi, \tilde{\xi}) :=L_{S,t}(\xi_{[t]}, \tilde{\xi}_{[t]})$,
$
L_{g, t}(\xi, \tilde{\xi}) := L_{g, t}(\xi_{[t]}, \tilde{\xi}_{[t]})
$ and $L_{C, t}(\xi, \tilde{\xi}) := L_{C, t}(\xi_{[t]}, \tilde{\xi}_{[t]})$. Further, let \(L_S(\xi, \tilde{\xi}) := \underset{t = 0, 1, \cdots, T - 1}{\max} L_{S,t}(\xi, \tilde{\xi})\)
, \(L_g(\xi, \tilde{\xi}) := \underset{t \in \{1, 2, \cdots, T\}}{\max} \left\{ L_{g,t}(\xi, \tilde{\xi}) \right\}\) and \(L_C(\xi, \tilde{\xi}) := \underset{t \in \{1, 2, \cdots, T\}}{\max} \left\{ L_{C, t}(\xi, \tilde{\xi}) \right\}\). We assume, without loss of generality, all the Lipschitz modulus hereinafter in this part are integrable.
   
As a preparation for the later stability analysis, we show the Lipschitz continuity of the feasible solution set 
mapping 
in 
terms of the Hausdorff distance. As only the distribution of $\xi$ is perturbed here, we omit the expectation w.r.t.~$\zeta$ in what follows for brevity. 
\begin{proposition}[Lipschitz continuity of the feasible set mapping]
Suppose: (a) Assumptions~\ref{Assu:welldefinedness}- \ref{Assu:compactness},
\ref{assumption7} and Assumption~\ref{assumption8} ($G^{\xi}$),($S^{\xi}$) 
hold;
(b) for $t = 1, 2, \cdots, T$ and for each fixed \((x_{t - 1}, \xi_{[t]})\), \(g_{t,i}(s_t, x_t, x_{t-1}, \xi_{[t]}), i \in I_t\) is convex 
in \((s_t, x_t)\). 
Then the set-valued mapping \(\mathcal{X}(\xi)\) is Lipschitz continuous 
in the following sense
\begin{eqnarray}
\mathbb{E}_{\pi(Q, \tilde{Q})} \left[ \mathbb{H}(\mathcal{X}(\xi), \mathcal{X}(\tilde{\xi})) \right] \leq \mathbb{E}_{\pi(Q, \tilde{Q})} \left[ \left(\underset{t \in \{1, 2, \cdots, T\}}{\max} L_{X, t}(\xi, \tilde{\xi})\right)
\Vert \xi - \tilde{\xi} \Vert \right] 
\label{eq:stab-feasibleset-xi}
\end{eqnarray}
 for any $\pi(Q, \tilde{Q}) \in \Pi(Q, \tilde{Q})$, where  $L_{X, t}(\xi, \tilde{\xi})$ is specified in \eqref{eq:L_Xt1} and \eqref{eq:L_Xt2}. If in addition, $L_g({\xi, \tilde{\xi}}) := L_g \max\{1, \Vert \xi \Vert, \Vert \tilde{\xi} \Vert\}$, $L_S({\xi, \tilde{\xi}}) := L_S \max\left\{1, \Vert \xi \Vert, \Vert \tilde{\xi} \Vert\right\}$, and $\xi$ has finite $2T$-th moment, then  
    \begin{eqnarray}
        \mathbb{E}_{\pi^*(Q, \tilde{Q})} \left[\mathbb{H}(\mathcal{X}(\xi), \mathcal{X}(\tilde{\xi})) \right] \leq L_X \Delta_{2T}(Q, \tilde{Q}),
        \label{eq:Lip-feasi-W}
    \end{eqnarray}
    where $\pi^*(Q, \tilde{Q})$ is the optimal coupling of $Q$ and $\tilde{Q}$ defined as in \eqref{eq:def-DeltaL} with $L(\xi, \tilde{\xi}) = \max\{1, \| \xi \|, \| \tilde{\xi} \| \}$, $L_X := \underset{t = 1, 2, \cdots, T}{\max}L_{X, t}$ and $L_{X, t}$ is recursively defined in \eqref{eq:Lip-feasi-FM-LX}.
    \label{Prop-stab-exo-feas}
\end{proposition}
\begin{proof}
    Since the decision \(x_0\) at the initial stage is chosen from a fixed feasible solution set \(\mathcal{X}_0\), we assume without loss of generality that \(x_0\) is a fixed decision, 
    we proceed with the proof 
    from stage \(1\). 
    For \(t = 1\), we show that there exists a positive coefficient \(L_{X, 1}(\xi, \tilde{\xi})\) such that
    \begin{eqnarray}
    \mathbb{E}_{\pi(Q, \tilde{Q})}\left[\mathbb{H}
    \left(\mathcal{X}_1(s_1, x_{0}, \xi_1), \mathcal{X}_1(s_1, x_0, \tilde{\xi}_{1})\right)\right] \leq  \mathbb{E}_{\pi(Q, \tilde{Q})} \left[ 
    L_{X, 1}(\xi, \tilde{\xi}) \Vert \xi_1 - \tilde{\xi}_{1} \Vert \right].  \nonumber
    \end{eqnarray}
Since the Slater condition holds for any \(\xi\), there exists an \(\bar{x}_1 \in \mathcal{X}_1(s_1, x_0, \xi_1)\) such that
    \begin{eqnarray}
        g_{1, i}(s_1, \bar{x}_1, x_0, \xi_1) \leq -\rho, \quad i \in I_1. \nonumber
    \end{eqnarray}
    Meanwhile, for any \(\tilde{x}_1 \in \mathcal{X}_1(s_1, x_0, \tilde{\xi}_{1})\), we have
    \begin{eqnarray}
    g_{1, i}(s_1, \tilde{x}_1, x_0, \tilde{\xi}_{1}) \leq 0, \quad i \in I_1.
    \label{eq:stab-exo-tildex1}
    \end{eqnarray}
    By \eqref{eq:stab-exo-tildex1} and Assumption \ref{assumption8} ($G^{\xi}$), 
    we can derive
    \begin{eqnarray}
    g_{1, i}(s_1, \tilde{x}_1, x_0, \xi_1) \leq L_g({\xi, \tilde{\xi}}) \Vert \xi_1 - \tilde{\xi}_{1} \Vert, \quad i \in I_1. \nonumber
    \end{eqnarray}
    Let 
    \[z_1 = \frac{L_g({\xi, \tilde{\xi}}) \Vert \xi_1 - \tilde{\xi}_{1} \Vert}{\rho + L_g({\xi, \tilde{\xi}}) \Vert \xi_1 - \tilde{\xi}_{1} \Vert} \bar{x}_1 + \frac{\rho}{\rho + L_g({\xi, \tilde{\xi}}) \Vert \xi_1 - \tilde{\xi}_{1} \Vert} \tilde{x}_1.\]
    By the convexity of \(g_{1, i}\),
    \begin{eqnarray}
    g_{1, i}(s_1, z_1, x_0, \xi_1) \leq 0, \quad i \in I_1. \nonumber
    \end{eqnarray}
    Combining this, the definition of \(z_1\) and Assumption \ref{Assu:compactness},
    we have
    \begin{eqnarray}
    d(z_1, \tilde{x}_1) = \frac{L_g({\xi, \tilde{\xi}}) \Vert \xi_1 - \tilde{\xi}_{1} \Vert}{\rho} d(z_1, x_1) \leq \frac{L_g({\xi, \tilde{\xi}}) A}{\rho} \Vert \xi_1 - \tilde{\xi}_{1} \Vert, \nonumber
    \end{eqnarray}
    where \(A\) is defined as in Theorem \ref{Theo-Lipschitz}. Consequently
    \begin{eqnarray}
        d(\tilde{x}_1, \mathcal{X}_1(s_1, x_0, {\xi}_{1})) \leq d(\tilde{x}_1, z_1) \leq \frac{L_g({\xi, \tilde{\xi}}) A}{\rho} \Vert \xi_1 - \tilde{\xi}_{1} \Vert. \nonumber
    \end{eqnarray}
    Since \(\tilde{x}_1\) is arbitrarily chosen from the feasible set \(\mathcal{X}_1(s_1, x_0, \tilde{\xi}_{1})\), it follows that
    \begin{eqnarray}
    \mathbb{D}(\mathcal{X}_1(s_1, x_0, \tilde{\xi}_{1}), \mathcal{X}_1(s_1, x_0, {\xi}_{1})) &\leq& \frac{L_g({\xi, \tilde{\xi}}) A}{\rho} \Vert \xi_1 - \tilde{\xi}_{1} \Vert. \nonumber
    \end{eqnarray}
   Likewise, we can show there exists an \(\bar{\tilde{x}}_1 \in \mathcal{X}_1(s_1, x_0, \tilde{\xi}_{1})\) 
   which satisfies the Slater condition uniformly, 
   and for any \(x_1 \in \mathcal{X}_1(s_1, x_0, \xi_1)\), 
    \[
    \tilde{z}_1 = \frac{L_g({\xi, \tilde{\xi}}) \Vert \xi_1 - \tilde{\xi}_{1} \Vert}{\rho + L_g({\xi, \tilde{\xi}}) \Vert \xi_1 - \tilde{\xi}_{1} \Vert} \bar{\tilde{x}}_1 + \frac{\rho}{\rho + L_g({\xi, \tilde{\xi}}) \Vert \xi_1 - \tilde{\xi}_{1} \Vert} {x}_1
    \]
    satisfies \(d(\tilde{z}_1, \mathcal{X}_1(s_1, x_0, \xi_1)) \leq \frac{L_g({\xi, \tilde{\xi}}) A}{\rho} \Vert \xi_1 - \tilde{\xi}_{1} \Vert\).

    The above two results ensure that the Hausdorff distance between \(\mathcal{X}_1(s_1, x_0, \xi_1)\) and \(\mathcal{X}_1(s_1, x_0, \tilde{\xi}_{1})\) satisfies
    \begin{eqnarray}
    && \mathbb{H}(\mathcal{X}_1(s_1, x_0, \xi_1), \mathcal{X}_1(s_1, x_0, \tilde{\xi}_{1})) \nonumber \\
    & =& \text{max } \left\{\mathbb{D}(\mathcal{X}_1(s_1, x_0, \xi_1), \mathcal{X}_1(s_1, x_0, \tilde{\xi}_{1})), \mathbb{D}(\mathcal{X}_1(s_1, x_0, \tilde{\xi}_{1}), \mathcal{X}_1(s_1, x_0, \xi_1))\right\} \nonumber \\
    & \leq& \frac{L_g({\xi, \tilde{\xi}}) A}{\rho} \Vert \xi_1 - \tilde{\xi}_{1} \Vert := L_{X, 1}({\xi, \tilde{\xi}}) \Vert \xi_1 - \tilde{\xi}_{1} \Vert.
    \label{eq:feasi-Lip-1}
    \end{eqnarray}    
    As the integrated MSP-MDP model includes state transition equations, in addition to considering the Lipschitz continuity of the feasible solution set, we also need to consider the Lipschitz continuity of state transitions. At stage 1, the state variable \(s_1\) is not affected by 
    exogenous random variables. Select any feasible solution \(x_1 \in \mathcal{X}_1(s_1, x_0, \xi_1)\), and let \(\tilde{y}_1\) be the orthogonal projection of \(x_1\) onto \(\mathcal{X}_1(s_1, x_0, \tilde{\xi}_{1})\). Then, according to the state transition mapping, we have
    \begin{eqnarray} 
    {s}_2 = S^M_1 (s_{1}, x_{1}, {\xi}_{1}, \zeta_1),  \tilde{s}_2 = S^M_1 (s_{1}, \tilde{y}_{1}, \tilde{\xi}_{1}, \zeta_1). \nonumber
    \end{eqnarray}
    Assumption \ref{assumption8} and the established Lipschitz continuity of 
    \(\mathcal{X}_1(\cdot)\) ensure that
    \begin{eqnarray}
    \Vert s_2 - \tilde{s}_2 \Vert & = &\Vert S^M_1(s_1, x_1, \xi_1, \zeta_1) - S^M_1({s}_1, \tilde{y}_1, \tilde{\xi}_{1}, \zeta_1) \Vert \leq L_S({\xi, \tilde{\xi}})(\Vert x_1 - \tilde{y}_1 \Vert + \Vert \xi_1 - \tilde{\xi}_{1} \Vert)   \nonumber\\
    &\leq& L_S({\xi, \tilde{\xi}})(\mathbb{H}(\mathcal{X}_1(s_1, x_0, \xi_1), \mathcal{X}_1(s_1, x_0, \tilde{\xi}_{1})) + \Vert \xi_1 - \tilde{\xi}_{1} \Vert)\nonumber \\ 
    &\leq& L_S({\xi, \tilde{\xi}})(L_{X, 1}({\xi, \tilde{\xi}}) + 1) \Vert \xi_1 - \tilde{\xi}_{1} \Vert. 
    \label{eq:feasi-Lip-s2}
    \end{eqnarray}
    It implies the Lipschitz continuity of the state variable at stage 2. Similarly, we denote the Lipschitz modulus here as \(l_{s, 2}({\xi, \tilde{\xi}}) = L_S({\xi, \tilde{\xi}})(L_{X, 1}({\xi, \tilde{\xi}}) + 1)\).
    
    For \(2 \leq t \leq T\), assume that for any \(k < t\), the following inequality holds:
    \[
    \mathbb{H}(\mathcal{X}_k(s_k, x_{k-1}, \xi_{[k]}), \mathcal{X}_k(\tilde{s}_k, \tilde{y}_{k-1}, \tilde{\xi}_{[k]})) \leq L_{X, k}({\xi, \tilde{\xi}}) \Vert \xi_{[k]} - \tilde{\xi}_{[k]} \Vert.
    \]
    Here, \(s_k = S^M_{k - 1}(s_{k - 1}, x_{k - 1}, \xi_{k - 1}, \zeta_{k - 1})\), \(\tilde{s}_k = S^M_{k - 1}(\tilde{s}_{k - 1}, \tilde{y}_{k - 1}, \tilde{\xi}_{k - 1}, \zeta_{k - 1})\), and \(\tilde{y}_{k}\) is the orthogonal projection of \({x}_{k}\) onto \(\mathcal{X}_{k}(\tilde{s}_k, \tilde{x}_{k - 1}, \tilde{\xi}_{[k]})\). Similarly, assume that for any \(k \leq t\), we have
    \[
    \Vert s_k - \tilde{s}_k \Vert \leq l_{s, k}({\xi, \tilde{\xi}}) \Vert \xi_{[k - 1]} - \tilde{\xi}_{[k - 1]} \Vert \leq l_{s, k}({\xi, \tilde{\xi}}) \Vert \xi_{[k]} - \tilde{\xi}_{[k]} \Vert.
    \]

    According to Assumption \ref{assumption7}, there exists an \(\bar{x}_t\) such that for any \({\xi_{[t]}}\) and any feasible solution \(\tilde{x}_t \in \mathcal{X}_t(\tilde{s}_t, \tilde{y}_{t - 1}, \tilde{\xi}_{[t]})\), the following holds:
    \begin{eqnarray}
    && g_{t,i}(s_t, \bar{x}_t, x_{t-1}, \xi_{[t]}) \leq -\rho, \quad i \in I_t. \nonumber \\
    && g_{t,i}(\tilde{s}_t, \tilde{x}_t, \tilde{y}_{t-1}, \tilde{\xi}_{[t]}) \leq 0, \quad i \in I_t. \nonumber
    \end{eqnarray}
    Similar to the proof for \(t = 1\), we can then 
    obtain 
    \begin{eqnarray}
    g_{t,i}(s_t, \tilde{x}_t, x_{t-1}, \xi_{[t]}) & \leq& L_g({\xi, \tilde{\xi}})(\Vert s_t - \tilde{s}_t \Vert + \Vert x_{t-1} - \tilde{y}_{t-1} \Vert + \Vert \xi_{[t]} - \tilde{\xi}_{[t]} \Vert).    \nonumber    \\
    & \leq& L_g({\xi, \tilde{\xi}})(l_{s, t}({\xi, \tilde{\xi}}) + L_{X, t-1}({\xi, \tilde{\xi}}) + 1) \Vert \xi_{[t]} - \tilde{\xi}_{[t]} \Vert, \quad i \in I_t. \nonumber
    \end{eqnarray}
    Let
    \begin{eqnarray*}
    z_t &= & \frac{\rho}{\rho + L_g({\xi, \tilde{\xi}})(l_{s, t}({\xi, \tilde{\xi}}) + L_{X, t-1}({\xi, \tilde{\xi}}) + 1) \Vert \xi_{[t]} - \tilde{\xi}_{[t]} \Vert} \tilde{x}_t \nonumber \\
    && + \frac{L_g({\xi, \tilde{\xi}})(l_{s, t}({\xi, \tilde{\xi}})+ L_{X, t-1}({\xi, \tilde{\xi}}) + 1) \Vert \xi_{[t]} - \tilde{\xi}_{[t]} \Vert}{\rho + L_g({\xi, \tilde{\xi}})(l_{s, t}({\xi, \tilde{\xi}}) + L_{X, t-1}({\xi, \tilde{\xi}}) + 1) \Vert \xi_{[t]} - \tilde{\xi}_{[t]} \Vert} \bar{x}_t.
    \end{eqnarray*}
    Using the convexity of \(g_{t, i}\), it is easy to derive that
    \begin{eqnarray}
    &&g_{t,i}(s_t, z_t, x_{t-1}, \xi_{[t]}) \nonumber \\
    &\leq & \frac{\rho g_{t, i}(s_t, \tilde{x}_t, x_{t - 1}, \xi_{[t]})}{\rho + L_g({\xi, \tilde{\xi}})(l_{s, t}({\xi, \tilde{\xi}}) + L_{X, t-1}({\xi, \tilde{\xi}}) + 1) \Vert \xi_{[t]} - \tilde{\xi}_{[t]} \Vert} \nonumber \\
    && - \frac{L_g({\xi, \tilde{\xi}})(l_{s, t}({\xi, \tilde{\xi}}) + L_{X, t-1}({\xi, \tilde{\xi}}) + 1) \Vert \xi_{[t]} - \tilde{\xi}_{[t]} \Vert g_{t, i}(s_t, \bar{x}_t, x_{t - 1}, \xi_{[t]})}{\rho + L_g({\xi, \tilde{\xi}})(l_{s, t}({\xi, \tilde{\xi}}) + L_{X, t-1}({\xi, \tilde{\xi}}) + 1) \Vert \xi_{[t]} - \tilde{\xi}_{[t]} \Vert} = 0.
    \end{eqnarray}
    Then we have
    \[
    \Vert \tilde{x}_t - z_t \Vert \leq \frac{A L_g({\xi, \tilde{\xi}})(l_{s, t}({\xi, \tilde{\xi}}) + L_{X, t-1}({\xi, \tilde{\xi}}) + 1)}{\rho} \Vert \xi_{[t]} - \tilde{\xi}_{[t]} \Vert,
    \]
    and thus 
    \[
    d(\tilde{x}_t, \mathcal{X}_t(s_t, x_{t-1}, \xi_{[t]})) \leq \frac{A L_g({\xi, \tilde{\xi}})(l_{s, t}({\xi, \tilde{\xi}}) + L_{X, t-1}({\xi, \tilde{\xi}}) + 1)}{\rho} \Vert \xi_{[t]} - \tilde{\xi}_{[t]} \Vert.
    \]
    Since \(\tilde{x}_t\) is chosen arbitrarily, we have
    \begin{eqnarray}
    \mathbb{D}(\mathcal{X}_t(\tilde{s}_t, \tilde{y}_{t-1}, \tilde{\xi}_{[t]}), \mathcal{X}_t(s_t, x_{t-1}, \xi_{[t]})) \leq \frac{A L_g({\xi, \tilde{\xi}})(l_{s, t}({\xi, \tilde{\xi}}) + L_{X, t-1}({\xi, \tilde{\xi}}) + 1)}{\rho} \Vert \xi_{[t]} - \tilde{\xi}_{[t]} \Vert. \nonumber
    \end{eqnarray}
    Similarly, it can be shown that
    \[
    \mathbb{D}(\mathcal{X}_t(s_t, x_{t-1}, \xi_{[t]}), \mathcal{X}_t(\tilde{s}_t, \tilde{y}_{t-1}, \tilde{\xi}_{[t]})) \leq \frac{A L_g({\xi, \tilde{\xi}})(l_{s, t}({\xi, \tilde{\xi}}) + L_{X, t-1}({\xi, \tilde{\xi}}) + 1)}{\rho} \Vert \xi_{[t]} - \tilde{\xi}_{[t]} \Vert.
    \]
    Combining the two inequalities above, we obtain
    \begin{eqnarray}
    \mathbb{H}(\mathcal{X}_t(\tilde{s}_t, \tilde{y}_{t-1}, \tilde{\xi}_{[t]}), \mathcal{X}_t(s_t, x_{t-1}, \xi_{[t]})) \leq L_{X, t}({\xi, \tilde{\xi}}) \Vert \xi_{[t]} - \tilde{\xi}_{[t]} \Vert. \nonumber
    \end{eqnarray}
    Also, we denote the Lipschitz modulus here as \(L_{X, t}({\xi, \tilde{\xi}}) = \frac{A L_g({\xi, \tilde{\xi}})(l_{s, t}({\xi, \tilde{\xi}}) + L_{X, t-1}({\xi, \tilde{\xi}}) + 1)}{\rho}\).
    Meanwhile, we consider the state variable. 
    Let \(\tilde{y}_t\) be the orthogonal projection of \(x_t \in \mathcal{X}_t(s_t, x_{t - 1}, \xi_{[t]})\) onto \(\mathcal{X}_t(\tilde{s}_t, \tilde{y}_{t-1}, \tilde{\xi}_{[t]})\). Then,
    \begin{eqnarray}
    \Vert s_{t + 1} - \tilde{s}_{t + 1} \Vert & =& \Vert S^M_t(s_t, x_t, \xi_{t}, \zeta_{t}) - S^M_t(\tilde{s}_t, \tilde{y}_t, \tilde{\xi}_{t}, \zeta_{t}) \Vert \nonumber \\
    & \leq& L_S({\xi, \tilde{\xi}})(\Vert s_t - \tilde{s}_t \Vert + \Vert x_t - \tilde{y}_t \Vert + \Vert \xi_{[t]} - \tilde{\xi}_{[t]} \Vert) \nonumber \\
    & \leq& L_S({\xi, \tilde{\xi}})(l_{s, t}({\xi, \tilde{\xi}}) + L_{X, t}({\xi, \tilde{\xi}}) + 1) \Vert \xi_{[t]} - \tilde{\xi}_{[t]} \Vert. \nonumber
    \end{eqnarray}
    Let \(l_{s, t + 1}({\xi, \tilde{\xi}}) = L_S({\xi, \tilde{\xi}})(l_{s, t}({\xi, \tilde{\xi}}) + L_{X, t}({\xi, \tilde{\xi}}) + 1)\) and we establish the Lipschitz continuity of the state variable at stage $t$. At this point, the inductive proof is completed. 
    
    We have shown that for any \(t = 1, 2, \cdots, T\),
    \begin{eqnarray}
    \mathbb{H}(\mathcal{X}_t(\tilde{s}_t, \tilde{y}_{t-1}, \tilde{\xi}_{[t]}), \mathcal{X}_t(s_t, x_{t-1}, \xi_{[t]})) \leq L_{X, t}({\xi, \tilde{\xi}}) \Vert \xi_{[t]} - \tilde{\xi}_{[t]} \Vert. \label{eq:nonexpected-Lip-feasible}
    \end{eqnarray}
    Thus,
    \begin{eqnarray}
     \mathbb{E}_{\pi(Q, \tilde{Q})} \left[ \mathbb{H}(\mathcal{X}_t(\tilde{s}_t, \tilde{y}_{t-1}, \tilde{\xi}_{[t]}), \mathcal{X}_t(s_t, x_{t-1}, \xi_{[t]})) \right] \leq \mathbb{E}_{\pi(Q, \tilde{Q})} \left[ L_{X, t}({\xi, \tilde{\xi}})  \Vert \xi_{[t]} - \tilde{\xi}_{[t]} \Vert \right]. \nonumber
    \end{eqnarray}
    
    Combining the following recursive equations for $L_{X, t}({\xi, \tilde{\xi}})$ 
    and $l_{s, t}({\xi, \tilde{\xi}})$
    \begin{eqnarray}
        &&l_{s, t + 1}({\xi, \tilde{\xi}}) =  L_S({\xi, \tilde{\xi}})(l_{s, t}({\xi, \tilde{\xi}}) + L_{X, t}({\xi, \tilde{\xi}}) + 1), \nonumber \\ 
        &&L_{X, t}({\xi, \tilde{\xi}}) 
        = \frac{A L_g({\xi, \tilde{\xi}})(l_{s, t}({\xi, \tilde{\xi}}) + L_{X, t-1}({\xi, \tilde{\xi}}) + 1)}{\rho}, \label{eq:Lip-feasi-induction-t}
    \end{eqnarray}
    we obtain
    \begin{eqnarray}
        L_{X, t}({\xi, \tilde{\xi}}) &= & \left(\frac{A L_g({\xi, \tilde{\xi}})L_S({\xi, \tilde{\xi}})}{\rho} + L_S({\xi, \tilde{\xi}}) + \frac{A L_g({\xi, \tilde{\xi}})}{\rho}\right) L_{X, t - 1}({\xi, \tilde{\xi}}) \nonumber \\
        && - \frac{A L_g({\xi, \tilde{\xi}})L_S({\xi, \tilde{\xi}})}{\rho} L_{X, t - 2}({\xi, \tilde{\xi}}) + \frac{A L_g({\xi, \tilde{\xi}})}{\rho}. \nonumber
    \end{eqnarray}
    Since $L_{X, 0} = 0$ and $L_{X, 1}({\xi, \tilde{\xi}}) = \frac{A L_g({\xi, \tilde{\xi}})}{\rho}$, we can derive a closed-form expression for $L_{X, t}({\xi, \tilde{\xi}})$
    when $1 - \frac{A L_g({\xi, \tilde{\xi}})}{\rho} - L_S({\xi, \tilde{\xi}}) \neq 0$. Concretely,
    \begin{eqnarray}
    L_{X, t}({\xi, \tilde{\xi}}) &= & \frac{\frac{A L_g({\xi, \tilde{\xi}})}{\rho}}{\left(1 - \frac{A L_g({\xi, \tilde{\xi}})}{\rho} - L_S({\xi, \tilde{\xi}})\right)(r_1 - r_2)} 
    \left( (r_2 - 1) r_1^{t+1} - (r_1 - 1) r_2^{t+1} \right) \nonumber \\
    &&+ \frac{\frac{A L_g({\xi, \tilde{\xi}})}{\rho}}{1 - \frac{A L_g({\xi, \tilde{\xi}})}{\rho} - L_S({\xi, \tilde{\xi}})} , \label{eq:L_Xt1}
    \end{eqnarray}
    where $ r_1 $ and $ r_2 $ are the roots of the characteristic equation
    \begin{eqnarray}
        r^2 - \left( \frac{A L_g({\xi, \tilde{\xi}})}{\rho} L_S({\xi, \tilde{\xi}}) + \frac{A L_g({\xi, \tilde{\xi}})}{\rho} + L_S({\xi, \tilde{\xi}}) \right) r + \frac{A L_g({\xi, \tilde{\xi}})}{\rho} L_S({\xi, \tilde{\xi}}) = 0. \nonumber
    \end{eqnarray}
    When $1 - \frac{A L_g({\xi, \tilde{\xi}})}{\rho} - L_S({\xi, \tilde{\xi}}) = 0$, we have
    \begin{eqnarray}
        \hspace{-0.5em} L_{X, t}({\xi, \tilde{\xi}}) = \frac{ (\frac{A L_g({\xi, \tilde{\xi}})}{\rho})^2 L_S({\xi, \tilde{\xi}}) }{ \left(1 - \frac{A L_g({\xi, \tilde{\xi}})}{\rho} L_S({\xi, \tilde{\xi}}) \right)^2 } \left( \left( \frac{A L_g({\xi, \tilde{\xi}})}{\rho} L_S({\xi, \tilde{\xi}}) \right)^t - 1 \right) + \frac{ \frac{A L_g({\xi, \tilde{\xi}})}{\rho} t }{ 1 - \frac{A L_g({\xi, \tilde{\xi}})}{\rho} L_S({\xi, \tilde{\xi}}) }. \label{eq:L_Xt2}
    \end{eqnarray}
    
    With the Lipschitz continuity of the feasible solution sets at individual stages, we can now consider the corresponding properties for the feasible policy. 
    Let \(\bm x = (x_0, x_1, x_2, \cdots, x_T)\) be a realization of the feasible policy for the original problem \eqref{eq:mixed-MSP-MDP} and \(\tilde{\bm y} = (x_0, \tilde{y}_1, \tilde{y}_2, \cdots, \tilde{y}_T)\) be the feasible policy for the problem \eqref{eq:mixed-MSP-MDP} under the perturbed process $\tilde{\xi}$, here \(\tilde{y}_t\) is the orthogonal projection of \(x_t\) onto \(\mathcal{X}_t(\tilde{s}_t, \tilde{y}_{t - 1}, \tilde{\xi}_{[t]})\). If we define
    \[
    \Vert \bm x \Vert = \underset{t \in \{1, 2, \cdots, T\}}{\max} \mathbb{E}\left[ \Vert x_t \Vert \right],
    \]
    then based on the arguments above, we have
    \[
    d(x_t(\xi_{[t]}), \tilde{y}_t(\tilde{\xi}_{[t]})) \leq L_{X, t}({\xi, \tilde{\xi}}) \Vert \xi - \tilde{\xi} \Vert.
    \]
    Thus,
    \begin{eqnarray}
    \mathbb{E}_{\pi(Q, \tilde{Q})}[\Vert \bm x - \tilde{\bm y} \Vert] = \underset{t \in \{1, 2, \cdots, T\}}{\max} \mathbb{E}_{\pi(Q, \tilde{Q})} \left[ \Vert x_t - \tilde{y}_t \Vert \right] \leq \mathbb{E}_{\pi(Q, \tilde{Q})} \left[ \left( \underset{t \in \{1, 2, \cdots, T\}}{\max} L_{X, t}({\xi, \tilde{\xi}})\right) \Vert \xi - \tilde{\xi} \Vert \right]. \nonumber
    \end{eqnarray}
    In summary, when the stochastic process \(\xi\) is perturbed to \(\tilde{\xi}\), the expected Hausdorff distance between the feasible sets before and after the perturbation is Lipschitz continuous w.r.t.~\(\xi\), and we have
    \begin{eqnarray}
    \mathbb{E}_{\pi(Q, \tilde{Q})} \left[ \mathbb{H}(\mathcal{X}(\xi), \mathcal{X}(\tilde{\xi})) \right] \leq \mathbb{E}_{\pi(Q, \tilde{Q})} \left[ \left(\underset{t \in \{1, 2, \cdots, T\}}{\max}  L_{X, t}({\xi, \tilde{\xi}})\right) \Vert \xi - \tilde{\xi} \Vert\right]. \nonumber
    \end{eqnarray}
    
    Next, we prove \eqref{eq:Lip-feasi-W}. By \eqref{eq:feasi-Lip-1}, 
    \begin{eqnarray}
        \mathbb{H}(\mathcal{X}_1(s_1, x_0, \xi_1), \mathcal{X}_1(s_1, x_0, \tilde{\xi}_1)) &\leq & \frac{A L_g}{\rho} \max\left\{1, \Vert \xi \Vert, \Vert \tilde{\xi} \Vert\right\}\|\xi - \tilde{\xi}\| \nonumber \\
        &:= & L_{X, 1}\max\left\{1, \Vert \xi \Vert, \Vert \tilde{\xi} \Vert\right\} \|\xi - \tilde{\xi}\|. 
        \label{eq:Lip-feasi-1-FM}
    \end{eqnarray}
    It is known from \eqref{eq:feasi-Lip-s2} that 
    \begin{eqnarray}
        \|s_2 - \tilde{s}_2\| &\leq & L_S \max\left\{1, \|\xi\|, \Vert \tilde{\xi} \Vert\right\} (1 + L_{X, 1}\max\{1, \|\xi\|, \Vert \tilde{\xi} \Vert\})\|\xi - \tilde{\xi}\| \nonumber \\
        &\leq & L_S (1 + L_{X, 1}) \max\left\{1, \|\xi\|^2, \|\tilde{\xi}\|^2\right\} \|\xi - \tilde{\xi}\| \nonumber \\
        &:= & l_{s, 2} \max\left\{1, \|\xi\|^2, \|\tilde{\xi}\|^2\right\} \|\xi - \tilde{\xi}\|.
    \end{eqnarray}
    Analogous to the induction argument in \eqref{eq:Lip-feasi-induction-t}, we have
    \begin{eqnarray}
        L_{X, t} = \frac{1}{\rho} A L_g (l_{s, t} + L_{X, t - 1} + 1), l_{s, t + 1} = L_S(L_{X, t} + l_{s, t} + 1). 
        \label{eq:Lip-feasi-FM-LX}
    \end{eqnarray}
    By \eqref{eq:stab-feasibleset-xi},
    \begin{eqnarray}
        \mathbb{E}_{\pi^*(Q, \tilde{Q})} \left[\mathbb{H}(\mathcal{X}(\xi), \mathcal{X}(\tilde{\xi})) \right] &\leq & \left( \underset{t = 1, 2, \cdots, T}{\max}L_{X, t} \right) \mathbb{E}_{\pi^*(Q, \tilde{Q})} \left[ \max\left\{1, \|\xi\|^{2T - 1}, \|\tilde{\xi}\|^{2T - 1}\right\} \|\xi - \tilde{\xi}\| \right] \nonumber \\
        &:= & L_X \mathbb{E}_{\pi^*(Q, \tilde{Q})} \left[ \max\left\{1, \|\xi\|^{2T - 1}, \|\tilde{\xi}\|^{2T - 1}\right\} \|\xi - \tilde{\xi}\| \right] \nonumber \\
        &= & L_X  \Delta_{2T}(Q, \tilde{Q}), \nonumber
    \end{eqnarray}
    where the last equality is due to the existence of finite $2T$-th moment of $\xi$ and the definition of $\Delta_p$ with $p = 2T$. 
\end{proof}

To ease the notation, 
in the remainder of this section, we 
write \(L_X({\xi, \tilde{\xi}})\) for 
\(\underset{t \in \{1, 2, \cdots, T\}}{\max} L_{X, t}({\xi, \tilde{\xi}})\).
Proposition \ref{Prop-stab-exo-feas} extends 
Proposition 3.1 in \cite{kuchler2008stability}
 where
$L_g({\xi, \tilde{\xi}}) = L_g \max \left\{ 1, \Vert \xi \Vert, \Vert \tilde{\xi} \Vert \right\}$ and $ L_S({\xi, \tilde{\xi}}) = 0$.
In the case that $L_S({\xi, \tilde{\xi}}) := L_S$ and $L_g({\xi, \tilde{\xi}}) := L_g$ are constant, 
we can obtain 
a similar result under the Kantorovich metric. 

With the established Lipschitz continuity of the feasible set, we can now consider the stability of the optimal value.  
Specifically, if the optimal value remains stable under small perturbations of the exogenous stochastic process, the optimal solution obtained from solving the original problem can still provide a high-quality solution even if there are some errors or perturbations in exogenous random variables. This is particularly important for dealing with the impact of the random environment variation on problem-solving in practical applications.


\begin{theorem}[Stability of the optimal value]
Under Assumptions \ref{Assu:welldefinedness}-
\ref{Assu:compactness},
\ref{assumption7}, \ref{assumption8} and \ref{Assu:convexity}(c), 
there exists a nonnegative 
\(L_{\vt}(\xi, \tilde{\xi})\) such that
    \begin{eqnarray}
        | \vt(Q) - \vt(\tilde{Q}) | \leq \Delta_{L_{\vt}}(Q, \tilde{Q}),
        \label{eq:stab-value-xi}
    \end{eqnarray}
where $\Delta_{L}$ is defined as in \eqref{eq:def-DeltaL} with $L = L_\vt$,
\bgeqn 
L_{\vt}(\xi, \tilde{\xi}) &:=& T L_C({\xi, \tilde{\xi}}) (L_X({\xi, \tilde{\xi}}) + 1) + \frac{T L_C({\xi, \tilde{\xi}}) (L_{X}({\xi, \tilde{\xi}}) + 1)}{1 - L_S({\xi, \tilde{\xi}})}\nonumber\\
&&- \frac{L_C({\xi, \tilde{\xi}})(L_X({\xi, \tilde{\xi}}) + 1)(1-L_S({\xi, \tilde{\xi}})^{T + 1})}{(1 - L_S({\xi, \tilde{\xi}}))^2} \label{eq:stab-coeff-vt1}
\edeqn 
for $L_S(\xi, \tilde{\xi}) \neq 1$ and 
\bgeqn
L_{\vt}(\xi, \tilde{\xi}) := T L_C({\xi, \tilde{\xi}}) (L_X({\xi, \tilde{\xi}}) + 1) + \frac{T (T - 1) L_C({\xi, \tilde{\xi}}) (L_X({\xi, \tilde{\xi}}) + 1)}{2} \label{eq:stab-coeff-vt2}
\edeqn 
for $L_S({\xi, \tilde{\xi}}) = 1$.
If, in addition, 
 $L_g({\xi, \tilde{\xi}}) := L_g \max\left\{1, \Vert \xi \Vert, \Vert \tilde{\xi} \Vert\right\}$, $L_S({\xi, \tilde{\xi}}) := L_S \max\left\{1, \Vert \xi \Vert, \Vert \tilde{\xi} \Vert\right\}$ and $L_C({\xi, \tilde{\xi}}) := L_C \max\left\{1, \Vert \xi \Vert, \Vert \tilde{\xi} \Vert\right\}$, and $\xi$ has finite $(3T+1)$-th moment, then 
    \begin{eqnarray}
        | \vt(Q) - \vt(\tilde{Q}) | \leq L_{\vt} \Delta_{3T + 1}(Q, \tilde{Q}),
        \label{eq:Lip-optvalue-FM}
    \end{eqnarray}
    where $L_{\vt} := T L_C (L_X + 1) + L_C (L_X + 1) \sum_{t = 1}^T \sum_{k = 1}^{t - 1} L_S^k$.
\label{Theo-stab-exo-optvalue}
\end{theorem}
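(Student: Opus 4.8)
The plan is to combine the nested reformulation of Theorem \ref{Theo-timecons} with the feasible-set stability of Proposition \ref{Prop-stab-exo-feas}, and then run a projection argument on the policy level along a coupling of $\xi$ and $\tilde\xi$. Concretely, I would first fix a realization of the pair $(\xi,\tilde\xi)$ (the endogenous variables $\zeta$ being frozen throughout, so their expectation is suppressed) and an optimal policy $\bm x^* = (x_0, x_1^*, \dots, x_T^*)$ of problem \eqref{eq:v-xi}, where $x_0\in\mathcal{X}_0$ is fixed. Stage by stage I would set $\tilde y_t$ to be the orthogonal projection of $x_t^*$ onto the perturbed feasible set $\mathcal{X}_t(\tilde s_t,\tilde y_{t-1},\tilde\xi_{[t]})$, where $\tilde s_{t+1}=S^M_t(\tilde s_t,\tilde y_t,\tilde\xi_t,\zeta_t)$, $\tilde s_1=s_1$, $\tilde y_0=x_0$. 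By construction $\tilde{\bm y}=(x_0,\tilde y_1,\dots,\tilde y_T)$ is nonanticipative and feasible for \eqref{eq:v-xi-ptb}, so $\vt(\tilde\xi)-\vt(\xi)\le \mathbb{E}_{\zeta}\big[\sum_{t=0}^T C_t(\tilde s_t,\tilde y_t,\tilde\xi_{[t]},\zeta_t) - \sum_{t=0}^T C_t(s_t,x_t^*,\xi_{[t]},\zeta_t)\big]$ along the coupled realization, and the reverse inequality follows by swapping the roles of $\xi$ and $\tilde\xi$.

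Next I would estimate the stagewise gap. Using Assumption \ref{assumption8}($C^{\xi}$),
\[
|C_t(s_t,x_t^*,\xi_{[t]},\zeta_t)-C_t(\tilde s_t,\tilde y_t,\tilde\xi_{[t]},\zeta_t)|\le L_{C,t}(\xi,\tilde\xi)\big(\|s_t-\tilde s_t\|+\|x_t^*-\tilde y_t\|+\|\xi_{[t]}-\tilde\xi_{[t]}\|\big).
\]
The term $\|x_t^*-\tilde y_t\|$ is dominated by $\mathbb{H}\big(\mathcal{X}_t(s_t,x_{t-1}^*,\xi_{[t]}),\mathcal{X}_t(\tilde s_t,\tilde y_{t-1},\tilde\xi_{[t]})\big)\le L_{X,t}(\xi,\tilde\xi)\|\xi_{[t]}-\tilde\xi_{[t]}\|$ from Proposition \ref{Prop-stab-exo-feas}, and $\|s_t-\tilde s_t\|$ is controlled by the state-deviation recursion in the proof of that proposition, $l_{s,t+1}(\xi,\tilde\xi)=L_S(\xi,\tilde\xi)(l_{s,t}(\xi,\tilde\xi)+L_{X,t}(\xi,\tilde\xi)+1)$ with $l_{s,1}=0$, which gives $\|s_t-\tilde s_t\|\le l_{s,t}(\xi,\tilde\xi)\|\xi-\tilde\xi\|$ and $l_{s,t}(\xi,\tilde\xi)\le (L_X(\xi,\tilde\xi)+1)\sum_{k=1}^{t-1}L_S(\xi,\tilde\xi)^k$. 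Summing over $t=1,\dots,T$ (the $t=0$ term vanishes because $x_0$ and $s_0$ are unperturbed) yields, pointwise in the coupled realization, $|\sum_t C_t(s_t,x_t^*,\xi_{[t]},\zeta_t)-\sum_t C_t(\tilde s_t,\tilde y_t,\tilde\xi_{[t]},\zeta_t)|\le L_{\vt}(\xi,\tilde\xi)\|\xi-\tilde\xi\|$ with $L_{\vt}(\xi,\tilde\xi)=L_C(\xi,\tilde\xi)\big[T(L_X(\xi,\tilde\xi)+1)+\sum_{t=1}^T l_{s,t}(\xi,\tilde\xi)\big]$; evaluating the geometric sum $\sum_{t=1}^T\sum_{k=1}^{t-1}L_S(\xi,\tilde\xi)^k$ in closed form produces \eqref{eq:stab-coeff-vt1} when $L_S(\xi,\tilde\xi)\neq1$ and \eqref{eq:stab-coeff-vt2} when $L_S(\xi,\tilde\xi)=1$. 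Taking expectations over the coupling gives \eqref{eq:stab-value-xi}.

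For the Fortet--Mourier estimate \eqref{eq:Lip-optvalue-FM}, I would substitute the growth forms $L_g(\xi,\tilde\xi)=L_g\max\{1,\|\xi\|,\|\tilde\xi\|\}$, $L_S(\xi,\tilde\xi)=L_S\max\{1,\|\xi\|,\|\tilde\xi\|\}$, $L_C(\xi,\tilde\xi)=L_C\max\{1,\|\xi\|,\|\tilde\xi\|\}$ into the recursions \eqref{eq:Lip-feasi-FM-LX}, which turns $L_{X,t}(\xi,\tilde\xi)$, $l_{s,t}(\xi,\tilde\xi)$ and finally $L_{\vt}(\xi,\tilde\xi)$ into constants times powers of $\max\{1,\|\xi\|,\|\tilde\xi\|\}$; tracking these powers through the above sums shows $L_{\vt}(\xi,\tilde\xi)\|\xi-\tilde\xi\|\le L_{\vt}\max\{1,\|\xi\|^{3T},\|\tilde\xi\|^{3T}\}\|\xi-\tilde\xi\|$ with the stated constant $L_{\vt}=TL_C(L_X+1)+L_C(L_X+1)\sum_{t=1}^T\sum_{k=1}^{t-1}L_S^k$. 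Since $\xi$ has finite $(3T+1)$-th moment, the map $\eta\mapsto\max\{1,\|\eta\|^{3T}\}\|\eta-\cdot\|$ is an admissible test function in the dual description of $\dd_{FM,3T+1}$, so minimizing the coupled bound over all couplings of $Q$ and $\tilde Q$ gives \eqref{eq:Lip-optvalue-FM}.

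The step I expect to be the main obstacle is the bookkeeping of the nested Lipschitz moduli: one must verify that the projected policy $\tilde{\bm y}$ is a genuinely admissible competitor in \eqref{eq:v-xi-ptb} (nonanticipativity and measurability of the stagewise projections, which rests on the Hausdorff-continuity of $\mathcal{X}_t$ and a measurable-selection argument), and then propagate the coupled state-deviation recursion through all stages and re-sum the resulting multi-index geometric series to obtain precisely the constants in \eqref{eq:stab-coeff-vt1}--\eqref{eq:stab-coeff-vt2}; a secondary subtlety is matching the moment hypothesis on $\xi$ exactly to the degree of the test function needed to upgrade the pointwise estimate to the Fortet--Mourier bound.
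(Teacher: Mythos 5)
Your proposal is correct and follows essentially the same route as the paper's proof: both bound the optimal-value gap by projecting one problem's optimal policy stagewise onto the other problem's feasible sets, invoke the Hausdorff bound of Proposition \ref{Prop-stab-exo-feas} together with the state-deviation recursion $l_{s,t+1}=L_S(l_{s,t}+L_{X,t}+1)$, sum the resulting geometric series to obtain \eqref{eq:stab-coeff-vt1}--\eqref{eq:stab-coeff-vt2}, swap the roles of $\xi$ and $\tilde\xi$ for the absolute value, and pass to the Fortet--Mourier bound by substituting the polynomial growth forms of the moduli. The only cosmetic difference is the direction of projection (you map the unperturbed optimal policy into the perturbed feasible sets, whereas the paper projects the perturbed optimal policy back into the original ones), which is immaterial since both directions are needed for \eqref{eq:stab-value-xi} in any case.
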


\begin{proof}
    Let \(\bm x^*(\xi) := (x_0^*, x_1^*(s_1, x_0^*, \xi_1), x_2^*(s_2, x_1^*, \xi_{[2]}), \cdots, x_T^*(s_T, x_{T-1}^*, \xi_{[T]})) \in \mathcal{X}^*(\xi)\) 
    be an optimal policy to problem \eqref{eq:v-xi}, and \( \tilde{\bm x}^* := (\tilde{x}_0^*, \tilde{x}_1^*(s_1, \tilde{x}_0^*, \tilde{\xi}_1), \tilde{x}_2^*(s_2, \tilde{x}_1^*, \tilde{\xi}_{[2]}), \cdots, \tilde{x}_T^*(s_T, \tilde{x}_{T-1}^*, \tilde{\xi}_{[T]})) \in \mathcal{X}^*(\tilde{\xi})\) be an optimal policy to problem \eqref{eq:v-xi-ptb}.
    Let \(y_0 = \tilde{x}_0^*\). 
    For \(t = 1, 2, \cdots, T\), let \(y_t(s_t^y, y_{t-1}, \xi_{[t]})\) be the orthogonal projection of \(\tilde{x}_t^*(\tilde{s}_t, \tilde{x}_{t-1}^*, \tilde{\xi}_{[t]})\) onto \(\mathcal{X}_t(s_t^y, y_{t-1}, \xi_{[t]})\).
    Then
    \begin{eqnarray} 
    \vt(Q) - \vt(\tilde{Q}) & =& \mathbb{E}_{\xi} \left[ \sum_{t = 0}^T C_t(s_t, x_t^*, \xi_{[t]}, \zeta_t) \right] - \mathbb{E}_{\tilde{\xi}} \left[ \sum_{t = 0}^T C_t(\tilde{s}_t, \tilde{x}_t^*, \tilde{\xi}_{[t]}, \zeta_t) \right] \nonumber \\
    & \leq& \mathbb{E}_{\xi} \left[ \sum_{t = 0}^T C_t(s_t^y, y_t, \xi_{[t]}, \zeta_t) \right] - \mathbb{E}_{\tilde{\xi}} \left[ \sum_{t = 0}^T C_t(\tilde{s}_t, \tilde{x}_t^*, \tilde{\xi}_{[t]}, \zeta_t) \right], 
    \label{eq:xt*-yt-proof}
    \end{eqnarray}
    where the inequality is due to the  optimality of $x_t^*$.
    We compare the integrands inside the expectation operators, i.e., the difference of accumulated costs on different scenarios
    \begin{eqnarray} 
    && \sum_{t = 0}^T C_t(s_t^y, y_t, \xi_{[t]}, \zeta_t) - \sum_{t = 0}^T C_t(\tilde{s}_t, \tilde{x}_t^*, \tilde{\xi}_{[t]}, \zeta_t) \nonumber \\
    &\leq & \sum_{t = 1}^T L_{C, t}({\xi, \tilde{\xi}}) (\Vert s_t^y - \tilde{s}_t \Vert + \Vert y_t - \tilde{x}_t^* \Vert + \Vert \xi_{[t]} - \tilde{\xi}_{[t]} \Vert)  \nonumber \\
    & \leq& L_C({\xi, \tilde{\xi}}) \sum_{t = 1}^T (\Vert s_t^y - \tilde{s}_t \Vert + \Vert y_t - \tilde{x}_t^* \Vert + \Vert \xi_{[t]} - \tilde{\xi}_{[t]} \Vert),   
    \label{eq:stab-exo-optvalue-v}
    \end{eqnarray}
    where \(s_t^y = S^M_{t - 1}(s_{t - 1}^y, y_{t - 1}, \xi_{t - 1}, \zeta_{t - 1}), 1 \leq t \leq T\), and \(s_0^y = s_0\).  
    The first inequality is based on Assumption \ref{assumption8}.  
    The terms inside the bracket at the right-hand side of \eqref{eq:stab-exo-optvalue-v} can be divided into two parts: \( \sum\limits_{t = 1}^T \Vert s_t^y - \tilde{s}_t \Vert \) and \(\sum\limits_{t = 1}^T (\Vert y_t - \tilde{x}_t^* \Vert + \Vert \xi_{[t]} - \tilde{\xi}_{[t]} \Vert)\). First, we consider the second part. It is known from \eqref{eq:nonexpected-Lip-feasible} that 
    \begin{eqnarray}
    &&  \sum_{t = 1}^T (\Vert y_t - \tilde{x}_t^* \Vert + \Vert \xi_{[t]} - \tilde{\xi}_{[t]} \Vert) = \sum_{t = 1}^T \left(\underset{x_t \in \mathcal{X}_t(s_t, x_{t - 1}, \xi_{[t]})}{\min} d(x_t, \tilde{x}_t^*) + \Vert \xi_{[t]} - \tilde{\xi}_{[t]} \Vert\right) \nonumber \\
    &\leq & \left(\mathbb{H}(\mathcal{X}_t(s_t, x_{t - 1}, \xi_{[t]}), \mathcal{X}_t(\tilde{s}_t, \tilde{x}_{t - 1}, \tilde{\xi}_{[t]})) + \Vert \xi_{[t]} - \tilde{\xi}_{[t]} \Vert\right) \leq \sum_{t = 1}^T (L_{X, t}({\xi, \tilde{\xi}}) + 1) \Vert \xi_{[t]} - \tilde{\xi}_{[t]} \Vert  \nonumber \\ 
    & \leq& \sum_{t = 1}^T \left[ (L_{X, t}({\xi, \tilde{\xi}}) + 1)  \Vert \xi - \tilde{\xi} \Vert \right]  \leq T (L_{X}({\xi, \tilde{\xi}}) + 1) \Vert \xi - \tilde{\xi} \Vert .
    \label{eq:stab-exo-x}
    \end{eqnarray}
    Next, we consider the first part. Utilizing \eqref{eq:stab-exo-x} and the assumed Lipschitz continuity of \(S^M_t\), we have
    \begin{eqnarray}
    && \Vert s_t^y - \tilde{s}_t \Vert \leq L_S({\xi, \tilde{\xi}}) (\Vert s_{t - 1}^y - \tilde{s}_{t - 1} \Vert + \Vert y_{t - 1} - \tilde{x}_{t - 1}^* \Vert + \Vert \xi_{[t - 1]} - \tilde{\xi}_{[t - 1]} \Vert) \nonumber \\
    &\leq & L_S({\xi, \tilde{\xi}}) \Vert s_{t - 1}^y - \tilde{s}_{t - 1} \Vert + L_S({\xi, \tilde{\xi}}) (L_{X, t - 1}({\xi, \tilde{\xi}}) + 1) \Vert \xi_{[t - 1]} - \tilde{\xi}_{[t - 1]} \Vert \nonumber \\
    &\leq & L_S({\xi, \tilde{\xi}})^2 (\Vert s_{t - 2}^y - \tilde{s}_{t - 2} \Vert + \Vert y_{t - 2} - \tilde{x}_{t - 2}^* \Vert + \Vert \xi_{[t - 2]} - \tilde{\xi}_{[t - 2]} \Vert) \nonumber\\
    &&+ L_S({\xi, \tilde{\xi}}) (L_{X, t - 1}({\xi, \tilde{\xi}}) + 1) \Vert \xi_{[t - 1]} - \tilde{\xi}_{[t - 1]} \Vert \nonumber \\
    &\leq & \cdots \leq \sum_{k = 1}^{t - 1} L_S({\xi, \tilde{\xi}})^k (L_{X, t - k}({\xi, \tilde{\xi}}) + 1) \Vert \xi_{[t - k]} - \tilde{\xi}_{[t - k]} \Vert \nonumber\\
    &\leq & \sum_{k = 1}^{t - 1} L_S({\xi, \tilde{\xi}})^k (L_{X}({\xi, \tilde{\xi}}) + 1) \Vert \xi - \tilde{\xi} \Vert.
    \label{eq:stab-exo-s}
    \end{eqnarray}
    Combining \eqref{eq:stab-exo-x} and \eqref{eq:stab-exo-s} with \eqref{eq:stab-exo-optvalue-v} 
    gives rise to
    \begin{eqnarray}
        && \sum_{t = 0}^T C_t(s_t^y, y_t, \xi_{[t]}, \zeta_t) - \sum_{t = 0}^T C_t(\tilde{s}_t, \tilde{x}_t^*, \tilde{\xi}_{[t]}, \zeta_t) \nonumber \\
        &\leq & L_C({\xi, \tilde{\xi}}) T (L_X({\xi, \tilde{\xi}}) + 1)  \Vert \xi - \tilde{\xi} \Vert + L_C({\xi, \tilde{\xi}}) \sum_{t = 1}^T \sum_{k = 1}^{t - 1} L_S({\xi, \tilde{\xi}})^k (L_X({\xi, \tilde{\xi}}) + 1) \Vert \xi - \tilde{\xi} \Vert \nonumber \\
        &= & \left(T L_C({\xi, \tilde{\xi}}) (L_X({\xi, \tilde{\xi}}) + 1) + \sum_{t = 1}^T \sum_{k = 1}^{t - 1} L_S({\xi, \tilde{\xi}})^k L_C({\xi, \tilde{\xi}}) (L_X({\xi, \tilde{\xi}}) + 1)\right) \Vert \xi - \tilde{\xi} \Vert \nonumber \\
        &= & \left(T L_C({\xi, \tilde{\xi}}) (L_X({\xi, \tilde{\xi}}) + 1) + L_C({\xi, \tilde{\xi}}) (L_X({\xi, \tilde{\xi}}) + 1) \sum_{t = 1}^T \sum_{k = 1}^{t - 1} L_S({\xi, \tilde{\xi}})^k\right) \Vert \xi - \tilde{\xi} \Vert.
        \label{eq:stab-xi-opval}
    \end{eqnarray}
    The inequality still holds when 
    we swap the positions between 
     $\xi$ and $\tilde{\xi}$.
    Therefore, it remains to identify the explicit form of the coefficient \(L_{\vt}(\xi,\tilde{\xi})\), which will then be used to derive \eqref{eq:stab-value-xi}. To this end, we derive the specific forms of \(L_{\vt}({\xi, \tilde{\xi}})\) in \eqref{eq:stab-coeff-vt1}-\eqref{eq:stab-coeff-vt2}.
    Consider the case that \(L_S({\xi, \tilde{\xi}}) \neq 1\). Then
    \begin{eqnarray}
    L_{\vt}({\xi, \tilde{\xi}}) &= & T L_C({\xi, \tilde{\xi}}) (L_X({\xi, \tilde{\xi}}) + 1) + L_C({\xi, \tilde{\xi}}) (L_X({\xi, \tilde{\xi}}) + 1) \sum_{t = 1}^T \sum_{k = 1}^{t - 1} L_S({\xi, \tilde{\xi}})^k \nonumber \\
    &= & T L_C({\xi, \tilde{\xi}}) (L_X({\xi, \tilde{\xi}}) + 1) + L_C({\xi, \tilde{\xi}}) (L_X({\xi, \tilde{\xi}}) + 1) \sum_{t = 1}^T \frac{1 - L_S({\xi, \tilde{\xi}})^t}{1 - L_S({\xi, \tilde{\xi}})} \nonumber \\
    &= & T L_C({\xi, \tilde{\xi}}) (L_X({\xi, \tilde{\xi}}) + 1) + \frac{T L_C({\xi, \tilde{\xi}}) (L_{X}({\xi, \tilde{\xi}}) + 1)}{1 - L_S({\xi, \tilde{\xi}})} \nonumber\\
    &&- \frac{L_C({\xi, \tilde{\xi}})(L_X({\xi, \tilde{\xi}}) + 1)(1-L_S({\xi, \tilde{\xi}})^{T + 1})}{(1 - L_S({\xi, \tilde{\xi}}))^2},
    \end{eqnarray}
    which gives rise to \eqref{eq:stab-coeff-vt1}.
      Likewise, when \(L_S({\xi, \tilde{\xi}}) = 1\),
  we have
  \begin{eqnarray}
    L_{\vt}({\xi, \tilde{\xi}}) = T L_C({\xi, \tilde{\xi}}) (L_X({\xi, \tilde{\xi}}) + 1) + \frac{T (T - 1) L_C({\xi, \tilde{\xi}}) (L_X({\xi, \tilde{\xi}}) + 1)}{2}, 
    \end{eqnarray}
    which is \eqref{eq:stab-coeff-vt2}.
    By combining \eqref{eq:stab-xi-opval} with the definition of $\Delta_L$ with $L = L_\vt$, we obtain \eqref{eq:stab-value-xi}.
    
    Next, we prove \eqref{eq:Lip-optvalue-FM}.
    Under the additional assumptions about Lipschitz modulus, analogous to \eqref{eq:stab-xi-opval}, \eqref{eq:stab-coeff-vt1} and \eqref{eq:stab-coeff-vt2}, we have
    \begin{eqnarray}
        && \sum_{t = 0}^T C_t(s_t^y, y_t, \xi_{[t]}, \zeta_t) - \sum_{t = 0}^T C_t(\tilde{s}_t, \tilde{x}_t^*, \tilde{\xi}_{[t]}, \zeta_t) \nonumber \\
        &\leq & \left( T L_C (L_X + 1) + L_C (L_X + 1) \sum_{t = 1}^T \sum_{k = 1}^{t - 1} L_S^k \right) \max\left\{1, \|\xi\|^{3T}, \|\tilde{\xi}\|^{3T}\right\} \|\xi - \tilde{\xi}\| \nonumber \\
        &:= & L_{\vt} \max\left\{1, \|\xi\|^{3T}, \|\tilde{\xi}\|^{3T}\right\} \|\xi - \tilde{\xi}\|.
    \end{eqnarray}
    Combining with \eqref{eq:xt*-yt-proof}, we obtain
    \begin{eqnarray}
        \vt(Q) - \vt(\tilde{Q}) \leq \mathbb{E}_{\pi^*(Q, \tilde{Q})} \left[ L_{\vt} \max\left\{1, \|\xi\|^{3T}, \|\tilde{\xi}\|^{3T}\right\} \|\xi - \tilde{\xi}\| \right] \leq 
        L_{\vt} \Delta_{3T + 1}(Q, \tilde{Q}).
    \end{eqnarray}
    The conclusion follows by swapping the positions between $\xi$ and $\tilde{\xi}$.
\end{proof}

Theorem \ref{Theo-stab-exo-optvalue} is similar in form to the stability result in 
\cite[Theorem 2.1]{HRS2006stability} for MSP problems.
However, the two results are derived under different settings.
In 
\cite[Theorem 2.1]{HRS2006stability}, the authors  
assume the 
complete recourse condition holds in both 
problems before and after perturbation of $\xi_{[t]}$.
Here,
we require the Lipschitz continuity 
of $C_t, g_t, S^M_t $ in $(s_t, x_t, \xi_{[t]})$ 
and 
the Slater condition. 
Moreover, the perturbation in 
\cite[Theorem 2.1]{HRS2006stability}
is measured by the 
filtration distance which depends on both the whole random
process and the optimal solutions at different stages.
Meanwhile, in \cite[Theorem 6.1]{pflug2012distance},
the authors
use 
the nested distance to measure the perturbation of the whole data process and derive an error bound 
under H\"{o}lder continuity and convexity 
of the objective function. We will come back 
to the details of the differences
in Example~\ref{ex-feasible-unchanged} at the end of the section.
Next, we investigate 
stability of the set of optimal solutions. 
For this purpose, we first show the continuity of the optimal solution set.

\begin{theorem}[Continuity of the optimal solution set]
Suppose that Assumptions \ref{Assu:convexity},
\ref{assumption4} and \ref{assumption8} hold and the conditions in Theorem \ref{Theo-stab-exo-optvalue} are satisfied. 
Let
 $a:\mathbb{R}_+ \to \mathbb{R}_+$ be a monotonically increasing function, $L_{\Sigma}: \Xi_{[T]} \times \Xi_{[T]} \to \mathbb{R}_+$ an integrable function, $L_X({\xi, \tilde{\xi}})$ defined in Proposition \ref{Prop-stab-exo-feas}, and $L_{\vt}({\xi, \tilde{\xi}})$ defined in Theorem \ref{Theo-stab-exo-optvalue}.
 Then for any $\epsilon > 0$ and $ \pi({Q, \tilde{Q}}) \in \Pi(Q, \tilde{Q})$, if \(
    \mathbb{E}_{\pi(Q, \tilde{Q})} \left[ \frac{L_X({\xi, \tilde{\xi}})}{\epsilon} \Vert \xi - \tilde{\xi} \Vert \right] < 1, \Delta_{L_{\vartheta}}(Q,\tilde Q)
    \leq
    \frac{a(\epsilon)}{2}\)
    and
    \(
    \frac{\Delta_{L_{\Sigma}(L_X + 1)}(Q, \tilde{Q})}{a(2\epsilon) - a(\epsilon)} \leq 1
    \), then
\begin{eqnarray}
    \mathbb{E}_{\pi({Q, \tilde{Q}})} \left[ \mathbb{H}( \mathcal{X}^*(\tilde{\xi}), \mathcal{X}^*(\xi) )\right] \leq \epsilon. 
\label{eq:continuity-optsolution-xi}
\end{eqnarray}
\label{Theo-cont-exo-optsolu}
\end{theorem}
\begin{proof}
    We consider the following two cases.

    \textbf{Case 1:} \(\mathcal{X}^*(\xi) = \mathcal{X} (\xi)\). The conclusion holds by Proposition \ref{Prop-stab-exo-feas}.

    \textbf{Case 2:} \(\mathcal{X}^*(\xi) \neq \mathcal{X} (\xi)\). Denote the \(\epsilon\)-neighborhood of the optimal solution set $\mathcal{X}^*(\xi)$ by \(\epsilon \text{-} \mathcal{X}^*(\xi) = \mathcal{X}^*(\xi) + \epsilon \mathbb{B}\), where \(\mathbb{B}\) is the unit open ball. 
    Let
    $\mathcal{X}^{\epsilon}(\xi) = \mathcal{X}(\xi) \setminus \epsilon \text{-} \mathcal{X}^*(\xi)$. 
    Observe that 
    \(\mathcal{X}(\xi)\) is a bounded and closed set under Assumptions \ref{Assu:compactness} and \ref{assumption8}; \(\mathcal{X}^*(\xi)\) is a convex set.
    Thus, \(\mathcal{X}^{\epsilon}(\xi)\) is a compact set as \(\epsilon \text{-} \mathcal{X}^*(\xi)\) is an open set. 
    Denote the minimum value of \(\mathbb{E}_{\xi} \left[ \sum\limits_{t = 0}^T C_t(s_t, x_t, \xi_{[t]}, \zeta_t) \right]\) in \(\mathcal{X}^{\epsilon}(\xi)\) by \(\vt^{\epsilon}(Q) := \mathbb{E}_{\xi} \left[ \sum\limits_{t = 0}^T C_t(s_t^{\epsilon}, x_t^{*, \epsilon}, \xi_{[t]}, \zeta_t)\right]\), where $x^{*, \epsilon}$ is the optimal policy in $\mathcal{X}^{\epsilon}(\xi)$, $s_{t + 1}^{\epsilon} = S^M_t(s_t^{\epsilon}, x_t^{*, \epsilon}, \xi_{t}, \zeta_t)$. 
    Then
    \(\vt^{\epsilon}(Q) > \vt(Q)\).
    
    Let 
    \(
    a(\epsilon) := \vt^{\epsilon}(Q) - \vt(Q).
    \) 
For any $\tilde{x}(\tilde{\xi}) \in \mathcal{X}^{\epsilon}(\xi)$, let $s_{t + 1} = S^M_t(s_t, \tilde{x}_t, \xi_{t}, \zeta_t), \tilde{s}_{t + 1} = S^M_t(\tilde{s}_t, \tilde{x}_t, \tilde{\xi}_{t}, \zeta_t)$, by Theorem \ref{Theo-stab-exo-optvalue}, 
    \(
        | \vt(Q) - \vt(\tilde{Q}) | \leq \Delta_{L_{\vt}}(Q, \tilde{Q}).
    \)
Therefore, 
if \(\tilde{\xi}\) 
    is sufficiently close to $\xi$
    such that 
    $\Delta_{L_{\vartheta}}(Q,\tilde Q)
    \leq
    \frac{a(\epsilon)}{2}$,
    then
    \begin{eqnarray}
    \vt(\tilde{Q}) &\leq & \vt(Q) + \frac{a(\epsilon)}{2} = \vt^{\epsilon}(Q) - \frac{a(\epsilon)}{2} < \mathbb{E}_{\xi} \left[ \sum\limits_{t = 0}^T C_t(s_t^{\epsilon}, x_t^{*, \epsilon} \xi_{[t]}, \zeta_t) \right] - \Delta_{L_{\vt}}(Q, \tilde{Q}), \nonumber \\
    &\leq & \mathbb{E}_{\xi} \left[ \sum\limits_{t = 0}^T C_t(s_t^{\epsilon}, x_t^{*, \epsilon}, \xi_{[t]}, \zeta_t) \right] - \left| \mathbb{E}_{\tilde{\xi}} \left[  \sum\limits_{t = 0}^T C_t(\tilde{s}_t, \tilde{x}_t, \tilde{\xi}_{[t]}, \zeta_t) \right] - \mathbb{E}_{\xi}\left[ \sum\limits_{t = 0}^T C_t(s_t, \tilde{x}_t, \xi_{[t]}, \zeta_t) \right] \right| \nonumber \\
    &\leq & \mathbb{E}_{\xi} \left[ \sum\limits_{t = 0}^T C_t(s_t^{\epsilon}, x_t^{*, \epsilon}, \xi_{[t]}, \zeta_t) \right] - \mathbb{E}_{\xi} \left[ \sum\limits_{t = 0}^T C_t(s_t, \tilde{x}_t, \xi_{[t]}, \zeta_t) \right] + \mathbb{E}_{\tilde \xi} \left[ \sum\limits_{t = 0}^T C_t(\tilde{s}_t, \tilde{x}_t, \tilde{\xi}_{[t]}, \zeta_t)  \right] \nonumber \\
    &\leq & \mathbb{E}_{\tilde{\xi}} \left[ \sum\limits_{t = 0}^T C_t(\tilde{s}_t, \tilde{x}_t, \tilde{\xi}_{[t]}, \zeta_t)  \right], \nonumber
    \end{eqnarray}
    which implies that 
    \underline{if the optimal policy \(\tilde{\bm x}^*(\tilde{\xi})\) to problem \eqref{eq:v-xi-ptb} lies in \({\mathcal{X}(\xi)} \cap {\mathcal{X}(\tilde{\xi})}\)}, then 
    \(\tilde{\bm x}^* \in \epsilon \text{-} \mathcal{X}^*(\xi)\). Thus, when 
    $\frac{2 \Delta_{L_{\vartheta}}(Q,\tilde Q)}{a(\epsilon)}
    \leq
    1$,
    inequality \eqref{eq:continuity-optsolution-xi} must hold. The third inequality is derived by \eqref{eq:stab-exo-optvalue-v} and \eqref{eq:stab-xi-opval}.
    
In what follows, we consider \underline{the case
that
\(\tilde{\bm x}^* \notin {\mathcal{X}(\xi)} \cap {\mathcal{X}(\tilde{\xi})}\)}. Let \(\bm x\) be the orthogonal projection of \(\tilde{\bm x}^*\) onto \(\mathcal{X}(\xi)\).
We prove that \(d(\bm x, \mathcal{X}^*(\xi)) < 2\epsilon\).  
To this end, we measure the difference in the objective function values of problems \eqref{eq:v-xi} and \eqref{eq:v-xi-ptb}, i.e.,
\begin{eqnarray}
    && \bigg| \sum_{t = 0}^T \big(C_t(s_t, x_t, \xi_{[t]}, \zeta_t) - C_t(\tilde{s}_t, \tilde{x}_t, \tilde{\xi}_{[t]}, \zeta_t)\big) \bigg| \nonumber \\
    &\leq& \sum_{t = 0}^T L_{C, t}({\xi, \tilde{\xi}})\big(\Vert s_t - \tilde{s}_t \Vert + \Vert x_t - \tilde{x}_t \Vert + \Vert \xi_{[t]} - \tilde{\xi}_{[t]} \Vert \big) \nonumber \\
    &\leq & (T + 1) L_C({\xi, \tilde{\xi}}) (\Vert \xi - \tilde{\xi} \Vert + \Vert \bm x - \tilde{\bm x} \Vert) + L_C({\xi, \tilde{\xi}}) \sum_{t = 0}^T \Vert s_t - \tilde{s}_t \Vert ,
    \label{eq:cont-obj-Lip}
\end{eqnarray}
where $\tilde{s}_t = S^M_{t - 1}(\tilde{s}_{t - 1}, \tilde{x}_{t - 1}, \tilde{\xi}_{t - 1}, \zeta_{t - 1})$ and $s_0 = \tilde{s}_0$. 
By Assumption \ref{assumption8},
\begin{eqnarray}
\Vert s_t - \tilde{s}_t \Vert &= & \Vert S^M_{t - 1}(s_{t - 1}, x_{t - 1}, \xi_{t - 1}, \zeta_{t - 1}) - S^M_{t - 1}(\tilde{s}_{t - 1}, \tilde{x}_{t - 1}, \tilde{\xi}_{t - 1}, \zeta_{t - 1}) \Vert \nonumber \\
&\leq & L_S({\xi, \tilde{\xi}})(\Vert s_{t - 1} - \tilde{s}_{t - 1} \Vert + \Vert x_{t - 1} - \tilde{x}_{t - 1} \Vert + \Vert \xi_{t - 1} - \tilde{\xi}_{t - 1} \Vert) \nonumber \\
&\leq & L_S({\xi, \tilde{\xi}})^2(\Vert s_{t - 2} - \tilde{s}_{t - 2} \Vert + \Vert x_{t - 2} - \tilde{x}_{t - 2} \Vert + \Vert \xi_{t - 2} - \tilde{\xi}_{t - 2} \Vert) \nonumber \\
&& + L_S({\xi, \tilde{\xi}})(\Vert x_{t - 1} - \tilde{x}_{t - 1} \Vert + \Vert \xi_{t - 1} - \tilde{\xi}_{t - 1} \Vert) \nonumber \\
&\leq & \cdots \leq \sum_{k = 1}^{t} L_S({\xi, \tilde{\xi}})^k(\Vert x_{t - k} - \tilde{x}_{t - k} \Vert + \Vert \xi_{t - k} - \tilde{\xi}_{t - k} \Vert) \nonumber \\
&\leq & \sum_{k = 1}^{t} L_S({\xi, \tilde{\xi}})^k(\Vert \bm x - \tilde{\bm x} \Vert + \Vert \xi - \tilde{\xi} \Vert) \leq \frac{1 - L_S({\xi, \tilde{\xi}})^{t + 1}}{1 -L_S({\xi, \tilde{\xi}})} (\Vert \bm x - \tilde{\bm x} \Vert + \Vert \xi - \tilde{\xi} \Vert) \nonumber
\end{eqnarray}
provided that $L_S({\xi, \tilde{\xi}}) \neq 1$. 
Consequently 
\begin{eqnarray}
\sum_{t = 0}^T \Vert s_t - \tilde{s}_t \Vert & =& \sum_{t = 1}^T \frac{1 - L_S({\xi, \tilde{\xi}})^{t + 1}}{1 -L_S({\xi, \tilde{\xi}})} (\Vert \bm x - \tilde{\bm x} \Vert + \Vert \xi - \tilde{\xi} \Vert) \nonumber \\
&= & \frac{T - 1 - TL_S({\xi, \tilde{\xi}}) + L_S({\xi, \tilde{\xi}})^{T + 2}}{(1-L_S({\xi, \tilde{\xi}}))^2} (\Vert \bm x - \tilde{\bm x} \Vert + \Vert \xi - \tilde{\xi} \Vert).
\label{eq:xi-st-lip}
\end{eqnarray}
Let $L_{\Sigma, 1}({\xi, \tilde{\xi}}) := \left(T + 1 + \frac{T - 1 - TL_S({\xi, \tilde{\xi}}) + L_S({\xi, \tilde{\xi}})^{T + 2}}{(1-L_S({\xi, \tilde{\xi}}))^2}\right) L_C({\xi, \tilde{\xi}})$.
Combining \eqref{eq:cont-obj-Lip} with \eqref{eq:xi-st-lip}, we obtain
\begin{eqnarray}
&& \text{rhs of } \eqref{eq:cont-obj-Lip} \leq 
L_{\Sigma,1}({\xi, \tilde{\xi}})(\Vert \xi - \tilde{\xi} \Vert + \Vert \bm x - \tilde{\bm x} \Vert).
\label{eq:cont-exo-L_Sigma1}
\end{eqnarray}
In the case that  $L_S({\xi, \tilde{\xi}}) = 1$,
we have
\begin{eqnarray}
    \text{rhs of }  \eqref{eq:cont-obj-Lip} &\leq & (T + 1) L_C({\xi, \tilde{\xi}}) (\Vert \xi - \tilde{\xi} \Vert + \Vert \bm x - \tilde{\bm x} \Vert)  + L_C({\xi, \tilde{\xi}}) \sum_{t = 0}^T \Vert s_t - \tilde{s}_t \Vert \nonumber \\
    &\leq & \left( (T + 1)L_C({\xi, \tilde{\xi}}) + \frac{T (T + 1)}{2} L_C({\xi, \tilde{\xi}}) \right)(\Vert \xi - \tilde{\xi} \Vert + \Vert \bm x - \tilde{\bm x} \Vert) \nonumber \\
    &:= & L_{\Sigma, 2}({\xi, \tilde{\xi}}) (\Vert \xi - \tilde{\xi} \Vert + \Vert \bm x - \tilde{\bm x} \Vert).
    \label{eq:cont-exo-L_Sigma2}
\end{eqnarray}
To ensure the existence of a suitable $\tilde{\xi}$ as above, we need to prove that $a(\epsilon)$ is strictly increasing w.r.t.~$\epsilon$. First, it is immediate from its definition that $a(\epsilon)$ is non-decreasing. Let $\epsilon_2 > \epsilon_1 > 0$. If $a(\epsilon_2) = a(\epsilon_1)$, then there exists a $\bar{y} \in \mathcal{X}^{\epsilon_2}(\xi)$ such that
\begin{eqnarray}
\mathbb{E}_{\xi} \left[ \sum_{t = 0}^T C_t(\bar{s}_t, \bar{y}_t, \xi_{[t]}, \zeta_t) \right] = \vt(Q) + a(\epsilon_2). \nonumber
\end{eqnarray}
Here, \(\bar{s}_t = S^M_{t - 1}(\bar{s}_{t - 1}, \bar{y}_{t - 1}, \xi_{t - 1}, \zeta_{t - 1})\), and \(\bar{s}_0 = s_0\). Suppose the distance from \(\bar{y}\) to \(\mathcal{X}^{*}(\xi)\) is \( \gamma \), then \(\gamma \geq \epsilon_2\). Let \(y\) be the orthogonal projection of \(\bar{y}\) onto \(\mathcal{X}^*(\xi)\) and define \(\tilde{y} = \frac{\epsilon_2 - \epsilon_1}{d(\bar{y}, y)} y + \left(1 - \frac{\epsilon_2 - \epsilon_1}{d(\bar{y}, y)}\right) \bar{y}\). Then, we have
\begin{eqnarray}
    d(\tilde{y}, y) = \left(1 - \frac{\epsilon_2 - \epsilon_1}{d(\bar{y}, y)}\right) d(y, \bar{y}) \geq \frac{\epsilon_1}{\epsilon_2}d(\bar{y}, y) \geq \epsilon_1. \nonumber
\end{eqnarray}    
By the assumptions, \(\mathbb{E}_{\xi} \left[ \sum\limits_{t = 0}^T C_t({s}_t, {x}_t, \xi_{[t]}, \zeta_t) \right]\) is convex w.r.t.~\(\bm x\). Therefore:
\begin{eqnarray}
    && \mathbb{E}_{\xi} \left[ \sum_{t = 0}^T C_t(\tilde{s}^y_t, \tilde{y}_t, \xi_{[t]}, \zeta_t) \right] \nonumber \\
    &\leq & \frac{\epsilon_2 - \epsilon_1}{d(\bar{y}, y)} \mathbb{E}_{\xi} \left[ \sum_{t = 0}^T C_t({s}^y_t, {y}_t, \xi_{[t]}, \zeta_t) \right] + \left(1 - \frac{\epsilon_2 - \epsilon_1}{d(\bar{y}, y)}\right) \mathbb{E}_{\xi} \left[ \sum_{t = 0}^T C_t(\bar{s}^y_t, \bar{y}_t, \xi_{[t]}, \zeta_t) \right] \nonumber \\
    &= & \frac{\epsilon_2 - \epsilon_1}{d(\bar{y}, y)} \vt(Q) + \left(1 - \frac{\epsilon_2 - \epsilon_1}{d(\bar{y}, y)}\right) (\vt(Q) + a(\epsilon_2)) \nonumber \\
    &= & \vt(Q) + \left(1 - \frac{\epsilon_2 - \epsilon_1}{d(\bar{y}, y)}\right) a(\epsilon_2) \nonumber \\
    &< & \vt(Q) + a(\epsilon_2) = \vt(Q) + a(\epsilon_1). \nonumber
\end{eqnarray}
This contradicts the assumption that \(\vt(Q) + a(\epsilon_1)\) is the optimal value of problem \eqref{eq:mixed-MSP-MDP} on \(\mathcal{X}^{\epsilon_1}(\xi)\). Therefore, we have shown that \(a(\epsilon)\) must be strictly increasing.

Combining the two conclusions above, we know that if a feasible policy \(\bm x \in \mathcal{X}(\xi)\) is chosen such that \(d(\bm x, \mathcal{X}^*({\xi})) \geq 2\epsilon\), then when \(\tilde{\xi}\) is selected such that 
$\frac{\Delta_{L_{\Sigma}(L_X + 1)}(Q, \tilde{Q})}{a(2\epsilon) - a(\epsilon)} \leq 1$, it holds
\begin{eqnarray}
    &&\mathbb{E}_{\tilde{\xi}} \left[ \sum_{t = 0}^T C_t(\tilde{s}^*_t, \tilde{x}^*_t, \tilde{\xi}_{[t]}, \zeta_t) \right] \nonumber \\
    &\geq & \mathbb{E}_{\xi} \left[  \sum_{t = 0}^T C_t(s_t, x_t, \xi_{[t]}, \zeta_t) \right] - \left| \mathbb{E}_{\tilde{\xi}} \left[ \sum_{t = 0}^T (C_t(\tilde{s}^*_t, \tilde{x}^*_t, \tilde{\xi}_{[t]}, \zeta_t) \right] - \mathbb{E}_{\xi} \left[ \sum_{t = 0}^T C_t(s_t, x_t, \xi_{[t]}, \zeta_t)) \right] \right| \nonumber \\
    &\geq & \vt(Q) + a(2\epsilon) - \Delta_{L_{\Sigma}(L_X + 1)}(Q, \tilde{Q}) \nonumber \\
    &\geq & \vt(Q) + a(\epsilon) > \vt(\tilde{Q}), \nonumber
\end{eqnarray}
    where $L_{\Sigma}({\xi, \tilde{\xi}}) := \max\left\{ L_{\Sigma, 1}({\xi, \tilde{\xi}}), L_{\Sigma, 2}({\xi, \tilde{\xi}}) \right\}$. 
    The second inequality follows from \eqref{eq:cont-exo-L_Sigma1}, \eqref{eq:cont-exo-L_Sigma2} and Proposition \ref{Prop-stab-exo-feas}. 
    This contradicts the fact that \(\tilde{\bm x}^*\) is an optimal policy for problem \eqref{eq:mixed-MSP-MDP} on \(\mathcal{X}^*(\tilde{\xi})\). Therefore, we must have \(\bm x \in 2\epsilon \text{-} \mathcal{X}^*({\xi})\). Furthermore, by Proposition \ref{Prop-stab-exo-feas}, \(\mathbb{E}_{\pi(Q, \tilde{Q})} [d(\bm x, \tilde{\bm x}^*)] \leq  \mathbb{E}_{\pi(Q, \tilde{Q})} \left[ L_X({\xi, \tilde{\xi}}) \Vert \xi - \tilde{\xi} \Vert \right]\). This means that, when \(\mathbb{E}_{\pi(Q, \tilde{Q})} \left[ \frac{L_X({\xi, \tilde{\xi}})}{\epsilon} \Vert \xi - \tilde{\xi} \Vert \right] < 1\), we have \(\mathbb{E}_{\pi(Q, \tilde{Q})} [d(\bm x, \tilde{\bm x}^*)] < \epsilon\). 
    In summary, when 
    \[
    \mathbb{E}_{\pi(Q, \tilde{Q})} \left[ \frac{L_X({\xi, \tilde{\xi}})}{\epsilon} \Vert \xi - \tilde{\xi} \Vert \right] < 1, \Delta_{L_{\vartheta}}(Q,\tilde Q)
    \leq
    \frac{a(\epsilon)}{2},
    \frac{\Delta_{L_{\Sigma}(L_X + 1)}(Q, \tilde{Q})}{a(2\epsilon) - a(\epsilon)} \leq 1,
    \]
    it must hold that
    \begin{eqnarray}
    \mathbb{E}_{\pi(Q, \tilde{Q})} \left[ d(\tilde{\bm x}^*, \mathcal{X}^*(\xi)) \right] \leq \mathbb{E}_{\pi(Q, \tilde{Q})} \left[ d(\tilde{\bm x}^*, \bm x) \right] + \mathbb{E}_{\pi(Q, \tilde{Q})} \left[ d(\bm x, \mathcal{X}^*(\xi)) \right] < 3\epsilon. \nonumber
    \end{eqnarray}
    Since \(\tilde{\bm x}^*\) is chosen arbitrarily, the inequality above implies that
    \begin{eqnarray}
    \mathbb{E}_{\pi(Q, \tilde{Q})} \left[ \mathbb{D}(\mathcal{X}^*(\tilde{\xi}), \mathcal{X}^*(\xi)) \right] < 3\epsilon. \nonumber
    \end{eqnarray}
    Similarly, we can reach the same conclusion when the positions of \(\xi\) and \(\tilde{\xi}\) are swapped. Therefore,
    \begin{eqnarray}
    \mathbb{E}_{\pi(Q, \tilde{Q})} \left[ \mathbb{H}(\mathcal{X}^*(\tilde{\xi}), \mathcal{X}^*(\xi)) \right] < 3\epsilon. \nonumber
    \end{eqnarray}
    This completes the proof about the continuity of the optimal solution set.
    \end{proof}
    Unlike the stability results 
    in \cite{HRS2006stability}, 
    inequality 
    \eqref{eq:continuity-optsolution-xi}
in Theorem~\ref{Theo-cont-exo-optsolu}   
    does not require 
    a growth condition. 
    Moreover,
    the convexity of the objective function is necessary for the continuity of the set of optimal solutions.  
    The next example shows that the 
    continuity may fail without convexity.
    \begin{example}
    Consider the following 
    one-stage stochastic 
    minimization problem:
    \begin{subequations}
    \begin{eqnarray}
    \underset{x \in \mathbb{R}} \min && - \mathbb{E}_{\xi} \left[ | x - \xi | \right] \\
    \textrm{ s.t. } && |x - 1| - 1 \leq 0,
    \end{eqnarray}     
    \label{eq-ex-cont-fanli}
    \end{subequations}
    where
    $\xi$ follows a uniform distribution on $[0, 2]$, i.e. $Q = U(0,2)$.  
    Problem \eqref{eq-ex-cont-fanli} satisfies all conditions in 
    Theorem~\ref{Theo-cont-exo-optsolu} except for 
    convexity of the objective function.  
    First, the constraint function in \eqref{eq-ex-cont-fanli} is Lipschitz continuous 
    in \(x\) with a Lipschitz modulus of 1, and it is also convex 
    in \(x\).  
    Second, since the objective function and the constraint function are independent of \(s\) in \eqref{eq-ex-cont-fanli}, 
    the Lipschitz continuity of
    the state transition function holds automatically.
    Third, the objective function is Lipschitz continuous 
    in $x$
    with modulus $1$.
    Fourth, for any \(\xi\), there exists a feasible solution \(x = 1\) such that \(|x - 1| - 1 \leq -1\). Therefore, the Slater condition holds uniformly.  
    Finally, the feasible solution set of problem \eqref{eq-ex-cont-fanli} is obviously bounded.
    However, the objective function is concave 
    in \(x\). To see this, we can obtain a closed form of the objective function by straightforward calculation
$$
\mathbb{E}_{\xi}[|x-\xi|] 
    = \frac{1}{2}\left( \int_{0}^{x} (x-t)\,dt + \int_{x}^{2} (t-x)\,dt \right)  \\
    = \frac{x^{2} + (2-x)^{2}}{4} = \frac{(x-1)^{2}+1}{2}. 
    $$
    Then the set of optimal solutions
    to problem \eqref{eq-ex-cont-fanli}  is $\mathcal{X}^*(\xi) = \{0, 2\}$.
    However, the optimal solution set changes drastically 
    with 
    a small perturbation in \(\xi\). For instance, if the distribution $Q$ is perturbed to $U(-\delta, 2)$ for any $\delta > 0$,
  then
    $$
    \mathbb{E}_{\tilde{\xi}}[|x-\tilde{\xi}|] 
    = \frac{1}{2+\delta} \left( \int_{-\delta}^{x} (x-t)\,dt + \int_{x}^{2} (t-x)\,dt \right) 
    = \frac{(x+\delta)^{2} + (2-x)^{2}}{2(2+\delta)}.
    $$
    The set of optimal solutions 
    reduces to a singleton
    $\mathcal{X}^*(\tilde{\xi}) = \{ 2 \}$. 
    Consequently,
    $\mathbb{H}(\mathcal{X}^*(\xi), \mathcal{X}^*(\tilde{\xi})) = 2$
    for any $\delta>0$. 
    The failure of stability is
    caused by disconnectedness 
    of the set of optimal solutions to the original problem.
    \end{example}
    It is possible to strengthen 
    Theorem \ref{Theo-cont-exo-optsolu}
    by deriving 
    an error bound for 
    $\mathbb{H}(\mathcal{X}^*(\xi), \mathcal{X}^*(\tilde{\xi}))$ in terms 
    of perturbations of $\xi$. The next theorem states this.

\begin{theorem}[Quantitative stability of the set of optimal solutions]
Let 
$\mathcal{X}^*(\xi)$ and $\mathcal{X}^*(\tilde{\xi})$
be the sets of optimal solutions 
to problems \eqref{eq:v-xi} and \eqref{eq:v-xi-ptb}. 
Assume: (a) Assumptions 
\ref{assumption7} and 
\ref{assumption8} hold and the conditions in Theorem \ref{Theo-stab-exo-optvalue} are satisfied, (b)
problem \eqref{eq:mixed-MSP-MDP} satisfies the \(\nu\)-th order growth condition, i.e., 
there exists constants \(\beta, \nu > 0\) such that
\begin{eqnarray}
\mathbb{E}_{\xi}\left[ \sum_{t = 0}^T C_t(s_t, x_t, \xi_{[t]}, \zeta_t) \right] - \vt(Q) > \beta \left( \mathbb{E}_{\xi} \left[ d(\bm x, \mathcal{X}^*(\xi)) \right] \right)^{\nu}, \quad \forall  \bm x \in \mathcal{X}(\xi)
\label{eq:growth-xi-nu}
\end{eqnarray}
holds for both $\xi$ and its perturbation $\tilde{\xi}$.
Then 
\begin{eqnarray}
&& \mathbb{E}_{\pi^*(Q, \tilde{Q})} \left[ \mathbb{H}(\mathcal{X}^*(\xi), \mathcal{X}^*(\tilde{\xi}))\right]  \leq \Delta_{L_X}(Q, \tilde{Q}) + \left( \frac{1}{\beta} \Delta_{L_\vt + L_{\Sigma}(L_X + 1)}(Q, \tilde{Q}) \right)^{\frac{1}{\nu}}, 
\label{eq:stab-optimalsolu-xi}
\end{eqnarray}
where $\pi^*(Q, \tilde{Q})$ is the optimal coupling of $Q$ and $\tilde{Q}$ defined as in \eqref{eq:def-DeltaL} with $L(\xi, \tilde{\xi}) = L_X(\xi, \tilde{\xi})\}$; $L_X({\xi, \tilde{\xi}})$, $L_{\vt}({\xi, \tilde{\xi}})$ and $L_{\Sigma}({\xi, \tilde{\xi}})$ are
defined respectively 
in Proposition~\ref{Prop-stab-exo-feas},
Theorem~\ref{Theo-stab-exo-optvalue} and Theorem~\ref{Theo-cont-exo-optsolu}.   If, in addition, 
$L_g({\xi, \tilde{\xi}}) := L_g \max\left\{1, \Vert \xi \Vert, \Vert \tilde{\xi} \Vert\right\}$, $L_S({\xi, \tilde{\xi}}) := L_S \max\left\{1, \Vert \xi \Vert, \Vert \tilde{\xi} \Vert\right\}$ and $L_C({\xi, \tilde{\xi}}) := L_C \max\left\{1, \Vert \xi \Vert, \Vert \tilde{\xi} \Vert\right\}$, and $\xi$ has finite $(3T+1)$-th moment, then
   \begin{eqnarray}
       \hspace{-0.5em} \mathbb{E}_{\pi^*(Q, \tilde{Q})} \left[ \mathbb{H}(\mathcal{X}^*(\xi), \mathcal{X}^*(\tilde{\xi})) \right] \leq L_{X} \Delta_{2T}(Q, \tilde{Q}) + \left( \frac{1}{\beta} (L_{\vt} + L_{\Sigma} L_X + L_{\Sigma}) \Delta_{3T + 1}(Q, \tilde{Q}) \right)^{\frac{1}{\nu}},
       \label{eq:stab-exo-optsolu-FM}
   \end{eqnarray}
   where $\pi^*(Q, \tilde{Q})$ is the optimal coupling of $Q$ and $\tilde{Q}$ defined as in \eqref{eq:def-DeltaL} with $L(\xi, \tilde{\xi}) = \max\{1, \| \xi \|, \| \tilde{\xi} \| \}$; $L_{\Sigma} := \max \left\{ L_{\Sigma,1}, L_{\Sigma,2} \right\}$,  $L_{\Sigma,1}$ and $L_{\Sigma,2}$ are specified in \eqref{eq:stab-optsolu-xi-FM-Lsigma1} and \eqref{eq:stab-optsolu-xi-FM-Lsigma2}.
\label{Theo-stab-exo}
\end{theorem}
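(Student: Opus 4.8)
The plan is to mirror the argument in the proof of Theorem~\ref{Theo-cont-exo-optsolu}, replacing the $\epsilon$-neighbourhood bookkeeping there by a direct quantitative estimate supplied by the growth condition \eqref{eq:growth-xi-nu}. Fix an arbitrary optimal policy $\tilde{\bm x}^* \in \mathcal{X}^*(\tilde{\xi})$ of the perturbed problem \eqref{eq:v-xi-ptb}, and let $\bm x$ be its orthogonal projection onto the feasible policy set $\mathcal{X}(\xi)$ of the nominal problem. By the triangle inequality $d(\tilde{\bm x}^*, \mathcal{X}^*(\xi)) \le d(\tilde{\bm x}^*, \bm x) + d(\bm x, \mathcal{X}^*(\xi))$, where the first term is controlled by the Lipschitz continuity of the feasible-set mapping: Proposition~\ref{Prop-stab-exo-feas} yields $\mathbb{E}_{\xi,\tilde{\xi}}[d(\tilde{\bm x}^*,\bm x)] \le \mathbb{E}_{\xi,\tilde{\xi}}[L_X(\xi,\tilde{\xi})\|\xi-\tilde{\xi}\|]$. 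The bulk of the work is bounding the second term.

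For the second term I would invoke \eqref{eq:growth-xi-nu}: since $\bm x \in \mathcal{X}(\xi)$, one has $\beta\, d(\bm x,\mathcal{X}^*(\xi))^{\nu} \le \mathbb{E}_{\xi}\big[\textstyle\sum_{t=0}^T C_t(s_t^{\bm x}, x_t, \xi_{[t]},\zeta_t)\big] - \vt(\xi)$, and I would split the right-hand side as
$\big(\mathbb{E}_{\xi}[\sum_t C_t(s_t^{\bm x},x_t,\xi_{[t]},\zeta_t)] - \mathbb{E}_{\tilde{\xi}}[\sum_t C_t(\tilde s_t,\tilde x_t^*,\tilde{\xi}_{[t]},\zeta_t)]\big) + \big(\vt(\tilde{\xi})-\vt(\xi)\big)$.
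The second bracket is bounded by $\mathbb{E}_{\xi,\tilde{\xi}}[L_{\vt}(\xi,\tilde{\xi})\|\xi-\tilde{\xi}\|]$ via Theorem~\ref{Theo-stab-exo-optvalue}. For the first bracket I would reuse the chain of Lipschitz estimates already assembled in the proof of Theorem~\ref{Theo-cont-exo-optsolu} (the computation running through \eqref{eq:cont-obj-Lip}--\eqref{eq:cont-exo-L_Sigma2}): the local Lipschitz continuity of $C_t$ in $(s_t,x_t,\xi_{[t]})$ together with the state-difference recursion \eqref{eq:xi-st-lip} for $\|s_t^{\bm x}-\tilde s_t\|$, and $\|\bm x - \tilde{\bm x}^*\| \le L_X(\xi,\tilde{\xi})\|\xi-\tilde{\xi}\|$, give a bound of the form $\mathbb{E}_{\xi,\tilde{\xi}}[L_{\Sigma}(\xi,\tilde{\xi})(L_X(\xi,\tilde{\xi})+1)\|\xi-\tilde{\xi}\|]$ with $L_{\Sigma}=\max\{L_{\Sigma,1},L_{\Sigma,2}\}$ exactly as in Theorem~\ref{Theo-cont-exo-optsolu} (distinguishing the cases $L_S(\xi,\tilde{\xi})\ne1$ and $L_S(\xi,\tilde{\xi})=1$). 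Combining the two brackets, dividing by $\beta$ and taking $\nu$-th roots gives $d(\bm x,\mathcal{X}^*(\xi)) \le \big(\tfrac1\beta\,\mathbb{E}_{\xi,\tilde{\xi}}[(L_{\vt}(\xi,\tilde{\xi})+L_{\Sigma}(\xi,\tilde{\xi})L_X(\xi,\tilde{\xi})+L_{\Sigma}(\xi,\tilde{\xi}))\|\xi-\tilde{\xi}\|]\big)^{1/\nu}$.

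Adding the two terms and using that $\tilde{\bm x}^*\in\mathcal{X}^*(\tilde{\xi})$ was arbitrary yields the bound on $\mathbb{E}_{\xi,\tilde{\xi}}[\mathbb{D}(\mathcal{X}^*(\tilde{\xi}),\mathcal{X}^*(\xi))]$; exchanging the roles of $\xi$ and $\tilde{\xi}$ (the growth condition is posited for both) and taking the maximum produces \eqref{eq:stab-optimalsolu-xi}. For the Fortet--Mourier version \eqref{eq:stab-exo-optsolu-FM} I would substitute the explicit moduli $L_g(\xi,\tilde{\xi}) = L_g\max\{1,\|\xi\|,\|\tilde{\xi}\|\}$, $L_S(\xi,\tilde{\xi}) = L_S\max\{1,\|\xi\|,\|\tilde{\xi}\|\}$, $L_C(\xi,\tilde{\xi})=L_C\max\{1,\|\xi\|,\|\tilde{\xi}\|\}$, track the polynomial degree of the $\max\{1,\|\xi\|^{\cdot},\|\tilde{\xi}\|^{\cdot}\}$ factors (degree $2T-1$ from $L_X(\xi,\tilde{\xi})$ as in Proposition~\ref{Prop-stab-exo-feas}, degree $3T$ from the $L_{\vt}$ and $L_{\Sigma}$ terms as in Theorem~\ref{Theo-stab-exo-optvalue}), and use the assumed finiteness of the $(3T+1)$-th moment of $\xi$ to identify $\mathbb{E}_{\xi,\tilde{\xi}}[\max\{1,\|\xi\|^{k},\|\tilde{\xi}\|^{k}\}\|\xi-\tilde{\xi}\|]$ with $\dd_{FM,\,k+1}(Q,\tilde{Q})$ for $k\in\{2T-1,\,3T\}$, collecting the residual constants into $L_X$, $L_{\vt}$, $L_{\Sigma}$.

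The main obstacle is the first bracket $\mathbb{E}_{\xi}[\sum_t C_t(s_t^{\bm x},x_t,\xi_{[t]},\zeta_t)] - \mathbb{E}_{\tilde{\xi}}[\sum_t C_t(\tilde s_t,\tilde x_t^*,\tilde{\xi}_{[t]},\zeta_t)]$: one must propagate the stagewise state perturbations $\|s_t^{\bm x}-\tilde s_t\|$ through the nonlinear transitions $S^M_t$ and the successive projections defining $\bm x$, keeping the accumulated Lipschitz constant consistent with $L_{\Sigma,1},L_{\Sigma,2}$ from Theorem~\ref{Theo-cont-exo-optsolu}, and simultaneously making sure that in the Fortet--Mourier specialization the polynomial degrees never exceed $3T$, so that the $(3T+1)$-th moment hypothesis is precisely what the FM identification requires.
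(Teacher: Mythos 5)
Your proposal is correct and follows essentially the same route as the paper's proof: project an optimal policy of one problem onto the feasible set of the other (Proposition~\ref{Prop-stab-exo-feas}), bound its suboptimality by combining Theorem~\ref{Theo-stab-exo-optvalue} with the $L_{\Sigma}$ Lipschitz chain from \eqref{eq:cont-obj-Lip}--\eqref{eq:cont-exo-L_Sigma2}, invoke the $\nu$-th order growth condition, and symmetrize. The only cosmetic difference is the direction of the projection (you project $\tilde{\bm x}^*$ onto $\mathcal{X}(\xi)$ and use the growth condition for $\xi$, whereas the paper projects $\bm x^*$ onto $\mathcal{X}(\tilde{\xi})$ and uses it for $\tilde{\xi}$), which is immaterial since both arguments conclude by swapping the roles of $\xi$ and $\tilde{\xi}$.
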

    \begin{proof}
    By Theorem \ref{Theo-stab-exo-optvalue}, 
    \begin{eqnarray}
        | \vt(Q) - \vt(\tilde{Q}) | \leq \Delta_{L_{\vt}}(Q, \tilde{Q}).
        \label{eq:stab-diff-v}
    \end{eqnarray}
Let \(\bm x^*\) be an optimal policy of problem \eqref{eq:mixed-MSP-MDP} 
and 
\(s^*_t = S^M_{t - 1}(s_{t - 1}^*, x_{t - 1}^*, \xi_{t - 1}, \zeta_{t - 1})\),
$1 \leq t \leq T$. For \(t = 1, 2, \cdots, T\), 
let \(\tilde{x}_t\) be the orthogonal projection of \(x^*_t\) onto \(\mathcal{X}_t(\tilde{s}_t, \tilde{x}_{t - 1}, \tilde{\xi}_{[t]})\), and \(\tilde{s}_t = S^M_{t - 1}(\tilde{s}_{t - 1}, \tilde{x}_{t - 1}, \tilde{\xi}_{t - 1}, \zeta_{t - 1}), \tilde{s}_0 = s_0\). 
By \eqref{eq:cont-obj-Lip}-\eqref{eq:cont-exo-L_Sigma2},
\begin{eqnarray}
    && \bigg|  \sum_{t = 0}^T C_t(\tilde{s}_t, \tilde{x}_t, \tilde{\xi}_{[t]}, \zeta_{t}) - \sum_{t = 0}^T C_t(s_t^*, x_t^*, \xi_{[t]}, \zeta_{t}) \bigg| \nonumber \\
    &\leq & L_{\Sigma}({\xi, \tilde{\xi}}) (\Vert \tilde{\bm x} - \bm x^* \Vert + \Vert \xi - \tilde{\xi} \Vert)
    \leq  L_{\Sigma}({\xi, \tilde{\xi}})(L_X({\xi, \tilde{\xi}}) + 1) \Vert \xi - \tilde{\xi} \Vert, \nonumber
\end{eqnarray}
which implies, by the definition of $\Delta_L$ in \eqref{eq:def-DeltaL},
\begin{eqnarray}
    && \bigg| \mathbb{E}_{\tilde{\xi}} \left[ \sum_{t = 0}^T C_t(\tilde{s}_t, \tilde{x}_t, \tilde{\xi}_{[t]}, \zeta_{t}) \right] - \vt(Q) \bigg|
    \leq \Delta_{L_{\Sigma}(L_X + 1)}(Q, \tilde{Q}).
    \label{eq:stab-diff-vtildex-xstar}
\end{eqnarray}
A combination of \eqref{eq:stab-diff-v} and \eqref{eq:stab-diff-vtildex-xstar}  yields
\begin{eqnarray}
    && \bigg|\mathbb{E}_{\tilde{\xi}} \left[ \sum_{t = 0}^T C_t(\tilde{s}_t, \tilde{x}_t, \tilde{\xi}_{[t]}, \zeta_{t}) \right] - \vt(\tilde{Q}) \bigg| \nonumber \\
    &\leq & \bigg| \mathbb{E}_{\tilde{\xi}} \left[ \sum_{t = 0}^T C_t(\tilde{s}_t, \tilde{x}_t, \tilde{\xi}_{[t]}, \zeta_{t}) \right] - \vt(Q) \bigg| + | \vt(Q) - \vt(\tilde{Q}) | \nonumber \\
    &\leq & \Delta_{L_\vt + L_{\Sigma}(L_X + 1)}(Q, \tilde{Q}).
    \label{eq:stab-diff-vtildex-ctildexi}
\end{eqnarray}
Assume for the sake of a contradiction that
\begin{eqnarray}
\mathbb{E}_{\tilde{\xi}} \left[ d( \tilde{\bm x} , \mathcal{X}^*(\tilde{\xi}) ) \right] > \left( \frac{1}{\beta} \Delta_{L_\vt + L_{\Sigma}(L_X + 1)}(Q, \tilde{Q}) \right)^{\frac{1}{\nu}}. \label{eq:contra-optsolu}
\end{eqnarray}
Then by 
\eqref{eq:growth-xi-nu} (replacing 
$\xi$ with $\tilde{\xi}$)
and \eqref{eq:contra-optsolu},
\begin{eqnarray}
    \bigg|\mathbb{E}_{ \tilde{\xi}} \left[ \sum_{t = 0}^T C_t(\tilde{s}_t, \tilde{x}_t, \tilde{\xi}_{[t]}, \zeta_{t}) \right] - \vt(\tilde{Q})\bigg| > \Delta_{L_\vt + L_{\Sigma}(L_X + 1)}(Q, \tilde{Q}), \nonumber
\end{eqnarray}
which contradicts \eqref{eq:stab-diff-vtildex-ctildexi}.
Thus
\begin{eqnarray}
\mathbb{E}_{\tilde{\xi}} \left[ d( \tilde{\bm x}, \mathcal{X}^*(\tilde{\xi}) ) \right] \leq \left( \frac{1}{\beta} \Delta_{L_\vt + L_{\Sigma}(L_X + 1)}(Q, \tilde{Q}) \right)^{\frac{1}{\nu}}. \label{eq:optsolu-projection}
\end{eqnarray}
Since \(\tilde{x}_t\) is the orthogonal projection of \(x_t^*\) onto $\mathcal{X}_t(\tilde{s}_t, \tilde{x}_{t - 1}, \tilde{\xi}_{[t]})$, by Proposition \ref{Prop-stab-exo-feas}
\begin{eqnarray}
    \Vert \bm x^* - \tilde{\bm x} \Vert \leq \mathbb{H}(\mathcal{X}(\xi), \mathcal{X}(\tilde{\xi})) \leq L_X({\xi, \tilde{\xi}}) \Vert \xi - \tilde{\xi} \Vert. \label{eq:optsolu-x-tildex}
\end{eqnarray}
Since $x^*$ is chosen arbitrarily, combining \eqref{eq:optsolu-projection} and \eqref{eq:optsolu-x-tildex}, we have 
for any \(\tilde{\bm x}^* \in \mathcal{X}^*(\tilde{\xi})\), 
\begin{eqnarray}
    && \mathbb{E}_{\pi^*(Q, \tilde{Q})} \left[ d(\tilde{\bm x}^*, \mathcal{X}^*(\xi)) \right] \leq \Delta_{L_X}(Q, \tilde{Q}) + \left( \frac{1}{\beta} \Delta_{L_\vt + L_{\Sigma}(L_X + 1)}(Q, \tilde{Q}) \right)^{\frac{1}{\nu}}. \nonumber
\end{eqnarray}
Since \(\tilde{\bm x}^*\) 
is chosen arbitrarily from \(\mathcal{X}^*(\tilde{\xi})\), then
\begin{eqnarray}
    && \mathbb{E}_{\pi^*(Q, \tilde{Q})} \left[ \mathbb{D}(\mathcal{X}^*(\tilde{\xi}), \mathcal{X}^*(\xi)) \right] \leq \Delta_{L_X}(Q, \tilde{Q}) + \left( \frac{1}{\beta} \Delta_{L_\vt + L_{\Sigma}(L_X + 1)}(Q, \tilde{Q}) \right)^{\frac{1}{\nu}}. \nonumber
\end{eqnarray}
By swapping 
the positions between  
\(\xi\) and \(\tilde{\xi}\), we obtain \eqref{eq:stab-optimalsolu-xi}.

Next, we can deduce that \eqref{eq:cont-exo-L_Sigma1} and \eqref{eq:cont-exo-L_Sigma2} hold with the coefficients replaced by 
    \begin{eqnarray}
       L_{\Sigma, 1} &= & L_C\left(T + 1 + \sum_{t = 1}^T \sum_{k = 1}^{t} L_S^k\right) = L_C \frac{T - 1 - TL_S + L_S^{T + 2}}{(1-L_S)^2} 
       \label{eq:stab-optsolu-xi-FM-Lsigma1}
    \end{eqnarray}
    and
    \begin{eqnarray}
       L_{\Sigma, 2} &= & L_C\left(T + 1 + \sum_{t = 1}^T \sum_{k = 1}^{t} L_S^k\right) = L_C \left(T + 1 + \frac{T (T + 1)}{2}\right)
       \label{eq:stab-optsolu-xi-FM-Lsigma2}
    \end{eqnarray}    
    respectively. Let $L_{\Sigma} := \max\left\{ L_{\Sigma,1}, L_{\Sigma,2} \right\}$. Then analogous to \eqref{eq:optsolu-projection}, we obtain
    \begin{eqnarray}
        \mathbb{E}_{\pi^*(Q, \tilde{Q})} \left[ d( \tilde{\bm x}, \mathcal{X}^*(\tilde{\xi}) ) \right] &\leq & \left( \frac{1}{\beta} \Delta_{L_\vt + L_{\Sigma}(L_X + 1)}(Q, \tilde{Q}) \right)^{\frac{1}{\nu}} \nonumber \\
        &= & \left(
        \frac{1}{\beta} \left( L_\vt + L_{\Sigma}(L_X+1) \right)
        \sup_{f\in \mathcal{F}_{3T+1}(\Xi)}
        \left|
        \int_{\Xi} f(\xi) Q(\mathrm d\xi)
        -
        \int_{\Xi} f(\xi) \tilde Q(\mathrm d\xi)
        \right|
        \right)^{\frac{1}{\nu}} \nonumber \\
        &= & \left( \frac{1}{\beta} (L_{\vt} + L_{\Sigma} L_X + L_{\Sigma}) \Delta_{3T + 1}(Q, \tilde{Q}) \right)^{\frac{1}{\nu}}.
        \label{eq:stab-optsolu-FM-projtoopt}
    \end{eqnarray}
    Combining \eqref{eq:stab-optsolu-FM-projtoopt} and Proposition \ref{Prop-stab-exo-feas}, we obtain \eqref{eq:stab-exo-optsolu-FM} by the same argument as in the proof of Theorem \ref{Theo-stab-exo}. 
\end{proof}

\subsection{Stagewise perturbations and interactions}

The stability results established in the preceding discussions are based on the perturbation of the whole stochastic process $\xi$.
In practice, it might be desirable to 
consider perturbations at each stage and
their impact on the value function locally.
It will also be interesting to investigate 
inter-stage effects of these perturbations, e.g.,
propagation of the effect of perturbation
at the current stage 
on the value functions at later stages. 
We begin with a quantitative stability result 
on the optimal value function.

\begin{theorem}[Quantitative stability of the optimal value]
Assume that (a) Assumptions \ref{assumption7} and 
\ref{assumption8} hold; 
(b) the conditions in Theorem \ref{Theo-stab-exo-optvalue} hold with $L_C({\xi, \tilde{\xi}}) := L_C, L_S({\xi, \tilde{\xi}}) := L_S$ and $L_g({\xi, \tilde{\xi}}) := L_g$
being independent of $\xi$; 
(c) for \(t = 2, \cdots, T\) and given the distribution \(Q_{[1:t-1]}\) of \(\xi_{[t - 1]}\), \(\mathbb{E}_{\xi_{[t-1]}} \left[ \dd_K(Q_t(\xi_t | \xi_{[t - 1]}), \tilde{Q}_t(\tilde{\xi}_t | \xi_{[t - 1]})) \right] < \infty\) when the distribution \(Q_t \in \mathscr{Q}_t\) 
of \(\xi_t\) is perturbed to \(\tilde{Q}_t \in \mathscr{Q}_t\);
(d) the conditional distributions satisfy the Lipschitz continuity condition:
    \begin{subequations}
    \begin{eqnarray}
    && \dd_K(Q_{t} (\xi_t | \xi_{[t-1]}), Q_t ({\xi}_t | \tilde{\xi}_{[t-1]})) \leq L_{Q_{t}} \Vert \tilde{\xi}_{[t-1]} - \xi_{[t-1]} \Vert,  \label{eq:P_wrt_xi1} \\
    && \dd_K(\tilde{Q}_{t} (\tilde{\xi}_t | \xi_{[t-1]}), \tilde{Q}_t (\tilde{\xi}_t | \tilde{\xi}_{[t-1]})) \leq L_{Q_{t}} \Vert \tilde{\xi}_{[t-1]} - \xi_{[t-1]} \Vert. \label{eq:P_wrt_xi2}
    \end{eqnarray}        
    \label{eq:P_wrt_xi}
    \end{subequations}
    Then
    \begin{itemize}
    \item[(i)] for $t = 1, 2, \cdots, T$, $v_t$ is Lipschitz continuous w.r.t. $(s_{t - 1}, x_{t - 1}, \xi_{[t]})$, i.e., there exists a constant $L_{v, t} > 0$ such that for any $(s_{t - 1}, x_{t - 1}, \xi_{[t]})$ and $(\hat{s}_{t - 1}, \hat{x}_{t - 1}, \hat{\xi}_{[t]})$,
    \begin{eqnarray}
    && v_t(s_{t - 1}, x_{t - 1}, \xi_{[t]}, \zeta_{t - 1}) - v_t(\hat{s}_{t - 1}, \hat{x}_{t - 1}, \hat{\xi}_{[t]}, \zeta_{t - 1}) \nonumber \\
    &\leq & L_{v, t}(\Vert s_{t - 1} - \hat{s}_{t - 1} \Vert + \Vert x_{t - 1} - \hat{x}_{t - 1} \Vert + \Vert \xi_{[t]} - \hat{\xi}_{[t]} \Vert) 
    \label{eq:Lip-xi-v_t}
    \end{eqnarray}
    where $L_{v, t} := (L_{C, t} + \max\{ L_{v, t + 1}, L_{v, t + 1} L_{Q_{t + 1}} \})(\frac{L_g A}{\rho} + 1)(L_S + 1)$;
    \item[(ii)] there exists a series of constants \(L_{\xi, t} > 0, t = 1, \cdots, T\) such that
    \begin{eqnarray}
    |\vt(Q) - \vt(\tilde{Q})| \leq \sum_{t = 1}^T L_{\xi,t} \mathbb{E}_{\xi_{[t - 1]}} \left[ \dd_K(Q_t(\xi_t | \xi_{[t - 1]}), \tilde{Q}_t(\tilde{\xi}_t | \xi_{[t - 1]})) \right]. 
    \label{eq:stab-xi-stagewise-val}
    \end{eqnarray}
    \end{itemize}
    \label{Theo-stab-exo-stage}
\end{theorem}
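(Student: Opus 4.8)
The plan is to establish part (i) by backward induction on $t$ and then use it as the engine for part (ii), where the stagewise distributional perturbations are handled by a telescoping argument over the conditional laws.

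For part (i), I would start from $t=T$, where $v_{T+1}\equiv 0$ and $L_{v,T+1}=0$, and carry out the induction using three ingredients. First, a Slater/Hoffman-type estimate showing that the feasible-set mapping $\mathcal{X}_t(s_t,x_{t-1},\xi_{[t]})$ is Lipschitz in $(s_t,x_{t-1},\xi_{[t]})$ in the Hausdorff distance with modulus $\tfrac{L_gA}{\rho}$; this is obtained exactly as in the proof of Theorem~\ref{Theo-Lipschitz} (replace a candidate point by a convex combination of it and a Slater point $\bar x_t$, use convexity of $g_{t,i}$ in $x_t$ together with Assumption~\ref{assumption8}$(G^{\xi})$). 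Second, the bound $\|s_t-\hat s_t\|\le L_S(\|s_{t-1}-\hat s_{t-1}\|+\|x_{t-1}-\hat x_{t-1}\|+\|\xi_{t-1}-\hat\xi_{t-1}\|)$ from Assumption~\ref{assumption8}$(S^{\xi})$. Third, the induction hypothesis that $v_{t+1}$ satisfies \eqref{eq:Lip-xi-v_t} at stage $t+1$ with modulus $L_{v,t+1}$. Given two argument triples, pick optimal $x_t^*,\hat x_t^*$ for the respective problems~\eqref{eq:value-t}, let $y_t$ be the orthogonal projection of $\hat x_t^*$ onto $\mathcal{X}_t(s_t,x_{t-1},\xi_{[t]})$ (feasible for the first problem, with $\|y_t-\hat x_t^*\|$ bounded by the Hausdorff distance above), and write
\[
v_t(s_{t-1},x_{t-1},\xi_{[t]},\zeta_{t-1})-v_t(\hat s_{t-1},\hat x_{t-1},\hat\xi_{[t]},\zeta_{t-1})\le \mathbb{E}_{\xi_{t+1}\mid\xi_{[t]},\zeta_t}\!\big[C_t(s_t,y_t,\xi_{[t]},\zeta_t)+v_{t+1}(s_t,y_t,\xi_{[t+1]},\zeta_t)\big]-\mathbb{E}_{\hat\xi_{t+1}\mid\hat\xi_{[t]},\zeta_t}\!\big[C_t(\hat s_t,\hat x_t^*,\hat\xi_{[t]},\zeta_t)+v_{t+1}(\hat s_t,\hat x_t^*,\hat\xi_{[t+1]},\zeta_t)\big].
\]
I would split the right-hand side into (I) the change of the integrand with the conditional law held fixed at $Q_{t+1}(\cdot\mid\xi_{[t]})$, bounded by $(L_{C,t}+L_{v,t+1})(\|s_t-\hat s_t\|+\|y_t-\hat x_t^*\|+\|\xi_{[t]}-\hat\xi_{[t]}\|)$ via Assumption~\ref{assumption8}$(C^{\xi})$ and the induction hypothesis, and (II) the change of the conditional law from $Q_{t+1}(\cdot\mid\xi_{[t]})$ to $Q_{t+1}(\cdot\mid\hat\xi_{[t]})$ with the integrand fixed, bounded through Kantorovich duality and hypothesis~\eqref{eq:P_wrt_xi1} by $L_{v,t+1}L_{Q_{t+1}}\|\xi_{[t]}-\hat\xi_{[t]}\|$. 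Substituting the feasible-set and state estimates and collecting terms yields the modulus $L_{v,t}=(L_{C,t}+\max\{L_{v,t+1},L_{v,t+1}L_{Q_{t+1}}\})(\tfrac{L_gA}{\rho}+1)(L_S+1)$; swapping the two triples gives the two-sided bound and closes the induction.

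For part (ii), I would telescope over the conditional laws stage by stage, in the spirit of the proof of Theorem~\ref{Theo-stab-endo}(i). Let $\xi^{(k)}$ be the process whose stage-$j$ conditional law is $\tilde Q_j(\cdot\mid\cdot)$ for $j\le k$ and $Q_j(\cdot\mid\cdot)$ for $j>k$, so that $\xi^{(0)}=\xi$, $\xi^{(T)}=\tilde\xi$, and $\vt(\xi)-\vt(\tilde\xi)=\sum_{k=1}^T\big(\vt(\xi^{(k-1)})-\vt(\xi^{(k)})\big)$. The processes $\xi^{(k-1)}$ and $\xi^{(k)}$ agree everywhere except in the conditional law of $\xi_k$ given $\xi_{[k-1]}$; in particular, by the nested reformulation of Theorem~\ref{Theo-timecons} the stage-$k$ continuation value is packaged into $v_k(s_{k-1},x_{k-1},\xi_{[k]},\zeta_{k-1})$ (which uses the $Q$-conditionals downstream of $k$ and is hence the same function for both processes), and the only difference is which distribution $\xi_k$ is drawn from in $\mathbb{E}_{\xi_k\mid\xi_{[k-1]}}[v_k(\cdot,\cdot,(\xi_{[k-1]},\xi_k),\cdot)]$. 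By part (i), $v_k$ is Lipschitz in $\xi_k$ with modulus $L_{v,k}$, so for each fixed $(s_{k-1},x_{k-1},\xi_{[k-1]})$ this expectation changes by at most $L_{v,k}\,\dd_K(Q_k(\xi_k\mid\xi_{[k-1]}),\tilde Q_k(\tilde\xi_k\mid\xi_{[k-1]}))$, a bound uniform in $(s_{k-1},x_{k-1})$. Since the outer stages $0,\dots,k-1$ (identical in both processes, with conditional laws for which \eqref{eq:P_wrt_xi2} supplies the regularity needed when passing to the symmetric estimate) act on the continuation only through nested minimizations and conditional expectations, each $1$-Lipschitz with respect to uniform perturbations of its argument, the bound propagates to the top without amplification; taking expectations over $\xi_{[k-1]}$ gives $|\vt(\xi^{(k-1)})-\vt(\xi^{(k)})|\le L_{\xi,k}\,\mathbb{E}_{\xi_{[k-1]}}[\dd_K(Q_k(\xi_k\mid\xi_{[k-1]}),\tilde Q_k(\tilde\xi_k\mid\xi_{[k-1]}))]$ with $L_{\xi,k}$ of the same form as $L_{v,k}$. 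Summing over $k$ yields \eqref{eq:stab-xi-stagewise-val}.

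The main obstacle is the intertemporal dependence of $\xi$: perturbing the conditional law of $\xi_k$ at stage $k$ alters, through the conditioning, the joint law of all later components and therefore both the later cost terms and the later feasible sets, so one cannot localize the effect naively. What resolves it is precisely the nested reformulation together with the Lipschitz-in-$\xi_{[t]}$ estimate of part (i): $v_k$ has already integrated out the downstream dependence, so only its regularity in $\xi_k$ — not any re-derivation of the downstream recursion — is needed, and the Lipschitz-in-conditioning hypotheses \eqref{eq:P_wrt_xi1}--\eqref{eq:P_wrt_xi2} are exactly what makes the induction in part (i) close. A secondary, purely technical difficulty is bookkeeping the growth of the Lipschitz constants through the backward recursion, in particular verifying that the factor $\max\{L_{v,t+1},L_{v,t+1}L_{Q_{t+1}}\}$ suffices when the contributions (I) and (II) above are combined with the feasible-set and state-transition estimates.
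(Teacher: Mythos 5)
Your proposal follows essentially the same route as the paper: part (i) by backward induction, using the Slater/Hoffman projection argument for the feasible-set Hausdorff estimate, the state-transition bound, and the split of the conditional expectation of $v_{t+1}$ into an integrand-change term and a conditional-law-change term controlled via Kantorovich duality and \eqref{eq:P_wrt_xi1}; part (ii) by a stagewise telescoping of the conditional laws combined with the Lipschitz-in-$\xi_t$ estimate from part (i) and the $1$-Lipschitz propagation through the outer minimizations and expectations. The one point where you deviate, and where your argument as written does not land on the stated bound, is the direction of the telescoping in part (ii): you perturb the conditional laws from stage $1$ forward, so that in the difference $\vt(\xi^{(k-1)})-\vt(\xi^{(k)})$ the history $\xi_{[k-1]}$ is already distributed under $\tilde Q_{[1:k-1]}$, which yields a bound of the form $L_{\xi,k}\,\mathbb{E}_{\tilde\xi_{[k-1]}}\bigl[\dd_K(Q_k(\cdot\mid\tilde\xi_{[k-1]}),\tilde Q_k(\cdot\mid\tilde\xi_{[k-1]}))\bigr]$ rather than the asserted $\mathbb{E}_{\xi_{[k-1]}}\bigl[\dd_K(Q_k(\xi_k\mid\xi_{[k-1]}),\tilde Q_k(\tilde\xi_k\mid\xi_{[k-1]}))\bigr]$ with the history drawn from the original law and both conditionals evaluated at that history. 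The paper telescopes in the opposite order — perturbing $\xi_T$ first and working back to $\xi_1$, so that at the step perturbing stage $t$ the stages $1,\dots,t-1$ are still governed by $Q$ — precisely to obtain the expectation under the unperturbed history. Reversing your order of substitution (and correspondingly using the already-perturbed downstream value functions, which are identical for the two adjacent intermediate processes) repairs this and reduces your argument to the paper's.
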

\begin{proof}
    \underline{Part (i)}.
    We 
    prove \eqref{eq:Lip-xi-v_t}
    by induction
    from $t=T$ backward to $t=1$.
    For any $(s_{T - 1}, x_{T - 1}, \xi_{[T]})$ and $(\hat{s}_{T - 1}, \hat{x}_{T - 1}, \hat{\xi}_{[T]})$, 
    \begin{eqnarray}
    && v_T(s_{T - 1}, x_{T - 1}, \xi_{[T]}, \zeta_{T - 1}) - v_T(\hat{s}_{T - 1}, \hat{x}_{T - 1}, \hat{\xi}_{[T]}, \zeta_{T - 1}) \nonumber \\
    &= & C_T(s_T, x_T^*, \xi_{[T]}, \zeta_{T}) - C_T(\hat{s}_T, \hat{x}_T^*, \hat{\xi}_{[T]}, \zeta_T) \nonumber \\
    &\leq & C_T(s_T, y_T, \xi_{[T]}, \zeta_T) - C_T(\hat{s}_T, \hat{x}_T^*, \hat{\xi}_{[T]}, \zeta_T) \nonumber \\
    &\leq & L_{C, T} (\Vert s_T - \hat{s}_T \Vert + \Vert y_T - \hat{x}^*_T \Vert + \Vert \xi_{[T]} - \hat{\xi}_{[T]} \Vert) \nonumber \\
    &\leq & L_{C, T}(\Vert s_T - \hat{s}_T \Vert + \frac{L_g A}{\rho}(\Vert s_T - \hat{s}_T \Vert + \Vert \xi_{[T]} - \hat{\xi}_{[T]} \Vert + \Vert x_{T - 1} - \hat{x}_{T - 1} \Vert) + \Vert \xi_{[T]} - \hat{\xi}_{[T]} \Vert) \nonumber \\
    &\leq & L_{C, T}(L_S + \frac{L_g A}{\rho} L_S + \frac{L_g A}{\rho} + 1) (\Vert s_{T - 1} - \hat{s}_{T - 1} \Vert + \Vert x_{T - 1} - \hat{x}_{T - 1} \Vert + \Vert \xi_{[T]} - \hat{\xi}_{[T]} \Vert), \nonumber
    \end{eqnarray}
    where $x^*_T$ and $\hat{x}^*_T$ represent the optimal solutions of problem \eqref{eq:value-T} at stage $T$ before and after the distribution perturbation of $\xi_{[T]}$, 
    respectively, and $y_T$ is the orthogonal projection of $\hat{x}^*_T$ onto $\mathcal{X}_T(s_T, x_{T - 1}, \xi_{[T]})$. The second inequality follows from condition (b), the third inequality follows from Proposition \ref{Prop-stab-exo-feas}, the last inequality follows from condition (b). The inequality 
    \eqref{eq:Lip-xi-v_t}
    follows by setting 
    $L_{v, T} := L_{C, T}(L_S + 1)(\frac{L_g A}{\rho} + 1)$. 
    Next,   
    assume that \eqref{eq:Lip-xi-v_t} holds 
    for $t \geq k + 1$.
    We prove  \eqref{eq:Lip-xi-v_t}
    for $t = k$. Analogous to the case $t=T$, we can use conditions (b) and Proposition \ref{Prop-stab-exo-feas} to establish
    \begin{eqnarray}
    && v_k(s_{k - 1}, x_{k - 1}, \xi_{[k]}, \zeta_{k - 1}) - v_k(\hat{s}_{k - 1}, \hat{x}_{k - 1}, \hat{\xi}_{[k]}, \zeta_{k - 1}) \nonumber \\
    &= & C_k(s_k, x_k^*, \xi_{[k]}, \zeta_k) - C_k(\hat{s}_k, \hat{x}_k^*, \hat{\xi}_{[k]}, \zeta_k) + \mathbb{E}_{\xi_{k + 1} | \xi_{[k]}} \left[ 
    v_{k + 1}(s_k, x_k^*, \xi_{[k + 1]}, \zeta_{k}) \right] \nonumber \\ 
    && - \mathbb{E}_{\hat{\xi}_{k + 1} | \hat{\xi}_{[k]}} \left[ 
    v_{k + 1}(\hat{s}_k, \hat{x}_k^*, \hat{\xi}_{[k + 1]}, \zeta_{k}) \right] \nonumber \\
    &\leq & C_k(s_k, y_k, \xi_{[k]}, \zeta_k) - C_k(\hat{s}_k, \hat{x}_k^*, \hat{\xi}_{[k]}, \zeta_k) + \mathbb{E}_{\xi_{k + 1} | \xi_{[k]}} \left[ 
    v_{k + 1}(s_k, y_k, \xi_{[k + 1]}, \zeta_{k}) \right] \nonumber \\ 
    && - \mathbb{E}_{\hat{\xi}_{k + 1} | \hat{\xi}_{[k]}} \left[ 
    v_{k + 1}(\hat{s}_k, \hat{x}_k^*, \hat{\xi}_{[k + 1]}, \zeta_{k}) \right] \nonumber \\
    &\leq & L_{C, k} (\Vert s_k - \hat{s}_k \Vert + \Vert y_k - \hat{x}_k^* \Vert + \Vert \xi_{[k]} - \hat{\xi}_{[k]} \Vert)  + \mathbb{E}_{\xi_{k + 1} | \xi_{[k]}} \left[ v_{k + 1}(s_k, y_k, \xi_{[k + 1]}, \zeta_{k}) \right] \nonumber \\
    && - \mathbb{E}_{\hat{\xi}_{k + 1} | \hat{\xi}_{[k]}} \left[ 
    v_{k + 1}(\hat{s}_k, \hat{x}_k^*, \hat{\xi}_{[k + 1]}, \zeta_{k}) \right],
    \label{eq:stab-exo-stage-Lip}
    \end{eqnarray}
    where $y_k$ is the orthogonal projection of $\hat{x}^*_k$ onto $\mathcal{X}_k(s_k, x_{k - 1}, \xi_{[k]})$.
    By 
    induction,
    \begin{eqnarray}
    && v_{k + 1}(s_k, y_k, \xi_{[k + 1]}, \zeta_{k}) - v_{k + 1}(\hat{s}_k, \hat{x}_k^*, \hat{\xi}_{[k + 1]}, \zeta_{k}) \nonumber \\
    &\leq & L_{v, k + 1}(\Vert s_k - \hat{s}_k \Vert + \Vert y_k - \hat{x}_k^* \Vert + \Vert \xi_{[k + 1]} - \hat{\xi}_{[k + 1]} \Vert), \nonumber
    \end{eqnarray}
    which implies  $v_{k+1}$ is Lipschitz continuous in $(s_{k}, x_{k}, \xi_{[k]})$ with modulus $L_{v, k+1}$. 
    Thus
    \begin{eqnarray} 
    && \left|\mathbb{E}_{\xi_{k + 1} | \xi_{[k]}} \left[ v_{k + 1}(s_k, y_k, \xi_{[k + 1]}, \zeta_{k}) \right] 
    - \mathbb{E}_{\hat{\xi}_{k + 1} | \hat{\xi}_{[k]}} \left[ 
    v_{k + 1}(\hat{s}_k, \hat{x}_k^*, \hat{\xi}_{[k + 1]}, \zeta_{k}) \right]\right| \nonumber \\
    &\leq & \left|\mathbb{E}_{\xi_{k + 1} | \xi_{[k]}} \left[ v_{k + 1}(s_k, y_k, \xi_{[k + 1]}, \zeta_{k}) \right] 
    - \mathbb{E}_{\xi_{k + 1} | \xi_{[k]}} \left[ 
    v_{k + 1}(\hat{s}_k, \hat{x}_k^*, \xi_{[k + 1]}, \zeta_{k}) \right]\right| \nonumber \\
    && + \left| \mathbb{E}_{\xi_{k + 1} | \xi_{[k]}} \left[v_{k + 1}(\hat{s}_k, \hat{x}_k^*, \xi_{[k + 1]}, \zeta_{k} ) \right] - \mathbb{E}_{\hat{\xi}_{k + 1} | \hat{\xi}_{[k]}} \left[v_{k + 1}(\hat{s}_k, \hat{x}_k^*, \hat{\xi}_{[k + 1]}, \zeta_{k}) \right]\right| \nonumber \\
    &\leq & L_{v, k + 1} (\Vert s_k - \hat{s}_k \Vert + \Vert y_k - \hat{x}_k^* \Vert) + L_{v, k + 1} \dd_K(Q_{k + 1}(\xi_{k + 1} | \xi_{[k]}), Q_{k + 1}({\xi}_{k + 1} | \hat{\xi}_{[k]})).
    \label{eq:Kanto-4.53}
    \end{eqnarray} 
    The second inequality is obtained by the definition of the Kantorovich metric and the Lipschitz continuity of $v_{k + 1}$. Thus 
    \begin{eqnarray}
    \text{rhs of } \eqref{eq:stab-exo-stage-Lip} 
    &\leq & L_{C, k} (\Vert s_k - \hat{s}_k \Vert + \Vert y_k - \hat{x}_k^* \Vert + \Vert \xi_{[k]} - \hat{\xi}_{[k]} \Vert) \nonumber \\
    && + L_{v, k + 1} (\Vert s_k - \hat{s}_k \Vert + \Vert y_k - \hat{x}_k^* \Vert) + L_{v, k+1} L_{Q_{k + 1}} \Vert \xi_{[k]} - \hat{\xi}_{[k]} \Vert \nonumber \\
    &\leq & (L_{C, k} + {\max}\{ L_{v, k + 1}, L_{v, k + 1} L_{Q_{k + 1}} \}) (\Vert s_k - \hat{s}_k \Vert + \Vert y_k - \hat{x}_k^* \Vert + \Vert \xi_{[k]} - \hat{\xi}_{[k]} \Vert) \nonumber \\
    &\leq & (L_{C, k} + {\max}\{ L_{v, k + 1}, L_{v, k + 1} L_{Q_{k + 1}} \})(\frac{L_g A}{\rho} + 1)(L_S + 1) \nonumber \\
    && (\Vert s_{k - 1} - \hat{s}_{k - 1} \Vert + \Vert x_{k - 1} - \hat{x}_{k - 1} \Vert + \Vert \xi_{[k]} - \hat{\xi}_{[k]} \Vert). \nonumber
    \end{eqnarray}
    The first inequality follows from
    \eqref{eq:P_wrt_xi}, while the other two inequalities follow from Proposition \ref{Prop-stab-exo-feas} and the Lipschitz property of $S^M_{k - 1}$. 
    This shows inequality \eqref{eq:Lip-xi-v_t} holds for $t = k$ by setting
    \[
    L_{v, k} := (L_{C, k} + \max\{ L_{v, k + 1}, L_{v, k + 1} L_{Q_{k + 1}} \})(\frac{L_g A}{\rho} + 1)(L_S + 1).
    \]
    
    \underline{Part (ii)}. 
    When the distribution of \(\xi\) is perturbed from \(Q\) to \(\tilde{Q}\), the distribution of \(\xi_t\) at stage \(t\) is correspondingly perturbed from \(Q_t (\xi_t \mid \xi_{[t-1]})\) to \(\tilde{Q}_t (\tilde{\xi}_t \mid \tilde{\xi}_{[t-1]})\).
    Let \(x_t^*\left(s_t^*, x_{t-1}^*, \xi_{[t]}\right)\) and \(\tilde{x}_t^*\left(\tilde{s}_t^*, \tilde{x}_{t - 1}^*, \tilde{\xi}_{[t]}\right)\) be optimal solutions of problem \eqref{eq:value-t} at stage \(t\) before and after perturbations.
    Recall that \(s_0^* = \tilde{s}_0^* = s_0\) by setup and
    \begin{eqnarray}
    s_t^* &= & S^M_{t - 1}(s_{t - 1}^*, x_{t - 1}^*, \xi_{[t - 1]}, \zeta_{t - 1}), \quad t = 1, 2, \cdots, T \nonumber \\
    \tilde{s}_t^* &= & S^M_{t - 1}(\tilde{s}_{t - 1}^*, \tilde{x}_{t - 1}^*, \tilde{\xi}_{[t - 1]}, \zeta_{t - 1}), \quad t = 1, 2, \cdots, T. \nonumber
    \end{eqnarray}
   We consider the difference between the optimal values of \eqref{eq:v-xi} and \eqref{eq:v-xi-ptb}
    \begin{eqnarray}
     \vt(Q) - \vt(\tilde{Q})
     &= & \vt(Q) - \vt((\tilde{Q}_T, Q_{[1 : T - 1]})) +
    \vt((\tilde{Q}_T, Q_{[1 : T - 1]})) - \vt((\tilde{Q}_{[T - 1 : T]}, Q_{[1 : T - 2]}))\nonumber\\ 
    &&+ 
    \cdots 
+ \vt((\tilde{Q}_{[2 : T]}, Q_{1})) - \vt(\tilde{Q}),
    \label{eq:opt-value-xi} 
    \end{eqnarray}
    where 
    \begin{eqnarray} 
    \vt\left(\tilde{Q}_{[t + 1 : T]}, Q_{[1 : t]}\right)
     &:= & \underset{\bm x \in \mathcal{X}}{\min} \quad \mathbb{E}_{\xi_1 \cdots, \xi_{t}, \tilde{\xi}_{t + 1}, \cdots, \tilde{\xi}_T} \Big[ C_0(s_0, x_0, \zeta_0) \nonumber \\
     && + \sum_{k = 1}^{t} C_k(s_k, x_k, \xi_{[k]}, \zeta_k) + \sum_{k = t + 1}^T C_k(s_k, x_k, \tilde{\xi}_{[k]}, \zeta_k)\Big].
     \label{eq:v-xi-ptbd}
    \end{eqnarray} 
     Let $\tilde{\bm x}^{*,t + 1}:= (\tilde{x}_0^{*,t + 1}, \cdots, \tilde{x}_T^{*,t + 1})$ 
    be an optimal policy to \eqref{eq:v-xi-ptbd}, and $\mathcal{X}^{*, t + 1}$ be the set of optimal policies to \eqref{eq:v-xi-ptbd}.
    Note that  $\tilde{\bm x}^{*, 1} = \tilde{\bm x}^*$, $\mathcal{X}^{*, 1} = \mathcal{X}^{*}(\tilde{\xi})$.
    Let $\tilde{v}_{t + 1}(s_{t}, x_{t}, \xi_{[t + 1]}, \zeta_{t}) :=  v_{t + 1}(s_{t}, x_{t}, (\xi_{[t]}, \tilde{\xi}_{t + 1}), \zeta_{t})$.
    For $k = t, t-1, \cdots, 1$, define
    \begin{eqnarray*}
    &&\tilde{v}_k(s_{k - 1}, x_{k - 1}, \xi_{[k]}, \zeta_{k - 1}) \\
    &:= & \underset{x_k \in \mathcal{X}_k(s_k, x_{k - 1}, \xi_{[k]})}{\text{min}} \mathbb{E}_{\zeta_k} \left[ C_k(s_k, x_k, \xi_{[k]}, \zeta_k) + \mathbb{E}_{\xi_{[k + 1]} | \xi_{[k]}} \left[ \tilde{v}_{k + 1}(s_k, x_k, \xi_{[k + 1]}, \zeta_k) \right] \right].  
    \end{eqnarray*}
    In what follows, we estimate
$\vt((\tilde{Q}_{[t + 1 : T]}, Q_{[1 : t]})) - \vt((\tilde{Q}_{[t : T]}, Q_{[1 : t - 1]}))$ for $t = T, T - 1, 
\cdots,1$.  
    Let \(y_T\) be the orthogonal projection of \(\tilde{x}^{*,T}_T\) onto \(\mathcal{X}_T(s_T, x_{T - 1}, \xi_{[T]})\). 
    By the Lipschitz continuity of \(C_T(s_T, x_T, \xi_{[T]}, \zeta_T)\) 
    in \((s_T, x_T, \xi_{[T]})\) 
and Proposition \ref{Prop-stab-exo-feas}, we have
    \begin{eqnarray}
    && v_T(s_{T - 1}, x_{T - 1}, \xi_{[T]}, \zeta_{T - 1}) - v_T(s_{T - 1}, x_{T - 1}, \tilde{\xi}_{[T]}, \zeta_{T - 1}) \nonumber \\
    &= & \mathbb{E}_{\zeta_T} \left[ C_T(s_T, x^*_{T}, \xi_{[T]}, \zeta_T) - C_T(\tilde{s}_T, \tilde{x}^{*, T}_T, \tilde{\xi}_{[T]}, \zeta_T) \right] \nonumber \\
    &\leq & \mathbb{E}_{\zeta_T} \left[ C_T(s_T, y_{T}, \xi_{[T]}, \zeta_T) - C_T(\tilde{s}_T, \tilde{x}^{*,T}_T, \tilde{\xi}_{[T]}, \zeta_T) \right] \nonumber \\
    &\leq & L_{C, T}(\Vert s_T - \tilde{s}_T \Vert + \Vert y_T - \tilde{x}^{*,T}_T \Vert + \Vert \xi_{[T]} - \tilde{\xi}_{[T]} \Vert) \nonumber \\
    &\leq & L_{C, T}(L_S + L_{X, T} + 1) \Vert \xi_{[T]} - \tilde{\xi}_{[T]} \Vert \nonumber \\
    &= & L_{C, T}(L_S + L_{X, T} + 1) \Vert \xi_{T} - \tilde{\xi}_{T} \Vert, \nonumber
    \end{eqnarray}
    where the second inequality holds due to its independence of $\zeta_T$; 
    the last equality holds 
    since only the distribution of $\xi_T$ is perturbed. 
    By swapping 
    the positions between 
    $\xi$ and $\tilde{\xi}$,
    we obtain the Lipschitz continuity of $v_T(\cdot)$  in  $\xi_{T}$ with modulus $L_{C, T}(L_S + L_{X, T} + 1)$.    
By the definition of the Kantorovich metric 
    \begin{eqnarray}
    && \mathbb{E}_{\xi_T | \xi_{[T - 1]}} \left[ v_T(s_{T - 1}, x_{T - 1}, \xi_{[T]}, \zeta_{T - 1})\right] - \mathbb{E}_{\tilde{\xi}_T | \xi_{[T - 1]}} \left[ v_T(s_{T - 1}, x_{T - 1}, \tilde{\xi}_{[T]}, \zeta_{T - 1}) \right] \nonumber \\
    &\leq & L_{C, T}(L_S + L_{X, T} + 1) \dd_K\big(Q_T(\xi_T \mid \xi_{[T - 1]}), \tilde{Q}_T(\tilde{\xi}_T \mid \xi_{[T - 1]})\big), \forall (s_{T - 1}, x_{T - 1}).
    \end{eqnarray}
    Similar to the derivation in \eqref{eq:stab-optvalue-zeta-step1},
    we can establish for 
    $t=1,\cdots,T-1$, 
    \begin{eqnarray}
    &&  \mathbb{E}_{\xi_t | \xi_{[t - 1]}} \left[ v_t(s_{t - 1}, x_{t - 1}, \xi_{[t]}, \zeta_{t - 1}) - \tilde{v}_t(s_{t - 1}, x_{t - 1}, {\xi}_{[t]}, \zeta_{t - 1}) \right] \nonumber \\
    &\leq & L_{C, T}(L_S + L_{X, T} + 1) \mathbb{E}_{\xi_{[t : T - 1]} | \xi_{[t - 1]}} \left[ \dd_K(Q_T(\xi_T \mid \xi_{[T - 1]}), \tilde{Q}_T(\tilde{\xi}_T \mid \xi_{[T - 1]})) \right]. 
    \label{eq:vt-stagexi-T}
    \end{eqnarray}
    By \eqref{eq:vt-stagexi-T} with $t=1$, we have
    \begin{eqnarray}
    && \vt(Q) - \vt((\tilde{Q}_T, Q_{[1 : T - 1]})) \nonumber \\
    &= & \mathbb{E}_{\zeta_0} \left[ C_0(s_0, x_0^*, \zeta_0) - C_0 (s_0, \tilde{x}_0^{*, T}, \zeta_0) + \mathbb{E}_{\xi_1} \left[  v_1(s_0, x_0^*, \xi_1, \zeta_0) - \tilde{v}_1(s_0, \tilde{x}_0^{*, T}, \xi_1, \zeta_0) \right] \right]\nonumber \\
    &\leq & \mathbb{E}_{\zeta_0, \xi_1} \left[ v_1(s_0, \tilde{x}_0^{*, T}, \xi_1, \zeta_0) - \tilde{v}_1(s_0, \tilde{x}_0^{*, T}, \xi_1, \zeta_0)\right] \nonumber \\
    &\leq & L_{C, T}(L_S + L_{X, T} + 1) \mathbb{E}_{\xi_{[T - 1]}} \left[ \dd_K(Q_T(\xi_T \mid \xi_{[T - 1]}), \tilde{Q}_T(\tilde{\xi}_T \mid \xi_{[T - 1]})) \right]. 
    \label{eq:optvalue-stage-T}
    \end{eqnarray}
    We are now ready to estimate
$\vt((\tilde{Q}_{[t + 1 : T]}, Q_{[1 : t]})) - \vt((\tilde{Q}_{[t : T]}, Q_{[1 : t - 1]}))$. Observe that 
    \begin{eqnarray}
    && v_t(s_{t - 1}, x_{t - 1}, \xi_{[t]}, \zeta_{t - 1}) - v_t(s_{t - 1}, x_{t - 1}, (\xi_{[t - 1]}, \tilde{\xi}_{t}), \zeta_{t - 1}) \nonumber \\
    &= & C_t(s_t, \tilde{x}_t^{*, t + 1}, \xi_{[t]}, \zeta_t) + \mathbb{E}_{\tilde{\xi}_{t + 1}| \xi_{[t]}} \left[ v_{t + 1}(s_t, \tilde{x}_t^{*, t + 1}, ({\xi}_{[t]}, \tilde{\xi}_{t + 1}), \zeta_{t}) \right] \nonumber \\
    && - C_t(s_t, \tilde{x}_t^{*, t}, (\xi_{[t - 1]}, \tilde{\xi}_{t}), \zeta_t) - \mathbb{E}_{\tilde{\xi}_{t + 1}| (\xi_{[t - 1]}, \tilde{\xi}_t)} \left[ v_{t + 1}(s_t, \tilde{x}_t^{*, t}, ({\xi}_{[t - 1]}, \tilde{\xi}_{[t : t + 1]}), \zeta_{t}) \right] \nonumber \\
    &\leq & C_t(s_t, y_t, \xi_{[t]}, \zeta_t) + \mathbb{E}_{\tilde{\xi}_{t + 1}| \xi_{[t]}} \left[ v_{t + 1}(s_t, y_t, ({\xi}_{[t]}, \tilde{\xi}_{t + 1}), \zeta_{t}) \right] \nonumber \\
    && - C_t(s_t, \tilde{x}_t^{*, t}, (\xi_{[t - 1]}, \tilde{\xi}_{t}), \zeta_t) - \mathbb{E}_{\tilde{\xi}_{t + 1}| (\xi_{[t - 1]}, \tilde{\xi}_t)} \left[ v_{t + 1}(s_t, \tilde{x}_t^{*, t}, ({\xi}_{[t - 1]}, \tilde{\xi}_{[t : t + 1]}), \zeta_{t}) \right] \nonumber \\
    &\leq & L_{C, t}(\Vert \xi_{t} - \tilde{\xi}_{t} \Vert + \Vert y_t - \tilde{x}_t^{*, t} \Vert) \nonumber \\
    && + \mathbb{E}_{\tilde{\xi}_{t + 1}| \xi_{[t]}} \left[ v_{t + 1}(s_t, y_t, ({\xi}_{[t]}, \tilde{\xi}_{t + 1}), \zeta_{t}) \right] - \mathbb{E}_{\tilde{\xi}_{t + 1}| (\xi_{[t - 1]}, \tilde{\xi}_t)} \left[ v_{t + 1}(s_t, \tilde{x}_t^{*, t}, ({\xi}_{[t - 1]}, \tilde{\xi}_{[t : t + 1]}), \zeta_{t}) \right] \nonumber \\
    &\leq & L_{C, t}(L_{X, t} + 1) \Vert \xi_{t} - \tilde{\xi}_{t} \Vert  + \mathbb{E}_{\tilde{\xi}_{t + 1}| \xi_{[t]}} \left[ v_{t + 1}(s_t, y_t, ({\xi}_{[t]}, \tilde{\xi}_{t + 1}), \zeta_{t}) \right] \nonumber \\
    && - \mathbb{E}_{\tilde{\xi}_{t + 1}| (\xi_{[t - 1]}, \tilde{\xi}_t)} \left[ v_{t + 1}(s_t, \tilde{x}_t^{*, t}, ({\xi}_{[t - 1]}, \tilde{\xi}_{[t : t + 1]}), \zeta_{t}) \right],
    \label{eq:stab-t:T disturbed}
    \end{eqnarray}
    where $y_t$ represents the orthogonal projection of $\tilde{x}_t^{*, t}$ onto $\mathcal{X}_t(s_t, x_{t - 1}, \xi_{[t]})$. 
    By the Lipschitz continuity of $v_t$ w.r.t.~$(s_{t - 1}, x_{t - 1}, \xi_{[t]})$ proved in (i), we can further quantify the right-hand side of \eqref{eq:stab-t:T disturbed}.
    Analogous to 
    \eqref{eq:Kanto-4.53},
    we can obtain
    \begin{eqnarray}
    \text{rhs of } \eqref{eq:stab-t:T disturbed}
    &\leq & L_{C, t} (L_{X, t} + 1) \Vert \xi_{[t]} - \tilde{\xi}_{[t]} \Vert + L_{v, t + 1} \Vert y_t - \tilde{x}^*_t \Vert + L_{v, t + 1} L_{Q_{t + 1}} \Vert \xi_{[t]} - \tilde{\xi}_{[t]} \Vert \nonumber \\
    &\leq & (L_{C, t} (L_{X, t} + 1) + L_{v, t + 1}(L_{X, t} + L_{Q_{t + 1}})) \Vert \xi_{[t]} - \tilde{\xi}_{[t]} \Vert. \nonumber
    \end{eqnarray}
    By using the definition of the Kantorovich metric 
    \begin{eqnarray}
    && \mathbb{E}_{\xi_t | \xi_{[t - 1]}} \left[ v_t(s_{t - 1}, x_{t - 1}, \xi_{[t]}, \zeta_t) \right] - \mathbb{E}_{\tilde{\xi}_t | \xi_{[t - 1]}} \left[ v_t(s_{t - 1}, x_{t - 1}, \tilde{\xi}_{[t]}, \zeta_t) \right] \nonumber \\
    &\leq & (L_{C, t} (L_{X, t} + 1) + L_{v, t + 1}(L_{X, t} + L_{Q_{t + 1}})) \dd_K(Q_t(\xi_t | \xi_{[t - 1]}), \tilde{Q}_t(\tilde{\xi}_t | \xi_{[t - 1]})). \nonumber
    \end{eqnarray}
    Thus
    \begin{eqnarray}
    && \vt((\tilde{Q}_{[t + 1 : T]}, Q_{[1 : t]})) - \vt((\tilde{Q}_{[t : T]}, Q_{[1 : t - 1]})) \nonumber \\
    &= & C_0(s_0, \tilde{x}_0^{*, t+1}, \zeta_0) - C_0(s_0, \tilde{x}_0^{*, t}, \zeta_0) + \mathbb{E}_{\xi_1}\left[ v_1(s_0, \tilde{x}_0^{*, t+1}, \xi_{1}, \zeta_0) - \tilde{v}_1(s_0, \tilde{x}_0^{*,t}, \xi_1, \zeta_0) \right] \nonumber \\
    &\leq& \mathbb{E}_{\xi_1}\left[ v_1(s_0, \tilde{x}_0^{*,t}, \xi_{1}, \zeta_0) - \tilde{v}_1(s_0, \tilde{x}_0^{*,t}, \xi_1, \zeta_0) \right] \nonumber \\
    &= & \mathbb{E}_{\xi_1}\left[ C_1(s_1, \tilde{x}_1^{*, t+1}, \xi_{1}, \zeta_1) - C_1(s_1, \tilde{x}_1^{*,t}, {\xi}_1, \zeta_1) \right. \nonumber \\
    && \left. + \mathbb{E}_{\xi_2 | \xi_1}\left[ v_2(s_1, \tilde{x}_1^{*,t+1}, \xi_{[2]}, \zeta_1) - \tilde{v}_2(s_1, \tilde{x}_1^{*,t}, \xi_{[2]}, \zeta_1) \right] \right] \nonumber \\
    &\leq & \mathbb{E}_{\xi_{[2]}}\left[ v_2(s_1, \tilde{x}_1^{*,t+1}, \xi_{[2]}, \zeta_1) - \tilde{v}_2(s_1, \tilde{x}_1^{*,t}, \xi_{[2]}, \zeta_1) \right] \leq \cdots \nonumber \\
    &\leq & \mathbb{E}_{\xi_{[t - 1]}}\left[ v_{t - 1}(s_{t - 2}, \tilde{x}_{t - 2}^{*, t+1}, \xi_{[t - 1]}, \zeta_{t - 2}) - \tilde{v}_{t - 1}(s_{t - 2}, \tilde{x}_{t - 2}^{*,t}, \xi_{[t - 1]}, \zeta_{t - 2}) \right] \nonumber \\
    &\leq & \mathbb{E}_{\xi_{[t - 1]}}\left[ \mathbb{E}_{\xi_t | \xi_{[t - 1]}} \left[ v_t(s_{t - 1}, \tilde{x}_{t - 1}^{*,t}, \xi_{[t]}, \zeta_{t - 1}) \right] - \mathbb{E}_{\tilde{\xi}_t | \xi_{[t - 1]}} \left[ v_t(s_{t - 1}, \tilde{x}_{t - 1}^{*,t}, \tilde{\xi}_{[t]}, \zeta_{t - 1}) \right] \right] \nonumber \\
    &\leq & \mathbb{E}_{\xi_{[t - 1]}}\Big[ (L_{C, t} (L_{X, t} + 1) + L_{v, t + 1}(L_{X, t} + L_{Q_{t + 1}}))  \dd_K(Q_t(\xi_t | \xi_{[t - 1]}), \tilde{Q}_t(\tilde{\xi}_t | \xi_{[t - 1]})) \Big]. \nonumber
    \end{eqnarray}
    By setting 
    $L_{\xi,t} := L_{C, t} (L_{X, t} + 1) + L_{v, t + 1}(L_{X, t} + L_{Q_{t + 1}})$ and summing up $\vt((\tilde{Q}_{[t + 1 : T]}, Q_{[1 : t]})) - \vt((\tilde{Q}_{[t : T]}, Q_{[1 : t - 1]}))$ 
    for $1 \leq t \leq T$ (see \eqref{eq:opt-value-xi}), 
    we arrive at
    \eqref{eq:stab-xi-stagewise-val}.
    \end{proof}

    Analyzing the perturbation to exogenous random process stage-by-stage 
    has two main advantages:
    First, it helps us understand how a small perturbation at a specific stage affects the optimal value and decisions in later stages, which is hard to see if we treat $\xi$ as a whole. 
    Second, it allows us to break down the effects of random disturbances into two parts: the impact caused by 
    the inaccurate distribution model based on historical information, and the impact from variations in that historical information itself. This decomposition provides a clearer  description of how different stages interact in a complex and dynamic
    way.
    
    The assumptions in Theorem \ref{Theo-stab-exo-stage} are reasonable and often automatically satisfied in typical scenarios. Consider  
    for example, 
    a merchant trying to predict the random demand $\xi_t$ for a product in the $t$-th month in the future,
    the demand not only depends on the sale's current situation 
    but also is affected by the average demand over the past few months. In this case, the average demand in the past can be used as a predictor, which introduces some correlation among 
    exogenous random variables at different stages. Take the Gaussian distribution as an example for random demand, we assume that the demand distributions under different historical conditions are $P$ and $Q$ (the subscript $t$ is omitted here for simplicity), respectively. Concretely,
    \[
    P = \mathcal{N}\left( \frac{1}{t - 1} \sum_{k=1}^{t - 1} \xi_k, \Sigma \right), \quad Q = \mathcal{N}\left( \frac{1}{t - 1} \sum_{k=1}^{t - 1} \tilde{\xi}_k, \Sigma \right),
    \]
    where $\Sigma$ is the common covariance matrix, $\xi_k$ and $\tilde{\xi}_k, 1 \leq k \leq t - 1$, are historical realizations of the random demands $\boldsymbol{\xi}_k$ and $\tilde{\boldsymbol{\xi}}_k$.
    Let $p(x)$ and $q(x)$ be the probability density functions of $P$ and $Q$ respectively. 
    \footnote{In the case when random variables are discretely distributed, we can treat 
    $p$ and $q$ as probability distribution functions and 
    replace the subsequent integrations with summations. }
    Then
    \[
    q(\xi_t) = p\left(\xi_t + \frac{1}{t - 1} \sum_{k=1}^{t - 1} \xi_k - \frac{1}{t - 1} \sum_{k=1}^{t - 1} \tilde{\xi}_k\right).
    \]
    According to the definition of the Kantorovich metric, we have
     \begin{eqnarray}
    \dd_K(P, Q) &= & \underset{h \in \mathcal{F}_1(\Xi)}{\sup} \int_{\mathbb{R}} h(\boldsymbol{\xi}_t) dP(\boldsymbol{\xi}_t) - \int h(\boldsymbol{\xi}_t) dQ(\boldsymbol{\xi}_t) \nonumber \\
    &= & \underset{h \in \mathcal{F}_1(\Xi)}{\sup} \int_{\mathbb{R}} h(\boldsymbol{\xi}_t) (p(\boldsymbol{\xi}_t) - q(\boldsymbol{\xi}_t)) d \boldsymbol{\xi}_t \nonumber \\
    & =& \underset{h \in \mathcal{F}_1(\Xi)}{\sup} \int_{\mathbb{R}} h(\boldsymbol{\xi}_t) \left(p(\boldsymbol{\xi}_t) - p\left(\boldsymbol{\xi}_t + \frac{1}{t - 1} \sum_{k=1}^{t - 1} \xi_k - \frac{1}{t - 1} \sum_{k=1}^{t - 1} \tilde{\xi}_k\right)\right) d \boldsymbol{\xi}_t \nonumber \\
    & =& \underset{h \in \mathcal{F}_1(\Xi)}{\sup} \int_{\mathbb{R}} p(\boldsymbol{\xi}_t) \left(h(\boldsymbol{\xi}_t) - h\left(\boldsymbol{\xi}_t - \frac{1}{t - 1} \sum_{k=1}^{t - 1} \xi_k + \frac{1}{t - 1} \sum_{k=1}^{t - 1} \tilde{\xi}_k\right)\right) d \boldsymbol{\xi}_t \nonumber \\
    & \leq& \underset{h \in \mathcal{F}_1(\Xi)}{\sup} \int_{\mathbb{R}} p(\boldsymbol{\xi}_t) 
    \left\|\frac{1}{t - 1} \sum_{k=1}^{t - 1} \xi_k - \frac{1}{t - 1} \sum_{k=1}^{t - 1} \tilde{\xi}_k 
    \right\|d\boldsymbol{\xi}_t \nonumber \\
    & =& \left\| \frac{1}{t - 1} \sum_{k=1}^{t - 1} \xi_k - \frac{1}{t - 1} \sum_{k=1}^{t - 1} \tilde{\xi}_k \right\| \leq \Vert \xi_{[t - 1]} - \tilde{\xi}_{[t - 1]} \Vert. \nonumber
    \end{eqnarray}
    This example shows that the assumptions such as \eqref{eq:P_wrt_xi} in Theorem \ref{Theo-stab-exo-stage} are reasonable. Similar assumptions have also been adopted in studies like \cite{wozabalstability}.
\begin{remark}
The quantitative stability results w.r.t. the exogenous process \(\xi\) can be
understood from two different but complementary perspectives.

First, Theorem~\ref{Theo-stab-exo-optvalue} treats
\(\xi=(\xi_1,\ldots,\xi_T)\) as a whole stochastic process. In this case, the
perturbation is measured directly w.r.t. the joint laws \(Q\) and
\(\tilde Q\) of the whole exogenous process. The resulting estimates take the
form
\[
    |\vartheta(\xi)-\vartheta(\tilde{\xi})|
    \leq
    \Delta_{L_\vt}(Q,\tilde Q),
\]
where \(\Delta_{L_\vt}\) denotes the weighted transport discrepancy. This upper bound is compact and global: it compares two entire
exogenous stochastic processes without decomposing the perturbation into stagewise components.

Second, Theorem~\ref{Theo-stab-exo-stage} provides a stagewise description of the same
type of perturbation. Instead of comparing the joint laws \(Q\) and
\(\tilde Q\) directly, it compare the conditional distributions
\[
    Q_t(\cdot\mid \xi_{[t-1]})
    \quad \text{and} \quad
    \tilde Q_t(\cdot\mid \xi_{[t-1]}),
    \qquad t=1,\ldots,T .
\]
In particular, the optimal-value variation is bounded by
\[
    |\vartheta(\xi)-\vartheta(\tilde{\xi})|
    \leq
    \sum_{t=1}^T
    L_{\xi,t}\,
    \mathbb E_{\xi_{[t-1]}}
    \left[
        \dd_K\left(
        Q_t(\xi_t\mid \xi_{[t-1]}),
        \tilde Q_t(\tilde \xi_t\mid \xi_{[t-1]})
        \right)
    \right].
\]
This stagewise estimation is more informative when the exogenous process is
history-dependent. It separates the effect of each conditional
distribution perturbation and shows how a local perturbation at stage \(t\)
propagates to later stages through the Lipschitz constants of the conditional
laws. Therefore, the whole-process estimate is more concise and distributional,
whereas the stagewise estimate is more diagnostic and better reflects the
intertemporal propagation mechanism of exogenous uncertainty.
\end{remark}
    Next, we consider the quantitative stability of the optimal solution set of problem \eqref{eq:mixed-MSP-MDP} w.r.t.~the distribution perturbation of $\xi_t, 1 \leq t \leq T$. 
    Since the optimal solution $x_t^*$ at stage $t$ depends on $(s_t, x_{t-1}, \xi_{[t]})$, it is not possible to directly compare the corresponding optimal solution sets at stage $t$ 
    after perturbing the distribution of $\xi_t$. In light of this, we examine the expected change in the distance between the optimal 
    policy sets before and after the perturbation.
    
\begin{theorem}[Quantitative stability of the optimal solution set]
Assume that (a) the conditions in Theorem \ref{Theo-stab-exo-stage} are satisfied; (b) problem \eqref{eq:v-xi} satisfies the $\nu$-th order growth condition, i.e., there exists a constant $\beta > 0$ such that 
\begin{eqnarray}
\mathbb{E}_{\xi} \left[ \sum_{t = 0}^T C_t(s_t, x_t, \xi_{[t]}, \zeta_t) \right] - \mathbb{E}_{\xi} \left[ \sum_{t = 0}^T C_t(s^*_t, x^*_t, \xi_{[t]}, \zeta_t) \right] \geq \beta \left( \mathbb{E}_{\xi} \left[ d(\bm x, \mathcal{X}^*(\xi)) \right] \right)^\nu,
\label{eq:growth-stage}
\end{eqnarray}
for both $\xi$ and its perturbation $\tilde{\xi}$,
where $s_t = S^M_{t - 1}(s_{t - 1}, x_{t - 1}, \xi_{t - 1}, \zeta_{t - 1})$, $s^*_t = S^M_{t - 1}(s^*_{t - 1}, x^*_{t - 1}, \xi_{t - 1}, \zeta_{t - 1})$, and $s_0 = s^*_0$. Then
\begin{eqnarray}
&& \mathbb{E}_{\pi^*(Q, \tilde{Q})} \left[ \mathbb{H}(\mathcal{X}^*(\xi), \mathcal{X}^*(\tilde{\xi})) \right] \nonumber \\
&\leq & \sum_{t = 1}^T L_X {\max} \left\{ 1, L_{Q_{t + 1}}, L_{Q_{t + 1}}L_{Q_{t + 2}}, \cdots, \prod_{i = 1}^{T -t} L_{Q_{t + i}} \right\} \mathbb{E}_{\xi_{[t - 1]}} \left[ \dd_K(Q_t(\xi_t | \xi_{[t - 1]}), \tilde{Q}_t(\tilde{\xi}_t | \xi_{[t - 1]}))\right] \nonumber \\
        && + \sum_{t = 1}^T \Bigg( \left( \frac{L_{\xi,t}}{\beta} + \frac{1}{\beta}(L_X + 1)L_{\Sigma} {\max} \left\{ 1, L_{Q_{t + 1}}, L_{Q_{t + 1}}L_{Q_{t + 2}}, \cdots, \prod_{i = 1}^{T -t} L_{Q_{t + i}} \right\}\right) \times \nonumber \\
    && \qquad \mathbb{E}_{\xi_{[t - 1]}} \left[ \dd_K(Q_t(\xi_t | \xi_{[t - 1]}), \tilde{Q}_t(\tilde{\xi}_t | \xi_{[t - 1]}))\right] \Bigg)^{\frac{1}{\nu}},
\label{eq:stab-exo-stage-optsolu}
\end{eqnarray}
where $L_X, L_{\Sigma}, L_{\xi, t}, L_{Q_t}$ are defined as in Theorem \ref{Theo-stab-exo-stage}, $\pi^*(Q, \tilde{Q})$ is the optimal coupling of $Q$ and $\tilde{Q}$ in \eqref{eq:def-DeltaL}.
\label{Theo-stab-exo-optsolu-stage}
\end{theorem}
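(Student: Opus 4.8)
The plan is to combine the stagewise optimal-value stability of Theorem~\ref{Theo-stab-exo-stage}(ii), the Lipschitz continuity of the feasible policy set from Proposition~\ref{Prop-stab-exo-feas} (and \eqref{eq:nonexpected-Lip-feasible}), the first-order growth condition \eqref{eq:growth-stage}, and a contradiction-in-expectation argument modelled on Part~(ii) of Theorems~\ref{Theo-stab-endo} and \ref{Theo-stab-exo} (the latter with $\nu=1$). First I would fix an optimal policy $\bm x^*\in\mathcal{X}^*(\xi)$ of \eqref{eq:v-xi} and, exactly as in the proof of Theorem~\ref{Theo-stab-exo}, construct a feasible policy $\tilde{\bm x}$ of \eqref{eq:v-xi-ptb} by orthogonally projecting $x_t^*$ onto $\mathcal{X}_t(\tilde s_t,\tilde x_{t-1},\tilde\xi_{[t]})$ stage by stage, so that by \eqref{eq:nonexpected-Lip-feasible} one has $\|\bm x^*-\tilde{\bm x}\|\le L_X\|\xi-\tilde\xi\|$ pathwise. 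The genuinely new ingredient, absent from the whole-process analysis, is that here the perturbation acts on the conditional laws $Q_t(\cdot\mid\xi_{[t-1]})$; hence I would build the coupling of $\xi$ and $\tilde\xi$ recursively, using at stage $t$ an optimal coupling of $Q_t(\cdot\mid\xi_{[t-1]})$ and $\tilde Q_t(\cdot\mid\xi_{[t-1]})$ and then transporting the discrepancy forward through the Lipschitz-in-history conditions \eqref{eq:P_wrt_xi}. This produces an estimate of the form
\[
\mathbb{E}_{\xi,\tilde\xi}[\|\xi-\tilde\xi\|]\le \sum_{t=1}^{T}\max\Big\{1,L_{Q_{t+1}},\dots,\prod_{i=1}^{T-t}L_{Q_{t+i}}\Big\}\,\mathbb{E}_{\xi_{[t-1]}}\big[\dd_K(Q_t(\xi_t\mid\xi_{[t-1]}),\tilde Q_t(\tilde\xi_t\mid\xi_{[t-1]}))\big],
\]
the cascading factor recording how a distributional perturbation at stage $t$ is transmitted to the realized values of $\xi_{t+1},\dots,\xi_T$.

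Next I would bound the suboptimality of $\tilde{\bm x}$ in the perturbed problem by the triangle inequality,
\[
\Big|\mathbb{E}_{\tilde\xi}\Big[\textstyle\sum_{t=0}^{T}C_t(\tilde s_t,\tilde x_t,\tilde\xi_{[t]},\zeta_t)\Big]-\vt(\tilde\xi)\Big|\le\Big|\mathbb{E}_{\tilde\xi}\Big[\textstyle\sum_{t=0}^{T}C_t(\tilde s_t,\tilde x_t,\tilde\xi_{[t]},\zeta_t)\Big]-\vt(\xi)\Big|+|\vt(\xi)-\vt(\tilde\xi)|,
\]
where the first term is controlled by the estimates \eqref{eq:cont-obj-Lip}--\eqref{eq:cont-exo-L_Sigma2} (using the constant moduli of Theorem~\ref{Theo-stab-exo-stage}) as $\mathbb{E}_{\xi,\tilde\xi}[L_\Sigma(\|\bm x^*-\tilde{\bm x}\|+\|\xi-\tilde\xi\|)]\le (L_X+1)L_\Sigma\,\mathbb{E}_{\xi,\tilde\xi}[\|\xi-\tilde\xi\|]$, and the second term is controlled by \eqref{eq:stab-xi-stagewise-val}. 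Substituting the coupling bound above turns both terms into weighted sums of the stagewise conditional Kantorovich distances. Then, invoking the first-order growth condition \eqref{eq:growth-stage} for $\tilde\xi$ in the same contradiction scheme used to derive \eqref{eq:stab-zeta-optsolu} and \eqref{eq:stab-optimalsolu-xi}, one obtains $\mathbb{E}_{\xi,\tilde\xi}[d(\tilde{\bm x},\mathcal{X}^*(\tilde\xi))]\le \tfrac1\beta(\text{that weighted sum})$. Combining this with $\mathbb{E}_{\xi,\tilde\xi}[\|\bm x^*-\tilde{\bm x}\|]\le L_X\,\mathbb{E}_{\xi,\tilde\xi}[\|\xi-\tilde\xi\|]$ and letting $\bm x^*$ range over $\mathcal{X}^*(\xi)$ yields a bound on $\mathbb{E}_{\xi,\tilde\xi}[\mathbb{D}(\mathcal{X}^*(\xi),\mathcal{X}^*(\tilde\xi))]$; swapping $\xi$ and $\tilde\xi$ (the growth condition being assumed for both) then gives the Hausdorff bound \eqref{eq:stab-exo-stage-optsolu} with the constants $L_X,L_\Sigma,L_{\xi,t},L_{Q_t}$ assembled as in the statement.

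The main obstacle I expect is the bookkeeping in the coupling step: one must construct the joint law of $(\xi,\tilde\xi)$ one stage at a time, perturbing a single conditional distribution $Q_t\to\tilde Q_t$ while tracking how the discrepancy $\|\xi_t-\tilde\xi_t\|$ created at stage $t$ propagates, through \eqref{eq:P_wrt_xi}, into the conditional distributions and hence the realizations at stages $t+1,\dots,T$, which is precisely what generates the factor $\max\{1,L_{Q_{t+1}},\dots,\prod_{i=1}^{T-t}L_{Q_{t+i}}\}$. A secondary delicate point is carrying out the growth-condition contradiction at the level of expectations rather than pathwise, which relies on the integrability of all the Lipschitz moduli and follows the template already established in Theorems~\ref{Theo-stab-endo} and \ref{Theo-stab-exo}; once that is in place, the remaining work is a routine combination of Proposition~\ref{Prop-stab-exo-feas}, the estimates \eqref{eq:cont-obj-Lip}--\eqref{eq:cont-exo-L_Sigma2}, and Theorem~\ref{Theo-stab-exo-stage}.
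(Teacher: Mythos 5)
Your proposal is a genuinely different route from the paper's. The paper never forms a single coupling of the whole processes $\xi$ and $\tilde\xi$: instead it telescopes through $T$ intermediate problems $\vt((\tilde\xi_{[t+1:T]},\xi_{[1:t]}))$ in which only the conditional law of one additional stage is switched from $Q_t(\cdot\mid\xi_{[t-1]})$ to $\tilde Q_t(\cdot\mid\xi_{[t-1]})$, bounds $\mathbb{H}(\mathcal{X}^{*,t+1},\mathcal{X}^{*,t})$ for each adjacent pair by projecting the perturbed optimal policy back onto the unperturbed feasible set and running the growth-condition contradiction (exactly as in \eqref{eq:optsolution-hausdorff-stage-T} for $t=T$), and then sums via the triangle inequality for the Hausdorff distance as in \eqref{eq:optsolu-sum}. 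The cascading factor $\max\{1,L_{Q_{t+1}},\dots,\prod_{i=1}^{T-t}L_{Q_{t+i}}\}$ arises there because, within one adjacent comparison, the single-stage discrepancy at stage $t$ propagates into the realizations at stages $t+1,\dots,T$ only through the conditioning, and the path norm is the stagewise maximum. Your approach pushes all of this into one global recursive coupling and a single application of the growth condition, which is conceptually cleaner and reuses Theorem \ref{Theo-stab-exo} with $\nu=1$ almost verbatim.

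There is, however, a concrete gap in the constants. Your coupling step bounds $\mathbb{E}_{\xi,\tilde\xi}[\|\xi-\tilde\xi\|]$ where $\|\xi-\tilde\xi\|=\max_t\|\xi_t-\tilde\xi_t\|$. The recursive construction only controls conditional expectations, $\mathbb{E}[\|\xi_{t+1}-\tilde\xi_{t+1}\|\mid\xi_{[t]},\tilde\xi_{[t]}]\le \mathbb{E}_{\xi_{[t]}}[\dd_K(Q_{t+1}(\cdot\mid\xi_{[t]}),\tilde Q_{t+1}(\cdot\mid\xi_{[t]}))]+L_{Q_{t+1}}\|\xi_{[t]}-\tilde\xi_{[t]}\|$, so you cannot pass from $\max_t\mathbb{E}[\|\xi_t-\tilde\xi_t\|]$ to $\mathbb{E}[\max_t\|\xi_t-\tilde\xi_t\|]$; the honest bound replaces the max by $\sum_t\mathbb{E}[\|\xi_t-\tilde\xi_t\|]$, and after collecting terms the coefficient of $\mathbb{E}_{\xi_{[t-1]}}[\dd_K(Q_t(\xi_t\mid\xi_{[t-1]}),\tilde Q_t(\tilde\xi_t\mid\xi_{[t-1]}))]$ becomes the sum $\sum_{j=0}^{T-t}\prod_{i=1}^{j}L_{Q_{t+i}}$ rather than the stated maximum. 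Your argument therefore establishes \eqref{eq:stab-exo-stage-optsolu} only with these larger cascading constants, not with the constants in the statement. A second, smaller point: the displayed coupling inequality is asserted but not derived; you should make explicit that measurable selections of optimal conditional couplings exist and that \eqref{eq:P_wrt_xi2} (the Lipschitz continuity of $\tilde Q_t$ in the history) is what lets you compare $\tilde Q_t(\cdot\mid\xi_{[t-1]})$ with $\tilde Q_t(\cdot\mid\tilde\xi_{[t-1]})$ before invoking the stagewise Kantorovich term. The remainder of your plan (the triangle inequality for the suboptimality of $\tilde{\bm x}$, the expectation-level contradiction with \eqref{eq:growth-stage}, and the symmetrization) matches the template of Theorems \ref{Theo-stab-endo} and \ref{Theo-stab-exo} and is sound.
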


\begin{proof}
    Let $\vt(\tilde{Q}_T, Q_{[1 : T - 1]})$ be defined as in \eqref{eq:v-xi-ptbd} with $t = T - 1$.
    By \eqref{eq:optvalue-stage-T},
    \begin{eqnarray}
    |\vt(Q) - \vt(\tilde{Q}_T, Q_{[1 : T - 1]})| \leq L_{\xi, T} \mathbb{E}_{\xi_{[T - 1]}} \left[ \dd_K(Q_T(\xi_T \mid \xi_{[T - 1]}), \tilde{Q}_T(\tilde{\xi}_T \mid \xi_{[T - 1]})) \right]. \nonumber
    \end{eqnarray}
    Let $\tilde{\bm x}^{*, T}$ be an optimal policy of problem \eqref{eq:v-xi-ptbd} with $t = T - 1$,
    and $y$ be the orthogonal projection of $\tilde{\bm x}^{*, T}$ onto $\mathcal{X}(\xi)$. Assume for the sake of a contradiction that
    \begin{eqnarray}
    \mathbb{E}_{\pi^*(Q, \tilde{Q})} \left[d(y, \mathcal{X}^*(\xi))\right] > \left( \frac{1}{\beta} ((L_{X} + 1) L_{\Sigma} + L_{\xi, T}) \mathbb{E}_{\xi_{[T - 1]}} \left[ \dd_K(Q_T(\xi_T \mid \xi_{[T - 1]}), \tilde{Q}_T(\tilde{\xi}_T \mid \xi_{[T - 1]})) \right] \right)^{\frac{1}{\nu}}.
    \label{eq:stage-optsolution-fanzheng}
    \end{eqnarray}
    By Proposition \ref{Prop-stab-exo-feas},
    \begin{eqnarray}
    && \left|\vt(\tilde{Q}_T, Q_{[1 : T - 1]}) - \mathbb{E}_{\xi} \left[ \sum_{t = 0}^T C_t(s_t^y, y_t, \xi_{[t]}, \zeta_t) \right] \right| \nonumber \\
    &\leq & L_{\Sigma} \mathbb{E}_{\pi^*(Q, \tilde{Q})} \left[\Vert y - \tilde{\bm x}^{*, T} \Vert + \Vert \xi - \tilde{\xi} \Vert\right] \leq L_{\Sigma} (L_X + 1) \mathbb{E}_{\pi^*(Q, \tilde{Q})} \left[ \Vert \xi_T - \tilde{\xi}_T \Vert \right] \nonumber \\
    &\leq & L_{\Sigma} (L_X + 1) \mathbb{E}_{\xi_{[T - 1]}} \left[ \dd_K(Q_T(\xi_T \mid \xi_{[T - 1]}), \tilde{Q}_T(\tilde{\xi}_T \mid \xi_{[T - 1]}))\right],
    \label{vtildeandCy}
    \end{eqnarray}
    where the first inequality follows from \eqref{eq:cont-obj-Lip}-\eqref{eq:cont-exo-L_Sigma2}. The second inequality follows from Proposition \ref{Prop-stab-exo-feas} and the last inequality is obtained by the definition of Kantorovich metric.
    By \eqref{eq:optvalue-stage-T},
    \begin{eqnarray}
    && |\vt(\tilde{Q}_T, Q_{[1 : T - 1]}) - \vt(Q)| \leq L_{\xi,T} \mathbb{E}_{\xi_{[T - 1]}} \left[ \dd_K(Q_T(\xi_T \mid \xi_{[T - 1]}), \tilde{Q}_T(\tilde{\xi}_T \mid \xi_{[T - 1]}))\right].
    \label{vtildeandv}
    \end{eqnarray}
    Thus, 
    \begin{eqnarray}
    && \mathbb{E}_{\xi}\left[ \sum_{t = 0}^T C_t(s_t^y, y_t, \xi_{[t]}, \zeta_t) \right] - \vt(Q)\nonumber\\
    &\geq & \beta \left( \mathbb{E}_{\xi}\left[ d(y, \mathcal{X}^*(\xi)) \right] \right)^{\nu} \quad \text{(by \eqref{eq:growth-stage})}\nonumber \\
    &> & ((L_{X} + 1) L_{\Sigma} + L_{\xi, T}) \mathbb{E}_{\xi_{[T - 1]}} \left[ \dd_K(Q_T(\xi_T \mid \xi_{[T - 1]}), \tilde{Q}_T(\tilde{\xi}_T \mid \xi_{[T - 1]})) \right] \quad \text{(by \eqref{eq:stage-optsolution-fanzheng})}\nonumber \\
    &\geq & |\vt(\tilde{Q}_T, Q_{[1 : T - 1]}) - \vt(Q)| + \left|\vt(\tilde{Q}_T, Q_{[1 : T - 1]}) - \mathbb{E}_{\xi} \left[ \sum_{t = 0}^T C_t(s_t^y, y_t, \xi_{[t]}, \zeta_t) \right]\right| \quad \text{(by \eqref{vtildeandCy}-\eqref{vtildeandv})}
    \nonumber \\
    &\geq & \mathbb{E}_{\xi}\left[ \sum_{t = 0}^T C_t(s_t^y, y_t, \xi_{[t]}, \zeta_t) \right] - \vt(Q) \nonumber
    \end{eqnarray}
    is a contradiction.
    Let $\mathcal{X}^{*}(\xi)$ be the set of optimal
    policies of problem \eqref{eq:v-xi} and $\mathcal{X}^{*,T}(\tilde{\xi}_T, \xi_{[1 : T - 1]})$ be the set of optimal
    policies of \eqref{eq:v-xi-ptbd} with $t = T - 1$.
    Thus,
    we must have
    \begin{eqnarray}
    &&\mathbb{E}_{\pi^*(Q, \tilde{Q})} \left[ \mathbb{D}(\mathcal{X}^{*, T}(\tilde{\xi}_{T}, \xi_{[1:T - 1]}), \mathcal{X}^*(\xi)) \right]\nonumber\\
    &\leq &
    \mathbb{E}_{\pi^*(Q, \tilde{Q})} \left[ \sup_{\tilde{x} \in \mathcal{X}^{*, T}(\tilde{\xi}_{T}, \xi_{[1:T - 1]})}
     d(\tilde{x}, y) + d(y, \mathcal{X}^*(\xi))\right]
    \nonumber \\
    &\leq & L_X \mathbb{E}_{\xi_{[T - 1]}} \left[ \dd_K(Q_T(\xi_T \mid \xi_{[T - 1]}), \tilde{Q}_T(\tilde{\xi}_T \mid \xi_{[T - 1]})) \right] \nonumber \\
    && + \left(  \frac{1}{\beta} ((L_{X} + 1) L_{\Sigma} + L_{\xi, T}) \mathbb{E}_{\xi_{[T - 1]}} \left[ \dd_K(Q_T(\xi_T \mid \xi_{[T - 1]}), \tilde{Q}_T(\tilde{\xi}_T \mid \xi_{[T - 1]})) \right] \right)^{\frac{1}{\nu}}, \nonumber
    \end{eqnarray}
    where $y$ is the orthogonal projection of $\tilde{x}$ on $\mathcal{X}(\xi)$. Likewise, 
    we can show that
    \begin{eqnarray}
    &&\mathbb{E}_{\pi^*(Q, \tilde{Q})} \left[ \mathbb{D}(\mathcal{X}^*(\xi), \mathcal{X}^{*, T}(\tilde{\xi}_{T}, \xi_{[1:T - 1]})) \right]\nonumber\\
    &\leq & L_X \mathbb{E}_{\xi_{[T - 1]}} \left[ \dd_K(Q_T(\xi_T \mid \xi_{[T - 1]}), \tilde{Q}_T(\tilde{\xi}_T \mid \xi_{[T - 1]})) \right] \nonumber \\
    && + \left(  \frac{1}{\beta} ((L_{X} + 1) L_{\Sigma} + L_{\xi, T}) \mathbb{E}_{\xi_{[T - 1]}} \left[ \dd_K(Q_T(\xi_T \mid \xi_{[T - 1]}), \tilde{Q}_T(\tilde{\xi}_T \mid \xi_{[T - 1]})) \right] \right)^{\frac{1}{\nu}}. \nonumber
    \end{eqnarray}
    Combining the two inequalities above, we obtain
    \begin{eqnarray}
    && \mathbb{E}_{\pi^*(Q, \tilde{Q})} \left[ \mathbb{H}(\mathcal{X}^{*}(\xi), \mathcal{X}^{*,T}(\tilde{\xi}_T, \xi_{[1 : T - 1]})) \right] \nonumber\\
    &=& \mathbb{E}_{\pi^*(Q, \tilde{Q})} \left[ \max\left\{ \mathbb{D}(\mathcal{X}^{*}(\xi), \mathcal{X}^{*,T}(\tilde{\xi}_{T}, \xi_{[1:T - 1]})), \mathbb{D}(\mathcal{X}^{*,T}(\tilde{\xi}_{T}, \xi_{[1:T - 1]}), \mathcal{X}^{*}(\xi)) \right\} \right] \nonumber \\
    &\leq & L_X \mathbb{E}_{\xi_{[T - 1]}} \left[ \dd_K(Q_T(\xi_T \mid \xi_{[T - 1]}), \tilde{Q}_T(\tilde{\xi}_T \mid \xi_{[T - 1]})) \right] \nonumber \\
    && + \left(  \frac{1}{\beta} ((L_{X} + 1) L_{\Sigma} + L_{\xi, T}) \mathbb{E}_{\xi_{[T - 1]}} \left[ \dd_K(Q_T(\xi_T \mid \xi_{[T - 1]}), \tilde{Q}_T(\tilde{\xi}_T \mid \xi_{[T - 1]})) \right] \right)^{\frac{1}{\nu}}.
    \label{eq:optsolution-hausdorff-stage-T}
    \end{eqnarray}
    Likewise, we can show by induction that for $t=1,2,\cdots, T - 1$
    \begin{eqnarray}
        \left| \vt((\tilde{Q}_{[t + 1 : T]}, Q_{[1 : t]})) - \vt((\tilde{Q}_{[t : T]}, Q_{[1 : t - 1]})) \right| \leq L_{\xi,t} \mathbb{E}_{\xi_{[t - 1]}} \left[ \dd_K(Q_t(\xi_t | \xi_{[t - 1]}), \tilde{Q}_t(\tilde{\xi}_t | \xi_{[t - 1]})) \right] 
        \label{eq:stab-xi-stage-t}
    \end{eqnarray}
    and subsequently
    \begin{eqnarray}
    && \mathbb{E}_{\pi^*(Q, \tilde{Q})} \left[ \mathbb{H}(\mathcal{X}^{*, t + 1}(\tilde{\xi}_{[t + 1 : T]}, \xi_{[1 : t]}), \mathcal{X}^{*, t}(\tilde{\xi}_{[t : T]}, \xi_{[1 : t - 1]}))
    \right] 
    \nonumber \\
    &\leq & L_X {\max} \left\{ 1, L_{Q_{t + 1}}, L_{Q_{t + 1}}L_{Q_{t + 2}}, \cdots, \prod_{i = 1}^{T -t} L_{Q_{t + i}} \right\} \mathbb{E}_{\xi_{[t - 1]}} \left[ \dd_K(Q_t(\xi_t | \xi_{[t - 1]}), \tilde{Q}_t(\tilde{\xi}_t | \xi_{[t - 1]}))\right] \nonumber \\
    && + \Bigg( \left( \frac{L_{\xi,t}}{\beta} + \frac{1}{\beta}(L_X + 1)L_{\Sigma} {\max} \left\{ 1, L_{Q_{t + 1}}, L_{Q_{t + 1}}L_{Q_{t + 2}}, \cdots, \prod_{i = 1}^{T -t} L_{Q_{t + i}} \right\}\right) \times \nonumber \\
    && \qquad \mathbb{E}_{\xi_{[t - 1]}} \left[ \dd_K(Q_t(\xi_t | \xi_{[t - 1]}), \tilde{Q}_t(\tilde{\xi}_t | \xi_{[t - 1]}))\right] \Bigg)^{\frac{1}{\nu}}. \nonumber
    \end{eqnarray}
    we omit the details.
    Summarizing the discussions above, we 
    have 
    \begin{eqnarray}
        &&\mathbb{E}_{\pi^*(Q, \tilde{Q})} \left[ \mathbb{H}(\mathcal{X}^*(\xi), \mathcal{X}^*(\tilde{\xi})) \right]\nonumber\\
        &\leq& \mathbb{E}_{\pi^*(Q, \tilde{Q})} \left[ \mathbb{H}(\mathcal{X}^*(\xi), \mathcal{X}^{*,T}(\tilde{\xi}_T, \xi_{[1 : T]})) + \cdots + \mathbb{H}(\mathcal{X}^{*, 2}(\tilde{\xi}_{2 : T}, \xi_1), \mathcal{X}^*(\tilde{\xi})) \right] \nonumber \\
        &\leq & \sum_{t = 1}^T L_X {\max} \left\{ 1, L_{Q_{t + 1}}, L_{Q_{t + 1}}L_{Q_{t + 2}}, \cdots, \prod_{i = 1}^{T -t} L_{Q_{t + i}} \right\} \mathbb{E}_{\xi_{[t - 1]}} \left[ \dd_K(Q_t(\xi_t | \xi_{[t - 1]}), \tilde{Q}_t(\tilde{\xi}_t | \xi_{[t - 1]}))\right] \nonumber \\
        && + \sum_{t = 1}^T \Bigg( \left( \frac{L_{\xi,t}}{\beta} + \frac{1}{\beta}(L_X + 1)L_{\Sigma} {\max} \left\{ 1, L_{Q_{t + 1}}, L_{Q_{t + 1}}L_{Q_{t + 2}}, \cdots, \prod_{i = 1}^{T -t} L_{Q_{t + i}} \right\}\right) \times \nonumber \\
    && \qquad \mathbb{E}_{\xi_{[t - 1]}} \left[ \dd_K(Q_t(\xi_t | \xi_{[t - 1]}), \tilde{Q}_t(\tilde{\xi}_t | \xi_{[t - 1]}))\right] \Bigg)^{\frac{1}{\nu}}.
        \label{eq:optsolu-sum}
    \end{eqnarray}
    which is \eqref{eq:stab-exo-stage-optsolu}.
\end{proof}

It might be helpful to explain how the established stability results may be positioned within the existing literature
of stability analysis in stochastic programming. 
First, in the case that $T = 1, \nu = 1$, 
\bgeqn 
\mathcal{X}^*(\xi) =
\Big\{ \left(x_0^*, x_1^*\left(S_0^M(s_0, x_0^*, \zeta_0), x_0^*, \xi\right)\right): x_0 \in \mathcal{X}_0^*(\xi)\Big\},
\edeqn
which is singleton when $\mathcal{X}_0^*(\xi)$ is a singleton. By \eqref{eq:stab-exo-stage-optsolu},
    \begin{eqnarray}
    && \mathbb{E}_{\pi^*(Q, \tilde{Q})} \left[ \mathbb{H}(\mathcal{X}^*(\xi), \mathcal{X}^*(\tilde{\xi})) \right]\leq  \bigg( \frac{L_{\xi,1}}{\beta} + \left(L_X + \frac{1}{\beta}(L_X + 1)L_{\Sigma}\right) \bigg) \dd_K(Q_1, \tilde{Q}_1).
    \label{eq:stab-exo-stage-optsolu-2stage}
    \end{eqnarray}
Moreover, since  $\mathcal{X}^*_0(\xi)$ 
and $\mathcal{X}^*_0(\tilde{\xi})$
depend only on the distribution $Q$ and $\tilde{Q}$,
then 
\begin{eqnarray}
    \mathbb{E}_{\pi^*(Q, \tilde{Q})} \left[ \mathbb{H}(\mathcal{X}_0^*(\xi), \mathcal{X}_0^*(\tilde{\xi})) \right] &\leq & \mathbb{E}_{\pi^*(Q, \tilde{Q})} \left[\mathbb{H}(\mathcal{X}^*(\xi), \mathcal{X}^*(\tilde{\xi})) \right] \nonumber \\
    &\leq &  \bigg( \frac{L_{\xi,1}}{\beta} + \left(L_X + \frac{1}{\beta}(L_X + 1)L_{\Sigma}\right) \bigg) \dd_K(Q_1, \tilde{Q}_1).
    \label{eq:twostage-X0star}
\end{eqnarray}
\eqref{eq:twostage-X0star} strengthens a similar inequality 
established by R\"omisch and Schultz 
in \cite{romisch1993stability} 
 where the rhs is $L^*\dd_K(P, Q)^{\frac{1}{2}}$
 under the strong convexity 
 of the objective function and 
 polyhedral structure of the feasible set.

\begin{example}
The constants $L_t$, $1\leq t\leq T$, in \eqref{eq:Lip-Lt} may grow geometrically fast with the remaining horizon length \(T - t\). This is not simply because the derived bound is not tight, but can be intrinsic to the model when the state dynamics is not contractive $(L_S > 1)$. 
To see this, consider a one-dimensional problem.

For \(k=0,1,\ldots,T-1\),
let \(C_k(s_k,x_k)=b |x_k|\),  \(C_T(s_T,x_T)=|s_T|+b |x_T|\), where \(b>0\) is a constant; 
\(\mathcal X_k=[-1,1]\) with $\mathcal{X}_T = [-1,1]$, let \(s_{k+1} = S^M_k(s_{k}, x_{k}, \xi_{k}, \zeta_k) = a s_k\), where \(a > 1\) is a constant. 
Then the Slater condition holds uniformly with \(\bar x_k=0\). 
Observe that 
the state dynamics $S^M_k$ does not depend on $x_k$ and 
every nonzero $x_k$ incurs an additional cost, the optimal decision is \(x_k^*=0\) for each \(k\).

Consider two different state values \(s_t\) and \(\tilde s_t\) at stage \(t\). 
Then, \(s_T = a^{T-t} s_t\). 
For ease of the exposition, let \(v_t(s_t)\) 
be defined as a one-dimensional specialization of \eqref{eq:value-T}--\eqref{eq:value-t} 
for a given state \(s_t\). 
It follows that \(v_t(s_t)=a^{T-t} |s_t|\). Thus,
\(
    |v_t(s_t)-v_t(\tilde s_t)|
    =
    a^{T-t}\bigl||s_t|-|\tilde s_t|\bigr|
    \leq
    a^{T-t}|s_t-\tilde s_t|.
\)
This shows that the difference in the value function can indeed be amplified by the factor \(a^{T-t}\). The difference arising at an early stage passes through state transitions and may consequently grow geometrically with the number of remaining stages.

The same example can also be interpreted as a perturbation of endogenous uncertainty. Let \(s_t=0\), \(s_{t+1}=a s_t+\zeta_t\), and \(s_{k+1}=a s_k\), for \(k=t+1,\ldots,T-1\). Let \(P=(P_0,\ldots,P_T)\) and \(\tilde P=(\tilde P_0,\ldots,\tilde P_T)\) denote the original and perturbed distributions of the endogenous uncertainties, respectively. Consider the case that the distributions at all stages other than \(t\) are unchanged and concentrated at zero, that is, \(P_k=\tilde P_k=\delta_0\) for all \(k\neq t\), while only the distribution at stage \(t\) is perturbed. Here, \(\vartheta(P)\) and \(\vartheta(\tilde P)\) denote the optimal values defined as in \eqref{eq:v-zeta} under \(P\) and \(\tilde P\), respectively. Since \(P_t\) is the distribution of \(\zeta_t\), then \(\vartheta(P)=a^{T-t-1}\mathbb E_{P_t}[|\zeta_t|]\). Let \(P_t=\delta_0\) and \(\tilde P_t=\delta_\epsilon\), where \(\epsilon>0\). Then, \(\dd_K(P_t,\tilde P_t)=\epsilon\) and
\(
    |\vartheta(P)-\vartheta(\tilde P)|
    =
    a^{T-t-1}\epsilon.
\)
Thus, the difference between the optimal values may also grow geometrically with the remaining horizon length when the distribution of endogenous uncertainty is perturbed.
For perturbations of exogenous uncertainty, a similar argument applies. By setting \(s_{t+1}=a s_t+\xi_t\) and keeping the other settings unchanged, we obtain the same result.


A similar horizon-dependent geometric factor also appears in the coefficients of existing stability bounds for multistage stochastic programs. In particular, K\"uchler~\cite{kuchler2008stability} derives in Theorem~2 the coefficient
\(C=D+(T-1)^2BM^{T-1}\),
which is subsequently used in the stability results. Here, \(D\) is an approximation constant, \(B\) is the Lipschitz modulus of the stagewise costs, and \(M\) is the Lipschitz modulus of the stagewise feasible set mappings. The term \(M^{T-1}\) reflects the recursive propagation of the differences across stages, showing that such geometric dependence on the horizon length is not specific in this paper.
\end{example}

Let us now turn to 
the general case that $T>1$.
The most notable 
research papers on stability analysis 
are
\cite{HRS2006stability,pflug2012distance}. 
Here we highlight the main differences between 
the derived results in this paper and those established in the two papers.
First, Pflug and Pichler (\cite{pflug2012distance}) established the quantitative stability of the optimal value 
of multistage convex stochastic optimization problems
under the 
nested distance.
Their focus is on 
the effect of perturbations of the probability distributions 
of the entire data process 
without specifically considering 
the dynamic changes and interactions 
of distributions at different stages.
Heitsch and R\"{o}misch (\cite{HRS2006stability})
investigated the quantitative stability of the optimal value for multistage stochastic linear programming problems by introducing the filtration distance under the assumptions of relatively complete recourse and locally bounded objective functions.
Similar to \cite{pflug2012distance}, 
the paper 
focuses on the stability of the optimal value
against perturbation of the
whole filtration process. 
Since error bounds are established in terms of the 
 filtration distance
which depends on decision variables,
it is difficult to verify the conditions
and figure out the bounds in some cases.


In this paper, we take a different approach
by 
decomposing 
perturbations of the whole 
exogenous random process into 
stagewise 
perturbations.
In doing so, 
we 
quantify the effect 
on the optimal value function and optimal decisions by 
the perturbation at each stage. This enables us not only
to capture 
the dynamic changes and interactions 
of the distributions of random variables at 
different stages, but also derive 
error bounds recursively and establish 
the overall impact by accumulating them.

\subsection{Illustrative examples}

The next two examples explain the main advantages
of the stability results in terms of tightness of error bounds
and applicability. For simplicity, whenever an $n$-dimensional vector $a$ is involved, 
the notation $a + 1$ stands for $a + e$, 
where $e$ denotes the all-ones vector.

\begin{example}[Linear problem with feasible set independent of $\xi$]
\label{ex-feasible-unchanged}
    Consider
    \begin{subequations}
    \begin{eqnarray}
        \underset{x}{\min} && \quad \mathbb{E}_{\xi, \zeta} \left[ e^{\top} ( s_0 + x_0 + \zeta_0 ) + \sum\limits_{t = 1}^T e^{\top} ( s_t + x_t + \xi_t + \zeta_t ) \right] \label{eq-ex-feasible-unchanged-a}\\
        \rm{s.t.} && \quad s_{t + 1} = M_1 s_t + M_2 x_t + N_1 \xi_t + N_2 \zeta_t, \quad t = 1, \cdots, T - 1, \label{eq-ex-feasible-unchanged-b}\\
        && \quad -d \leq x_t \leq d, \quad t = 0,1, \cdots, T, \label{eq-ex-feasible-unchanged-c} \\
        && \quad s_1 = M_1 s_0 + M_2 x_0 + N_2 \zeta_0, \quad s_0 = 0. \label{eq-ex-feasible-unchanged-d}
    \end{eqnarray}        
    \label{eq-ex-feasible-unchanged}
\end{subequations}
To ease the exposition, we restrict the discussions to the case that
$T=2, n = 2$, 
$e = (1, 1)^\top$, $d = 5$, $\xi_0 = 0$,
$M_1 = M_2 = 
    \begin{pmatrix}
    1 & 0 \\
    0 & 1 
    \end{pmatrix},
    N_1 = N_2 = \begin{pmatrix}
    1 & 0 \\
    0 & 1
    \end{pmatrix},
    $
 \begin{eqnarray*}
    \zeta_t &\sim& U(-2,0) \times U(-2,0), \quad t = 0,1,2, \\
    \xi_1 &\sim& U(-2,0) \times U(-2,0), \\
    \xi_2 &=& \xi_1 + \nu_1, \quad \nu_1 \sim U(-1,1) \times U(-1,1),    
    \end{eqnarray*}
where $U(a,b)$ denotes the uniform distribution 
of a random variable over interval     \([a,b] \subset \mathbb{R}\). Consequently, we can write the problem \eqref{eq-ex-feasible-unchanged} as
 \begin{subequations}
    \begin{eqnarray}
        \underset{x}{\min} && \quad \mathbb{E}_{\xi, \zeta} \left[ (1, 1)^{\top} (x_0 + \zeta_0 ) + \sum\limits_{t = 1}^2 (1, 1)^{\top} ( s_t + x_t + \xi_t + \zeta_t ) \right] \label{eq-ex-feasible-unchanged-value-a}\\
        \rm{s.t.} && \quad s_{t + 1} =  s_t + x_t + \xi_t + \zeta_t, \quad t = 1, \cdots, T - 1, \label{eq-ex-feasible-unchanged-value-b}\\
        && \quad |x_t| - 5\leq 0, \quad t = 0,1, \cdots, T, \label{eq-ex-feasible-unchanged-value-c} \\
        && \quad s_1 = s_0 + x_0 + \zeta_0. \label{eq-ex-feasible-unchanged-value-d}
    \end{eqnarray}        
    \label{eq-ex-feasible-unchanged-value}
\end{subequations}
For problem \eqref{eq-ex-feasible-unchanged-value},
the Lipschitz continuity of the underlying functions 
w.r.t.~$\xi_{[t]}$ in Assumption \ref{assumption8} 
and w.r.t.~$\zeta_{[t]}$ in Assumption \ref{assumption6}
is satisfied with 
  $L_C = 2$, $L_S = 1$, $L_g = 0$.
  Our focus will be on stability w.r.t.~perturbation of  
  $\xi_{[t]}$ and thus we don't need Assumption \ref{assumption6}.
  The Slater condition in Assumption \ref{assumption7} is also 
  satisfied with $\bar{x}_t = (0, 0)$ with $\rho = 5$.
Note that the constraint \eqref{eq-ex-feasible-unchanged-value-c} can be equivalently 
written as $|x_t|^\alpha - 5^\alpha \leq 0$ for any positive 
constant $\alpha$. Moreover, by restricting $\alpha$ to an even positive number, $g_t(s_t, x_t, x_{t - 1}, \xi_{[t]}) = |x_t|^\alpha - 5^\alpha$ is convex in $x_t$, thus satisfying condition (b) in Proposition \ref{Prop-stab-exo-feas}.
Under the equivalent form of the constraint, 
the Slater condition is satisfied with $\bar{x}_t = (0, 0)$ with $\rho = 5^\alpha$.

Next, we consider perturbation of $Q$.
Let $\xi_1$ be perturbed from $U(-2, 0) \times U(-2, 0)$ to $U(-1.98, 0) \times U(-1.98, 0)$ and 
the distribution of $\nu_1$ be perturbed 
from $U(-1, 1) \times U(-1, 1)$ to 
$U(-0.98, 1) \times U(-0.98, 1)$.
    Then by the monotonicity of the objective function and the state transition function
    in
    \eqref{eq-ex-feasible-unchanged-value-a}, \eqref{eq-ex-feasible-unchanged-value-b} and \eqref{eq-ex-feasible-unchanged-value-d}  w.r.t. $x$, 
    we can deduce
    the optimal solutions to problem \eqref{eq-ex-feasible-unchanged} before and after the perturbations 
    are both $x_0 = x_1 = x_2 = (-5, -5)$ with 
    optimal values $\vt(Q) = -78.0$ and $\vt(\tilde{Q}) = -77.92$, respectively. On the other hand,     
    \begin{eqnarray*}
    \dd_K(Q_1,\tilde{Q}_1)
    &=& \sup_{f \in \mathcal{F}_1(\Xi)} \left( \int_{[-2, -2]}^{[0,0]} f dQ_1 - \int_{[-1.98, -1.98]}^{[0,0]} f d\tilde{Q}_1 \right) \nonumber \\
    &= & \int_{[-2, -2]}^{[0,0]} \|\xi_1\| dQ_1 - \int_{[-1.98, -1.98]}^{[0,0]} \|\tilde{\xi}_1\| d\tilde{Q}_1 =\tfrac{2}{3}\times 2 \;-\; \tfrac{2}{3} \times 1.98 = \frac{1}{75}.
    \end{eqnarray*}
    Likewise, 
    we can obtain $\dd_K(Q_2(\xi_2 | \xi_1), \tilde{Q}_2(\tilde{\xi}_2 | \xi_1)) = \frac{1}{75}$. 
    The nested distance between $Q$ and $\tilde{Q}$ (\cite{pflug2012distance} Definition 5.1 ) 
    is
    \begin{eqnarray}
    \label{eq:nested-distance}
        \dd_{\rm{Nested}}(Q, \tilde{Q}) =  
        &\underset{\pi\in\mathcal P(\Xi\times\tilde\Xi)}{\min} & \int d(\xi_1,\tilde\xi_1) + d(\xi_2, \tilde{\xi}_2) \pi(\mathrm d\xi,\mathrm d\tilde\xi) \nonumber \\
    &\textrm{s.t.} & \pi \left[ A\times\tilde\Xi \, \middle|\, \mathcal F_t \otimes \tilde{\mathcal F}_t \right](\xi,\tilde\xi)
    = P\left[ A \,\middle|\, \mathcal F_t \right](\xi), \big(A\subset\Xi_T,\ t = 1, 2\big), \nonumber\\
    && \pi \left[ \Xi\times B \,\middle|\, \mathcal F_t \otimes \tilde{\mathcal F}_t \right](\xi,\tilde\xi)
    = \tilde P\!\left[ B \,\middle|\, \tilde{\mathcal F}_t \right](\tilde\xi), \big(B\subset\tilde{\Xi}_T,\ t = 1, 2\big) \nonumber \\
    = && \hspace{-3.5em}\dd_K(Q_1, \tilde{Q}_1) + \dd_K(Q_2(\xi_2), \tilde{Q}_2(\tilde{\xi}_2)) \nonumber \\
    = && \hspace{-3.5em}\dd_K(Q_1, \tilde{Q}_1) + \left( \dd_K(Q_2(\xi_2 | \xi_1), \tilde{Q}_2(\tilde{\xi}_2 | \xi_1)) + \dd_K(Q_1, \tilde{Q}_1) \right) = 0.04. \nonumber
    \end{eqnarray}
By Theorem \ref{Theo-stab-exo-optvalue}, we can figure out
the Lipschitz modulus of 
the overall objective function 
$(1, 1)^\top(s_0 + x_0 + \zeta_0) + \sum_{t = 1}^2 (1, 1)^\top(s_t + x_t + \xi_t + \zeta_t)$
in $\xi$ is
$L_1 = 4.0$.
By Theorem \ref{Theo-Pflug} (\cite{pflug2012distance}),
\bgeqn 
     | \vt(Q) - \vt(\tilde{Q}) | \leq L_1 \dd_{\rm{Nested}}(Q, \tilde{Q}) = 0.16. \label{eq-results-pflug}
\edeqn
Next, we compare the above error bound with the one derived via 
\eqref{eq:stab-xi-stagewise-val}
   in Theorem~\ref{Theo-stab-exo-stage}.
To this end, we consider the equivalent 
formulation of the box constraint
\eqref{eq-ex-feasible-unchanged-value-c}
$|x_t|^\alpha-5^\alpha\leq 0$.
    By Theorem \ref{Theo-stab-exo-stage}, we can figure out the Lipschitz modulus 
    $L_{X, 1} = \frac{10}{5^\alpha}$, $L_{X, 2} = \frac{20}{5^\alpha} + \frac{200}{5^{2\alpha}}$, $L_{v, 2} = 4 + \frac{40}{5^\alpha}$, $L_{\xi, 2} = 2 + \frac{40}{5^\alpha} + \frac{400}{5^{2\alpha}}$, $L_{\xi, 1} = 6 + \frac{100}{5^\alpha} + \frac{400}{5^{2\alpha}}$. 
    We can then plug them into \eqref{eq:stab-xi-stagewise-val}
    to obtain
    \begin{eqnarray}
        | \vt(Q) - \vt(\tilde{Q}) | &\leq & L_{\xi, 1} \dd_K(Q_1, \tilde{Q}_1) + L_{\xi, 2} \dd_K\left(Q_2(\xi_2 | \xi_1), \tilde{Q}_2(\tilde{\xi}_2 | \xi_1)\right) \nonumber \\
        &= & \frac{8}{75} + \frac{28}{15 \times 5^\alpha} + \frac{32}{3 \times 5^{2\alpha}}.
        \label{eq-ex-bound-alpha}
    \end{eqnarray}
    Since $\alpha$ 
    can be 
    arbitrarily large,
we obtain    $L_{\xi, 1} \to 6, L_{\xi, 2} \to 2$.
The error bound in the above inequality converges to: 
    \bgeqn 
    | \vt(Q) - \vt(\tilde{Q}) | \leq L_{\xi, 1} \dd_K(Q_1, \tilde{Q}_1) + L_{\xi, 2} \dd_K\left(Q_2(\xi_2 | \xi_1), \tilde{Q}_2(\tilde{\xi}_2 | \xi_1)\right) =  \frac{8}{75},
\edeqn 
    which provides a 
    tighter bound than \eqref{eq-results-pflug}. However, if $\alpha=1,2$, 
then the rhs of \eqref{eq-ex-bound-alpha}
exceeds 0.16, which means 
the error bound \eqref{eq-results-pflug} is tighter. 
From the example, we can see that there is no definite conclusion which stability result may provide a sharper bound -- it depends on the problem structure. However,  it might be fair to say that 
it is relatively easier to calculate/estimate 
the Kantorovich metric.

    Another advantage of Theorem \ref{Theo-stab-exo-stage} is that it is applicable to general feasible sets depending on $\xi$. We will illustrate this in the next example and
    compare the quantitative stability result in Theorem \ref{Theo-stab-exo-stage} with the bound presented in Heitsch et al.~\cite{HRS2006stability}.
    To this end, we 
    add to problem \eqref{eq-ex-feasible-unchanged} an additional linear constraint
    \begin{eqnarray}
        A x_t  + B x_{t - 1} + C s_t + D \xi_t - \kappa \leq 0, \quad t = 1, 2, \cdots, T, \label{EX-linear-1c}
    \end{eqnarray}
where
    $\xi, \zeta$ are all the same as those in Example \ref{ex-feasible-unchanged}. Moreover, let 
    $A = \begin{pmatrix}
    -10 & 0 \\
    0 & -10
    \end{pmatrix},
    B = \begin{pmatrix}
    1 & 0 \\
    0 & 1
    \end{pmatrix},
    C = \begin{pmatrix}
    1 & 0 \\
    0 & 1
    \end{pmatrix},
    D = \begin{pmatrix}
    1 & 0 \\
    0 & 1
    \end{pmatrix}$, $\kappa = 11$. 
    Specifically, we consider
    \begin{subequations}
    \begin{eqnarray}
        \underset{x}{\min} && \quad \mathbb{E}_{\xi, \zeta} \left[ (1, 1)^{\top} (x_0 + \zeta_0 ) + \sum\limits_{t = 1}^2 (1, 1)^{\top} ( s_t + x_t + \xi_t + \zeta_t ) \right] \label{eq-ex-linear-a}\\
        \rm{s.t.} && \quad s_{t + 1} =  s_t + x_t + \xi_t + \zeta_t, \quad t = 1, \cdots, T - 1, \label{eq-ex-linear-b}\\
        && \quad -10 x_t + x_{t - 1} + s_t + \xi_t - 11 \leq 0, \label{eq-ex-linear-c} \\ 
        && \quad -5 \leq x_t \leq 5, \quad t = 0,1, \cdots, T, \label{eq-ex-linear-d} \\
        && \quad s_1 = s_0 + x_0 + \zeta_0. \label{eq-ex-linear-e}
    \end{eqnarray}        
    \label{eq-ex-linear}
\end{subequations}
    
    For problem \eqref{eq-ex-linear}, the induced problem at stage $t = 2$ can be represented as
\begin{subequations}
\begin{eqnarray}
    &\underset{x_2 \in \mathcal{X}_2(s_2, x_1, \xi_{[2]})}{\min} & \mathbb{E}_{\zeta_2} \left[ e^{\top} ( s_2 + x_2 + \xi_2 + \zeta_2 ) \right] \nonumber \\
    &\rm{s.t.} & -10 x_{2, 1} + x_{1, 1} + s_{2, 1} + \xi_{2, 1} - 11 \leq 0, \nonumber \\
    && -10 x_{2, 2} + x_{1, 2} + s_{2, 2} + \xi_{2, 2} - 11 \leq 0. \nonumber \\
    && |x_{2, 1}| \leq 5, |x_{2, 2}| \leq 5. \nonumber
\end{eqnarray}
\end{subequations}
We can figure out the optimal solution 
of the problem with $x_{2} = \big(\frac{1}{10}(x_{1, 1} + s_{2, 1} + \xi_{2, 1} - 11)$, $\frac{1}{10}(x_{1, 2} + s_{2, 2} + \xi_{2, 2} - 11)\big)^{\top}$, and the corresponding
value function 
\begin{eqnarray}
    v_2(s_2, x_1, \xi_{[2]}) = e^{\top} ( 1.1 s_2 + 1.1 \xi_2 + 0.1 x_1) - 4.2. \nonumber
\end{eqnarray}    
Then the dynamic problem at $t = 1$
is
\begin{subequations}
\begin{eqnarray}
    &\underset{x_1 \in \mathcal{X}_1(s_1, x_0, \xi_{1})}{\min} & \mathbb{E}_{\zeta_1} \left[ e^{\top} ( s_1 + x_1 + \xi_1 + \zeta_1 ) + \mathbb{E}_{\xi_2 | \xi_1} \left[ v_2(s_2, x_1, \xi_{[2]}) \right] \right] \nonumber \\
    &\rm{s.t.} & x_{1, 1} - x_{0, 1} + s_{1, 1} + \xi_{1, 1} - 11 \leq 0, \nonumber \\
    && x_{1, 2} - x_{0, 2} + s_{1, 2} + \xi_{1, 2} - 11 \leq 0. \nonumber \\
    && |x_{1, 1}| \leq 5, |x_{1, 2}| \leq 5, \nonumber \\
    && s_2 = s_1 + x_1 + \xi_1 + \zeta_1. \nonumber
\end{eqnarray}
\end{subequations}
The optimal solution is $x_1 = (\frac{1}{10}(x_{0, 1} + s_{1, 1} + \xi_{1, 1} - 11), \frac{1}{10}(x_{0, 2} + s_{1, 2} + \xi_{1, 2} - 11))^\top$, and the value function is 
\begin{eqnarray}
    v_1(s_1, x_0, \xi_1) &= & e^{\top} ( 2.32 s_1 + 3.42 \xi_1 + 0.22 x_0) - 4.2 - 4.2 - 2.64 - 2.2 \nonumber \\
    &= & e^{\top} ( 2.32 s_1 + 3.42 \xi_1 + 0.22 x_0) - 13.24. \nonumber
\end{eqnarray}
The optimal value of problem \eqref{eq-ex-linear} 
can thus be determined by the following problem:
\begin{eqnarray}
    && \underset{x_0}{\min} \; \mathbb{E}_{\zeta_0} \left[ e^\top(s_0 + x_0 + \zeta_0) + \mathbb{E}_{\xi_1} \left[ v_1(s_1, x_0, \xi_1)\right] \right] \nonumber \\
    &= & \mathbb{E}_{\zeta_0} \left[ e^\top(s_0 + x_0^* + \zeta_0) + \mathbb{E}_{\xi_1} \left[ e^{\top} ( 2.32 s_1 + 3.42 \xi_1 + 0.22 x_0^*) - 13.24\right] \right] \nonumber \\
    &= & \mathbb{E}_{\zeta_0} \left[ e^\top(s_0 + x_0^* + \zeta_0) +  e^{\top} ( 2.32 s_1 + 0.22 x_0^*) - 20.08 \right] \nonumber \\
    &= & \mathbb{E}_{\zeta_0} \left[ e^\top(3.32 s_0 + 3.54 x_0^* + 3.32 \zeta_0) - 20.08 \right] \nonumber \\
    &= & \mathbb{E}_{\zeta_0} \left[ -35.4 - 6.64 - 20.08 \right] = - 62.12. \nonumber
\end{eqnarray}
The optimal solution of the problem above is $x_0^* = -5$, $x_1^*(s_1, x_0, \xi_1) = \frac{1}{10}(s_1 + x_0 + \xi_1 - 11)$, $x_2^*(s_2, x_1, \xi_2) = \frac{1}{10}(s_2 + x_1 + \xi_2 - 11)$, and the optimal value of problem \eqref{eq-ex-linear} is $-62.12$.

We consider  the same  perturbation of probability distributions as in 
\eqref{eq-ex-feasible-unchanged-value}, 
the optimal solution $\tilde{x}^*$ remains the same, that is, $\tilde{x}^* = x^*$,  and the optimal value after the perturbation becomes
$-62.0296$.
For problem \eqref{eq-ex-linear}, we can 
figure out that
$L_S = 1$, $\rho \geq 40$, $A = 10$, $L_g = 1$, $L_C = 2$, $L_{X, 1} = \frac{1}{4}$, $L_{X, 2} = \frac{5}{8}$, $L_{\xi, 1} = \frac{35}{4}$, $L_{\xi, 2} = \frac{13}{4}$. 
By Theorem \ref{Theo-stab-exo-stage}, 
\bgeqn 
| \vt(Q) - \vt(\tilde{Q}) | \leq L_{\xi, 1} \dd_K(Q_1, \tilde{Q}_1) + L_{\xi, 2} \dd_K\left(Q_2(\xi_2 | \xi_1), \tilde{Q}_2(\tilde{\xi}_2 | \xi_1)\right) = 0.16. \label{eq-thispaper-linear}
\edeqn 
Next, we 
calculate the error bound based on the stability result in
\cite{HRS2006stability}. 
To this end, we need to calculate the filtration distance (see Appendix \ref{appendixproof})
$$
\dd_{\rm{Filt}}(Q, \tilde{Q}) := \underset{\epsilon \in (0, \alpha]}{\sup} \underset{x \in l_{\epsilon}(\xi), \tilde{x} \in l_{\epsilon}(\tilde{\xi})}{\inf} \sum_{t=1}^{2} 
\max\Big\{ 
\big\| x_t - \mathbb{E}[x_t \mid \tilde{\mathcal{F}}_t] \big\|,\;
\big\| \tilde{x}_t - \mathbb{E}[\tilde{x}_t \mid \mathcal{F}_t] \big\| 
\Big\}.
$$ 
At stage $0$, $\mathbb{E}\left[ x^*_0 - \mathbb{E}\left[ x^*_0 \right] \right] = 0$. At $t = 1$, we have
\begin{eqnarray}
    && \mathbb{E}_{\xi_1, \zeta_0}\left[ \left\| x_1^* - \mathbb{E}_{\tilde{\xi}_1, \zeta_0} \left[ x_1^* \right] \right\| \right] \nonumber \\
    &= & 
    \mathbb{E}_{\xi_1, \zeta_0}\left[ \left\| x_1^* - \frac{1}{10} \mathbb{E}_{\tilde{\xi}_1, \zeta_0} \left[ s_1 + x_0 + \tilde{\xi}_1 - 11\right] \right\| \right] \nonumber \\
    &= & \mathbb{E}_{\xi_1, \zeta_0}\left[ \left\| x_1^* - \frac{1}{10} \mathbb{E}_{\tilde{\xi}_1, \zeta_0} \left[ s_0 + 2x_0 + \zeta_0 + \tilde{\xi}_1 - 11\right] \right\| \right] \nonumber \\
    &= & \mathbb{E}_{\xi_1, \zeta_0}\left[ \left\| x_1^* + \frac{22.99}{10} \right\| \right] \nonumber \\
    &= & \frac{1}{10} \mathbb{E}_{\xi_1, \zeta_0}\left[ \left\| s_1 + x_0 + \xi_1 - 11 + 22.99  \right\| \right] \nonumber \\
    &= & \frac{1}{10} \mathbb{E}_{\xi_1, \zeta_0}\left[ \left\| 1.99 + \zeta_0 + \xi_1 \right\| \right] \nonumber \\
    &\approx & \frac{1}{15}.  \nonumber
\end{eqnarray}
At $t = 2$, we obtain 
\begin{eqnarray}
    && \mathbb{E}_{\xi_{[2]}, \zeta_1, \zeta_0}\left[ \left\| x_2^* - \mathbb{E}_{\tilde{\xi}_{[2]}, \zeta_1, \zeta_0} \left[ x_2^* \right] \right\| \right] \nonumber \\
    &= & \mathbb{E}_{\xi_{[2]}, \zeta_1, \zeta_0}\left[ \left\| x_2^* - \mathbb{E}_{\tilde{\xi}_{[2]}, \zeta_1, \zeta_0} \left[ \frac{1}{10} (s_2 + x_1^* + \tilde{\xi}_2 - 11) \right]\right\| \right] \nonumber \\
    &= & \mathbb{E}_{\xi_{[2]}, \zeta_1, \zeta_0}\left[ \left\| x_2^* - \mathbb{E}_{\tilde{\xi}_{[2]}, \zeta_1, \zeta_0} \left[ \frac{1}{10} (s_1 + 2 x_1^* + \tilde{\xi}_1 + \zeta_1 + \tilde{\xi}_2 - 11) \right] \right\| \right] \nonumber \\
    &= & \mathbb{E}_{\xi_{[2]}, \zeta_1, \zeta_0}\left[ \left\| x_2^* - \mathbb{E}_{\tilde{\xi}_{[2]}, \zeta_1, \zeta_0} \left[ \frac{1}{10} (s_0 + x^*_0 + \zeta_0 + 2 x_1^* + \tilde{\xi}_1 + \zeta_1 + \tilde{\xi}_2 - 11) \right] \right\| \right] \nonumber \\
    &= & \mathbb{E}_{\xi_{[2]}, \zeta_1, \zeta_0}\left[ \left\| x_2^* + 2.4568 \right\| \right] \nonumber \\
    &= & \mathbb{E}_{\xi_{[2]}, \zeta_1, \zeta_0}\left[ \left\| \frac{1}{10} \big(s_0 + x^*_0 + \zeta_0 + \frac{1}{5}(s_0 + 2x_0^* + \xi_1 + \zeta_0 - 11) + \xi_1 + \zeta_1 + \xi_2 - 11\big) + 2.4568 \right\| \right] \nonumber \\
    &= & \mathbb{E}_{\xi_{[2]}, \zeta_1, \zeta_0} \left[ \left\| \frac{3}{25} \zeta_0 + \frac{3}{25} \xi_1 + \frac{\zeta_1}{10} + \frac{\xi_2}{10} + 0.4368 \right\| \right] \nonumber \\
    &\approx & 0.136. \nonumber
\end{eqnarray}
By the definition of the filtration distance, we obtain that 
\bgeqn 
\dd_{\rm{Filt}}(Q, \tilde{Q}) \approx 0.203. 
\label{eq-filtration-HRS}
\edeqn
Moreover,
\begin{eqnarray}
    \| \xi - \tilde{\xi} \|_{W_3} := W_3(Q, \tilde{Q}) = 0.01(2 + 2 + 12 + 12)^{\frac{1}{3}} \approx 0.030, 
    \label{eq-Wasserstein3-HRS}
\end{eqnarray}
where $W_3$ stands for the 3-Wasserstein metric. Consequently, we can approximately calculate the bound provided by \cite{HRS2006stability} 
\begin{eqnarray}
    |\vt(Q) - \vt(\tilde{Q})| \leq L (\| \xi - \tilde{\xi} \|_{W_3} + \dd_{\rm{Filt}}(Q, \tilde{Q})) =18 \times (0.203 + 0.030) \approx 4.19.
\end{eqnarray}
    We can see that the bound in \eqref{eq-thispaper-linear} is tighter.
    
    Note that even 
    the filtration distance before and after perturbation is $0$, 
    we can 
    still show that the error bound provided by
    Theorem ~\ref{Theo-stab-exo-stage} could be tighter.   
    To see this, we set $A = 
    \begin{pmatrix}
    1 & 0 \\
    0 & 1
    \end{pmatrix},
    B = \begin{pmatrix}
    -1 & 0 \\
    0 & -1 
    \end{pmatrix}
    $. 
    The resulting optimal solution at stage $0$ is $x_0 = (-5, -5)^\top$, and the optimal value is $\vt(Q) = - 78$. 
    If we perturb the distribution of each component of $\xi_1$ from $U(-2, 0)$ to $U(-1.98, 0)$ and that of $\xi_2 | \xi_1$ from $\xi_1 + U(-1, 1)$ to $\xi_1 + U(-0.98, 1)$, we obtain that the optimal value after the perturbation is $\vt(\tilde{Q}) = -77.92$.
    Under this setting, 
    $L_C = 2$, $L_S = 1$, $A = 10$, $L_g = 1$, $\rho = 10$, $L_{X, 1} = 1$, $L_{X, 2} = 4$, $L_{v, 2} = 4$, $L_{Q, 1} = 1$, $L_{\xi, 1} = 12$, $L_{\xi_2} = 10$. 
    The bound provided by Theorem \ref{Theo-stab-exo-stage} is $\frac{22}{75}$.
    On the other hand, the bound derived from \cite{HRS2006stability} is approximately $ 18 * 0.030 = 0.540$.

    Moreover, if $\kappa$ increases, the bound provided by Theorem~\ref{Theo-stab-exo-stage}
    can  be further tightened. Especially, 
    when $\kappa$ is sufficiently large, 
    $\rho$ becomes large, 
    the bound provided in this paper 
    approaches the real gap between the optimal values before and after the perturbation. 

    From this example, we envisage that the error bound 
    derived in Theorem~\ref{Theo-stab-exo-stage} is in general tighter than the one based on the filtration distance albeit theoretical evidence is yet to be established. 
    This is because 
    the sum of the filtration distance and the Wasserstein metric is usually large. 
    \end{example}

    Finally, we note that 
    the stability results in 
    \cite{HRS2006stability} and \cite{pflug2012distance} 
    are established under some specified problem structure. 
    For example, Heitsch et al.~\cite{HRS2006stability} require the problem to be linear, whereas 
    Pflug and Pichler \cite{pflug2012distance} require the feasible set to be deterministic. 
    In contrast,
    the stability results in this paper are not subject to  these restrictions.
    The next example illustrates this.

\begin{example}[Nonlinear problem]
    Consider problem \eqref{eq-ex-feasible-unchanged} with additional nonlinear constraints
    \begin{eqnarray}
        \| A x_t \circ x_t  + B x_{t - 1} + C s_t + D \xi_t \| - \kappa \leq 0, \quad t = 1, 2, \cdots, T,
    \end{eqnarray}        
    where $a \circ b$ denotes the Hadamard product of $a$ and $b$.
    Let $A = 
    \begin{pmatrix}
    1 & 0 \\
    0 & 1
    \end{pmatrix},
    B = \begin{pmatrix}
    -1 & 0 \\
    0 & -1 
    \end{pmatrix}
    $. Then the problem becomes
    \begin{subequations}
    \begin{eqnarray}
        \underset{x \in \mathcal{X}}{\min} && \quad \mathbb{E}_{\xi, \zeta} \left[ (1, 1)^{\top} (x_0 + \zeta_0 ) + \sum\limits_{t = 1}^T (1, 1)^{\top} ( s_t + x_t + \xi_t + \zeta_t ) \right] \label{eq:obj-norm}\\
        \rm{s.t.} && \quad s_{t + 1} = s_t + x_t + \xi_t + \zeta_t, \quad t = 1, \cdots, T - 1, \label{eq:trans-bilinear} \\
        && \quad x_{t, i}^2 - x_{t - 1, i} + s_{t, i} + \xi_{t, i} - 11 \leq 0, \quad t = 1, 2, \cdots, T; \quad i = 1, 2, \label{eq:constraint-norm}\\
        && \quad -5 \leq x_t \leq 5,  \quad t = 0,1, \cdots, T,\\
        && \quad s_1 = s_0 + x_0 + \zeta_0.
    \end{eqnarray}        
    \label{ex-stab-nonlinear}
    \end{subequations}
    The dynamic problem at $t = 2$ can be written as 
\begin{subequations}
\begin{eqnarray}
     &\underset{x_2
     }{\min}& \mathbb{E}_{\zeta_2} \left[ s_{2, 1} + x_{2, 1} + \xi_{2, 1} + \zeta_{2, 1} + s_{2, 2} + x_{2, 2} + \xi_{2, 2} + \zeta_{2, 2} \right] \nonumber \\
    &\rm{s.t.}& | x_{2, 1} | \leq 5, |x_{2, 2}| \leq 5, \nonumber \\
    && x_{2, 1}^2 - x_{1, 1} + s_{2, 1} + \xi_{2, 1} \leq 11, \nonumber \\
    && x_{2, 2}^2 - x_{1, 2} + s_{2, 2} + \xi_{2, 2} \leq 11. \nonumber
\end{eqnarray}
\end{subequations}
The optimal solution is 
\begin{eqnarray*}
    x_{2, 1}^*(s_2, x_1, \xi_{[2]}) &= & -\sqrt{11 + x_{1, 1} - s_{2, 1} - \xi_{2, 1}}, \\
    x_{2, 2}^*(s_2, x_1, \xi_{[2]}) &= & -\sqrt{11 + x_{1, 2} - s_{2, 2} - \xi_{2, 2}}
\end{eqnarray*}
and
the value function at stage $2$ is
\begin{eqnarray}
    &&v_2(s_2, x_1, \xi_{[2]}) \nonumber \\
    &= &  s_{2, 1} - \sqrt{11 + x_{1, 1} - s_{2, 1} - \xi_{2, 1}} + \xi_{2, 1} - 1 + s_{2, 2} - \sqrt{11 + x_{1, 2} - s_{2, 2} - \xi_{2, 2}} + \xi_{2, 2} - 1  . \nonumber
\end{eqnarray}
We consider the dynamic problem at $t = 1$. Due to the decomposibility, we simply 
consider the dynamic problem at $t = 1$ corresponding to the first component, which is
\begin{eqnarray}
    &\underset{x_1
    }{\min}& \mathbb{E}_{\zeta_1, \xi_2 | \xi_1} \left[ s_{1, 1} + x_{1, 1} + \xi_{1, 1} + \zeta_{1, 1} + s_{2, 1} - \sqrt{11 + x_{1, 1} - s_{2, 1} - \xi_{2, 1}} + \xi_{2, 1} - 1 \right] \nonumber \\
    &\rm{s.t.}& |x_{1, 1}| \leq 5, x_{1, 1}^2 - x_{0, 1} + s_{1, 1} + \xi_{1, 1} \leq 11, \nonumber \\
    && s_{2, 1} = s_{1, 1} + x_{1, 1} + \xi_{1, 1} + \zeta_{1, 1}. \nonumber
\end{eqnarray}
The objective function can be reformulated as
$$
2 s_{1, 1} + 2 x_{1, 1} + 2 \xi_{1, 1} + 2 \zeta_{1, 1} - \sqrt{11 - s_{1, 1} - \xi_{1, 1} - \zeta_{1, 1} - \xi_{2, 1}} + \xi_{2, 1} - 1.
$$
We can figure out the optimal solution with $x_{1, 1}^* = - \sqrt{11 + x_{0, 1} - s_{1, 1} - \xi_{1, 1}}$. Similarly, we obtain  $x_{1, 2}^* = - \sqrt{11 + x_{0, 2} - s_{1, 2} - \xi_{1, 2}}$.
Consequently, 
\begin{eqnarray}
    && v_{1, 1}(s_1, x_0, \xi_1) \nonumber \\
    &= & 
    2 s_{1, 1} - 2 \sqrt{11 + x_{0, 1} - s_{1, 1} - \xi_{1, 1}}  + 3 \xi_{1, 1} - 3 - \mathbb{E}_{\zeta_{1, 1}, \nu_{1, 1}} \left[ \sqrt{11 - s_{1, 1} - 2 \xi_{1, 1} - \zeta_{1, 1} - \nu_{1, 1}} \right] 
    \nonumber \\
    &= & 2 s_{1, 1} - 2 \sqrt{11 + x_{0, 1} - s_{1, 1} - \xi_{1, 1}}  + 3 \xi_{1, 1} - 3 \nonumber \\
    && - \frac{1}{15} \left( (14 - s_{1,1} - 2\xi_{1,1})^{5/2} - 2(12 - s_{1,1} - 2\xi_{1,1})^{5/2} + (10 - s_{1,1} - 2\xi_{1,1})^{5/2} \right). \label{eq:v_11}
\end{eqnarray}
Likewise, we have
\begin{eqnarray}
    && v_{1, 2}(s_1, x_0, \xi_1) = 2 s_{1, 2} - 2 \sqrt{11 + x_{0, 2} - s_{1, 2} - \xi_{1, 2}}  + 3 \xi_{1, 2} - 3 \nonumber \\
    && - \frac{1}{15} \left( (14 - s_{1,2} - 2\xi_{1,2})^{5/2} - 2(12 - s_{1,2} - 2\xi_{1,2})^{5/2} + (10 - s_{1,2} - 2\xi_{1,2})^{5/2} \right). \nonumber
\end{eqnarray}
Thus the value function at stage $1$ can be represented as 
\begin{eqnarray}
    v_1(s_1, x_0, \xi_1) =  v_{1, 1}(s_1, x_0, \xi_1) + v_{1, 2}(s_1, x_0, \xi_1) . \nonumber
\end{eqnarray}
w.r.t.~the first component, the dynamic problem at stage $0$ can be written as
\begin{eqnarray}
    &\underset{x_{0, 1}}{\min}& \mathbb{E}_{\zeta_0, \xi_1} \left[ s_{0, 1} + x_{0, 1} + \zeta_{0, 1} + v_{1, 1}(s_1, x_0, \xi_1) \right] \nonumber \\
    &\rm{s.t. }& |x_{0, 1}| \leq 5, \nonumber \\
    && s_1 = s_0 + x_0 + \zeta_0. \nonumber
\end{eqnarray}
The optimal solution to this problem is $x_0^* = -5$. The dynamic problem w.r.t.~the second variable can be similarly solved and we then obtain the optimal value of problem \eqref{ex-stab-nonlinear}  
\begin{eqnarray*}
&&2 \left( -24 - \frac{2}{15}(15^{\frac{5}{2}} - 2 \cdot 13^{\frac{5}{2}} + 11^{\frac{5}{2}}) - \frac{1}{1890}\left(2 \cdot 19^{\frac{9}{2}} + 2 \cdot 21^{\frac{9}{2}} - 3 \cdot 17^{\frac{9}{2}} - 3 \cdot 23^{\frac{9}{2}} + 15^{\frac{9}{2}} + 25^{\frac{9}{2}} \right) \right) \\
&\approx & -71.36    
\end{eqnarray*}
If we perturb the distribution of each component of $\xi_1$ from $U(-2, 0)$ to $U(-1.98, 0)$ and that of $\xi_2 | \xi_1$ from $\xi_1 + U(-1, 1)$ to $\xi_1 + U(-0.99, 0.99)$,  we can find that the optimal value after the perturbation is approximately equal to -71.28. 
On the other hand, 
$\dd_K(Q_2(\xi_2 | \xi_1), \tilde{Q}_2(\tilde{\xi}_2 | \xi_1)) = \frac{1}{150}$.
Since $L_{C} = 2$, $L_S = 1$, $A = 10$, $L_g = 1$, $\rho = 5$, $L_{X, 1} = 2$, $L_{X, 2} = 12$, $L_{v, 1} = 84$, $L_{v, 2} = 12$, $L_{Q, 1} = 1$, $L_{\xi, 1} = 42$, $L_{\xi, 2} = 26$,
then we can apply Theorem \ref{Theo-stab-exo-stage} 
to establish 
\begin{eqnarray}
    | \vt(Q) - \vt(\tilde{Q}) | \leq L_{\xi, 1} \dd_K(Q_1, \tilde{Q}_1) + L_{\xi, 2} \dd_K\left(Q_2(\xi_2 | \xi_1), \tilde{Q}_2(\tilde{\xi}_2 | \xi_1)\right) = \frac{11}{15}. 
    \label{eq-nonlinear-theostage}
\end{eqnarray}

\end{example}
\section{Concluding Remarks}

In this paper, 
we consider 
an 
integrated MSP-MDP framework which combines  
multistage stochastic programming
and 
MDP.
The integrated model covers emerging 
multistage decision-making problems 
for which the traditional 
MSP models or MDP models may fail to describe.
Under some moderate conditions, we derive 
dynamic recursive formulations of the problem,
investigate the stability of the optimal 
values and optimal solutions 
when the underlying random processes are perturbed locally and globally. 
The new stability results provide theoretical grounding of the integrated MSP-MDP model and complement the existing stability results 
in the literature of MSP/MDP.

The research may be extended in a few directions. First, it may deserve 
further explorations on the advantages and 
disadvantages of the established
stability results in comparison with 
the existing ones in terms of tightness and applicability. Second, it will be interesting to extend the stability results, at least some of them, to infinite time horizon cases because a large class of MDP problems in the literature are infinite-horizon.
Third, extending the stability analysis to multistage risk minimization problems might be another promising direction. 
Stability analysis of risk minimization problems are mostly focused on one-stage and two-stage decision making problems, see e.g~\cite{claus2017weak, wang2020robust, pichler2022quantitative,guo2021statistical}, it might be interesting to extend the research to multistage setting.
Fourth,  
we may use the established stability results 
to develop appropriate computational methods for solving the integrated MSP-MDP problem. 
We leave all these for future research.

\bibliography{sample}
\bibliographystyle{plain}
\appendix

\section{Supplementary results and proofs}
\label{appendixproof}

\begin{proposition}[Random lower semicontinuity of $v_t(s_{t - 1}, x_{t - 1}, \xi_{[t]}, \zeta_{t - 1})$]
Assume that Assumptions \ref{Assu:welldefinedness} 
 and 
\ref{Assu:compactness} hold, and for any $t = 1, 2, \cdots, T$, $i = 1, 2, \cdots, I_t$, $g_{t, i}$ is lower semicontinuous w.r.t. $(s_t, x_t)$, $S^M_t(s_{t}, x_t, \xi_t, \omega_t)$ is continuous w.r.t.~$(s_t, x_t, \xi_t, \omega_t)$, \( C_t\left(s_t, x_t, \xi_{[t]}, \zeta_t\right) \) is lower semicontinuous w.r.t. $(s_t, x_t)$.
Then
\( v_t(s_{t - 1}, x_{t - 1}, \xi_{[t]}, \zeta_{t - 1}) \) is random lower semicontinuous,  
i.e., \( v_t(s_{t - 1}, x_{t - 1}, \xi_{[t]}, \zeta_{t - 1}) \) is lower semicontinuous in $(s_{t - 1}, x_{t - 1})$, and measurable in $(s_{t - 1}, x_{t - 1}, \xi_{[t]}, \zeta_{t - 1})$. 
\label{Prop-randomlower}
\end{proposition}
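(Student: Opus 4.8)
The plan is to proceed by backward induction on $t$, from $t = T$ down to $t = 1$, combining the lower-semicontinuity half of Berge's maximum theorem with Fatou's lemma (to transfer semicontinuity through the expectations) and a normal-integrand and measurable-selection argument (to obtain joint measurability). Since we work with extended-real-valued functions, an empty feasible set simply gives $v_t = +\infty$; Assumption~\ref{Assu:welldefinedness}(b) in fact keeps $v_t$ finite, and Assumption~\ref{Assu:welldefinedness}(a) supplies the integrable minorants $-h_t(\xi_{[t]},\zeta_t)$ required for Fatou. As a preliminary step I would re-run the well-definedness argument of Proposition~\ref{Prop:continuity} --- which uses only Assumption~\ref{Assu:welldefinedness} and not continuity --- to record that every $v_t$ and every $\mathbb{E}_{\xi_{t+1}\mid\xi_{[t]}}[v_{t+1}]$ is bounded below by an integrable function, so that all the interchanges below are legitimate.

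\textbf{Base case $t = T$.} Put $\phi_T(s_T,x_T,\xi_{[T]}):=\mathbb{E}_{\zeta_T}[C_T(s_T,x_T,\xi_{[T]},\zeta_T)]$. As $C_T$ is lower semicontinuous in $(s_T,x_T)$ and jointly Borel measurable, it is a normal integrand, and Fatou's lemma together with the minorant $-h_T$ makes $\phi_T$ lower semicontinuous in $(s_T,x_T)$ and measurable in all arguments. The feasible-set mapping $(s_T,x_{T-1})\rightrightarrows\mathcal{X}_T(s_T,x_{T-1},\xi_{[T]})=\{x_T: g_{T,i}(s_T,x_T,x_{T-1},\xi_{[T]})\le 0,\ i\in I_T\}$ has closed graph (lower semicontinuity of the $g_{T,i}$ in $(s_T,x_T,x_{T-1})$) and is contained in the bounded set $\mathcal{X}_T^0$ (Assumption~\ref{Assu:compactness}), hence is outer semicontinuous with nonempty compact values, and measurable in $\xi_{[T]}$; composing it with the continuous map $(s_{T-1},x_{T-1})\mapsto S^M_{T-1}(s_{T-1},x_{T-1},\xi_{T-1},\zeta_{T-1})$ preserves these properties. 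Then the lower-semicontinuity half of Berge's theorem (equivalently, the elementary subsequence argument: take $x_T^n$ near-optimal at $(s_{T-1}^n,x_{T-1}^n)$, extract a convergent $x_T^n\to x_T$ by boundedness, use the closed graph to get $x_T$ feasible at the limit, and lower semicontinuity of $\phi_T$ to obtain the desired $\liminf$ inequality) shows that $v_T(\cdot,\cdot,\xi_{[T]},\zeta_{T-1})$ is lower semicontinuous in $(s_{T-1},x_{T-1})$; joint measurability of $v_T$ then follows from the standard fact that the optimal-value function of a normal integrand minimized over a measurable closed-valued mapping is measurable.

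\textbf{Inductive step.} Assume $v_{t+1}$ is random lower semicontinuous, i.e.\ lower semicontinuous in $(s_t,x_t)$ and jointly measurable. Define $\psi_t(s_t,x_t,\xi_{[t]},\zeta_t):=C_t(s_t,x_t,\xi_{[t]},\zeta_t)+\mathbb{E}_{\xi_{t+1}\mid\xi_{[t]}}[v_{t+1}(s_t,x_t,\xi_{[t+1]},\zeta_t)]$. By the induction hypothesis, lower semicontinuity of $C_t$, and Fatou (using the minorants recorded above), $\psi_t$ is lower semicontinuous in $(s_t,x_t)$ and jointly measurable; integrating over $\zeta_t$ and applying Fatou once more preserves both properties. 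Now $v_t$ is the minimum of $\mathbb{E}_{\zeta_t}[\psi_t(s_t,x_t,\xi_{[t]},\zeta_t)]$ over $x_t\in\mathcal{X}_t(s_t,x_{t-1},\xi_{[t]})$ with $s_t=S^M_{t-1}(s_{t-1},x_{t-1},\xi_{t-1},\zeta_{t-1})$ continuous in $(s_{t-1},x_{t-1})$, so the Berge-type argument of the base case yields lower semicontinuity of $v_t$ in $(s_{t-1},x_{t-1})$ and the normal-integrand argument yields joint measurability. This closes the induction and proves the claim.

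\textbf{Expected main obstacle.} I anticipate that the measurability claim, rather than the semicontinuity, is the delicate point: one must verify that $\psi_t$ and then $\mathbb{E}_{\zeta_t}[\psi_t]$ are genuine normal integrands, and that the marginal function $v_t$ --- an infimum over a feasible correspondence that itself depends on the random data through $S^M_{t-1}$ and the $g_{t,i}$ --- remains jointly measurable, which calls for a careful application of a measurable-selection theorem on the product of the state-decision space and the sample space. The secondary, lighter technicality is ensuring that Fatou's lemma is applicable at every recursion level, which is precisely why the integrable lower bounds from Assumption~\ref{Assu:welldefinedness}(a), propagated through the recursion as in Proposition~\ref{Prop:continuity}, are needed.
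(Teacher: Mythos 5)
Your proposal is correct and follows essentially the same route as the paper's proof: backward induction from $t=T$, a compactness-plus-subsequence (Berge-type) argument for lower semicontinuity of each $v_t$ in $(s_{t-1},x_{t-1})$, and a measurable set-valued-mapping/measurable-selection argument (the paper uses weak measurability, the measurable projection theorem, and the measurable maximum theorem, which is the same toolkit as your normal-integrand formulation) for joint measurability. Your explicit invocation of Fatou's lemma with the integrable minorants from Assumption~\ref{Assu:welldefinedness}(a) makes precise the step that the paper states more loosely as "the expectation operator preserves lower semicontinuity," but this is a matter of presentation rather than a different argument.
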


\begin{proof}
We proceed the proof in four steps.
    
\underline{Step 1}.   
We show that \( v_T(s_{T - 1}, x_{T - 1}, \xi_{[T]}, \zeta_{T - 1}) \) is measurable w.r.t. \( (s_{T - 1}, x_{T - 1}, \xi_{[T]}, \zeta_{T - 1}) \). 
Since \( g_{T,i}(s_T, x_T, x_{T - 1}, \xi_{[T]}), i \in I_t, \) is measurable w.r.t.~\( (s_T, x_T, x_{T - 1}, \xi_{[T]}) \) under the lower semicontinuous condition, it follows that
    \begin{eqnarray}
    Z_T := \{ (s_T, x_T, x_{T - 1}, \xi_{[T]}) \mid g_{T,i}(s_T, x_T, x_{T - 1}, \xi_{[T]}) \leq 0, i \in I_T\} \nonumber
    \end{eqnarray}
    is a 
    measurable set. 
    We aim to prove that the set of feasible solutions at stage \( T \),
    i.e., the set-valued mapping 
    \( \mathcal{X}_T : \mathbb{R}^{\hat{n}_T} \times \mathbb{R}^{n_{T - 1}} \times \mathbb{R}^{m_{1, [T]}} \rightrightarrows \mathbb{R}^{n_T} \)
    is weakly measurable (\cite{aliprantis2006infinite}), that is, for any open set \( U \subset \mathbb{R}^{n_T} \), 
    {\begin{eqnarray}
    \mathcal{X}_T^{-1}(U) = \left\{ (s_T, x_{T - 1}, \xi_{[T]}) \mid \mathcal{X}_T(s_T, x_{T - 1}, \xi_{[T]}) \cap U \neq \emptyset \right\}
    \end{eqnarray}}
    is measurable. 
    Observe that
    \begin{eqnarray}
    \mathcal{X}_T^{-1}(U) &= & \left\{ (s_T, x_{T - 1}, \xi_{[T]}) \mid \exists x_T \in U, \text{ such that } g_{T,i}(s_T, x_{T}, x_{T - 1}, \xi_{[T]}) \leq 0, i \in I_T \right\} \nonumber \\
    &= & 
    \Pi\left((U \times (\mathbb{R}^{\hat{n}_T} \times \mathbb{R}^{n_{T - 1}} \times \mathbb{R}^{m_{1, [T]}})) \cap Z_T\right), \nonumber
\end{eqnarray}
where \( \Pi \)
denotes
the projection onto \(\mathbb{R}^{\hat{n}_T} \times \mathbb{R}^{n_{T - 1}} \times \mathbb{R}^{m_{1, [T]}}\). Since \( U \times (\mathbb{R}^{\hat{n}_T} \times \mathbb{R}^{n_{T - 1}} \times \mathbb{R}^{m_{1, [T]}}) \) is an open set, it is measurable; meanwhile, we know that \( Z_T \) is a measurable set, so \( (U \times (\mathbb{R}^{\hat{n}_T} \times \mathbb{R}^{n_{T - 1}} \times \mathbb{R}^{m_{1, [T]}})) \cap Z_T \) is also a measurable set. By the measurable projection theorem (\cite{dellacherie1979probabilities}), it follows that \( \mathcal{X}_T^{-1}(U) \) is measurable. Thus, \( \mathcal{X}_T \) is weakly measurable.

\underline{Step 2}. We show that \( v_T(s_{T - 1}, x_{T - 1}, \xi_{[T]}, \zeta_{T - 1}) \) is lower semicontinuous w.r.t. $(s_{T - 1}, x_{T - 1})$.
By the
continuity of \( S^M_{T-1} \) 
and the lower semicontinuous condition, the feasible set of problem \eqref{eq:value-T} at stage $T$ is measurable w.r.t.~\( (s_{T}, x_{T - 1}, \xi_{[T]}, \zeta_{T - 1}) \). 
Combining this with Theorem~18.19 in \cite{aliprantis2006infinite}, we can deduce that
\begin{eqnarray}
    && v_T(s_{T - 1}, x_{T - 1}, \xi_{[T]}, \zeta_{T - 1}) = \underset{x_T \in \mathcal{X}_T(s_T, x_{T - 1}, \xi_{[T]})}{\min} \mathbb{E}_{\zeta_T} \left[ C_T(s_T, x_T, \xi_{[T]}, \zeta_T) \right] \nonumber \\
    &= & \underset{x_T \in \mathcal{X}_T(s_T, x_{T - 1}, \xi_{[T]})}{\min} \mathbb{E}_{\zeta_T} \left[ C_T(S^M_{T - 1}(s_{T - 1}, x_{T - 1}, \xi_{T - 1}, \zeta_{T - 1}), x_T, \xi_{[T]}, \zeta_T) \right] \nonumber
\end{eqnarray}
is measurable 
in \( (s_{T - 1}, x_{T - 1}, \xi_{[T]}, \zeta_{T - 1}) \).
The attainability of the optimum 
can be established as follows. 
Let  $C_T^m$ denote 
the infimum of $C_T(s_T, x_T, \xi_{[T]}, \zeta_T)$ 
over $\mathcal{X}_T(s_T, x_{T - 1}, \xi_{[T]})$.
Assumption \ref{Assu:continuity} 
ensures that there
exists a sequence of feasible solustions 
$\{x_{T, i}\} \subset \mathcal{X}_T(s_T, x_{T - 1}, \xi_{[T]})$
such that 
$C_T(s_T, x_{T, i}, \xi_{[T]}, \zeta_T) \to C_T^m$. 
By the compactness of $\mathcal{X}_T(s_T, x_{T - 1}, \xi_{[T]})$ under 
Assumption~\ref{Assu:compactness}, 
$\{x_{T, i}\}$ has a convergent subsequence $x_{T, ij} \to x_{T, 0}$ such that $C_T(s_T, x_{T, ij}, \xi_{[T]}, \zeta_T) \to C_T^m$. 
By the lower semicontinuity of $C_T$, 
$C_T(s_T, x_{T, 0}, \xi_{[T]}, \zeta_T) = C_T^m$, which means 
that $x_{T,0} \in \mathcal{X}_T(s_T, x_{T - 1}, \xi_{[T]})$ is an optimal solution.

Next, we show the lower semicontinuity of $v_T(\cdot)$.
Under Assumption \ref{Assu:compactness}, we can assume that for any feasible tuple 
\( (s_{T-1}, x_{T-1}) \), there exists a sequence of feasible tuples 
\( (s_{T-1,n}, x_{T-1,n}) \) such that  
    $$
    \lim _{n \rightarrow \infty}\left(s_{T-1, n}, x_{T-1, n}\right)=\left(s_{T-1}, x_{T-1}\right).
    $$
    To prove the 
    lower semicontinuity of \( v_T(\cdot) \), it suffices to show that
    $$
    \liminf _{n \rightarrow \infty} v_T\left(s_{T-1, n}, x_{T-1, n}, \xi_{[T]}, \zeta_{T - 1}\right) \geqslant v_T\left(s_{T-1}, x_{T-1}, \xi_{[T]}, \zeta_{T - 1}\right).
    $$  
    Let  
    \[
    s_{T, n} = S^M_{T - 1}\left(s_{T-1, n}, x_{T-1, n}, \xi_{T-1}, \zeta_{T - 1}\right), 
    s_T = S^M_{T - 1}\left(s_{T-1}, x_{T-1}, \xi_{T-1}, \zeta_{T - 1}\right).
    \]  
    By the continuity of the state transition mapping, 
    we have \( s_{T, n} \rightarrow s_T \).
    Now, we consider the value functions:
    {\fontsize{9pt}{10pt}\selectfont \begin{subequations}
    \begin{eqnarray}
        v_T\left(s_{T-1}, x_{T-1}, \xi_{[T]}, \zeta_{T - 1}\right) &= &  \min _{x_T \in \mathcal{X}_T (s_T, x_{T-1}, \xi_{[T]})} \mathbb{E}_{\zeta_T} \left[ C_T\left(s_T, x_T, \xi_{[T]}, \zeta_T\right) \right],  \\
        v_T\left(s_{T-1, n}, x_{T-1, n}, \xi_{[T]}, \zeta_{T - 1}\right) &= & \min _{x_T \in \mathcal{X}_T (s_{T,n}, x_{T-1,n}, \xi_{[T]})} \mathbb{E}_{\zeta_T} \left[ C_T\left(s_{T, n}, x_T, \xi_{[T]}, \zeta_T\right) \right]. 
        \label{eq:Prop-random-v_Tn}
    \end{eqnarray}   
    \label{eq:Prop-random-v_T}
    \end{subequations}}
    Assumption \ref{Assu:compactness} and the lower semicontinuity of \( g_{T, i}, i \in I_T, \) w.r.t. $(s_T, x_{T})$ imply that \( \mathcal{X}_T (s_T, x_{T-1}, \xi_{[T]}) \) and \( \mathcal{X}_T (s_{T,n}, x_{T-1,n}, \xi_{[T]}) \) are compact sets. Combining with the lower semicontinuity of \( C_T \) in $(s_T, x_T)$, 
    we can show that the optimums
    in \eqref{eq:Prop-random-v_T} 
    are attainable. Let 
    \( x_T^* \) and \( x_{T, n}^* \)
    denote the optimal solutions. Then
    \begin{eqnarray}
    v_T\left(s_{T-1, n}, x_{T-1, n}, \xi_{[T]}, \zeta_{T - 1}\right) 
    &= & \mathbb{E}_{\zeta_T} \left[ C_T\left(s_{T, n}, x_{T, n}^*, \xi_{[T]}, \zeta_T\right) \right]. \nonumber
\end{eqnarray}
By the lower semicontinuity of \( C_T(s_T,x_T,\xi_{[T]}, \zeta_T) \), there exists a subsequence \( \left(s_{T, n_j}, x_{T, n_j}^*\right) \) such that  
$$
\lim _{j \rightarrow \infty} \mathbb{E}_{\zeta_T} \left[ C_T\left(s_{T, n_j}, x_{T, n_j}^*, \xi_T, \zeta_T\right) \right] = c,
$$  
where \( c \) is the infimum of the sequence \( \left\{ \mathbb{E}_{\zeta_T} \left[ C_T(s_{T, n}, x_{T, n}^*, \xi_{[T]}, \zeta_T) \right] \right\} \). Since the feasible solution set is compact, there exists a convergent subsequence\( \left\{ \left(s_{T, n_{j i}}, x_{T, n_{j i}}^*\right) \right\} \) such that  
$
x_{T, n_{j i}}^* \rightarrow y_T^*.
$
Again, using the lower semicontinuity of the objective function, we obtain  
\begin{eqnarray}
\lim _{j \rightarrow \infty} \mathbb{E}_{\zeta_T} \left[ C_T\left(s_{T, n_j}, x_{T, n_j}^*, \xi_T, \zeta_T\right) \right] \geqslant \mathbb{E}_{\zeta_T} \left[ C_T\left(s_T, y_T^*, \xi_T, \zeta_T\right) \right] \nonumber \geqslant \mathbb{E}_{\zeta_T} \left[ C_T\left(s_T, x_T^*, \xi_T, \zeta_T\right) \right]. \nonumber
\end{eqnarray}  
Thus, by choosing a subsequence that attains the lower limit, we have
\begin{eqnarray}
&& \liminf _{n \rightarrow \infty} v_T\left(s_{T-1, n}, x_{T-1, n}, \xi_{[T]}, \zeta_{T - 1}\right) =  \liminf _{j \rightarrow \infty} \mathbb{E}_{\zeta_T} \left[ C_T\left(s_{T, n_j}, x_{T, n_j}^*, \xi_T, \zeta_T\right) \right] \nonumber \\
&\geqslant & \mathbb{E}_{\zeta_T} \left[ C_T\left(s_T, x_T^*, \xi_T, \zeta_T\right) \right] = v_T(s_{T - 1}, x_{T - 1}, \xi_{[T]}, \zeta_{T - 1}). \nonumber
\end{eqnarray}
It shows that \( v_T \) is lower semicontinuous w.r.t.~\( (s_{T - 1}, x_{T - 1}) \).  
The measurability of \( v_T \) follows directly from above. Therefore, the random lower semicontinuity of \( v_T \) is established.

\underline{Step 3}.  We show that \( v_t(s_{t - 1}, x_{t - 1}, \xi_{[t]}, \zeta_{t - 1}) \) is measurable. We do by induction. 

By the measure-preserving property of the expectation operator, we can conclude that \( \mathbb{E}_{\xi_T | \xi_{[T - 1]}} [ 
v_T(s_{T - 1}, x_{T - 1}, \xi_{[T]}, \zeta_{T - 1}) ] \) is measurable w.r.t.~\( (s_{T - 1}, x_{T - 1}, \xi_{[T - 1]}, \zeta_{T - 1}) \). 
This and Assumption~\ref{Assu:continuity} 
ensure 
that the objective function   
$C_{T - 1}(s_{T - 1}, x_{T - 1}, \xi_{[T - 1]}, \zeta_{T - 1}) +  \mathbb{E}_{\xi_T | \xi_{[T - 1]}} \big[ $ \\ $ v_T(s_{T - 1}, x_{T - 1}, \xi_{[T]}, \zeta_{T - 1}) \big]$ 
is also measurable w.r.t. \( (s_{T - 1}, x_{T - 1}, \xi_{[T - 1]}, \zeta_{T - 1}) \). 
By using a similar argument 
as that for stage \( T \), 
we can show that \( v_{T - 1}(s_{T-2}, x_{T-2}, \xi_{[T-1]}, \zeta_{T - 2}) \) is also measurable w.r.t. \( (s_{T-2}, x_{T-2}, \xi_{[T-1]}, \zeta_{T - 2}) \).

By recursively applying the above argument 
for \( t = 1, 2, \cdots, T - 1 \), we can conclude that $v_t, 1 \leq t \leq T$ is also a measurable function, and the expectation operators in problems \eqref{eq:value-t} are all well-defined. 

\underline{Step 4}. We show that \( v_t(s_{t - 1}, x_{t - 1}, \xi_{[t]}, \zeta_{t - 1}) \) is lower semicontinuous w.r.t. $(s_{t - 1}, x_{t - 1})$.
Assume that the random lower semicontinuity holds 
at stages after \( t+1 \), we now prove that it also holds 
at stage \( t \). 
To this end, consider {$v_t\left(s_{t-1}, x_{t-1}, \xi_{[t]}, \zeta_{t - 1}\right)$.}
According to the induction hypothesis, 
\( v_{t+1}\left(s_t, x_t, \xi_{[t+1]}, \zeta_{t}\right) \) is lower semicontinuous w.r.t.~\( \left(s_t, x_t\right) \). Since the expectation operator preserves lower semicontinuity, then \( \mathbb{E}_{\xi_{t+1} \mid \xi_{[t]}, \zeta_{t}} v_{t+1}\left(s_t, x_t, \xi_{[t+1]}, \zeta_{t}\right) \) is also lower semicontinuous w.r.t.~\( \left(s_t, x_t\right) \). Similar to the argument for stage \( T \),  we can prove that \( v_t\left(s_{t-1}, x_{t-1}, \xi_{[t]}, \zeta_{t - 1}\right) \) is a lower semicontinuous function w.r.t.~\( \left(s_{t-1}, x_{t-1}\right) \).
We can also show that $v_t\left(s_{t-1}, x_{t-1}, \xi_{[t]}, \zeta_{t - 1}\right)$ is a measurable function. 
This ensures  the lower semicontinuity of \( v_t \).

Since we first consider the lower semicontinuity of $v_t$, then the measurability of $v_{t - 1}$, the minimization operator in problem \eqref{eq:value-t} is always well-defined, i.e., the optimal solution always exists. The proof is completed.
\end{proof}

To illustrate the reasonability of the growth condition in Theorem \ref{Theo-stab-exo}, we define the objective functions of  problem \eqref{eq:mixed-MSP-MDP} under two feasible policies $\bm x$ and $\bm y$ as 
   \begin{eqnarray*}
    F(\bm x, \xi, \zeta) &:= & C_0(s_0, x_0, \zeta_0) + \sum\limits_{t = 1}^T C_t(s_t^{\bm x}, x_t, \xi_{[t]}, \zeta_t), \\
    F(\bm y, \xi, \zeta) &:= & C_0(s_0, y_0, \zeta_0) + \sum\limits_{t = 1}^T C_t(s_t^{\bm y}, y_t, \xi_{[t]}, \zeta_t).
   \end{eqnarray*}

\begin{proposition}[strong convexity of $F(\bm x, \xi, \zeta)$]
    \label{Prop-strong-convexity}
    Let Assumptions \ref{Assu:convexity} and \ref{assumption4} hold, and for $t = 1, 2, \cdots, T$, $C_t(s_t, x_t, \xi_{[t]}, \zeta_t)$ is strongly convex w.r.t. $x_t$, i.e., there exists a constant $\mu_t > 0$ such that
    \begin{eqnarray*}
        && \alpha C_t(s_t^{\bm x}, x_t, \xi_{[t]}, \zeta_t) + (1 - \alpha) C_t(s_t^{\bm y}, y_t, \xi_{[t]}, \zeta_t) \\ 
        &\geq &  C_t(\alpha s_t^{\bm x} + (1- \alpha) s_t^{\bm y}, \alpha x_t + (1 - \alpha) y_t, \xi_{[t]}, \zeta_t) + \mu_t \alpha (1 - \alpha) \Vert x_t - y_t \Vert^2, \quad \forall (s_t^{\bm x}, x_t), (s_t^{\bm y}, y_t).  
    \end{eqnarray*}
    Then the objective function  of  problem \eqref{eq:mixed-MSP-MDP} is strongly convex in $\bm x$, i.e., there exists a constant $\mu > 0$ such that
    \begin{eqnarray*}
        \alpha F(\bm x, \xi, \zeta) + (1 - \alpha)F(\bm y, \xi, \zeta) \geq F(\alpha \bm x + (1 - \alpha) \bm y, \xi, \zeta) + \mu \alpha (1- \alpha) \Vert \bm x - \bm y \Vert^2.
    \end{eqnarray*}
    Furthermore, if $\bm x^*$ is the optimal solution to problem \eqref{eq:mixed-MSP-MDP}, then 
    \begin{eqnarray*}
\mathbb{E}_{\xi, \zeta} \left[ F(\bm x, \xi, \zeta)\right] \geq \mathbb{E}_{\xi, \zeta} \left[ F(\bm x^*, \xi, \zeta) + \mu \Vert \bm x - \bm x^* \Vert^2 \right].
   \end{eqnarray*}
    \label{Prop-strongly-convex}
\end{proposition}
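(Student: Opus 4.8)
The plan is to prove Proposition~\ref{Prop-strongly-convex} in three stages: first establish strong convexity of the stagewise composite terms $C_t(s_t^{\bm x}, x_t, \xi_{[t]}, \zeta_t)$ along the policy $\bm x$, then sum over $t$ to obtain strong convexity of $F(\cdot,\xi,\zeta)$, and finally pass to the expectation and invoke the standard strong-convexity inequality at the minimizer.

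First I would deal with the state dynamics. Fix feasible policies $\bm x$ and $\bm y$, a scalar $\alpha\in[0,1]$, and write $\bm z := \alpha\bm x + (1-\alpha)\bm y$ with induced states $s_t^{\bm x}, s_t^{\bm y}, s_t^{\bm z}$. The point is that, exactly as in the proof of Proposition~\ref{Theo-convexity}, the convexity of $S^M_{t-1}$ in $(s_{t-1},x_{t-1})$ together with an induction on $t$ gives $s_t^{\bm z} \le \alpha s_t^{\bm x} + (1-\alpha) s_t^{\bm y}$ componentwise (the base case $s_0^{\bm z}=s_0$ is trivial, and the inductive step uses monotonicity of $S^M_{t-1}$ in $s_{t-1}$ to propagate the inequality through the nonlinear transition). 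Then, using Assumption~\ref{assumption4}(a) (monotonicity of $C_t$ in $s_t$) followed by the assumed strong convexity of $C_t$ in $x_t$, I would chain
\begin{eqnarray*}
&& \alpha C_t(s_t^{\bm x}, x_t,\xi_{[t]},\zeta_t) + (1-\alpha)C_t(s_t^{\bm y}, y_t,\xi_{[t]},\zeta_t) \\
&\ge& C_t(\alpha s_t^{\bm x}+(1-\alpha)s_t^{\bm y}, \alpha x_t+(1-\alpha)y_t, \xi_{[t]},\zeta_t) + \mu_t\alpha(1-\alpha)\Vert x_t - y_t\Vert^2 \\
&\ge& C_t(s_t^{\bm z}, z_t, \xi_{[t]},\zeta_t) + \mu_t\alpha(1-\alpha)\Vert x_t - y_t\Vert^2,
\end{eqnarray*}
where $z_t = \alpha x_t + (1-\alpha)y_t$. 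Summing this over $t=1,\dots,T$ and adding the $t=0$ term (for which $s_0$ is fixed, so strong convexity of $C_0$ in $x_0$ applies directly) yields $\alpha F(\bm x,\xi,\zeta) + (1-\alpha)F(\bm y,\xi,\zeta) \ge F(\bm z,\xi,\zeta) + \alpha(1-\alpha)\sum_{t=0}^T \mu_t \Vert x_t - y_t\Vert^2$. Setting $\mu := \min_{0\le t\le T}\mu_t$ and noting $\sum_t \Vert x_t - y_t\Vert^2 \ge \Vert \bm x - \bm y\Vert^2$ for the norm used on policies (this needs a brief remark matching whatever norm convention is in force — here the coordinatewise sum dominates the max, so $\mu$ may need to absorb a dimension-independent constant, or one simply keeps $\sum_t\mu_t\Vert x_t-y_t\Vert^2$ in the bound) gives the claimed strong convexity of $F$.

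For the final assertion, I would take expectations over $(\xi,\zeta)$: since the inequality holds pathwise and the expectation is linear and monotone, $\mathbb{E}_{\xi,\zeta}[F(\cdot,\xi,\zeta)]$ is strongly convex in $\bm x$ with modulus $\mu$ over the (convex, by Assumptions~\ref{Assu:convexity}--\ref{assumption4}) feasible policy set. Then the standard fact that a $\mu$-strongly convex function $\phi$ satisfies $\phi(\bm x) \ge \phi(\bm x^*) + \mu\Vert \bm x - \bm x^*\Vert^2$ whenever $\bm x^*$ minimizes $\phi$ over a convex set — proved by applying the strong convexity inequality along the segment from $\bm x^*$ to $\bm x$ and letting $\alpha\to 0^+$, using $\phi(\bm x^* + \alpha(\bm x - \bm x^*)) \ge \phi(\bm x^*)$ — delivers $\mathbb{E}_{\xi,\zeta}[F(\bm x,\xi,\zeta)] \ge \mathbb{E}_{\xi,\zeta}[F(\bm x^*,\xi,\zeta) + \mu\Vert \bm x - \bm x^*\Vert^2]$.

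The main obstacle I anticipate is the first stage: making the state-comparison inequality $s_t^{\bm z}\le\alpha s_t^{\bm x}+(1-\alpha)s_t^{\bm y}$ rigorous and then correctly threading it through the monotonicity/strong-convexity chain for $C_t$, since the nonlinear transition means $s_t^{\bm z}$ is genuinely not a convex combination of $s_t^{\bm x}$ and $s_t^{\bm y}$ — this is precisely the same subtlety already flagged after Proposition~\ref{Theo-convexity}, and Assumption~\ref{assumption4} is exactly what rescues it. Everything after that (summation, expectation, the minimizer inequality) is routine. A minor secondary point to handle cleanly is the norm bookkeeping when consolidating $\sum_t\mu_t\Vert x_t-y_t\Vert^2$ into a single $\mu\Vert\bm x-\bm y\Vert^2$ term under the paper's $\|\cdot\|_\infty$ convention on stacked vectors.
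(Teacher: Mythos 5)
Your proposal is correct and follows essentially the same route as the paper's proof: the inductive state comparison $s_t^{\bm z}\le \alpha s_t^{\bm x}+(1-\alpha)s_t^{\bm y}$ via convexity and monotonicity of $S^M_{t-1}$, the chain through the joint strong-convexity hypothesis and the monotonicity of $C_t$ in $s_t$, summation with $\mu=\min_t\mu_t$ under the $\|\cdot\|_\infty$ convention, and the standard minimizer inequality obtained from the segment argument (the paper absorbs the factor $(1-\alpha)$ into $\mu$ rather than letting $\alpha\to 0^+$, which is the same thing). The only presentational difference is the order in which the state comparison and the aggregate inequality appear; nothing of substance differs.
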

\begin{proof}
   To prove the strong convexity of $ F(\bm x, \xi, \zeta)$ w.r.t.~decision variables,
   it suffices to show that 
   for any two feasible policies $\bm x, \bm y,$
   \begin{eqnarray*}
       \alpha F(\bm x, \xi, \zeta) + (1 - \alpha) F(\bm y, \xi, \zeta) - \alpha (1 - \alpha) \mu \Vert \bm x - \bm y \Vert^2 \geq F(\alpha \bm x + (1 - \alpha) \bm y, \xi, \zeta). 
   \end{eqnarray*}
   From the convexity of $C_t$ w.r.t. $(s_t, x_t)$ and its strong convexity w.r.t. $x_t$, we have
   \begin{eqnarray}
       &&\alpha F(\bm x, \xi, \zeta) + (1 - \alpha) F(\bm y, \xi, \zeta) \nonumber \\
       &\geq & C_0(s_0, \alpha x_0 + (1 - \alpha) y_0, \zeta_0) + \sum\limits_{t = 1}^T C_t(\alpha s_t^{\bm x} +(1 - \alpha)s_t^{\bm y}, \alpha x_t + (1 - \alpha) y_t, \xi_{[t]}, \zeta_t) \nonumber \\
       && + \sum\limits_{t = 0}^T \mu_t \alpha (1 - \alpha) \Vert x_t - y_t \Vert^2, \label{eq:strongly-convex-1}
   \end{eqnarray}
   where $s_t^{\bm x}= S^M_{t - 1}(s_{t - 1}, x_{t - 1}, \xi_{t - 1}, \zeta_{t - 1})$, $s_t^{\bm y}= S^M_{t - 1}(s_{t - 1}, y_{t - 1}, \xi_{t - 1}, \zeta_{t - 1})$. Let $z = \alpha \bm x + (1 - \alpha) \bm y$ and $s_t^{\bm z} = S^M_{t - 1}(s_{t - 1}, z_{t - 1}, \xi_{t - 1}, \zeta_{t - 1})$ be the state at stage $t$ associated with $\bm z$. At stage 0, $s_0^{\bm x} = s_0^{\bm y} = s_0^{\bm z} = s_0$. Next, we compare $s_t^{\bm z}$ with $\alpha s_t^{\bm x} + (1 - \alpha) s_t^{\bm y}$. For $t = 1$, it is known from the convexity of $S^M_0$ that 
   \begin{eqnarray*}
       s_1^{\bm z} = S^M_0(s_0, z_0, \zeta_0) \leq \alpha S_0^M(s_0, x_0, \zeta_0) + (1 - \alpha) S_0^M(s_0, y_0, \zeta_0) = \alpha s_1^{\bm x} + (1 - \alpha) s_1^{\bm y}.
   \end{eqnarray*}
   Assume that for any $ 1 \leq k < t \leq T-1$, we have $s_k^{\bm z} \leq \alpha s_k^{\bm x} + (1 - \alpha) s_k^{\bm y}$. Then by the convexity and monotonicity of $S^M_{t - 1}$, we have
   \begin{eqnarray*}
       s_t^{\bm z} &= & S^M_{t - 1}(s_{t - 1}^{\bm z}, z_{t - 1}, \xi_{t - 1}, \zeta_{t - 1}) \leq S^M_{t - 1}(\alpha s_{t - 1}^{\bm x} + (1 - \alpha) s_{t - 1}^{\bm y}, z_{t - 1}, \xi_{t - 1}, \zeta_{t - 1}) \\
       &\leq & \alpha S^M_{t - 1}(s_{t - 1}^{\bm x}, x_{t - 1}, \xi_{t - 1}, \zeta_{t - 1}) + (1 - \alpha) S^M_{t - 1}(s_{t - 1}^{\bm y}, y_{t - 1}, \xi_{t - 1}, \zeta_{t - 1}) = \alpha s_t^{\bm x} + (1 - \alpha) s_t^{\bm y}.
   \end{eqnarray*}
    The principle of induction implies that we have shown $s_t^{\bm z} \leq \alpha s_t^{\bm x} + (1 - \alpha) s_t^{\bm y}$ for $t = 1, 2, \cdots, T$. With this and  the monotonicity of $C_t$ w.r.t. $s_t$, we have 
   \begin{eqnarray*}
       \eqref{eq:strongly-convex-1} &\geq & C_0(s_0, \alpha x_0 + (1 - \alpha) y_0, \zeta_0) + \sum\limits_{t = 1}^T C_t(s_t^{\bm z}, \alpha x_t + (1 - \alpha) y_t, \xi_{[t]}, \zeta_t) \\
       && + \sum\limits_{t = 0}^T \mu_t \alpha (1 - \alpha) \Vert x_t - y_t \Vert^2 \\
       &= & C_0(s_0, z_0, \zeta_0) + \sum\limits_{t = 1}^T C_t(s_t^{\bm z}, z_t, \xi_{[t]}, \zeta_t) + \sum\limits_{t = 0}^T \mu_t \alpha (1 - \alpha) \Vert x_t - y_t \Vert^2 \\
       &\geq & \underset{t = 0,1, \cdots, T}{\min} \mu_t \alpha (1 - \alpha) \Vert \bm x - \bm y \Vert^2 + F(\bm z, \xi, \zeta)\\
       &= & \mu \alpha (1 - \alpha) \Vert \bm x - \bm y \Vert^2 + F(\bm z, \xi, \zeta),
   \end{eqnarray*}
   where $\mu = \underset{t = 0,1, \cdots, T}{\min} \mu_t$. This establishes the strong convexity of the objective function  of  problem \eqref{eq:mixed-MSP-MDP}.
   Then for any feasible solution \( \bm x \) and the optimal solution \( \bm x^* \) (which must be unique due to the strong convexity) to problem \eqref{eq:mixed-MSP-MDP}, we have
   \begin{eqnarray}
       &&\mathbb{E}_{\xi, \zeta} \left[ \alpha F(\bm x, \xi, \zeta) + (1 - \alpha) F(\bm x^*, \xi, \zeta) \right] \geq \mathbb{E}_{\xi, \zeta} \left[ F(\alpha \bm x + (1 - \alpha)\bm x^*, \xi, \zeta)  + \mu \alpha (1 - \alpha) \Vert \bm x - \bm x^* \Vert^2 \right] \nonumber \\
       &\geq & \mathbb{E}_{\xi, \zeta} \left[ F(\bm x^*, \xi, \zeta)  + \mu \alpha (1 - \alpha) \Vert \bm x - \bm x^* \Vert^2 \right]. \label{eq:strongly-convex-2}
   \end{eqnarray}
   This means that for any $0 < \alpha < 1$, we obtain 
   \begin{eqnarray*}
       \mathbb{E}_{\xi, \zeta} \left[ F(\bm x, \xi, \zeta)\right] \geq \mathbb{E}_{\xi, \zeta} \left[ F(\bm x^*, \xi, \zeta) + \mu (1 - \alpha) \Vert \bm x - \bm x^* \Vert^2 \right].
   \end{eqnarray*}
   In other words, the second-order growth condition
   \begin{eqnarray*}
       \mathbb{E}_{\xi, \zeta} \left[ F(\bm x, \xi, \zeta)\right] \geq \mathbb{E}_{\xi, \zeta} \left[ F(\bm x^*, \xi, \zeta) + \mu \Vert \bm x - \bm x^* \Vert^2 \right]
   \end{eqnarray*}
   holds by redefining $\mu (1 - \alpha)$ as $\mu$.
\end{proof}
Proposition \ref{Prop-strong-convexity} tells us that, when the strong convexity holds, the growth condition in Theorem \ref{Theo-stab-exo} holds for $\nu = 2$.

\section{Existing quantitative stability results}

To facilitate reading,
we include 
two  well-known 
quantitative stability results in 
multistage stochastic programming by Pflug and Pichler~\cite{pflug2012distance} and Heitsch et al.~\cite{HRS2006stability}. 

Pflug and Pichler~\cite{pflug2012distance} consider the following 
problem
\begin{eqnarray}\label{eq:MSP}
v(\mathbb{P}) \;=\; \inf_{x \in \mathbb{X},\, x \triangleleft \mathcal{F}}
   \; \mathbb{E}_{\mathbb{P}}\bigl[ H(\xi, x) \bigr],
\end{eqnarray}
where $x \triangleleft \mathcal{F}$ means that the decision vector 
$x=(x_t)_{t \in T}$ is adapted to the filtration $\mathcal{F}=(\mathcal{F}_t)_{t \in T}$,
and enforces the non-anticipativity constraint: 
decisions at stage $t$ can only depend on the information revealed up to time $t$, but not on future information, 
$\mathbb{X}$ denotes the feasible set independent of $\xi$,
$\mathbb{P}$ is a nested distribution which is a distribution that includes both the values of a 
stochastic process and the associated information structure. Formally, it is the 
distribution of a value-and-information structure $(\Omega,\mathcal{F},\mathbb{P},\xi)$, 
where $\xi=(\xi_t)_{t\in T}$ is adapted to the filtration 
$\mathcal{F}=(\mathcal{F}_t)_{t\in T}$.
\begin{theorem}[{\cite[Theorem 6.1]{pflug2012distance}}]
\label{Theo-Pflug}
Let $\mathbb{P}$ and $\tilde{\mathbb{P}}$ be two nested distributions. 
Assume that $\mathbb{X}$ is convex, 
the objective function $H(\xi, x)$ is convex in $x$ for any fixed~$\xi$, 
and $H$ is uniformly H\"older continuous in $\xi$ with exponent $\beta \leq 1$ and constant $L_\beta$, i.e.
\begin{eqnarray}
\bigl| H(\xi,x) - H(\tilde{\xi},x) \bigr| 
   \le L_\beta \Big( \sum_{t\in T} d(\xi_t,\tilde{\xi}_t) \Big)^{\beta},
   \qquad \forall x\in\mathbb{X}.
   \label{eq-ex-Holder-Pflug}
\end{eqnarray}
Then 
\begin{eqnarray}
|v(\mathbb{P})-v(\tilde{\mathbb{P}})| \le L_\beta \, \dd_{\rm{Nested}}(\mathbb{P},\tilde{\mathbb{P}})^{\beta}, 
\qquad \forall \beta \le 1.
\label{eq-ex-result-Pflug}
\end{eqnarray}
\end{theorem}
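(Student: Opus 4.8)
The plan is to exploit the variational (optimal-transport) characterization of the nested distance and to \emph{transport feasible policies} from one problem to the other, controlling the change in objective value through convexity and Hölder continuity. By symmetry it suffices to prove the one-sided bound $v(\mathbb{P}) \le v(\tilde{\mathbb{P}}) + L_\beta\,\dd_{Nested}(\mathbb{P},\tilde{\mathbb{P}})^{\beta}$; interchanging the roles of $\mathbb{P}$ and $\tilde{\mathbb{P}}$ and combining the two inequalities then yields \eqref{eq-ex-result-Pflug}. Throughout I work with $\beta\le 1$ as in the Hölder hypothesis \eqref{eq-ex-Holder-Pflug}.

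First I would fix $\epsilon>0$ and choose a policy $\tilde{x}=(\tilde{x}_t)_{t\in T}$, adapted to $\tilde{\mathcal{F}}$ and feasible (i.e.\ $\tilde{x}\in\mathbb{X}$, $\tilde{x}\triangleleft\tilde{\mathcal{F}}$), with $\mathbb{E}_{\tilde{\mathbb{P}}}[H(\tilde{\xi},\tilde{x})]\le v(\tilde{\mathbb{P}})+\epsilon$. Next I would take a transportation plan $\pi$ on the product value-and-information space whose $\xi$-marginal is $\mathbb{P}$, whose $\tilde{\xi}$-marginal is $\tilde{\mathbb{P}}$, and which respects the nested conditional-marginal constraints defining $\dd_{Nested}$ (as displayed in the nested-distance formula of Example~\ref{ex-feasible-unchanged}); one may take $\pi$ to be $\dd_{Nested}$-optimal, or $\epsilon$-optimal. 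Regarding $\tilde{x}$ as a random element on the product space, I would define the transported policy by the adapted conditional expectation $x_t:=\mathbb{E}_{\pi}[\tilde{x}_t\mid\mathcal{F}_t]$ for each $t$. Two facts must then be verified: (a) $x$ is \emph{feasible}, namely $x\in\mathbb{X}$ and $x\triangleleft\mathcal{F}$ — the adaptedness is immediate from the construction, while $x\in\mathbb{X}$ follows because $\mathbb{X}$ is convex and closed so it is preserved under conditional expectation; and (b) the conditional-marginal (martingale-type) property of the nested plan, $\pi[\,\cdot\mid\mathcal{F}_t\otimes\tilde{\mathcal{F}}_t\,]=\mathbb{P}[\,\cdot\mid\mathcal{F}_t\,]$, guarantees that $x_t$ genuinely depends only on the information revealed up to stage $t$, so non-anticipativity is not violated.

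With the transported policy in hand I would bound the objective in three moves. By convexity of $H(\xi,\cdot)$ and Jensen's inequality applied to the adapted projection,
\[
\mathbb{E}_{\mathbb{P}}\bigl[H(\xi,x)\bigr]=\mathbb{E}_{\pi}\bigl[H(\xi,x)\bigr]\le \mathbb{E}_{\pi}\bigl[H(\xi,\tilde{x})\bigr].
\]
Then the Hölder continuity \eqref{eq-ex-Holder-Pflug} in $\xi$ gives
\[
\mathbb{E}_{\pi}\bigl[H(\xi,\tilde{x})\bigr]\le \mathbb{E}_{\pi}\bigl[H(\tilde{\xi},\tilde{x})\bigr]
+L_\beta\,\mathbb{E}_{\pi}\Bigl[\bigl(\textstyle\sum_{t\in T}d(\xi_t,\tilde{\xi}_t)\bigr)^{\beta}\Bigr],
\]
and since $\tilde{x}$ depends only on $\tilde{\xi}$ whose $\pi$-marginal is $\tilde{\mathbb{P}}$, the first term equals $\mathbb{E}_{\tilde{\mathbb{P}}}[H(\tilde{\xi},\tilde{x})]\le v(\tilde{\mathbb{P}})+\epsilon$. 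Finally, because $t\mapsto t^{\beta}$ is concave for $\beta\le 1$, Jensen's inequality yields $\mathbb{E}_{\pi}[(\sum_t d(\xi_t,\tilde{\xi}_t))^{\beta}]\le(\mathbb{E}_{\pi}[\sum_t d(\xi_t,\tilde{\xi}_t)])^{\beta}$, and the bracketed expectation is exactly the transportation cost of $\pi$, which equals (or approximates) $\dd_{Nested}(\mathbb{P},\tilde{\mathbb{P}})$. Assembling these and letting $\epsilon\downarrow 0$ gives $v(\mathbb{P})\le v(\tilde{\mathbb{P}})+L_\beta\,\dd_{Nested}(\mathbb{P},\tilde{\mathbb{P}})^{\beta}$, and symmetrization completes the argument.

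I expect the main obstacle to be step (b) above: rigorously justifying that the transported policy $x_t=\mathbb{E}_{\pi}[\tilde{x}_t\mid\mathcal{F}_t]$ is both feasible and \emph{adapted to the original filtration}. This is precisely where the nested distance differs from an ordinary Wasserstein coupling — a mere coupling of the terminal marginals would in general destroy non-anticipativity, whereas the recursive conditional-marginal constraints built into $\dd_{Nested}$ are exactly what makes the conditional-expectation construction respect the information structure stage by stage. Making this interaction between conditioning, the tree/filtration structure, and the convexity-Jensen estimate watertight (ideally by a backward-recursive verification over stages) is the technical heart of the proof; the Hölder and concavity estimates are then routine.
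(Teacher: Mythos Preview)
The paper does not supply its own proof of this statement: Theorem~\ref{Theo-Pflug} sits in the appendix on existing results and is simply quoted from \cite{pflug2012distance}. So there is no in-paper proof to compare against; the relevant comparison is with Pflug and Pichler's original argument, and your proposal is essentially a correct sketch of that argument (transport a near-optimal policy through a bicausal coupling, project by conditional expectation, apply Jensen for the convexity in $x$, then H\"older plus concavity of $t\mapsto t^\beta$).

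One point deserves sharpening. You say in (b) that the nested conditional-marginal constraints are what ``guarantee that $x_t$ genuinely depends only on the information revealed up to stage $t$''. That is not where they enter: $x_t:=\mathbb{E}_\pi[\tilde{x}_t\mid\mathcal{F}_t]$ is $\mathcal{F}_t$-measurable by definition, so adaptedness is automatic regardless of the coupling. The bicausality constraints are needed instead to make the Jensen step
\[
\mathbb{E}_\pi\bigl[H(\xi,x)\bigr]\le \mathbb{E}_\pi\bigl[H(\xi,\tilde{x})\bigr]
\]
legitimate. Since $H$ depends on all of $\xi$, you need to condition on $\mathcal{F}_T$ before applying Jensen in the $x$-argument; for this you must know that $\mathbb{E}_\pi[\tilde{x}_t\mid\mathcal{F}_T]=\mathbb{E}_\pi[\tilde{x}_t\mid\mathcal{F}_t]=x_t$, i.e.\ that under $\pi$ the future of $\xi$ carries no additional information about $\tilde{\xi}_{[t]}$ beyond what $\xi_{[t]}$ already reveals. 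That conditional-independence statement is precisely one half of the bicausal constraint defining $\dd_{Nested}$. Your closing paragraph correctly flags the conditioning/Jensen interaction as the technical crux, so this is a matter of locating the role of the nested structure more precisely rather than a genuine gap.
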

Next, we recall a quantitative stability result from Heitsch et al.~\cite{HRS2006stability}, which provides upper bound on the variation of the value function to distribution perturbations w.r.t.~the filtration distance. Heitsch et al.~consider the problem
\begin{eqnarray}
\min_{x_1, x_2, \dots, x_T} && \mathbb{E}_{\xi} \left[ \sum_{t=1}^T \langle b_t(\xi_t), x_t \rangle \right] \nonumber \\
\textrm{s.t.} && x_t \in X_t, \quad x_t \text{ is } \mathcal{F}_t\text{-measurable}, \quad t = 1, \dots, T, \nonumber \\
&& A_{t,0} x_t + A_{t,1}(\xi_t) x_{t-1} = h_t(\xi_t), \quad t = 2, \dots, T, \nonumber
\end{eqnarray}
where $b_t(\xi_t)$ is a random cost vector depending affinely on $\xi_t$, specifying the linear cost matrix of the stage-$t$ decision $x_t$, $A_{t,0}$ is a fixed matrix representing the deterministic technology coefficients at stage $t$, $A_{t,1}(\xi_t)$ is a random matrix depending affinely on $\xi_t$, linking the stage-$t$ decision $x_t$ with the previous stage decision $x_{t-1}$, $h_t(\xi_t)$ is a random right-hand side vector depending 
linear on $\xi_t$. Following the notation in \cite{HRS2006stability}, we 
let $F(\xi,x) := \mathbb{E}\!\left[\sum_{t=1}^T \langle b_t(\xi_t),x_t\rangle\right]$ and $X(\xi)$ denote the feasible set 
of the multistage stochastic programming problem. The optimal value function is then defined as
\bgeqn
v(\xi) := \min_{x \in X(\xi)} F(\xi,x).
\label{eq-ex-Romisch-optvalue}
\edeqn 

\begin{theorem}[{\cite[Theorem 2.1]{HRS2006stability}}]
\label{Theo-Romisch}
Assume the following conditions hold:
\begin{itemize}
  \item[(A1)]
  There exists a $\delta > 0$ such that for any $\tilde{\xi}$ with $\|\tilde{\xi}-\xi\|_{W_r} \le \delta$, and any 
  feasible prefix decisions $x_1,\dots,x_{t-1}$,  $t=2,\dots,T,$ the $t$-th stage feasibility set
  \[
  X_t(x_{t-1};\tilde{\xi}_t) = \left\{x_t \in X_t \mid A_{t,0}x_t + A_{t,1}(\tilde{\xi}_t)x_{t-1} = h_t(\tilde{\xi}_t)\right\}
  \]
  is nonempty.
  \item[(A2)]
  The optimal value $v(\tilde{\xi})$ is finite for all $\tilde{\xi}$ in a neighborhood of $\xi$, and there exist $\alpha > 0$, $\delta > 0$ and a bounded set $B \subset L^{r'}(\Omega,\mathcal{F},P;\mathbb{R}^m)$ such that the $\alpha$-level set
  \[
  l_\alpha(F(\tilde{\xi},\cdot)) := \left\{x \in X(\tilde{\xi}) \mid F(\tilde{\xi},x) \le v(\tilde{\xi})+\alpha\right\}
  \]
  is nonempty and contained in $B$ for all $\tilde{\xi}$ with $\|\tilde{\xi}-\xi\|_{W_r} \le \delta$.
  \item[(A3)]
  $\xi \in L^r(\Omega,\mathcal{F},P;\mathbb{R}^s)$ for some $r \ge 1$.
\end{itemize}
If, in addition, $X_1$ is bounded, then there exist constants $L,\alpha,\delta > 0$ such that
\bgeqn
|v(\xi)-v(\tilde{\xi})| \;\le\; L\big(\|\xi-\tilde{\xi}\|_{W_r} + \dd_{\rm{Filt}}(\xi,\tilde{\xi})\big)
\label{eq-ex-result-Romisch}
\edeqn
for all $\tilde{\xi}\in L^r(\Omega,\mathcal{F},P;\mathbb{R}^s)$ with $\|\xi-\tilde{\xi}\|_{W_r} \le \delta$, where $\dd_{\rm{Filt}}(\xi,\tilde{\xi})$ denotes the filtration distance between $\xi$ and $\tilde{\xi}$,
\bgeqn 
\dd_{\rm{Filt}}(\xi, \tilde{\xi}) := \underset{\epsilon \in (0, \alpha]}{\sup} \underset{x \in l_{\epsilon}(\xi), \tilde{x} \in l_{\epsilon}(\tilde{\xi})}{\inf} \sum_{t=1}^{T} 
\max\Big\{
\big\| x_t - \mathbb{E}[x_t \mid \tilde{\mathcal{F}}_t] \big\|,\;
\big\| \tilde{x}_t - \mathbb{E}[\tilde{x}_t \mid \mathcal{F}_t] \big\| 
\Big\}.
\label{eq-ex-filtration}
\edeqn 
\end{theorem}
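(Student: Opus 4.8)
The plan is to prove Theorem~\ref{Theo-Romisch} by splitting the perturbation error into a \emph{data} part, governed by the $r$-th Wasserstein distance $\|\xi-\tilde\xi\|_{W_r}$, and an \emph{information} part, governed by the filtration distance $\dd_{Filt}(\xi,\tilde\xi)$, and then recombining the two through a feasibility-transfer argument. First I would record the two elementary Lipschitz properties that the linear structure supplies: (i) for fixed $x$ in a bounded set $B$, the objective $F(\cdot,x)=\mathbb{E}[\sum_t\langle b_t(\cdot),x_t\rangle]$ is Lipschitz in the data process with respect to $\|\cdot\|_{W_r}$, because $b_t$ depends affinely on $\xi_t$ and $\xi\in L^r$ by (A3); and (ii) for fixed data, $F(\xi,\cdot)$ is Lipschitz in $x$ on $B$. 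Assumption (A2) together with boundedness of $X_1$ is used here to propagate bounds through the dynamic equality constraints $A_{t,0}x_t+A_{t,1}(\xi_t)x_{t-1}=h_t(\xi_t)$, so that the $\alpha$-level sets $l_\alpha(F(\tilde\xi,\cdot))$, for all $\tilde\xi$ in a $\delta$-neighbourhood of $\xi$, lie in one common bounded set $B$; this is what makes the Lipschitz moduli above \emph{uniform} in $\tilde\xi$.

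The core step is a feasibility-transfer lemma: given any $x\in X(\xi)$ with $x\triangleleft\mathcal F$ in the level set, construct $\tilde x\in X(\tilde\xi)$ with $\tilde x\triangleleft\tilde{\mathcal F}$ and $\mathbb{E}\|\tilde x-x\|\le L(\|\xi-\tilde\xi\|_{W_r}+\dd_{Filt}(\xi,\tilde\xi))$. I would build $\tilde x$ stage by stage. At stage $t$, first replace $x_t$ by $\mathbb{E}[x_t\mid\tilde{\mathcal F}_t]$ to enforce $\tilde{\mathcal F}_t$-measurability; the displacement $\|x_t-\mathbb{E}[x_t\mid\tilde{\mathcal F}_t]\|$, after the $\inf$ over level-set representatives and the $\sup$ over $\epsilon\in(0,\alpha]$, is exactly bounded by $\dd_{Filt}(\xi,\tilde\xi)$ as in \eqref{eq-ex-filtration}. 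This conditioned point generally violates the perturbed equality constraint, since both $A_{t,1}(\tilde\xi_t)$ and $h_t(\tilde\xi_t)$ have moved; using relatively complete recourse from (A1) to guarantee $X_t(\tilde x_{t-1};\tilde\xi_t)\neq\emptyset$ and a Hoffman-type metric-regularity estimate for the polyhedral system, I would correct $\tilde x_t$ back to feasibility at a cost proportional to $\|A_{t,1}(\xi_t)-A_{t,1}(\tilde\xi_t)\|\,\|x_{t-1}\|+\|h_t(\xi_t)-h_t(\tilde\xi_t)\|+\|x_{t-1}-\tilde x_{t-1}\|$, which by the affine dependence on $\xi_t$ and the boundedness of $B$ telescopes into the claimed bound. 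Taking expectations and summing over $t$ gives the lemma.

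With these pieces in place, the error bound follows by a two-sided sandwich. Let $x^*$ be optimal for $v(\xi)$ and $\tilde x$ its transfer; then $v(\tilde\xi)\le F(\tilde\xi,\tilde x)\le F(\xi,x^*)+|F(\tilde\xi,\tilde x)-F(\xi,x^*)|\le v(\xi)+L_x\,\mathbb{E}\|\tilde x-x^*\|+L_\xi\|\xi-\tilde\xi\|_{W_r}\le v(\xi)+L(\|\xi-\tilde\xi\|_{W_r}+\dd_{Filt}(\xi,\tilde\xi))$, inserting the Lipschitz estimates (i)--(ii) and the transfer lemma. Exchanging the roles of $\xi$ and $\tilde\xi$ --- legitimate because both level sets sit in the common set $B$ and $\dd_{Filt}$ is symmetric --- yields the reverse inequality and hence \eqref{eq-ex-result-Romisch}.

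The main obstacle I expect is the simultaneous control of the two perturbations inside the feasibility-transfer step: after the conditional-expectation projection the point is $\tilde{\mathcal F}$-adapted but infeasible for $\tilde\xi$, and after the Hoffman correction it is feasible but the correction could reintroduce $\tilde{\mathcal F}_t$-nonmeasurability unless the correction is itself chosen $\tilde{\mathcal F}_t$-measurably. Making the recursion compatible --- correcting stage $t$ using only information in $\tilde{\mathcal F}_t$, while keeping the accumulated displacement linear in $\|\xi-\tilde\xi\|_{W_r}+\dd_{Filt}(\xi,\tilde\xi)$ rather than blowing up with $t$ --- is the delicate part, and it is precisely where relatively complete recourse (A1) and the uniform level-set boundedness (A2) are indispensable.
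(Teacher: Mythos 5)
The paper does not prove this statement: Theorem~\ref{Theo-Romisch} is recalled verbatim from Heitsch and R\"omisch \cite{HRS2006stability} in the appendix of ``existing quantitative stability results,'' purely for comparison with the paper's own Theorems \ref{Theo-stab-exo-optvalue} and \ref{Theo-stab-exo-stage}, so there is no in-paper proof to measure your proposal against. Judged against the original argument in \cite{HRS2006stability}, your sketch reproduces its architecture faithfully: a uniform Lipschitz estimate for $F(\cdot,x)$ over the common bounded level set $B$ (via H\"older's inequality, the affine dependence of $b_t$ on $\xi_t$, and (A2)--(A3)), a stagewise feasibility transfer combining conditional-expectation projection onto $\tilde{\mathcal F}_t$ with a Hoffman-type correction made possible by relatively complete recourse (A1), and a two-sided sandwich. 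Your closing observation --- that the correction must itself be chosen $\tilde{\mathcal F}_t$-measurably --- is exactly the crux, and in the original it is resolved by taking a measurable selection of the stage-$t$ feasibility multifunction evaluated at $\tilde{\mathcal F}_t$-measurable data, so the recursion stays adapted by construction.

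One step of your sandwich needs tightening. You start from an exact optimizer $x^*$ of $v(\xi)$ and bound its conditional-expectation displacement by $\dd_{Filt}(\xi,\tilde\xi)$, but the filtration distance in \eqref{eq-ex-filtration} is an \emph{infimum over pairs} $(x,\tilde x)\in l_\epsilon(\xi)\times l_\epsilon(\tilde\xi)$; for a single prescribed element of the level set the sum $\sum_t\|x^*_t-\mathbb{E}[x^*_t\mid\tilde{\mathcal F}_t]\|$ need not be dominated by $\dd_{Filt}(\xi,\tilde\xi)$, and the assumptions do not even guarantee that an exact optimizer exists. The argument should instead be run for an arbitrary pair of $\epsilon$-optimal solutions, yielding $|v(\xi)-v(\tilde\xi)|\le \epsilon + L\bigl(\|\xi-\tilde\xi\|_{W_r}+\sum_{t}\max\{\|x_t-\mathbb{E}[x_t\mid\tilde{\mathcal F}_t]\|,\|\tilde x_t-\mathbb{E}[\tilde x_t\mid\mathcal F_t]\|\}\bigr)$ simultaneously in both directions; only then may one pass to the infimum over pairs, dominate it by the supremum over $\epsilon\in(0,\alpha]$ defining $\dd_{Filt}$, and let $\epsilon\to 0$. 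With that reordering, and with the measurable-selection device above, your outline matches the proof in \cite{HRS2006stability}.
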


\end{document}